\documentclass[aos,preprint]{imsart}

\RequirePackage[OT1]{fontenc}
\usepackage{amsthm,amsmath,multirow,graphicx,subfigure,booktabs,url,mathtools,wrapfig,mathrsfs,bm,relsize,caption,xcolor, enumitem, float, algorithm}

\usepackage[noend]{algpseudocode}

%\usepackage{xr} 
%\externaldocument{supp}

\RequirePackage[numbers]{natbib}
\bibliographystyle{imsart-number}

\RequirePackage[psamsfonts]{amssymb}
\usepackage[colorlinks=true,
linkcolor=blue,
urlcolor=blue,
citecolor=blue]{hyperref}

\startlocaldefs
\numberwithin{equation}{section}
\newtheorem{conj}{Conjecture}
\newtheorem{thm}[conj]{Theorem}
\newtheorem{cor}[conj]{Corollary}
\newtheorem{prop}[conj]{Proposition}

\newtheorem{lemma}{Lemma}
\newtheorem{ass}{Assumption}
\newtheorem{fact}{Fact}

\providecommand{\customgenericname}{}
\newcommand{\newcustomtheorem}[2]{%
	\newenvironment{#1}[1] 
	{%
		\renewcommand\customgenericname{#2}%
		\renewcommand\theinnercustomgeneric{##1}%
		\innercustomgeneric
	}
	{\endinnercustomgeneric}
}

\newcustomtheorem{customAss}{Assumption}

\theoremstyle{definition}\newtheorem{remark}{Remark}

\def\Cov{\text{Cov}}   
\def\PP{\mathbb{P}}
\def\EE{\mathbb{E}}
\def\RR{\mathbb{R}}
\def\wh{\widehat}
\def\eps{\varepsilon}
\def\bI{{\bm I}}
\def\wt{\widetilde}

\def\Y{{\bm Y}}
\def\y{{\bm y}}
\def\X{{\bm X}}
\def\E{{\bm E}}
\def\W{{\bm W}}

\def\Eps{{\bm \eps}}

\def\g{\gamma}
\def\diag{\textrm{diag}}
\def\C{\mathcal{C}}
\def\R{\mathcal{R}}
\def\cE{\mathcal{E}}
\def\l12{\ell_1 / \ell_2}
\def\i{\infty}
\def\tr{\textrm{tr}}
\def\P{P_{\lambda_2}}
\def\Q{Q_{\lambda_2}}
\def\1{\bm{1}}
\def\rank{\textrm{rank}}
\def\Ttheta{\Theta^*}
\def\B{B^*}
\def\Ps{\Psi^*}

\makeatletter
\def\BState{\State\hskip-\ALG@thistlm}
\makeatother

\DeclarePairedDelimiter{\floor}{\lfloor}{\rfloor}

\usepackage{tikz}
\usetikzlibrary{positioning}
\usetikzlibrary{calc,fit}
\usepgflibrary{shapes.geometric}
\tikzset{
	server/.style={circle, inner sep=0.0mm, minimum width=.5cm,
		draw=white,fill=teal!35,thick},
	buffer/.style={circle, inner sep=0.0mm, minimum width=.5cm,
		draw=white,fill=blue!35,thick},
	routing/.style={circle, inner sep=0.0mm, minimum width=.5cm,
		draw=white,fill=pink!90,thick},
	center/.style={circle,inner sep=0.0mm, minimum width=0cm,
		draw=white,fill=white,thick},
}

\captionsetup{font={small},skip=5pt,width=0.9\textwidth}

\endlocaldefs

\begin{document}
	\begin{frontmatter}
		\title{Adaptive Estimation in Multivariate Response Regression with Hidden Variables}
		\runtitle{Adaptive estimation with hidden variables}
		
		\begin{aug}
			\author{\fnms{Xin} \snm{Bing}\thanksref{t1} \ead[label=e1]{xb43@cornell.edu}}
			\and
			\author{\fnms{Yang} \snm{Ning} \ead[label=e2]{yn265@cornell.edu}}
			\and
			\author{\fnms{Yaosheng} \snm{Xu}	\ead[label=e3]{yx433@cornell.edu}}
			
			\thankstext{t1}{Corresponding author}
			
			\runauthor{X. Bing, Y. Ning and Y. Xu}
			
			%\affiliation{Cornell University}
			
			\address{
				%X. Bing, Y. Ning and Y. Xu\\
				Department of Statistics and Data Science\\ 
				Cornell University\\
				%Ithaca, New York 14853-3801\\
				%USA\\
				\printead{e1,e2,e3}}
			%\address{
			%	Y. Ning\\
			%	Department of Statistics and Data Science\\ 
			%	Cornell University\\
			%	Ithaca, New York 14853-3801\\
			%	USA\\
			%	\printead{e2}}
			%\address{
			%	Y. Xu\\
			%	Department of Statistics and Data Science\\
			%	Cornell University\\
			%	Ithaca, New York 14853-3801\\
			%	USA\\
			%	\printead{e3}}
		\end{aug}

		\begin{abstract}
			A prominent concern of scientific investigators is the presence of unobserved hidden variables in association analysis. Ignoring hidden variables 
			%in the analysis 
			often yields biased statistical results and misleading scientific conclusions. Motivated by this practical issue, this paper studies the 
			%estimation of the partial direct effect in 
			multivariate response regression with hidden variables, $Y = (\Ps)^TX + (B^*)^TZ  + E$, where $Y \in \RR^m$ is the response vector, $X\in \RR^p$ is the observable feature, $Z\in \RR^K$ represents the vector of unobserved hidden variables, possibly correlated with $X$, and $E$ is an independent error. The number of hidden variables $K$ is unknown and both $m$ and $p$ are allowed, but not required, to grow with the sample size $n$. 
			
			%Since only $Y$ and $X$ are observable, the regression parameter $\Ps$ is not identifiable. However, the partial direct effect $\Ttheta = \Ps P_{\B}^{\perp}$ is always identifiable and is treated as the parameter of our interest. It has practical meanings and represents the effect of $X$ on $Y$ that can not be explained by the hidden effects.
			
			Though $\Ps$ is shown to be non-identifiable due to the presence of hidden variables, we propose to identify the projection of $\Ps$ onto the orthogonal complement of the row space of $B^*$, denoted by $\Ttheta$. The quantity $(\Ttheta)^TX$ measures the effect of $X$ on $Y$ that cannot be explained through the hidden variables, and thus $\Ttheta$ is  treated as the parameter of interest. Motivated by the identifiability proof, we propose a novel %and computationally efficient 
			estimation 
			algorithm for $\Ttheta$, called HIVE, under homoscedastic errors. 
			The first step of the algorithm estimates the best linear prediction of $Y$ given $X$, in which the unknown coefficient matrix exhibits an additive decomposition of $\Ps$ and a dense matrix due to the correlation between $X$ and %the hidden variable 
			$Z$. Under the sparsity assumption on $\Ps$, we propose to minimize a penalized least squares loss by regularizing $\Ps$ and the dense matrix via group-lasso and multivariate ridge, respectively. Non-asymptotic deviation bounds of the in-sample prediction error are established. 
			Our second step estimates the row space of $B^*$ by leveraging the covariance structure of the residual vector from the first step. In the last step, we estimate $\Ttheta$ via projecting $Y$ onto the orthogonal complement of the estimated row space of $B^*$ to remove the effect of hidden variables. Non-asymptotic error bounds of our final estimator of $\Ttheta$, which are valid for any $m,p,K$ and $n$, are established. We further show that, under mild assumptions, the rate of our estimator matches the best possible rate with known $B^*$  and is adaptive to the unknown sparsity of $\Ttheta$ induced by the sparsity of $\Ps$.  
			The model identifiability, estimation algorithm and statistical guarantees are further extended to the setting with heteroscedastic errors. 
			Thorough numerical simulations and two real data examples are provided to back up our theoretical results.

		\end{abstract}
		
		\begin{keyword}[class=MSC]
			\kwd[Primary ]{62H12}
			\kwd{62J07}
		\end{keyword}
		
		\begin{keyword}
			\kwd{High-dimensional models}
			\kwd{multivariate response regression}
			\kwd{shrinkage}
			\kwd{non-sparse estimation}
			\kwd{hidden variables}
			\kwd{confounding}
			\kwd{surrogate variable analysis}
		\end{keyword}
	\end{frontmatter}

	\section{Introduction}

	Multivariate response regression has been widely used to evaluate how predictors are associated with multiple response variables and is ubiquitous in many areas including genomics, epidemiology, social science and economics \citep{anderson_book}. Most of the existing research on multivariate response regression assumes that the collected predictors are sufficient to explain the responses. However, due to cost constraints or ethical issues, oftentimes there exist unmeasured hidden variables that are associated with the responses as well. Ignoring the hidden variables often leads to biased estimates.  
	
	In this paper, we consider the following multivariate response regression with hidden variables. Let $Y\in \RR^m$ denote the response vector, $X\in \RR^{p}$ denote the observable predictors and $Z\in \RR^{K}$ be the unobservable hidden variables. The multivariate response regression model postulates 
	\begin{equation}\label{model_1}
	Y = (\Psi^*)^TX + (B^*)^TZ  + E,
	\end{equation}
	where $\Psi^*\in \RR^{p\times m}$ 
	and $B^*\in \RR^{K\times m}$ are unknown deterministic matrices and  $E \in \RR^{m}$ is a stochastic error with zero mean and a diagonal covariance matrix $\Sigma_E$. The random error $E$ is independent of $(X, Z)$ and the hidden variable $Z$ is possibly correlated with $X$. The number of hidden variables, $K$, is unknown and is assumed to be less than $m$. As we can subtract means from both sides of (\ref{model_1}), we consider $Y$, $X$ and $Z$ have mean zero. Without loss of generality, we assume $\Sigma=\Cov(X)$ and $\Sigma_Z=\Cov(Z)$ are strictly positive definite and $\rank(B^*) = K$. Otherwise, one might reduce the dimensions of $X$ and $Z$ such that these conditions are met.

	Assume that we observe $n$ i.i.d. copies of $(X, Y)$ and stack them together as a design matrix $\X\in \RR^{n\times p}$ and a response matrix $\Y\in \RR^{n\times m}$. In practice, the number of response variables $m$, or the number of features $p$, or both of them, can be greater than the sample size $n$. 
	%{\red The main interest of model (\ref{model_1}) is to study quantities related with $\Ps$ in order to understand the association between the primary features $X$ and the responses $Y$ in the presence of unobserved hidden variables $Z$. }

	The proposed model unifies and generalizes the following two strands of research that emerges in a variety of applications.
	
	{\em 1. Surrogate variable analysis (SVA) in genomics}. The measurements of high-throughput genomic data are often confounded by unobserved factors. To remove the influence of unobserved confounders, surrogate variable analysis (SVA) based on model (\ref{model_1}) has been proposed for the analysis of biological data \citep{LeekStorey07, Leek2008, Teschendorff, Chakraborty, Gagnon2012, Houseman12, sun2012}. In these applications, the response vector $Y$ is often the gene expression or DNA methylation levels at $m$ sites, which is usually much larger than the sample size $n$. The covariate $X$ is  a small set of exposures (e.g., treatment variables), whose dimension $p$ is assumed to be fixed in the theoretical analysis \citep{Lee2017, wang2017, McKennan19}. Since $p$ is small, the existing SVA methods apply the ordinary least squares (OLS) $(\X^T\X)^{-1}\X^T\Y$ to estimate the main regression effect and then remove the bias of the OLS estimator, originating from the correlation between $Z$ and $X$. However, to avoid confounding issues, researchers tend to collect as many features as possible and adjust them in the regression model afterwards. In this case, $p$ can be large and even much larger than $n$, whence the existing SVA methods are not applicable as the OLS estimator may not exist. Our work extends the scope of the SVA in the sense that a unified estimation procedure and theoretical justification are developed under model (\ref{model_1}) where both $p$ and $m$ are allowed, but not required, to grow with $n$. We refer to Section \ref{sec_related_work} for detailed comparisons with existing SVA literature.

	{\em 2. Structural equation model in causal inference}. Model  (\ref{model_1}) can  be also framed as linear structural equation models \citep{hox1998introduction}. Suppose the causal structure among $(X, Z, Y)$  is represented by the directed acyclic graph (DAG) in Figure \ref{fig_dag}. As shown in this graph, both observed variables $X$ and hidden variables $Z$ are the causes of $Y$, as $(X,Z)$ are the parents of $Y$. Under the linearity assumption, the causal structure of $(X,Z)\rightarrow Y$ is modeled by  equation (\ref{model_1}). Similarly, the DAG in Figure \ref{fig_dag} also implies that $X$ is the cause of $Z$, which can be further modeled via
	\begin{equation}\label{modelZ}
	Z = D^T X + W,
	\end{equation}
	where $D\in\RR^{p\times K}$ is a deterministic matrix and $W\in\RR^K$ is a random noise, independent of $X$ and $E$. Since $Z$ is not observable, model (\ref{model_1})  and (\ref{modelZ}) are viewed as linear structural equation models with hidden variables  \citep{diaz}. Using the terminology in causal mediation analysis, the parameter $\Ps$ in (\ref{model_1}) represents the direct causal effect of $X$ on $Y$. It is worthwhile to note that the proposed framework is more general than linear structural equation models as model (\ref{modelZ}) is not imposed. In particular, we allow an arbitrary dependence structure between $X$ and $Z$, whereas the linear structural equation model assumes $X$ is a cause of $Z$ with the independence between $X$ and $W$.
	
	%\begin{figure}[ht]
	%   \centering
	%  \includegraphics[width = .4\textwidth]{DAG.pdf}
	% \caption{Illustration of DAG}
	%  \label{fig_dag}
	%\end{figure}
	
	\begin{figure}[ht]
		\centering
		\begin{tikzpicture}[inner sep=1.5mm]
		\node[server,minimum size=0.4in] at (6,4) (X)  {\textbf{X}};
		\node[routing,minimum size=0.4in] (Z) [right=0.9in of X] {\textbf{Z}};
		\node[center](C)[right=0.43in of X]{};
		\node[buffer,minimum size=0.4in]  (Y) [below=0.8in of C.north] {\textbf{Y}};

		\draw[thick,->] (X) -- ++  (Y)  ;
		\draw[thick,->] (Z) -- ++ (Y) ;
		\draw[thick,->] (X) -- ++ (Z);
		
		\path[thick]
		;
		
		\node [left,align=left] at ($(Y)+(-0.3in,.43in)$) {\textbf{$\Ps$}};
		\node [left,align=left] at ($(X)+(0.8in,.15in)$) {\textbf{$D$}};
		\node [left,align=left] at ($(Z)+(-0.02in,-.58in)$) {\textbf{$B^*$}};
		\end{tikzpicture}
		\caption{Illustration of the DAG under model (\ref{model_1}) and (\ref{modelZ})}
		\label{fig_dag}
	\end{figure}
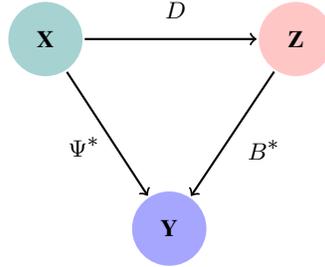
	
	\subsection{Our contributions}
	%We now state our main contributions in this paper. 
	
	\subsubsection*{\bf Identifiability}
	Our first contribution is to investigate the identifiability of model (\ref{model_1}). We show in Proposition \ref{prop_ident_nece} of Section \ref{sec_ident} that $\Ps $ in model (\ref{model_1}) is not identifiable. This motivates us to focus on an alternative estimand, the projection of $\Ps $ onto the orthogonal complement of the row space of $B^*$, which is identifiable and has desirable interpretations in practice. 
	%Our first contribution is to address a fundamental question of this model, that is, which part of $\Ps $ is identifiable.  
	
	We start by rewriting model (\ref{model_1}). Denote by $(A^*)^T X$ the $L_2$ projection of $Z$ onto the linear space of $X$ and by $W=Z-(A^*)^TX$ its residual, where
	\begin{equation}\label{def_A}
	A^*=\left\{\EE\left[XX^T\right]\right\}^{-1}\EE\left[XZ^T\right]\in\RR^{p\times K}.
	\end{equation}
	We emphasize that we do not require model (\ref{modelZ}), or equivalently, the independence between $W$ and $X$. For this reason, we use a different notation $A^*$ rather than $D$ to denote the coefficient of the $L_2$ projection. We then decompose the effect of hidden variable $Z$ as 
	$
	(B^*)^TZ=(A^*B^*)^TX+(B^*)^TW. 
	$
	Plugging this into (\ref{model_1}) yields
	\begin{align}\label{model}\nonumber
	Y &~=~ (\Ps  + A^*B^*)^TX + (B^*)^TW + E\\
	&~ = ~ (\underbrace{\Ps P_{\B}^\perp}_{\Ttheta} + \Ps  P_{\B} + \underbrace{A^*B^*}_{L^*})^TX+ \underbrace{(B^*)^T W+E}_{\eps},
	\end{align}
	where $P_{\B} = B^{*T}(B^*B^{*T})^{-1}\B \in \RR^{m\times m}$ is the projection matrix onto the row space of $B^*$, $P_{\B}^\perp = \bI_m - P_{\B}$, $L^*=A^*B^*$ and the  residual vector $\eps =(B^*)^T W+E$  satisfies $\EE[\eps] = 0$ and $\Cov(X,\eps) = 0$.	
	
	In (\ref{model}), we decompose $\Ps$ into two components $\Ps P_{\B}^\perp$ and $\Ps P_{\B}$. The former, denoted by $\Ttheta$, is the projection of $\Ps $ onto the orthogonal complement of the row space of $B^*$. %Since $(\Ps P_{\B})^TX$ shares the same row space of $B^*$, it can be rewritten as $(B^*)^T\wt Z$ with some suitable hidden variables $\wt Z$ in model (\ref{model_1}). Thus, $\Ttheta $ measures the association between $X$ and $Y$ that cannot be explained by any hidden variables.  
	Since ${\Ttheta}^T X$ is orthogonal to ${\B}^T Z$, ${\Ttheta}^T X$ measures the effect of $X$ on $Y$ in the multivariate response regression (\ref{model_1}) that cannot be explained through the effect of hidden variables.	In the mediation analysis, $\Ttheta$ is referred to as the ``partial'' direct effect of $X$ on $Y$. 
	
	%	On the other hand, despite the fact that  $\Ps$ is not identifiabile, we establish that $\Ttheta$ is always a bona fide parameter of interest.
	
	In Propositions \ref{prop_ident} and \ref{prop_ident_W} of Section \ref{sec_ident} we establish the sufficient and necessary conditions for the identifiability of $\Ttheta$  when the error $E$ is homoscedastic, that is  $\Sigma_E=\tau^2\bI_m$. 
	This covariance structure of the residual vector $\eps$ is crucial to show the results, as detailed in Section \ref{sec_ident}. 
	However, the sufficiency no longer holds in the presence of heteroscedastic error, when $\Sigma_E$ is a diagonal matrix with unequal entries. Inspired by \cite{hpca}, we introduce a mild incoherence condition on the right singular vectors of $B^*$ to identify the row space of $B^*$, which is an important intermediate step towards identifying $\Ttheta$. We show in Proposition \ref{prop_ident_hetero} of Section \ref{sec_hetero} that this incoherence condition guarantees the identifiability of $\Ttheta$ in the heteroscedastic case.

	Summarizing, the parameter $\Ttheta$ is identifiable under both homoscedastic and heteroscedastic errors. We focus on the estimation of $\Ttheta$ throughout the paper. In cases where $\Ps$ is the primary interest, the information in $\Ttheta$ is still helpful to infer $\Ps$. In particular, our analysis of $\Ttheta$ carries over to $\Ps$, under $\Ps P_{\B} \to 0$ as $m = m(n)\to \i$, a common condition of $\Ps$ in the SVA literature \citep{Lee2017}. We refer to Section \ref{sec_usage_theta} for more detailed discussions of using $\Ttheta$ to infer $\Ps$.

	\subsubsection*{\bf Estimation of $\Ttheta$}
	Our second contribution is to propose a new method for estimating $\Ttheta$. In particular, our approach can handle the case when $p>n$ and the OLS commonly used in the SVA literature does not exist. To deal with the high dimensionality of  $\Ttheta$, we assume that there exists a small subset of $X$ that are associated with $Y$ in model (\ref{model_1}). Such a row-wise sparsity assumption on the coefficient matrix has been widely used in multivariate response regression, for instance,  \cite{bv,bunea2012,Bunea2014,lounici2011,ob,yuanlin}, just to name a few. While $\Ps$ is not identifiable, we assume $\Ps$ lies in the space $\Omega$
	\begin{equation}\label{Psi_space}
	\Ps\in\Omega:= \left\{M \in \RR^{p\times m}: \|M\|_{\ell_0 / \ell_2}\le s_*\right\},
	\end{equation}
	where $s_* \le p$ and  $\|M\|_{\ell_0 / \ell_2}=\sum_{j=1}^p1_{\{\|M_{j\cdot}\|_2\ne 0\}}$ is the number of nonzero rows. As a result, (\ref{Psi_space}) implies that $\Ttheta\in\Omega$ as
	\begin{equation}\label{Theta_space}
	\|\Ttheta\|_{\ell_0 / \ell_2} = \|\Ps P_{\B}^{\perp} \|_{\ell_0 / \ell_2} \le s_*.
	\end{equation}
	Our estimation procedure	consists of three steps: first estimate the best linear prediction of $Y$ given $X$; then estimate the row space of $B^*$ and finally estimate $\Ttheta$.

	The first step is  critical but challenging especially when $p$ is large. In Section \ref{sec_est_fit}, we propose a new optimization-based approach with a combination of the group-lasso penalty \citep{yuanlin} and the multivariate ridge penalty. The group-lasso penalty aims to exploit the row-wise sparsity of $\Ps$ in (\ref{Psi_space}), while the multivariate ridge penalty regularizes the additional dense signal $L^*$ due to the hidden variables $Z$ (see model (\ref{model})). %{\color{red} Our method is a generalization of the lava approach \citep{chernozhukov2017} to the multivariate regression setting, where the coefficient matrix is a sum of a sparse matrix and a dense matrix.} 
	The proposed procedure is easy to implement and has almost the same complexity as solving a group-lasso problem. We refer to Section \ref{sec_est_fit} for detailed discussions of computational and theoretical advantages  of our estimator over other competing methods.  
	%Under the classical multivariate regression setting, this method could be used to estimate the regression coefficient matrix that has an additive decomposition of a sparse matrix and a dense matrix. This could be of its own interest. Our method can be view as a multivariate generalization of the lava approach \citep{chernozhukov2017} which is originally proposed to capture an additive decomposition of a sparse vector and a dense vector in one-dimensional regression setting. 
	
	Our second step is to estimate the row space of $B^*$ or equivalently $P_{\B}$. When the noise is homoscedastic, we directly apply 
	the principle component analysis (PCA) to the sample covariance matrix of the estimated residuals (see Section \ref{sec_est_P}). The resulting first $K$ eigenvectors are then used to estimate $P_{\B}$. However, PCA may lead to biased estimates under heteroscedastic error, especially when $m$ is fixed. To deal with heteroscedasticity, we adapt the HeteroPCA  algorithm, originally proposed by \cite{hpca}, to our setting. %The need of developing different estimation methods for $P_{\B}$ under homoscedastic and heteroscedastic errors echoes the difference of the aforementioned identifiability conditions in these two settings. 

	In the last step, we project $Y$ onto the orthogonal complement of the estimated row space of $B^*$ to remove the effect of hidden variables, and then recover $\Ttheta$ by applying group-lasso to the projected $Y$.

	Our entire procedure is summarized in Algorithm \ref{alg_1}, called HIVE, representing \underline{HI}dden  {\underline V}ariable  adjustment  {\underline E}stimation. Similarly, the algorithm tailored for the heteroscedastic error is referred to as H-HIVE in Algorithm \ref{alg_2}. For the convenience of practitioners, we also provide detailed discussions on practical implementations in Section \ref{sec_practical}, including estimation of the number of hidden variables ($K$), the consequence of overestimating/underestimating $K$, the choice of tuning parameters, data standardization and practical usage of $\Ttheta$ for inferring $\Ps$.

	\subsubsection*{\bf Statistical guarantees}
	Our third contribution is to establish theoretical properties of our procedure. In Theorem \ref{thm_pred} of Section \ref{thm_pred}, we derive non-asymptotic deviation bounds of the in-sample prediction error, which are valid for any finite $n$, $p$, $m$ and $K$. The error bounds consist of three components: bias and variance terms from the ridge regularization and an error term from the group-lasso regularization.  To understand the advantage of our estimator, we particularize to the orthonormal design and show that our estimator enjoys the optimal rate of the group-lasso when there is no hidden variable (i.e., $L^* = 0$ in model (\ref{model})) and it also achieves the optimal rate of the ridge estimator when $\Ps = 0$. 
	Thus, the rate of our estimator matches the best possible rate even if $L^*=0$ or $\Ps=0$ were known a priori. %For this reason, we refer to it as an adaptive estimator. 
	%Overall, our estimator adapts to the best possible split between the sparse and dense coefficient matrices. 
	
	We further provide theoretical guarantees for the estimation of $\Ttheta$. Specifically, we establish in Theorem \ref{thm_Theta} non-asymptotic upper bounds of the estimation error of our estimator $\wt \Theta$ based on any estimator $\wh P$ of $P_{\B}$. As expected, the estimation error of $\wt \Theta$ depends on the accuracy of $\wh P$ for estimating $P_{\B}$. When $P_{\B}$ can be estimated accurately, our estimator $\wt \Theta$ achieves the optimal rate in the oracle case with known $\B$ (see the subsequent paragraph of Theorem \ref{thm_Theta}).
	However, if the estimation error of $P_{\B}$  is relatively large, we can balance this term with the error of the group-lasso to attain a more refined rate via a suitable choice of the regularization parameter. In Theorem \ref{thm_U} of Section \ref{sec_theory_theta} and Theorem \ref{thm_U_hetero} of Section \ref{sec_hetero}, we further establish non-asymptotic error bounds for our proposed estimators of $P_{\B}$ under both homoscedastic and heteroscedastic errors. These results together with Theorem \ref{thm_Theta} provide the final upper bounds of the estimation error of $\wt \Theta$. 
	%In order to facilitate understanding, we simplify  results in Corollary \ref{cor_rate} under reasonable conditions and discuss its optimality subsequently in Remark \ref{rem_rate_Theta}. 
	To deal with heteroscedastic errors, we develop a new robust $\sin\Theta$ theorem in Appendix \ref{sec_sintheta} to control the perturbation of eigenspaces in the Frobenius norm. This theorem is essential to the proof of Theorem \ref{thm_U_hetero} and can be of its own interest.
	
	%	Finally, in Theorem \ref{thm_K} of Section \ref{sec_select_K} we provide theoretical justifications for selection of $K$. The effect of overestimating and underestimating $K$ is clarified in Lemma \ref{lem_mis_K} of Section \ref{sec_remark_K}.

	%	Finally, we corroborate our theoretical results via extensive simulations in Section \ref{sec_simulation}. Our proposed approach has best performance among competitive algorithms in all settings.

	\subsection{Related literature}\label{sec_related_work}
	
	This work is most related to the literature on surrogate variable analysis (SVA) in which the parameter of interest is $\Ps$. Though our target $\Ttheta =\Ps  P^{\perp}_{\B}$ is different from $\Ps$, our analysis of $\Ttheta$ is applicable to   $\Ps$ under the condition $\Ps P_{\B} = 0$ (or $\Ps P_{\B} \rightarrow 0$). We thus  compare our results under $\Ps P_{\B} = 0$ with the existing SVA literature. For the identifiability of $\Ps$, \cite{Gagnon2012,wang2017} assumed that there exists a {\em known }subset $J \in \{1,\ldots, m\}$ such that the $p\times |J|$ submatrix $\Ps _J=0$. This set $J$ is known as ``negative control'' in the microarray studies. However, this side information is usually unknown in other settings. 
	Another approach by \cite{wang2017, McKennan19} assumes that each row $\Ps _{j\cdot}\in\RR^m$ is sparse with $\|\Ps _{j\cdot}\|_0\leq (m-a)/2$ for some $a>K$ and any $K\times a$ submatrix of $B^*$ is of rank $K$. This assumption rules out the possibility that $B^*$ could be sparse and the resulting sparsity pattern of $\Ps $ differs from (\ref{Psi_space}), considered in this work. \cite{Lee2017} assumed the condition  $\Ps  P_{\B} \to 0$ as $m\to \i$. %However, when the error is heteroscedastic, \cite{Lee2017} implicitly required $m\rightarrow\infty$ to show the asymptotic identifiability of $\Ps$, see our Remark \ref{rem_robust_PCA} for more explanations. 
	In contrast, our identifiability result of $\Ttheta$ (which is $\Ps$ under $\Ps P_{\B} = 0$) holds for any finite $n, p, m$ and $K$. To show the estimation consistency, all existing SVA methods require that $m$ grows with $n$ and is typically much larger than $n$, meanwhile $p$ is fixed and small, whereas our method provides a more general theoretical framework in which both $p$ and $m$ are allowed, but not required, to grow with $n$.

	\cite{chandrasekaran2012latent} studied the estimation of Gaussian graphical models with latent variables. In their setting, one can rewrite their estimand as the sum of a low-rank matrix and a sparse matrix (see \cite{Hsu2011, Candes} for other related examples). The regularized maximum likelihood approach is proposed with a combination of the lasso penalty and the nuclear norm penalty. Our problem is related to theirs, because model (\ref{model}) is a regression problem where the coefficient matrix has an additive decomposition of a sparse and a low-rank matrix when $K$ is much smaller than $p$ and $m$. However, our work differs significantly from this strand of research in the following aspects. First, the parameter of interest is $\Ttheta$, and thus we do not need the identifiability assumptions in \cite{chandrasekaran2012latent}. To see this, consider a simple example based on the regression model (\ref{model}) with $p=m$ and $K=1$. Let $A^*=e_i$ and $B^*=e_i^T$, where $e_i$ is the $i$th canonical basis vector of $\RR^p$. The identifiability assumption in  \cite{chandrasekaran2012latent} does not hold,  because the low rank matrix $L^*=A^*B^*=e_ie_i^T$ is too sparse and cannot be distinguished from the sparse matrix $\Ps$. However, our target $\Ttheta \in \{\Ps(\bI_p-e_ie_i^T): \Ps\in\RR^{p\times p}\}$ is still identifiable when the error is homoscedastic. One explanation is that the covariance structure of $\eps = (\B)^TW + E$ from model (\ref{model}) can assist the identification of $\Ttheta$, whereas this information is ignored if one directly applies the approach in  \cite{chandrasekaran2012latent}.
	Second, due to the differences of identifiability, our estimation algorithm (HIVE or H-HIVE) is fundamentally different from their regularized maximum-likelihood approach. In particular, our regularized estimation in the first step of our algorithm combines the group-lasso penalty and the ridge penalty. We provide a technical comparison of the ridge penalty and the nuclear norm penalty in Appendix \ref{sec_comp_rr}.

	Recently, \cite{diaz} applied SVA to estimate the causal effect under the structural equation models with hidden variables. As discussed previously, the structural equation models assume (\ref{modelZ}), which is not needed in our modeling framework. Our model (\ref{model}) is derived without imposing any specific model between $X$ and $Z$. For instance, we allow the true dependence structure between $X$ and $Z$ to be very complicated and highly nonlinear. %However, if there exists side information implying that $X$ is the cause of $Z$ such that the structural equation model (\ref{modelZ}) holds, we then have $A^*=D$. 
	The estimation method of \cite{diaz} is adapted from the SVA literature, and therefore has the same drawback as SVA. In another recent paper, \cite{cevid2018spectral} proposed a new spectral deconfounding approach to deal with high-dimensional linear regression with hidden confounding variables. In particular, their model can be written as a perturbed linear regression $Y=X^T(\beta+b)+\epsilon$ where $\epsilon \in\RR$ is a random noise, $\beta\in\RR^p$ is an unknown sparse vector and $b\in\RR^p$ is a small perturbation vector. In order to identify $\beta$, they assumed that $\|b\|_2$ is sufficiently small. Their estimation method generalizes the lava estimator \cite{chernozhukov2017}. Unlike this work, we consider a different setting where the response $Y$ is multivariate and, consequently, both our identifiability and estimation procedures (HIVE and H-HIVE) are completely different from theirs. Our theoretical results in Corollary \ref{cor_rate} and its subsequent Remark \ref{rem_rate_Theta} imply that the convergence rate of our estimator benefits substantially from the multivariate nature of the response, which can be viewed as the blessing of dimensionality.

	\subsection{Outline} In Section \ref{sec_ident_est}, we study the identifiability and estimation of $\Ttheta$ under homoscedastic error. Sufficient and necessary conditions for the identifiability of $\Ttheta$ are established in Section \ref{sec_ident}. Section \ref{sec_estimation} contains three steps of our estimation procedure.   The estimation of $\Ps + L^*$ in model (\ref{model}) is stated in Section \ref{sec_est_fit} and the estimation of the row space of $B^*$ is discussed in Section \ref{sec_est_P}. The final step of estimating $\Ttheta$ is stated in Section \ref{sec_est_theta}. Section \ref{sec_theory_fit} is dedicated to the deviation bounds of the in-sample prediction error. The estimation errors of our estimator of $\Ttheta$ together with the errors for estimating the row space of $B^*$ are given in Section \ref{sec_theory_theta}. The extension to heteroscedastic case is studied in Section \ref{sec_hetero}. In Section \ref{sec_practical}, we discuss several practical considerations, including the selection of $K$, the consequence of overestimating and underestimating $K$, the choice of tuning parameters, data standardization and practical usage of $\Ttheta$ for inferring $\Ps$. Simulation results and real data applications are presented in Sections \ref{sec_simulation} and \ref{sec_data}. All proofs and supplementary simulation results are deferred to \ref{supp}.

	\subsection{Notation}
	For any set $S$, we write $|S|$ for its cardinality. 
	For any vector $v\in \RR^d$ and some real number $q\ge 0$, we define its $\ell_q$ norm as $\|v\|_q = (\sum_{j=1}^d |v_j|^q)^{1/q}$. For any matrix $M \in \RR^{d_1 \times d_2}$, $I \subseteq \{1,\ldots, d_1\}$ and $J\subseteq \{1, \ldots, d_2\}$, we write $M_{IJ}$ as the $|I| \times |J|$ submatrix of $M$ with row and column indices corresponding to $I$ and $J$, respectively. In particular, $M_{I\cdot}$ denotes the $|I|\times d_2$ submatrix and $M_J$ denotes the $d_1\times |J|$ submatrix. Further write $\|M\|_{\ell_p / \ell_q} = (\sum_{j=1}^{d_1} \|M_{j\cdot}\|_{\ell_q}^p)^{1/p}$ and denote by $\|M\|_{\ell_0}$, $\|M\|_{op}$, $\|M\|_F$ and $\|M\|_\infty$, respectively, the element-wise $\ell_0$ norm, the operator norm, the Frobenius norm and the element-wise sup-norm of $M$. For any symmetric matrix $M$, we write $\lambda_{k}(M)$ for its $k$th largest eigenvalue. For any two sequences $a_n$ and $b_n$, we write $a_n \lesssim b_n$  if there exists some positive constant $C$ such that $a_n \le Cb_n$. Both $a_n \asymp b_n$ and $a_n = \Omega(b_n)$ stand for $a_n = O(b_n)$ and $b_n = O(a_n)$. Denote $a\vee b=\max (a,b)$ and $a\wedge b=\min(a,b)$. Throughout the paper, we will write $\wh\Sigma = n^{-1}\X^T\X$ with non-zero eigenvalues $\sigma_1 \ge \sigma_2 \ge \cdots\ge \sigma_q$ and $q:=\rank(\X)$.

	\section{Identifiability and estimation under homoscedastic noise}\label{sec_ident_est}
	
	As seen in the Introduction, $\Ps $ is not  identifiable
	%The identifiability of $\Ttheta$ 
	under model (\ref{model_1}) 
	due to the presence of hidden variables. The following proposition formally shows this.
	
	\begin{prop}\label{prop_ident_nece}
		Under model (\ref{model_1}), or equivalently (\ref{model}), if $Z\in \RR^K$ has continuous support and $A^*\ne 0$, then $\Ps$ is not identifiable.
	\end{prop}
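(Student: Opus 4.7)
The plan is to exhibit two distinct parameter tuples that induce exactly the same joint distribution of the observables $(X,Y)$, thereby showing that $\Psi^*$ cannot be recovered from that distribution alone. The construction is already implicit in the decomposition (\ref{model}): with $A^* = \{\EE[XX^T]\}^{-1}\EE[XZ^T]$ and $W = Z - (A^*)^T X$, the identity
\[
(\Psi^*)^T X + (B^*)^T Z \;=\; (\Psi^* + A^* B^*)^T X + (B^*)^T W
\]
is a purely algebraic rearrangement. So I would define the alternative tuple $\tilde\Psi := \Psi^* + A^* B^*$, $\tilde B := B^*$, $\tilde Z := W$, and $\tilde E := E$. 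The equality $Y = \tilde\Psi^T X + \tilde B^T \tilde Z + \tilde E$ then holds as equality of random variables, so the joint laws of $(X,Y)$ induced by $(\Psi^*, B^*, Z, E)$ and by $(\tilde\Psi, \tilde B, \tilde Z, \tilde E)$ coincide.

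Next I would verify that the alternative tuple is a bona fide instance of model (\ref{model_1}) and that $\tilde\Psi \ne \Psi^*$. For the former, $\tilde E = E$ inherits the zero mean, diagonal covariance and independence from $(X,\tilde Z)$, the last point following because $\tilde Z=W$ is measurable with respect to $(X,Z)$ while $E$ is independent of $(X,Z)$. The hypothesis that $Z$ has continuous support passes through the affine map $z\mapsto z-(A^*)^T x$ to ensure that $\tilde Z$ remains a nondegenerate continuous random vector rather than collapsing to a constant; any residual rank deficiency of $\Cov(\tilde Z)$ can be absorbed by the dimension-reduction remark following equation (\ref{model_1}), an operation that modifies $\tilde B$ and the ambient dimension of $\tilde Z$ but leaves $\tilde\Psi$ untouched. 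For the latter, since $\rank(B^*)=K$ the matrix $B^*$ has full row rank, so $A^* B^* = 0$ would force $A^* = 0$, contradicting the hypothesis. Hence $\tilde\Psi - \Psi^* = A^* B^* \ne 0$, yielding two distinct values of the parameter $\Psi^*$ compatible with the same distribution of $(X,Y)$.

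The argument is essentially a reparametrization, and no step requires heavy machinery. The main point of care, and the only place the continuous-support and $A^*\ne 0$ hypotheses are genuinely invoked, is in confirming that the alternative tuple is a legitimate member of the model class while still differing from the original in $\Psi^*$; the algebraic observation that absorbing $(A^*B^*)^T X$ into the regression coefficient leaves the observable law unchanged does the rest.
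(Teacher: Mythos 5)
Your proof is correct, but it is not the argument the paper gives, and the comparison is instructive. The paper works at the level of densities: it writes the joint density of $(Y,X)$ as $f_X(x)\int f_E(y-\Psi^Tx-B^Tz)f_{Z|X}(z)\,dz$, perturbs $\Psi$ to $\tilde\Psi=\Psi-\Psi P_{B}$, compensates by shifting the conditional law of $Z$ given $X$ by $\Delta^Tx$ with $\Delta=\Psi B^{T}(BB^{T})^{-1}$, and verifies invariance of the integral by a change of variables. That route genuinely needs the continuous-support hypothesis (so that $f_{Z|X}$ exists and the shift is absorbed into the nuisance class), whereas the role of $A^*\neq 0$ there is peripheral. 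Your route instead uses the pathwise identity $Y=(\Psi^*+A^*B^*)^TX+(B^*)^TW+E$ already displayed in (\ref{model}), so no densities are needed and the continuous-support assumption is essentially idle; in exchange, $A^*\neq 0$ becomes the load-bearing hypothesis, since your perturbation is exactly $A^*B^*$ and you correctly deduce $A^*B^*\neq 0$ from $\rank(B^*)=K$. The two constructions are both of the form ``replace $\Psi$ by $\Psi+\Delta B^*$ and $Z$ by $Z-\Delta^TX$'': the paper takes $\Delta=-\Psi B^{*T}(B^*B^{*T})^{-1}$, which exposes that the indeterminacy sweeps out the entire row space of $B^*$ (foreshadowing why only $\Psi^*P_{B^*}^{\perp}$ is identifiable), while you take $\Delta=A^*$, which is more elementary and makes the stated hypotheses do visible work. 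Your attention to the side conditions is the right instinct: the independence of $E$ from $(X,W)$ follows as you say, and the possible rank deficiency of $\Sigma_W$ is correctly discharged by the dimension-reduction convention stated after (\ref{model_1}), which alters $\tilde B$ and the dimension of $\tilde Z$ but not $\tilde\Psi$. The only residual fringe case is $\Sigma_W=0$ (i.e., $Z$ an exact linear function of $X$), where the alternative instance has no hidden variable at all; this is degenerate enough that it does not undermine the argument, but it is worth noting that your appeal to continuous support does not by itself rule it out.
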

	%\begin{proof}
	%	The proof is deferred to Appendix \ref{sec_proof_ident_nece}.
	%\end{proof}
	
	Despite of the non-identifiability of $\Ps$, we proceed to show that $\Ttheta= \Ps P_{\B}^{\perp}$ is identifiable when the error $E$ is homoscedastic. Our identifiability procedure is constructive and leads to a computationally efficient estimation algorithm for $\Ttheta$. 
	
	\subsection{Identifiability of $\Ttheta$}\label{sec_ident}
	
	We start by describing our procedure of identifying $\Ttheta$  from model (\ref{model})
	in  three steps:
	\begin{enumerate}[noitemsep]
		\item[(1)] identify the coefficient matrix 
		$$
		F^* :=\Ps +A^*B^*= \Ps + L^*;
		$$
		\item[(2)] identify $\Sigma_{\eps}:= \Cov(\eps)$ with $\eps = (\B)^TW+E$ and use it to construct $P_{\B}$, the projection matrix onto the row space of $B^*$; 
		\item[(3)] identify $\Ttheta$ from $(\bI_m - P_{\B}) Y$. 
	\end{enumerate}
	
	Recall that $W=Z-(A^*)^TX$ is independent of $E$ with $A^*$ defined in (\ref{def_A}). In step (2), a key observation from model (\ref{model}) is that, under homoscedastic error, the covariance matrix of $\eps = (\B)^T W+E$ satisfies
	\begin{align}\label{eq_Sigma_eps}
	\Sigma_{\eps} = (B^*)^T\Sigma_W B^* + \Sigma_E= (B^*)^T\Sigma_W B^* + \tau^2\bI_m,
	\end{align}
	where $\Sigma_W=\Cov(W)$. Recall that $\rank(\B)= K<m$. Provided that $\Sigma_W$ has full rank, (\ref{eq_Sigma_eps}) implies that the row space of $B^*$  coincides with the space spanned by the eigenvectors of $\Sigma_{\eps}$ corresponding to its largest  $K$ eigenvalues. We thus propose to identify $P_{\B}$ via the eigenspace of $\Sigma_{\eps}$.  Step (3) uses
	\begin{align}\label{model_Y_comp}\nonumber
	P_{\B}^{\perp}Y & ~ = ~ (\Ps  P_{\B}^{\perp})^T X +(B^*P_{\B}^{\perp})^TZ+ P_{\B}^{\perp}E\\
	& ~ = ~ (\Ttheta)^T X + P_{\B}^{\perp}E,
	\end{align}
	where $P^{\perp}_{\B} = \bI_m - P_{\B}$. The above identity further implies
	$$\Ttheta = \bigl[\Cov(X)\bigr]^{-1}\Cov\left(X, P_{\B}^{\perp}Y\right).$$ 
	The identifiability of $\Ttheta$ under the homoscedastic error is summarized in the following proposition. % Recall that $\Sigma_W:=\Cov(W)$. 

	\begin{prop}\label{prop_ident}
		Under model (\ref{model}), $\Ttheta$ is identifiable if either of the following holds:
		\begin{enumerate}
			\item[(1)]  $\Ps P_{\B} + A^*\B = 0$;
			\item[(2)] $\rank(\Sigma_W) = K$ and $\Sigma_E = \tau^2\bI_m$.
		\end{enumerate}
	\end{prop}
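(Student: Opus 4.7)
The plan is to exploit the representation in (\ref{model}), namely $Y = (F^*)^T X + \eps$ with $F^* = \Ttheta + \Ps P_{\B} + L^*$, and identify $\Ttheta$ from quantities that are determined by the joint law of $(X,Y)$. The first ingredient, valid under both cases, is that $F^*$ is the population best-linear-predictor coefficient and therefore
\[
F^* = \bigl[\Cov(X)\bigr]^{-1}\Cov(X,Y),
\]
which is well-defined since $\Sigma = \Cov(X)$ is assumed strictly positive definite. The centered residual $\eps := Y - (F^*)^T X$ is then a function of the joint distribution as well, so $\Sigma_{\eps} = \Cov(Y) - (F^*)^T \Sigma\, F^*$ is identifiable.

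For case (1), the hypothesis $\Ps P_{\B} + A^*\B = 0$ immediately collapses $F^*$ to $\Ttheta$, so the identifiability of $F^*$ is already the identifiability of $\Ttheta$. No further work is needed.

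For case (2), the heart is extracting $P_{\B}$ from $\Sigma_{\eps}$ using (\ref{eq_Sigma_eps}), which gives $\Sigma_{\eps} = (B^*)^T \Sigma_W B^* + \tau^2 \bI_m$. The rank condition $\rank(\Sigma_W) = K$, combined with the standing assumption $\rank(B^*) = K$, forces $(B^*)^T \Sigma_W B^*$ to have rank exactly $K$ and its column space to equal the row space of $B^*$. Consequently the eigenvalues of $\Sigma_{\eps}$ split into a top group of $K$ values of the form $\lambda_i + \tau^2$, with $\lambda_i > 0$ the positive eigenvalues of $(B^*)^T \Sigma_W B^*$, and a bottom group of $m-K$ values equal to $\tau^2$. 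This spectral gap makes both $\tau^2$ (the smallest eigenvalue) and the $K$-dimensional top eigenspace (equal to the row space of $B^*$) identifiable from $\Sigma_{\eps}$, and hence $P_{\B}$ itself is identifiable. To close the argument I would right-multiply $F^*$ by $P_{\B}^\perp$ and use $\Ttheta P_{\B}^\perp = \Ttheta$, $\Ps P_{\B} P_{\B}^\perp = 0$ and $B^* P_{\B}^\perp = 0$ to conclude $\Ttheta = F^* P_{\B}^\perp$, an identifiable quantity.

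The only delicate point is the spectral step in case (2): one has to invoke the rank hypothesis carefully to guarantee that the eigenvalue $\tau^2$ really has multiplicity $m-K$ (not larger), so that the top-$K$ eigenspace is unambiguously defined and coincides with the row space of $B^*$. Everything else reduces to the identifiability of second moments of $(X,Y)$ and linear-algebraic manipulations with the projector $P_{\B}$.
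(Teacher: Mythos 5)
Your proposal is correct and follows essentially the same route as the paper's proof: identify $F^*=[\Cov(X)]^{-1}\Cov(X,Y)$, hence $\Sigma_{\eps}$, recover $P_{\B}$ as the top-$K$ eigenspace of $\Sigma_{\eps}=(B^*)^T\Sigma_W B^*+\tau^2\bI_m$ (your explicit spectral-gap argument via $\rank(\Sigma_W)=K$ is exactly the point the paper invokes more tersely), and then read off $\Ttheta$ after projecting by $P_{\B}^{\perp}$. Your final step $\Ttheta=F^*P_{\B}^{\perp}$ is algebraically identical to the paper's $\Ttheta=[\Cov(X)]^{-1}\Cov(X,P_{\B}^{\perp}Y)$.
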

	Case {\it (1)} implies that $\Ps P_{\B}$, the direct effect of $X$ on $Y$ explained by $Z$, can be exactly offset by the indirect effect $A^*\B$. In this case, $\Ttheta$ can be recovered directly by regressing $Y$ onto $X$. Since this is rarely the case in practice, we will focus on $\Ps P_{\B} + A^*\B \ne 0$. Case {\it (2)} requires $\rank(\Sigma_W) = K$ in addition to $\Sigma_E = \tau^2\bI_m$. We show, in Proposition \ref{prop_ident_W} below, that $\rank(\Sigma_W) = K$ is also  necessary for identifying $\Ttheta$ if $\EE[W|X] = 0$ and $\Ps P_{\B} + A^*\B \ne 0$. 
	
	%\begin{prop}\label{prop_ident}
	%	Under model (\ref{model}), assume $\rank(\Sigma_W) = K$ and $\Sigma_E = \tau^2\bI_m$. Then $\Ttheta$ is uniquely determined by $\Cov(X), Cov(Y)$ and $\Cov(X,Y).$
	% \end{prop}
	%	\begin{proof}
	%		The proof is deferred to Appendix \ref{sec_proof_prop_ident}.
	%	\end{proof}	
	%	Proposition \ref{prop_ident} requires $\rank(\Sigma_W) = K$. We show, in Proposition \ref{prop_ident_W} below, that $\rank(\Sigma_W) = K$ is also  necessary for identifying $\Ttheta$ if $\EE[W|X] = 0$ holds. 

	\begin{prop}\label{prop_ident_W}
		Under model (\ref{model}) with $\EE[W|X] = 0$, assume $\Ps P_{\B} + A^*\B \ne 0$. If $\rank(\Sigma_W) < K$, then $\Ttheta$ is not identifiable. 
	\end{prop}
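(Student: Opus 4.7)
My plan is to exhibit two distinct parameter configurations that induce the same joint distribution of $(X, Y)$ but yield different values of $\Ttheta$, thereby refuting identifiability. The construction exploits the fact that a degenerate direction of $W$ is invisible in the observed data.

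First, since $\rank(\Sigma_W) < K$, I would pick a non-zero $v \in \RR^K$ with $\Sigma_W v = 0$. Combining $\EE[W|X] = 0$ with $\text{Var}(v^T W) = v^T \Sigma_W v = 0$ yields $v^T W = 0$ almost surely. The strict positive-definiteness of $\Sigma_Z = (A^*)^T \Sigma A^* + \Sigma_W$ (built into the model's setup) then forces $A^* v \ne 0$, because $v^T \Sigma_W v = 0$ would otherwise imply $v^T \Sigma_Z v = 0$.

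Next, for any $u \in \RR^m$, I would introduce the perturbation $B^*_u := \B + v u^T$ and $\Ps_u := \Ps - A^* v u^T$, leaving the joint distribution of $(X, Z)$ (and hence $A^*$, $W$, $E$) unchanged. A direct substitution into (\ref{model_1}), using $v^T Z = (A^* v)^T X + v^T W = (A^* v)^T X$, shows that the extra terms cancel and the joint law of $(X, Y)$ is preserved; in particular $F^* = \Ps + A^* \B = \Ps_u + A^* B^*_u$ and $\Sigma_\eps$ are invariant under the perturbation.

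Finally, I would show that $\Ttheta_u := \Ps_u P_{B^*_u}^\perp$ differs from $\Ttheta$ for some choice of $u$. Using $A^* B^*_u P_{B^*_u}^\perp = 0$, one rewrites $\Ttheta_u = F^* P_{B^*_u}^\perp$ and similarly $\Ttheta = F^* P_{\B}^\perp$, so
\[
\Ttheta_u - \Ttheta \;=\; F^*\bigl(P_{\B} - P_{B^*_u}\bigr).
\]
The main obstacle, and the heart of the argument, will be to select $u$ making this non-zero. The hypothesis $G := \Ps P_{\B} + A^* \B = F^* P_{\B} \ne 0$ supplies a non-trivial row of $F^*$ lying in $\text{row}(\B)$. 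Choosing $u \notin \text{row}(\B)$ (possible since $m > K$) together with $A^* v \ne 0$ guarantees that at least one row of $B^*_u$ acquires a component outside $\text{row}(\B)$, so $\text{row}(B^*_u) \ne \text{row}(\B)$. A direct computation of the perturbed projection then shows that the non-trivial row of $F^*$ projects onto $\text{row}(B^*_u)$ differently than onto $\text{row}(\B)$, yielding $F^*(P_{\B} - P_{B^*_u}) \ne 0$ and completing the non-identifiability argument.
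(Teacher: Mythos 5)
Your perturbation itself is sound and is essentially a rank-one version of the paper's construction: the paper projects the entire null space $\mathcal S$ of $\Sigma_W$ out of $\B$ at once, setting $\wt B = P_{\mathcal S}^{\perp}\B$ and $\wt\Psi = \Psi + A^*P_{\mathcal S}\B$, while you add $vu^T$ along a single degenerate direction $v$ and compensate in $\Ps$; in both cases the joint law of $(X,Y)$ is preserved, and your identity $\Ttheta_u - \Ttheta = F^*(P_{\B}-P_{B^*_u})$ is correct. The gap is the final step. From ``$F^*P_{\B}\ne 0$'' and ``$\mathrm{row}(B^*_u)\ne\mathrm{row}(\B)$'' you infer $F^*(P_{\B}-P_{B^*_u})\ne 0$, but two distinct subspaces can project every row of $F^*$ identically, and this actually happens under your hypotheses. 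Take $p=1$, $m=3$, $K=2$, $\B=[e_1\ e_2]^T$, $\Sigma_W=\mathrm{diag}(\sigma^2,0)$ so that $v=e_2=(0,1)^T$, $A^*=(a_1,a_2)$ with $a_2\ne 0$, and $\Ps=(1-a_1)e_1^T-a_2e_2^T$ so that $F^*=\Ps+A^*\B=e_1^T$. All hypotheses hold, since $\Ps P_{\B}+A^*\B=F^*P_{\B}=e_1^T\ne 0$. Yet because $v_1=0$, the first row of $B^*_u=\B+vu^T$ is still $e_1^T$ for every $u$, so $e_1\in\mathrm{row}(B^*_u)$ and $\Ttheta_u=e_1^TP_{B^*_u}^{\perp}=0=\Ttheta$: your perturbation family never moves $\Ttheta$. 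The underlying reason is that the row of $F^*$ witnessing $F^*P_{\B}\ne 0$ may be a combination $c^T\B$ with $c$ orthogonal to $v$, in which case it is untouched by the perturbation.

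The problem is not merely that a cleverer choice of $u$ is needed: in this example one can check (e.g.\ in the Gaussian case, by decomposing $\Sigma_\eps$) that every configuration reproducing the observed distribution must contain $e_1$ in the row space of its $B$, so $\Ttheta=0$ is in fact pinned down. Hence the proposition cannot be established for an \emph{arbitrary} configuration satisfying its hypotheses; non-identifiability is an existence claim, and the paper proves it as one. It additionally \emph{chooses} the witness configuration: it takes $\Psi$ with $\Psi P_{\B}=0$ (so that $A^*\B\ne 0$ is forced by the hypothesis) and then picks $A^*$, e.g.\ of full rank, so that $A^*\B P_{\wt B}\ne 0$ and $A^*\B P_{\wt B}^{\perp}\ne 0$, which makes the difference $\wt\Psi P_{\wt B}^{\perp}-\Psi P_{\B}^{\perp}=A^*\B P_{\wt B}^{\perp}$ verifiably nonzero. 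To repair your argument you should do the same: fix a concrete $(\Ps,A^*,\B)$ and a concrete $u$ for which $F^*(P_{\B}-P_{B^*_u})\ne 0$ can be computed directly, rather than trying to deduce it from $\Ps P_{\B}+A^*\B\ne 0$ alone.
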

	
	Combining Propositions \ref{prop_ident} and \ref{prop_ident_W} concludes  that, under homoscedastic error, $\Ps P_{\B} + A^*\B \ne 0$ and $\EE[W|X] = 0$, $\Ttheta$ is identifiable if and only if  $\rank(\Sigma_W) = K$. 
	The condition $\EE[W|X] = 0$ is satisfied in many interesting scenarios, such as the structured equation model (\ref{modelZ}) and the multivariate Gaussian model for $(Z, X)$.  
	In practice, recalling that $W = Z - A^{*T}X$, $\rank(\Sigma_W) = K$ is a reasonable assumption as the hidden variable $Z$ usually contains information that cannot be perfectly explained by a linear combination of the observable feature $X$.  Therefore, throughout the paper, we assume $\rank(\Sigma_W) = K$.

	\begin{remark}\label{rem_ident_Psi}
		In the SVA literature, \cite{Lee2017} assumed $\Ps P_{\B} \to 0$ as $m = m(n)\to \i$. 
		Under this condition, we obtain $\Ttheta \approx \Ps$ for sufficiently large $m$. In this case, Propositions \ref{prop_ident} and \ref{prop_ident_W} provide sufficient and necessary conditions for the identifiability of $\Ps$ for $m$ large enough. Therefore, our analysis provides complete identifiability results for SVA and it further generalizes to the setting that $\Ps P_{\B} \not\to 0$.
		
		%Recall that in the SVA literature, \cite{Lee2017} assumed $\Ps P_{\B} = 0$ for the estimation of $\Ps$. Under this condition, we obtain $\Ttheta = \Ps$. In this case, Propositions \ref{prop_ident} and \ref{prop_ident_W} provide sufficient and necessary conditions for the identifiability of $\Ps$. Therefore, our analysis provides complete identifiability results for SVA and it further generalizes to the setting that $\Ps P_{\B} \neq 0$. 
	\end{remark}	
	
	\begin{remark}
		Our identifiability results are established when the rows of $\bm {X}\in\RR^{n\times p}$ are viewed as i.i.d. realizations of a random vector $X\in \RR^p$. When ${\bm X}$ is treated as a fixed design matrix, Proposition 1 in \cite{McKennan19} provides sufficient conditions on $\Ps$ and other quantities (such as the sparsity of rows of $\Ps$ and the magnitude of $\B$) under which $\Ps$ becomes identifiable for sufficiently large $n$. 
	\end{remark}

	\subsection{Estimation of $\Ttheta$}\label{sec_estimation}
	Given the data matrices $\X \in \RR^{n\times p}$ and $\Y \in \RR^{n\times m}$, our estimation procedure follows the same steps in the analysis of the model identifiability: (1) first estimate $\X F^*$; (2) then estimate $\Sigma_{\eps}$ and $P_{\B}$; (3) finally estimate $\Ttheta$.     
	
	\subsubsection{Estimation of $X F^*$}\label{sec_est_fit}
	Recall that $F^*=\Ps + L^*$ is identifiable, where $L^*=A^*B^*$ is a dense matrix and $\Ps$ is a row-wise sparse matrix satisfying (\ref{Psi_space}). We propose to estimate $F^*$ by $\wh F = \wh \Psi + \wh L$ where  $\wh \Psi$ and $\wh L$ are obtained by solving the following optimization problem 
	\begin{align}\label{est_F}
	(\wh \Psi,~\wh L) = \arg\min_{\Psi, L} {1\over n}\left\|\Y - \X(\Psi + L)\right\|_F^2 +\lambda_1 \|\Psi\|_{\l12} +  \lambda_2\|L\|_F^2
	\end{align}
	with some tuning parameters $\lambda_1, \lambda_2 \ge 0$. Our estimator is designed to recover both the sparse matrix $\Ps$ via the group-lasso regularization \citep{yuanlin} and the dense matrix $L^*$ via the multivariate ridge regularization. %Since our goal in this step is to estimate the best linear predictor $\X F^*$, there is no need to separate $\Ps$ from $L^*$. 
	Computationally, solving (\ref{est_F}) is efficient with almost the same complexity of solving a group-lasso problem. Specifically, we have the following lemma.
	
	%Computationally, (\ref{est_F}) has a unique solution  and solving (\ref{est_F}) is computationally efficient with almost the same complexity of solving a group-lasso problem. Specifically, we have the following lemma.
	\begin{lemma}\label{lem_solution}
		Let $(\wh \Psi, \wh L)$ be any solution of (\ref{est_F}), and denote
		\begin{equation}\label{def_P_Q_lbd2}
		P_{\lambda_2} = \X\left(\X^T\X + n \lambda_2 \bI_p\right)^{-1}\X^T,\qquad Q_{\lambda_2} = \bI_n - P_{\lambda_2}
		\end{equation}
		for any $\lambda_2\ge 0$ such that $\P$ exists. 
		Then $\wh \Psi$ is the solution of the following problem
		\begin{equation}\label{crit_Theta}
		\wh \Psi = \arg\min_{\Psi} {1\over n}\left\|Q_{\lambda_2}^{1/2}(\Y - \X\Psi)
		\right\|_F^2 + \lambda_1 \|\Psi\|_{\l12},
		\end{equation}
		and $\wh L=(\X^T\X + n \lambda_2 \bI_p)^{-1}\X^T(\Y - \X\wh\Psi)$, where $Q_{\lambda_2}^{1/2}$ is the principal matrix square root of $Q_{\lambda_2}$. Moreover, we have
		\begin{equation}\label{fit}
		\X\wh F= \X(\wh \Psi + \wh L) = P_{\lambda_2}\Y + Q_{\lambda_2}\X\wh \Psi. 
		\end{equation}
	\end{lemma}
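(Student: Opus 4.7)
The plan is to proceed by partial minimization: first solve the inner problem over $L$ with $\Psi$ held fixed, then substitute the resulting minimizer back into the objective to obtain a problem only in $\Psi$. For any fixed $\Psi$, the objective is a strictly convex quadratic in $L$ (as $\lambda_2 \ge 0$ is chosen so that $P_{\lambda_2}$ exists). The first-order condition
\[
-\frac{2}{n}\X^T\bigl(\Y - \X\Psi - \X L\bigr) + 2\lambda_2 L = 0
\]
yields the closed-form minimizer $L(\Psi) = (\X^T\X + n\lambda_2 \bI_p)^{-1}\X^T(\Y - \X\Psi)$, which coincides with the stated formula for $\wh L$ upon plugging in $\Psi = \wh\Psi$.

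Next I would substitute $L(\Psi)$ back into the objective. Writing $R := \Y - \X\Psi$, observe that $\X L(\Psi) = P_{\lambda_2} R$ and hence $R - \X L(\Psi) = Q_{\lambda_2} R$. The squared-residual term therefore equals $n^{-1}\tr(R^T Q_{\lambda_2}^2 R)$, while the ridge penalty becomes $\lambda_2\, \tr\!\bigl[R^T \X(\X^T\X + n\lambda_2\bI_p)^{-2}\X^T R\bigr]$. The key algebraic identity needed is
\[
\frac{1}{n} Q_{\lambda_2}^2 + \lambda_2\, \X(\X^T\X + n\lambda_2\bI_p)^{-2}\X^T = \frac{1}{n} Q_{\lambda_2},
\]
which, after multiplying by $n$, is equivalent to $Q_{\lambda_2} P_{\lambda_2} = n\lambda_2\, \X(\X^T\X + n\lambda_2\bI_p)^{-2}\X^T$. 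This last equality I would verify by writing $Q_{\lambda_2} P_{\lambda_2} = P_{\lambda_2} - P_{\lambda_2}^2$ and using the simple rearrangement $\bI_p - \X^T\X(\X^T\X + n\lambda_2\bI_p)^{-1} = n\lambda_2 (\X^T\X + n\lambda_2\bI_p)^{-1}$. Once this identity is established, the substituted objective collapses to $n^{-1}\tr(R^T Q_{\lambda_2} R) + \lambda_1\|\Psi\|_{\l12} = n^{-1}\|Q_{\lambda_2}^{1/2} R\|_F^2 + \lambda_1 \|\Psi\|_{\l12}$, which is exactly the reduced problem (\ref{crit_Theta}) for $\wh\Psi$.

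Finally, the identity for the fitted values follows from a short rearrangement:
\[
\X\wh F = \X\wh\Psi + \X\wh L = \X\wh\Psi + P_{\lambda_2}(\Y - \X\wh\Psi) = P_{\lambda_2}\Y + (\bI_n - P_{\lambda_2})\X\wh\Psi = P_{\lambda_2}\Y + Q_{\lambda_2}\X\wh\Psi.
\]
The only step that is not routine is the operator identity that collapses the sum of squared residual and ridge penalty into the single quadratic form $R^T Q_{\lambda_2} R$; everything else is a standard partial-minimization argument for a convex coercive objective. Note that even if $\wh\Psi$ is not unique (when $p>n$), the stationarity conditions above hold for any optimal pair $(\wh\Psi, \wh L)$, so the three stated relations are simultaneously valid for any solution of (\ref{est_F}).
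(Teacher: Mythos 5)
Your proposal is correct and follows essentially the same route as the paper: partial minimization over $L$ for fixed $\Psi$, followed by the operator identity $n^{-1}Q_{\lambda_2}^2 + \lambda_2\X(\X^T\X+n\lambda_2\bI_p)^{-2}\X^T = n^{-1}Q_{\lambda_2}$, which the paper verifies by direct expansion and you verify via the equivalent form $Q_{\lambda_2}P_{\lambda_2} = n\lambda_2\X(\X^T\X+n\lambda_2\bI_p)^{-2}\X^T$ — a cosmetic difference only. The concluding rearrangement for $\X\wh F$ matches the paper's as well.
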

	Lemma \ref{lem_solution} characterizes the role of the regularization parameters $\lambda_2$ and $\lambda_1$. 
	%When $\lambda_2\to 0$, we have $\wh F\approx\wh\Psi+(\X^T\X)^{+}\X^T(\Y - \X\wh\Psi)=(\X^T\X)^{+}\X^T\Y$, where $(\X^T\X)^{+}$ is the pseudo inverse of $\X^T\X$. Thus, $\wh F$ reduces to the generalized least squares estimator. 
	When $\lambda_2\to 0$, we have $\P \approx \X (\X^T \X)^+\X^T$ and $\Q \X \approx 0$ with $(\X^T\X)^{+}$ being the Moore-Penrose inverse of $\X^T\X$. Thus, $\X\wh F\approx \X(\X^T\X)^{+}\X^T\Y$ and $\wh F$ reduces to the minimum norm ordinary least squares estimator (see, for instance, \cite{bunea2020interpolation}).
	On the other hand, when $\lambda_2\to \i$, we have $Q_{\lambda_2} \approx \bI_n$ and $\wh L\approx 0$ whence $\wh F\approx \wh \Psi$ essentially becomes the group-lasso estimator. Later in Remark \ref{rem_orth_design} of Section \ref{sec_theory_fit}, we will take a closer look at this phenomenon in terms of the convergence rates of $\|\X\wh F - \X F^*\|_F$ under the orthonormal design. The tuning parameter $\lambda_1$ only appears in (\ref{crit_Theta}) and its magnitude controls the sparsity level of the group-lasso estimator $\wh \Psi$. 
	Lemma \ref{lem_solution} also implies that the estimator $(\wh \Psi, \wh L)$ is unique if and only if the solution of the group-lasso problem (\ref{crit_Theta}) is unique. Even if (\ref{crit_Theta}) has multiple solutions, we can define $(\wh \Psi, \wh L)$ to be any of the solutions and the resulting best linear predictor $\X\wh F=\X(\wh \Psi+\wh L)$ satisfies the desired deviation bounds stated in Theorem \ref{thm_pred} of Section \ref{sec_theory_fit}.

	In applications when both $m$ and $p$ are large while $K$ is small, $L^*$ can be also viewed as a low-rank matrix with rank $K$. One common approach of estimating a low-rank matrix is to either impose a rank constraint on the matrix, known as the reduced-rank approach \citep{i08}, or regularize its nuclear norm. %The latter is known as the convex relaxation of the reduced-rank approach. 
	We emphasize that, under model (\ref{model}), our approach with the ridge penalty has both theoretical and computational advantages over these two methods. We defer to Appendix \ref{sec_comp_rr} for both theoretical and numerical comparisons.

	Finally, we comment that our method (\ref{est_F}) can be viewed as the multivariate generalization of the lava approach proposed by \cite{chernozhukov2017}; see also \cite{cevid2018spectral}. Lava estimates the sum of a sparse vector $\beta$ and a dense vector $b$ in linear regression problem $\y=\X(\beta+b)+{\bm \epsilon}$ by minimizing the least squares loss plus the penalty $\lambda_1\|\beta\|_1+\lambda_2\|b\|_2^2$. 
	%It is shown that lava has better prediction performance than many other estimators such as ridge, lasso, ordinary least squares and elastic net. 
	As explained in \cite{chernozhukov2017},  lava
	is intrinsically different from the elastic net as lava penalizes both $\beta$ and $b$ and the estimate of $(\beta+b)$ is non-sparse, whereas elastic net uses the penalty $\lambda_1\|\beta\|_1+\lambda_2\|\beta\|_2^2$ and typically yields a sparse estimate of $\beta$. These differences naturally extend to our multivariate setting. 
	
	%The closed-form of $\X \wh F$ in (\ref{fit}) is also critical  to derive theoretical guarantees for the fit $\|\X \wh  F - \X F^*\|_F^2$. For any pair $(\Theta_0, L_0)$ such that $\Theta_0 + L_0 = F^*$, using (\ref{fit}) yields the following decomposition of the fit 
	%\begin{align}\label{error_decomp}\nonumber
	%\X\wh F - \X F^* &= P_{\lambda_2}\Y + Q_{\lambda_2}\X\wh \Theta - \X \Theta_0 - \X L_0\\
	%&= \underbrace{P_{\lambda_2}(\Y- \X\Theta_0) - \X L_0}_{Error(ridge)}+ \underbrace{Q_{\lambda_2}(\X\wh \Theta - \X \Theta_0)}_{Error(lasso)},
	%\end{align}
	%where $Error(ridge)$ is the estimation error of the mutivariate ridge regression while $Error(lasso)$ is that of the group-lasso in (\ref{crit_Theta}). 

	\subsubsection{Estimation of $P_{\B}$}\label{sec_est_P}
	
	In this section, we discuss how to estimate the projection matrix $P_{\B}$. Consider the singular value decomposition $B^*=VDU^T$, where $V\in\RR^{K\times K}$ and $U\in\RR^{m\times K}$ are, respectively, the left and right singular vectors of $B^*$, and $D$ is the diagonal matrix of non-increasing singular values. It is easily seen that $P_{\B}=UU^T$. Recall that, from  (\ref{eq_Sigma_eps}), $U$ also coincides with the first $K$ eigenvectors of $\Sigma_{\eps}$ up to an orthogonal matrix. We thus propose to  first estimate $\Sigma_{\eps}$  by
	\begin{equation}\label{def_Sigma_eps_hat}
	\wh \Sigma_{\eps} = {1\over n}\left(\Y - \X \wh F\right)^T\left(\Y - \X \wh F\right),
	\end{equation}
	with $\wh F$ obtained from (\ref{est_F}), and then estimate $P_{\B}$ by $\wh P_{\B}=\wh U\wh U^T$, where $\wh U$ consists of the eigenvectors of $\wh\Sigma_{\eps}$ corresponding to the $K$ largest eigenvalues.  We assume $K$ is known for now and defer to Section \ref{sec_select_K} for detailed discussions of selecting $K$.

	\subsubsection{Estimation of $\Ttheta$}\label{sec_est_theta}
	After estimating $P_{\B}$ by $\wh P_{\B}$, motivated by (\ref{model_Y_comp}),  we propose to estimate $\Ttheta$ by
	\begin{equation}\label{est_Theta}
	\wt\Theta = \arg\min_{\Theta} {1\over n}\left\|\Y \left(\bI_m - \wh P_{\B}\right)-  \X\Theta\right\|_F^2 + \lambda_3 \|\Theta\|_{\l12}
	\end{equation}
	with some tuning parameter $\lambda_3>0$. Solving the problem in (\ref{est_Theta}) is equivalent to solving a group-lasso problem with the projected response matrix $\Y (\bI_m - \wh P_{\B})$.

	For the reader's convenience, we summarize our procedure, \underline{HI}dden  {\underline V}ariable  adjustment  {\underline E}stimation (HIVE), in Algorithm \ref{alg_1}.
	
	{\begin{algorithm}[ht]
			\caption{The HIVE procedure for estimating $\Ttheta$.}\label{alg_1}
			\begin{algorithmic}[1]
				\Require Data $\X\in \RR^{n\times p}$ and $\Y \in \RR^{n\times m}$, rank $K$, tuning parameters $\lambda_1$, $\lambda_2$ and $\lambda_3$. 
				\State Estimate $\X\wh F$ with $\wh F = \wh\Psi + \wh L$ by solving (\ref{est_F}).
				\State Obtain $\wh \Sigma_{\eps}$ from (\ref{def_Sigma_eps_hat}). 
				\State Compute $\wh P_{\B} = \wh U\wh U^T$ where $\wh U$ are the first $K$ eigenvectors of $\wh \Sigma_{\eps}$.
				\State Estimate $\Ttheta$ by $\wt \Theta$ obtained from (\ref{est_Theta}).
			\end{algorithmic}
	\end{algorithm} }

	\section{Statistical guarantees}\label{sec_theory}
	In this section, we provide theoretical guarantees for our estimation procedure. 
	In our theoretical analysis, the design matrix $\X$ is considered to be deterministic and the analysis can be done similarly for random designs by first conditioning on $\X$. 
	Recall from model (\ref{model}) that $W$ is uncorrelated with $X$. To simplify the analysis under the fixed design scenario, we assume the independence between $X$ and $W$ in order to derive the deviation bounds of their cross product. We expect that the same theoretical guarantees hold under $\Cov(X,W)=0$ by using more tedious arguments. We start with the following assumptions on the error matrices $\W\in\RR^{n\times K}$ and $\E\in\RR^{n\times m}$. 
	\begin{ass}\label{ass_error}
		Let $\g_w$ and $\g_e$ denote some positive constants.
		
		(1) Assume $\bigl\{\Sigma_W^{-1/2}\W_{i\cdot}\bigr\}_{i=1}^n$ are  i.i.d. $\g_w$ sub-Gaussian random vectors\footnote{A random vector $X$ is $\g$ sub-Gaussian if $\langle u, X\rangle$ is $\g$ sub-Gaussian for any $\|u\|_2=1$.}, where $\Sigma_W=\Cov(\W_{i\cdot})$.
		
		(2) For any fixed $1\le j\le m$, $\bigl\{\E_{ij}\bigr\}_{i=1}^n$ are i.i.d. $\g_e$ sub-Gaussian\footnote{A centered random variable $X$ is $\g$ sub-Gaussian if it satisfies 
			$
			\EE[\exp(tX)] \le \exp(\g^2t^2/2)
			$ for all $t\ge 0$.}. For any fixed $1\le i\le n$, $\bigl\{\E_{ij}\bigr\}_{j=1}^m$ are independent.
	\end{ass}
	Since part $(2)$ of Assumption \ref{ass_error} does not assume $\E_{ij}$ are identically distributed across $1\le j\le m$, this assumption is applicable to both homoscedastic and heteroscedastic errors, provided that $\max_{1\le j\le p}\textrm{Var}(\E_{ij}) \le \g_e^2$. 
	We assume $\Sigma_E = \Cov(E) = \tau^2\bI_m$ throughout this section and defer the dicussion of the heteroscedastic case to Section \ref{sec_hetero}. Finally, recall from (\ref{Psi_space}) that $\|\Ps\|_{\ell_0 / \ell_2} \le s_*$.

	\subsection{Statistical guarantees of estimating $XF^*$}\label{sec_theory_fit}
	To establish theoretical properties for $\X\wh F$ obtained from (\ref{crit_Theta}), we first generalize the design impact factor of $\X$ in \cite{chernozhukov2017} to multivariate response regression settings. Denote $\wt \X = Q_{\lambda_2}^{1/2}\X$, where $\Q$ is defined in (\ref{def_P_Q_lbd2}). For notational simplicity, we suppress the dependence of $\wt \X$ on $\lambda_2$. For any constant $c>0$ and matrix $\Psi_0 \in \RR^{p\times m}$, define the design impact factor as
	\begin{equation}\label{RE_pred}
	\kappa_1(c, \Psi_0, \lambda_1, \lambda_2) := \inf_{\Delta \in \R(c, \Psi_0, \lambda_1, \lambda_2)}{\|\wt \X\Delta\|_F/\sqrt{n} \over 
		\|\Psi_0\|_{\l12} - \|\Psi_0+\Delta\|_{\l12} + c\|\Delta\|_{\l12}},
	\end{equation}
	where 
	\begin{align}\label{def_R}
	& ~~\R(c, \Psi_0, \lambda_1, \lambda_2)  = \Big\{
	\Delta \in \RR^{p\times m}\setminus \{0\}: \\\nonumber
	&\hspace{1cm}{\|\wt \X \Delta\|_F/ \sqrt n}\le 2\lambda_1\left(	\|\Psi_0\|_{\l12} - \|\Psi_0+\Delta\|_{\l12} + c\|\Delta\|_{\l12}\right)
	\Big\}.
	\end{align}
	
	It is well known that when $p>n$ the matrix $\wt \X^T\wt \X$ is singular and the least squares loss is not strictly convex. The design impact factor $\kappa_1(c, \Psi_0, \lambda_1, \lambda_2)$ is used to characterize the minimum curvature of the least squares loss in (\ref{crit_Theta}) when the matrix $\Delta$ is restricted in a feasible set $\R(c, \Psi_0, \lambda_1, \lambda_2)$. It generalizes the widely used Restricted Eigenvalue (RE) condition in high-dimensional regression \citep{bickel2009} and is more suitable for prediction \citep{belloni2014, chernozhukov2017}.  We refer to Remark \ref{rem_design_impact_factor} for its connection with the RE condition.
	
	Define the following quantity which characterizes the total variation of the multivariate response regression in (\ref{model}), \begin{equation}\label{def_V_eps}
	V_\eps = \tr(\Gamma_\eps),\qquad \text{with}\quad \Gamma_{\eps} := \g_w^2\  B^{*T}\Sigma_W \B + \g_e^2\ \bI_m,
	\end{equation}
	where $\tr(\cdot)$ stands for the trace. 
	Let $r_e(\Gamma_\eps) = \tr(\Gamma_{\eps}) /\|\Gamma_{\eps}\|_{op}$ denote the \emph{effective rank} of $\Gamma_\eps$. Write $M = n^{-1}\X^T Q_{\lambda_2}^2\X$ and $\wh\Sigma = n^{-1}\X^T\X$. Recall that  $\P$ and $\Q$ are defined in (\ref{def_P_Q_lbd2}). The following theorem provides the deviation bounds of $\|\X\wh F - \X F^*\|_F$.  
	
	\begin{thm}\label{thm_pred}
		Under model (\ref{model}) and Assumption \ref{ass_error},  choose 
		\begin{equation}\label{rate_lbd1}
		\lambda_1 = 4\sqrt{\max_{1\le j\le p}M_{jj}}\left(1+\sqrt{2\log(p / \epsilon') \over r_e(\Gamma_{\eps})}\right)\sqrt{V_\eps \over n}
		\end{equation}
		for any $\epsilon'>0$ and choose any $\lambda_2 \ge 0$
		in (\ref{est_F}) such that $\P$ exists. 
		With probability $1-\epsilon - \epsilon'$, 
		\begin{align*}
		{1\over n}\left\|\X \wh F - \X F^*\right\|_F^2 
		&\le 
		\!\!\! \inf_{\substack{(\Psi_0, L_0):\\
				\Psi_0 + L_0 = F^*}}  \!\!\! \left[
		{2\over n}\left\|\X(\wh L - L_0)\right\|_F^2 + {2\over n}\left\|Q_{\lambda_2} \X(\wh \Psi - \Psi_0)\right\|_F^2\right]\\
		&\le \!\!\! \inf_{\substack{(\Psi_0, L_0):\\
				\Psi_0 + L_0 = F^*}}  \!\!\! \Bigl[4Rem_1 + 36 \|Q_{\lambda_2}\|_{op} Rem_2(L_0) + 8 \|Q_{\lambda_2}\|_{op} Rem_3(\Psi_0)\Bigr],
		\end{align*}
		where $\|\Q\|_{op}\le1$ and 
		\begin{align*}
		&Rem_1 = \left(\sqrt{{\rm tr}(P_{\lambda_2}^2)} + \sqrt{2\log (m/\epsilon)\|P_{\lambda_2}^2\|_{op}} \right)^2{V_\eps \over n},\\
		&Rem_2(L_0) = \lambda_2~  {\rm tr} \left[L_0^T \wh \Sigma(\wh \Sigma + \lambda_2 \bI_p)^{-1}L_0\right],\\
		&Rem_3(\Psi_0) = \lambda_1^2~  \left[\kappa_1(1/2, \Psi_0, \lambda_1, \lambda_2)\right]^{-2}.
		\end{align*}
	\end{thm}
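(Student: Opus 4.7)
The plan is to use Lemma \ref{lem_solution} to reduce the analysis to a group-lasso problem on the transformed design $\wt\X=\Q^{1/2}\X$ and then split the prediction error into three sources that match the three remainder terms: the stochastic projected noise $\P\Eps$ (for $Rem_1$), the deterministic ridge bias from regularizing the dense component $L_0$ (for $Rem_2$), and the group-lasso estimation error on $\wh\Psi$ (for $Rem_3$).

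First I would derive the algebraic decomposition of the prediction error. Writing $\Y=\X F^*+\Eps$ with $\Eps=\W\B+\E$, plugging in identity (\ref{fit}) and using $F^*=\Psi_0+L_0$ yields
\[
\X\wh F-\X F^* \;=\; \Q\X(\wh\Psi-\Psi_0)-\Q\X L_0+\P\Eps,
\]
together with the companion identity $\X(\wh L-L_0)=-\P\X(\wh\Psi-\Psi_0)-\Q\X L_0+\P\Eps$. Combining these via $(a+b)^2\le 2a^2+2b^2$ with the bound $\|\Q\|_{op}\le 1$ produces the first inequality of the theorem, reducing the task to bounding $n^{-1}\|\X(\wh L-L_0)\|_F^2$ and $n^{-1}\|\Q\X(\wh\Psi-\Psi_0)\|_F^2$ separately.

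Next, the optimality of $\wh\Psi$ in (\ref{crit_Theta}) produces the basic inequality for the loss $L(\Psi):=n^{-1}\|\Q^{1/2}(\Y-\X\Psi)\|_F^2$. After expanding it and using $\Y-\X\Psi_0=\X L_0+\Eps$, the task reduces to controlling the stochastic cross term $n^{-1}|\langle\Q\Eps,\X(\wh\Psi-\Psi_0)\rangle|$, which by duality between the $\l12$ and $\ell_\infty/\ell_2$ norms is at most $n^{-1}\|\X^T\Q\Eps\|_{\ell_\infty/\ell_2}\cdot\|\wh\Psi-\Psi_0\|_{\l12}$. The key probabilistic step is to show, for each $j\in\{1,\dots,p\}$, that $(\X^T\Q\Eps)_{j\cdot}\in\RR^m$ is a sub-Gaussian vector with covariance proxy $nM_{jj}\Gamma_\eps$, so its Euclidean norm concentrates around $\sqrt{nM_{jj}V_\eps}$ with fluctuations of order $\sqrt{nM_{jj}\|\Gamma_\eps\|_{op}\log(p/\epsilon')}$. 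A union bound over $j$ then ensures that the choice (\ref{rate_lbd1}) of $\lambda_1$ dominates $2n^{-1}\|\X^T\Q\Eps\|_{\ell_\infty/\ell_2}$ with probability at least $1-\epsilon'$. Standard cone manipulations then place $\wh\Psi-\Psi_0$ in $\R(1/2,\Psi_0,\lambda_1,\lambda_2)$, and invoking the design impact factor $\kappa_1$ yields $n^{-1}\|\Q^{1/2}\X(\wh\Psi-\Psi_0)\|_F^2\lesssim\lambda_1^2/\kappa_1^2=Rem_3(\Psi_0)$. The remaining deterministic ridge-bias piece $n^{-1}\|\Q^{1/2}\X L_0\|_F^2$ is computed exactly via the identity $\X^T\Q\X=n\lambda_2\wh\Sigma(\wh\Sigma+\lambda_2\bI_p)^{-1}$, producing exactly $Rem_2(L_0)$.

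Finally, for the projected-noise term $n^{-1}\|\P\Eps\|_F^2=n^{-1}\tr(\Eps^T\P^2\Eps)$, I would apply a Hanson--Wright-type concentration for the quadratic form of a sub-Gaussian matrix whose rows are i.i.d.\ with proxy $\Gamma_\eps$. The expectation is $n^{-1}\tr(\P^2)V_\eps$ and the sub-Gaussian deviation is of order $n^{-1}\sqrt{\tr(\P^2)\|\P^2\|_{op}\log(m/\epsilon)}\,V_\eps$; applying $(\sqrt a+\sqrt b)^2=a+b+2\sqrt{ab}$ then reproduces exactly $Rem_1$ with probability at least $1-\epsilon$. The main technical obstacle I anticipate is the vector-valued sub-Gaussian concentration of the previous step: because the columns of $\Eps$ are correlated across responses through the shared hidden-variable noise $\W\B$, a naive coordinate-wise union bound across the $m$ responses is too lossy, and a sharper concentration tailored to the covariance structure $\Gamma_\eps$ is needed in order for the effective rank $r_e(\Gamma_\eps)$ to enter (\ref{rate_lbd1}) in the way dictated by the theorem.
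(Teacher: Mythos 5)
Your proposal is correct and follows essentially the same route as the paper: the identity $\X\wh F=\P\Y+\Q\X\wh\Psi$ from Lemma \ref{lem_solution} gives the same three-way split into $\P\Eps$, $\Q\X L_0$, and $\Q\X(\wh\Psi-\Psi_0)$, the group-lasso basic inequality with the design impact factor handles the last piece, and the sub-Gaussian quadratic-form concentration applied with the matrix $\Gamma_\eps$ (rather than a coordinate-wise union bound over the $m$ responses) is exactly how the paper makes the effective rank $r_e(\Gamma_\eps)$ enter $\lambda_1$. The only detail you gloss over is that the cone membership $\wh\Psi-\Psi_0\in\R(1/2,\Psi_0,\lambda_1,\lambda_2)$ holds only when $\|\Q^{1/2}\X(\wh\Psi-\Psi_0)\|_F$ exceeds a multiple of $\|\Q^{1/2}\X L_0\|_F$; the complementary case contributes an additional multiple of $Rem_2(L_0)$, which is why that term carries the constant $36$ rather than $4$ in the final bound.
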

	%\begin{proof}
	%	The proof is deferred to Appendix \ref{sec_proof_thm_pred}.
	%\end{proof}
	
	Since $\Ps$ is not identifiable,  neither  $\Ps$ nor $L^*$ can be identified individually. Nevertheless, our estimator $\wh F$ minimizes the error over all possible combinations of $\Psi_0$ and $L_0$ satisfying $\Psi_0 + L_0 = F^*$. %In particular, it holds for $(\Ps, L^*)$ whenever they are identifiable. 
	As expected from (\ref{est_F}), the prediction error in Theorem \ref{thm_pred} comes from two sources:  estimating $L^*$ from the multivariate ridge regression and estimating $\Ps$ from the group-lasso. Specifically, $Rem_1$ and $Rem_2(L_0)$ are, respectively, the variance and bias terms from the ridge regression while $Rem_3(\Psi_0)$ corresponds to the estimation error of the group-lasso. In the following, we provide more insights on these three terms.

	\begin{remark}[Design impact factor and $\lambda_1$]\label{rem_design_impact_factor}
		The remainder term $Rem_3(\Psi_0)$ depends on the design impact factor  $\kappa_1(c, \Psi_0, \lambda_1, \lambda_2)$ and the tuning parameter $\lambda_1$. 
		We first discuss the connection of $\kappa_1(c, \Psi_0, \lambda_1, \lambda_2)$ and the Restricted Eigenvalue (RE) condition of $\X$ defined as 
		\begin{equation}\label{RE_X}
		\kappa(s, \alpha) = \min_{S\subseteq \{1, 2, \ldots, p\},|S|\le s}~\min_{\Delta \in \C(S, \alpha)}{\|\X \Delta\|_F \over \sqrt{n}\|\Delta_{S\cdot}\|_F},
		\end{equation}
		where $\alpha \ge 1$ is a constant, $1\le s\le p$ is some integer and $\C(S, \alpha) := \{\Delta\in \RR^{p\times m} \setminus \{0\}: \alpha\|\Delta_{S\cdot}\|_{\ell_1/\ell_2}\ge \|\Delta_{S^c \cdot}\|_{\ell_1/\ell_2}\}$. Similarly, denote by $\wt \kappa(s, \alpha)$ the RE condition of $\wt \X = \Q^{1/2}\X$.
		In Lemma \ref{lem_RE} of Appendix \ref{sec_proof_auxiliary}, we show that, for any constant $c\in (0,1)$, 
		\begin{equation}\label{kappa1_kappa}
		[\kappa_1(c, \Psi_0, \lambda_1, \lambda_2)]^2 \ge{[\wt\kappa(s_0, \alpha_{c})]^2 \over  (1+c)^2 s_0}  \ge
		{\lambda_2 \over  \sigma_1+ \lambda_2}\cdot {[\kappa(s_0, \alpha_{c})]^2\over (1+c)^2 s_0},
		\end{equation} 		
		where $\alpha_{c} = (1+c)/(1-c)$, $s_0 = \|\Psi_0\|_{\ell_0/\ell_2}$ and $\sigma_1$ is the leading eigenvalue of $\wh\Sigma = n^{-1}\X^T\X$.
		The first inequality is proved in \cite{chernozhukov2017} for $m = 1$. Here we extend it to $m\ge 2$, and we further establish the second inequality which characterizes the relation between $\kappa_1(c, \Psi_0, \lambda_1, \lambda_2)$ and $\kappa(s_0, \alpha_c)$. It is well known that  $\kappa(s_0, \alpha_c)$ is lower bounded by a positive constant with high probability when the rows of $\X$ are i.i.d. sub-Gaussian vectors with $\lambda_{\min}(\Sigma)>c'$ for some constant $c'>0$ and $s_0 = O(n)$ \citep{rz13}. Together with the second inequality in (\ref{kappa1_kappa}), we obtain $[\kappa_1(c, \Psi_0, \lambda_1, \lambda_2)]^2\gtrsim \lambda_2/[s_0(\sigma_1+ \lambda_2)]$. Thus, when $\lambda_2$ dominates $\sigma_1$, $\kappa_1(c, \Psi_0, \lambda_1, \lambda_2)$ scales as $1/\sqrt{s_0}$.
		
		%As discussed in \cite{chernozhukov2017},  $\kappa_1(c, \Theta_0, \lambda_1, \lambda_2)$  scales as $1/\sqrt{s_0}$ when $\wt \kappa(s_0, \alpha_{c})$ is lower bounded away from zero. On the other hand, even if  $\wt\kappa(s_0, \alpha_{c}) = 0$, $ \kappa_1(c, \Theta_0, \lambda_1, \lambda_2)$ could still be positive such that $[\kappa_1(c, \Theta_0, \lambda_1, \lambda_2)]^{-1}$ is well-defined, for instance, when two regressors are identical \citep{belloni2014,chernozhukov2017}. On the other hand, the connection of $\kappa_1(c, \Theta_0, \lambda_1, \lambda_2)$ and $\kappa(s_0, \alpha_c$) depends on $\lambda_2$ and is not this transparent. Only when $\lambda_2$ is relatively large comparing to $\sigma_1$, $\kappa_1(c, \Theta_0, \lambda_1, \lambda_2)$ scales as $\kappa(s_0, \alpha_{c})/\sqrt{s_0}$.
		
		Note that $Rem_3(\Psi_0)$ also depends on the tuning parameter $\lambda_1$ which is further related to the choice of $\lambda_2$ via the diagonal entries of $M = n^{-1}\X^T Q_{\lambda_2}^2\X$. To simplify $Rem_3(\Psi_0)$, in Lemma \ref{lem_RE} of Appendix \ref{sec_proof_auxiliary} we prove
		\begin{equation}\label{Mjj}
		\max_{1\le j\le p}M_{jj} \le \max_{1\le j\le p}\wh\Sigma_{jj}  \left(\lambda_2 \over \sigma_q + \lambda_2\right)^2,
		\end{equation}
		where $\sigma_q$ is the smallest non-zero eigenvalue of $\wh\Sigma$. Combining (\ref{kappa1_kappa}) and (\ref{Mjj}), we obtain that
		$$
		Rem_3(\Psi_0)\lesssim {\lambda_2(\sigma_1 + \lambda_2) \over (\sigma_q + \lambda_2)^2}\max_{1\le j\le p} \wh \Sigma_{jj}{s_0 \over  [\kappa(s_0, 3)]^2}\left(1 + {\log(p/\epsilon') \over r_e(\Gamma_\eps)}\right)\frac{V_\eps}{n}.
		$$
	\end{remark}
	\medskip
	
	The first two remainder terms $Rem_1$ and $Rem_2(L_0)$ depend on the choice of $\lambda_2$ in a more complicated way. To make the remainder terms more transparent, we can bound them from above via the eigenvalues of $\wh\Sigma$. The simplified deviation bounds of $\|\X \wh F - \X F^*\|_F^2$ are summarized in the following corollary. 
	Recall that $\sigma_1 \ge \cdots \ge \sigma_q$ denote the non-zero eigenvalues of $\wh\Sigma$ with $q = \rank(\X)$. 
	
	\begin{cor}\label{cor_pred}
		Under model (\ref{model}) and Assumption \ref{ass_error}, with probability $1-\epsilon - \epsilon'$, one has
		\begin{align*}
		{1\over n}\left\|\X \wh F - \X F^*\right\|_F^2
		&\lesssim  \!\!\! \inf_{\substack{(\Psi_0, L_0):\\
				\Psi_0 + L_0 = F^*}}  \!\!\!  \bigg\{ {\sigma_1 \lambda_2 \over (\sigma_1 + \lambda_2)(\sigma_q + \lambda_2)}~\lambda_2\|L_0\|_F^2\\
		+  & \left[
		\sum_{k=1}^q\left({\sigma_k \over \sigma_k + \lambda_2}\right)^2+
		\left( {\sigma_1 \over \sigma_1 + \lambda_2}\right)^2\log(m/\epsilon)
		\right]{V_\eps \over n}\\
		+ &
		{\sigma_1 + \lambda_2 \over \sigma_q + \lambda_2} \left({\lambda_2 \over \sigma_q + \lambda_2}\right)^2 \!\!\! \max_{1\le j\le p} \wh \Sigma_{jj} \left(1 + {\log(p/\epsilon') \over r_e(\Gamma_\eps)}\right){s_0\over  [\kappa(s_0, 3)]^2} {V_\eps  \over n}\bigg\}
		\end{align*} 
	where $\kappa(s_0, 3)$ is defined in (\ref{RE_X}) with $s_0 = \|\Psi_0\|_{\ell_0/\ell_2}$.
	\end{cor}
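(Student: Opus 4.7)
The plan is to return to the three-term upper bound of Theorem \ref{thm_pred} and replace each of the matrix-functional quantities there by their spectral expressions in terms of the eigenvalues $\sigma_1 \ge \cdots \ge \sigma_q$ of $\wh\Sigma$. Using the thin SVD $\X = U_1 S V_1^T$ with $S = \diag(\sqrt{n\sigma_1}, \ldots, \sqrt{n\sigma_q})$, one diagonalises
\[
P_{\lambda_2} = U_1 \diag\!\left(\tfrac{\sigma_k}{\sigma_k+\lambda_2}\right) U_1^T, \qquad
Q_{\lambda_2}P_{\X} = U_1\diag\!\left(\tfrac{\lambda_2}{\sigma_k+\lambda_2}\right)U_1^T,
\]
where $P_{\X} = U_1 U_1^T$ is the orthogonal projector onto the column space of $\X$. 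The three spectral identities that drive the argument are then $\tr(P_{\lambda_2}^2) = \sum_{k=1}^q \sigma_k^2/(\sigma_k+\lambda_2)^2$, $\|P_{\lambda_2}^2\|_{op} = \sigma_1^2/(\sigma_1+\lambda_2)^2$, and the refined operator norm $\|Q_{\lambda_2}P_{\X}\|_{op} = \lambda_2/(\sigma_q+\lambda_2)$.

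First, substituting the first two identities into $Rem_1$ and applying $(a+b)^2 \le 2(a^2+b^2)$ directly yields the first summand in the corollary. Second, since $\wh\Sigma(\wh\Sigma+\lambda_2\bI_p)^{-1}$ is positive semidefinite with nonzero eigenvalues $\sigma_k/(\sigma_k+\lambda_2) \le \sigma_1/(\sigma_1+\lambda_2)$, I would bound $\tr[L_0^T\wh\Sigma(\wh\Sigma+\lambda_2\bI_p)^{-1}L_0] \le [\sigma_1/(\sigma_1+\lambda_2)]\|L_0\|_F^2$ by the largest eigenvalue. The key observation is that in the proof of Theorem \ref{thm_pred}, every occurrence of $\|Q_{\lambda_2}\|_{op}$ multiplies a quantity that already lies in the column space of $\X$ (such as $\X L_0$ or $\X(\wh\Psi-\Psi_0)$), and therefore $\|Q_{\lambda_2}\|_{op}$ may be replaced by $\|Q_{\lambda_2}P_{\X}\|_{op} = \lambda_2/(\sigma_q+\lambda_2)$. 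This sharpening supplies the extra prefactor $\lambda_2/(\sigma_q+\lambda_2)$ and produces the second summand $\sigma_1\lambda_2^2/[(\sigma_1+\lambda_2)(\sigma_q+\lambda_2)]\|L_0\|_F^2$.

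Third, for $\|Q_{\lambda_2}\|_{op}Rem_3(\Psi_0) = \|Q_{\lambda_2}\|_{op}\lambda_1^2[\kappa_1(1/2,\Psi_0,\lambda_1,\lambda_2)]^{-2}$, I would invoke the two bounds from Lemma \ref{lem_RE} collected in Remark \ref{rem_design_impact_factor}: taking $c=1/2$ in (\ref{kappa1_kappa}) gives $[\kappa_1(1/2,\Psi_0,\lambda_1,\lambda_2)]^{-2} \lesssim s_0(\sigma_1+\lambda_2)/(\lambda_2\,[\kappa(s_0,3)]^2)$, while substituting the prescribed $\lambda_1$ from (\ref{rate_lbd1}) together with (\ref{Mjj}) yields $\lambda_1^2 \lesssim [\lambda_2/(\sigma_q+\lambda_2)]^2 \max_j\wh\Sigma_{jj}\,(1+\log(p/\epsilon')/r_e(\Gamma_\eps))\,V_\eps/n$. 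Multiplying these together with the sharpened factor $\|Q_{\lambda_2}\|_{op} \le \lambda_2/(\sigma_q+\lambda_2)$ gives the third summand in the corollary.

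The main obstacle will be the careful bookkeeping of the $\|Q_{\lambda_2}\|_{op}$ factor, namely recognising that in each occurrence it acts on an image of $\X$ and can therefore be replaced by the strictly smaller $\|Q_{\lambda_2}P_{\X}\|_{op}$; without this sharpening one only recovers a looser bound. Once the three spectral identities and this observation are in place, chaining (\ref{kappa1_kappa}), (\ref{Mjj}) and the explicit form of $\lambda_1$ in (\ref{rate_lbd1}) is routine, and the final display follows by inserting the three contributions into the bound of Theorem \ref{thm_pred}.
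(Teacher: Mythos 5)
Your proposal is correct and follows essentially the same route as the paper's proof (Lemma \ref{lem_Rem23} in the appendix): diagonalize $\P$ to obtain $\tr(\P^2)=\sum_{k=1}^q\sigma_k^2/(\sigma_k+\lambda_2)^2$ and $\|\P^2\|_{op}=\sigma_1^2/(\sigma_1+\lambda_2)^2$, bound $\tr[L_0^T\wh\Sigma(\wh\Sigma+\lambda_2\bI_p)^{-1}L_0]$ by its largest eigenvalue $\sigma_1/(\sigma_1+\lambda_2)$ times $\|L_0\|_F^2$, and chain (\ref{kappa1_kappa}), (\ref{Mjj}) and the choice of $\lambda_1$ in (\ref{rate_lbd1}) for the group-lasso term, before inserting everything into the bound of Theorem \ref{thm_pred}. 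Your treatment of the $\Q$-factor is in fact more careful than the paper's: Lemma \ref{lem_Rem23} asserts $\|\Q\|_{op}\le 1-\|\P\|_{op}\le\lambda_2/(\sigma_q+\lambda_2)$, which fails whenever $\rank(\X)<n$ (then $\Q$ has an eigenvalue equal to $1$), whereas your observation that each occurrence of $\Q$ acts on matrices whose columns lie in the column space of $\X$, so that only $\|\Q P_{\X}\|_{op}=\lambda_2/(\sigma_q+\lambda_2)$ is needed, is the correct justification and yields the same final bound.
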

	
	%\begin{proof}
	%	The proof is deferred to Appendix \ref{sec_proof_cor_thm_pred}.
	%\end{proof}
	
	%We further illustrate the prediction error when the design matrix is orthonormal. 
	%One thing to note is that $\kappa(s_*,3)>0$ implies $\wt \kappa(s_*,3) > 0$ from (\ref{kappa1_kappa}) albeit the latter depends on the choice of $\lambda_2$. 
	
	\begin{remark}[Orthonormal design]\label{rem_orth_design}
		To draw connections with existing results on the group-lasso and ridge estimators, we consider the orthonormal design $\wh\Sigma = \bI_p$.  The deviation bounds in Corollary \ref{cor_pred} reduce to (after ignoring the logarithmic factors) 
		\begin{align}\label{rate_fit_orth_design}
		{1\over n}\left\|\X \wh F - \X F^*\right\|_F^2 
		&\lesssim  \left({1\over 1+\lambda_2}\right)^2{pV_{\eps} \over n} +\left(\lambda_2 \over 1+\lambda_2\right)^2\|L_0\|_F^2 + \left(\lambda_2 \over 1+\lambda_2\right)^2{s_0V_\eps \over n},
		\end{align}
		for any $(\Psi_0, L_0)$ satisfying $\Psi_0 + L_0 = F^*$. 
		The first two terms are the variance and bias due to the ridge penalty while the third term is the error of the group-lasso.  As $\lambda_2$ increases, the variance term of the ridge decreases whereas the bias term of the ridge and the error of group-lasso increase. Optimizing the right hand side of (\ref{rate_fit_orth_design}) over $\lambda_2$ yields 
		\begin{equation}\label{rate_lam2}
		\lambda_2 = {pV_{\eps} / n \over \|L_0\|_F^2 + s_0V_{\eps}/n}.
		\end{equation}
		
		 (a) When $L^* = 0$, model (\ref{model}) reduces to $Y = (\Ps)^T X+\eps$. By choosing $(\Psi_0, L_0)=(\Ps, 0)$, we have $\lambda_2 = {p / s_*}$ from (\ref{rate_lam2}) and $\max_jM_{jj} \asymp p^2/ (p +s_*)^2$ from (\ref{Mjj}). Consequently, the choice of $\lambda_1$ in (\ref{rate_lbd1}) satisfies 
		$$
		\lambda_1 \asymp \left({p \over s_*+p}\right)^2\left(1 + \sqrt{\log(p / \epsilon') \over r_e(\Gamma_{\eps})}\right) \sqrt{V_\eps \over n},$$ 
		and (\ref{rate_fit_orth_design}) reduces to 
		$$
		{1\over n}\left\|\X \wh F - \X F^* \right\|_F^2\lesssim \left(\frac{s_*}{p+s_*}\right)^2{pV_\eps \over n}+\left(\frac{p}{p+s_*}\right)^2{s_*V_\eps \over n}\lesssim {s_*V_\eps \over n},
		$$
		which is the optimal rate of the group-lasso estimator. 
		%When $s_* = o(p)$, we have $\lambda_2 \to \infty$, $\lambda_1^2 \asymp V_\eps\log (pm) / n$ and  $\wh F$ from (\ref{est_F}) essentially becomes the group-lasso estimator.
		
		 (b) When $\Psi^* = 0$,  model (\ref{model}) reduces to $Y = (L^*)^TX +\eps$. By choosing $(\Psi_0, L_0)=(0, L^*)$, we have $\lambda_2 = pV_{\eps} / (n\|L^*\|_F^2)$ from (\ref{rate_lam2}). After simple calculations, (\ref{rate_fit_orth_design}) yields
		\begin{equation}\label{rate_ridge}
		{1\over n}\left\|\X \wh F - \X F^*\right\|_F^2  \lesssim \min \left(\frac{pV_{\eps}}{n},\|L^*\|_F^2\right) \lesssim \sqrt{pV_{\eps}\over n}\|L^*\|_F,
		\end{equation}
		which is the optimal rate of the ridge regression \citep{Hsu2014}. 
		
		Combining scenarios (a) and (b), we conclude that the convergence rate (\ref{rate_fit_orth_design}) of our estimator $\wh F$ with the optimal tuning parameters $\lambda_1$ and $\lambda_2$ matches the best possible rate even if $L^*=0$ or $\Ps=0$ were known a priori. For this reason, we refer to our estimator $\wh F$ as an adaptive estimator. 
		
		When $\Ps = 0$ and $K$ is much smaller than both $p$ and $m$, model (\ref{model}) is a multivariate response regression with the coefficient matrix $L^*$ exhibiting a low-rank structure. A natural approach is to use a reduced-rank estimator to estimate $L^*$. In Appendix \ref{sec_comp_rr}, we show that our ridge-type estimator could have a faster rate than the reduced-rank estimator in our problem.
	\end{remark}

	\subsection{Statistical guarantees  of estimating $\Ttheta$}\label{sec_theory_theta}
	
	It is seen from (\ref{est_Theta}) that $\wt\Theta$ depends on $\wh P_{\B}$, the estimator of $P_{\B}$.
	In the following, we first state a general theorem which establishes non-asymptotic upper bounds of $\|\wt \Theta - \Ttheta\|_{\l12}$ for $\wt \Theta$ obtained from (\ref{est_Theta}) by using any estimator $\wh P$ of $P_{\B}$ in lieu of $\wh P_{\B}$. Let $\Lambda_1$ denote the largest eigenvalue  of $B^{*T}\Sigma_W B^*$.
	\begin{thm}\label{thm_Theta}
		Under model (\ref{model}) and Assumption \ref{ass_error}, assume $\kappa(s_*, 4) > 0$. Let  $\wt \Theta$ be any solution of problem (\ref{est_Theta}) by using estimator $\wh P\in \RR^{m\times m}$ in place of $\wh P_{\B}$.  Choose any $\lambda_3 \ge  \bar\lambda_3$ in (\ref{est_Theta}) with
		\begin{align}\label{def_event_lbd3}
		&\bar\lambda_3 =4 \g_e\sqrt{\max_{1\le j\le p} \wh\Sigma_{jj}}{\sqrt{m}+\sqrt{2\log(p/\epsilon)}\over \sqrt n}.
		\end{align}
		On the event 
		%\begin{equation}\label{def_event_P_B}
		$   \{
		\|\wh P - P_{\B}\|_F \lesssim \xi_n
		\}
		$
		%\end{equation}
		for some proper sequence $\xi_n$, 
		with probability $1-\epsilon - 2e^{-cK}$ for some constant $c>0$, one has
		\begin{equation}\label{rate_Theta_td}
		\|\wt \Theta - \Ttheta\|_{\l12} 
		\lesssim \max\left\{{\lambda_3},  ~ {(\wt \lambda_3)^2 \over \lambda_3 }\right\}{s_*\over \kappa^2(s_*,4)},
		\end{equation}
		where
		%, for some constant $c>0$,
		\begin{equation}\label{def_lbd3_td}
		\wt\lambda_3 =   \left\{{1\over \sqrt n}\|\X F^*\|_{op} +  \sqrt\Lambda_1\left(1+ \sqrt{K \over n}\right)\right\}{\kappa(s_*,4) \over \sqrt{s_*}}~ \xi_n.
		\end{equation}
		%with  $Rem(P_{\B})$ defined in (\ref{def_RU}).
	\end{thm}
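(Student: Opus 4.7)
The plan is to run a standard group-lasso oracle-inequality argument, but with the ``effective noise'' $\Y(\bI_m - \wh P) - \X \Ttheta$ split into a sub-Gaussian piece and a perturbation piece of order $\xi_n$. The key algebraic reduction is that $B^* P_{\B}^\perp = 0$ forces $L^* P_{\B}^\perp = A^* B^* P_{\B}^\perp = 0$, and combined with $\Ttheta = \Ps P_{\B}^\perp$ and the representation $\Y = \X F^* + \W B^* + \E$ this gives
\[
\Y(\bI_m - \wh P) - \X \Ttheta \;=\; \underbrace{(\X F^* + \W B^*)(P_{\B} - \wh P)}_{=:\ R_{\mathrm{pert}}} \;+\; \underbrace{\E(\bI_m - \wh P)}_{=:\ R_{\mathrm{main}}},
\]
so that what could have been an enormous $\X F^*(\bI_m - \wh P)$ contribution collapses into an $O(\xi_n)$ perturbation. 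With $\Delta := \wt\Theta - \Ttheta$ and $S := \mathrm{supp}(\Ttheta)$ (so $|S| \le s_*$), the optimality of $\wt\Theta$ in (\ref{est_Theta}) against $\Ttheta$ yields the usual basic inequality
\[
\tfrac{1}{n}\|\X\Delta\|_F^2 \;\le\; \tfrac{2}{n}\langle R_{\mathrm{main}}, \X\Delta\rangle + \tfrac{2}{n}\langle R_{\mathrm{pert}}, \X\Delta\rangle + \lambda_3\bigl(\|\Delta_{S\cdot}\|_{\l12} - \|\Delta_{S^c\cdot}\|_{\l12}\bigr).
\]

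The two inner products are bounded by different mechanisms. For $R_{\mathrm{main}}$, H\"older together with $\|\bI_m - \wh P\|_{op} \le 1$ reduces the task to controlling $\|\X^T\E\|_{\ell_\infty/\ell_2}$; each row has squared $\ell_2$ norm equal to a sum of $m$ independent sub-Gaussian squares by Assumption \ref{ass_error}(2), so Laurent--Massart plus a union bound over $j \le p$ gives $\tfrac{2}{n}\langle R_{\mathrm{main}}, \X\Delta\rangle \le \tfrac{\lambda_3}{2}\|\Delta\|_{\l12}$ on an event of probability at least $1 - \epsilon$ once $\lambda_3 \ge \bar\lambda_3$. For $R_{\mathrm{pert}}$, Cauchy--Schwarz plus AM--GM give $\tfrac{2}{n}\langle R_{\mathrm{pert}}, \X\Delta\rangle \le \tfrac{1}{2n}\|\X\Delta\|_F^2 + \tfrac{2}{n}\|R_{\mathrm{pert}}\|_F^2$, and submultiplicativity gives $\|R_{\mathrm{pert}}\|_F \le (\|\X F^*\|_{op} + \|\W B^*\|_{op})\xi_n$. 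To handle $\|\W B^*\|_{op}$, I would write $\W = (\W\Sigma_W^{-1/2})\Sigma_W^{1/2}$, apply a Davidson--Szarek-type bound to the sub-Gaussian matrix $\W\Sigma_W^{-1/2}$ (identity-covariance rows, $K$ columns), and use $\|\Sigma_W^{1/2}B^*\|_{op}^2 = \lambda_{\max}(B^{*T}\Sigma_W B^*) = \Lambda_1$, producing $\|\W B^*\|_{op} \le \sqrt{n\Lambda_1}\bigl(1 + C\sqrt{K/n}\bigr)$ with probability at least $1 - 2e^{-cK}$. Putting the pieces together and absorbing $\tfrac{1}{2n}\|\X\Delta\|_F^2$ on the left leaves
\[
\tfrac{1}{2n}\|\X\Delta\|_F^2 + \tfrac{\lambda_3}{2}\|\Delta_{S^c\cdot}\|_{\l12} \;\le\; \tfrac{3\lambda_3}{2}\|\Delta_{S\cdot}\|_{\l12} + \tfrac{2}{n}\|R_{\mathrm{pert}}\|_F^2, \qquad(\star)
\]
with the calibration $\tfrac{2}{n}\|R_{\mathrm{pert}}\|_F^2 \lesssim (\wt\lambda_3)^2\, s_*/\kappa^2(s_*,4)$.

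The argument closes by a two-case dichotomy on $(\star)$. In the regime where $\tfrac{2}{n}\|R_{\mathrm{pert}}\|_F^2 \le \tfrac{\lambda_3}{4}\|\Delta_{S\cdot}\|_{\l12}$, $(\star)$ forces a cone condition $\|\Delta_{S^c\cdot}\|_{\l12} \le (7/2)\|\Delta_{S\cdot}\|_{\l12}$, so $\Delta \in \C(S,4)$; invoking $\kappa(s_*,4) > 0$ and the chain $\|\Delta_{S\cdot}\|_{\l12} \le \sqrt{s_*}\|\Delta_{S\cdot}\|_F \le \sqrt{s_*}\|\X\Delta\|_F/[\kappa(s_*,4)\sqrt n]$ yields $\|\Delta\|_{\l12} \lesssim \lambda_3 s_*/\kappa^2(s_*,4)$. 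In the opposite regime, $\|\Delta_{S\cdot}\|_{\l12}$ is itself bounded by a multiple of $\|R_{\mathrm{pert}}\|_F^2/(\lambda_3 n)$, substituting back into $(\star)$ bounds $\|\Delta_{S^c\cdot}\|_{\l12}$ by the same order, and summing gives $\|\Delta\|_{\l12} \lesssim (\wt\lambda_3)^2 s_*/[\lambda_3 \kappa^2(s_*,4)]$. The maximum of the two rates is the stated bound. The main obstacle is the constant bookkeeping in this dichotomy: the factor $\tfrac{\lambda_3}{2}$ in the H\"older noise bound and the factor $\tfrac{1}{2}$ in the AM--GM split of $\langle R_{\mathrm{pert}}, \X\Delta\rangle$ must both be tight enough that Case A actually lands inside the cone $\C(S,4)$ matching the RE hypothesis; everything else reduces to a clean algebraic identity or routine sub-Gaussian / random-matrix concentration.
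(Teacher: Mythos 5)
Your proposal is correct and follows essentially the same route as the paper's proof: the identical decomposition $\Y\wh P^{\perp}-\X\Ttheta=(\X F^*+\W B^*)(\wh P^{\perp}-P_{\B}^{\perp})+\E\wh P^{\perp}$ (the paper's Lemma \ref{lem_U}), the same union-bound control of $\max_j\|\X_j^T\E\|_2$ calibrating $\bar\lambda_3$ (Lemma \ref{lem_XE}), the same sub-Gaussian operator-norm bound on $\W B^*$ yielding the $\sqrt{\Lambda_1}(1+\sqrt{K/n})$ factor (Lemma \ref{lem_W^TW}), and the same two-case cone dichotomy producing the $\max\{\lambda_3,(\wt\lambda_3)^2/\lambda_3\}$ rate. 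The only differences are cosmetic: you absorb the perturbation inner product by AM--GM where the paper keeps it linear in $\|\X\Delta\|_F$, and you split cases on the size of $\|R_{\mathrm{pert}}\|_F^2$ relative to $\lambda_3\|\Delta_{S\cdot}\|_{\l12}$ where the paper splits on membership in $\C(S_*,4)$.
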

	%\begin{proof}
	%	The proof is deferred to Appendix \ref{sec_proof_thm_Theta}.
	%\end{proof}
	
	If $K$ is small, one can replace $\sqrt{K/n}$ in (\ref{def_lbd3_td}) by $\sqrt{K\log(n)/n}$ and the resulting probability of (\ref{rate_Theta_td}) will become $1-\epsilon - 2n^{-cK}$ which, by choosing $\epsilon= n^{-1}$, tends to one as $n\to \i$. The same argument is applicable to the subsequent theorems. 
	
	Theorem \ref{thm_Theta} holds for any estimator $\wh P$ of $P_{\B}$ with convergence rate $\|\wh P - P_{\B}\|_F \lesssim \xi_n$. The effect of $\wh P$ on the estimation error of $\wt\Theta$ is characterized by the term $(\wt \lambda_3)^2s_*/[\lambda_3 \kappa^2(s_*,4)]$ in (\ref{rate_Theta_td}) via the choice of $\lambda_3$. When $P_{\B}$ can be estimated very accurately, for instance when $\B$ is known,  $\xi_n$ is fast enough such that $\wt \lambda_3 \le \bar \lambda_3$. We can take $\lambda_3 = \bar \lambda_3$ to obtain the convergence rate $\bar\lambda_3s_*/\kappa^2(s_*,4)$. We refer to this as the oracle rate since it is the optimal rate for estimating $\Ttheta$ from $\Y P_{\B}^{\perp} = \X \Ttheta + \E P_{\B}^{\perp}$ when $B^*$ is known (cf. \cite{lounici2011}). 
	On the other hand, when $\wh P$ has a slow rate such that $\wt \lambda_3 > \bar \lambda_3$, one needs to take a larger $\lambda_3$ to achieve the best trade-off between the two terms in (\ref{rate_Theta_td}). It is easy to see that in this scenario the optimal $\lambda_3$  is equal to $\wt \lambda_3$ and the resulting convergence rate is $\wt \lambda_3 s_*/\kappa^2(s_*,4)$.
	
	\begin{remark}[On the benefit of group-lasso]
		As seen above, when $\wt \lambda_3 \leq \bar \lambda_3$, the convergence rate (\ref{rate_Theta_td}) reduces to the oracle rate $\bar\lambda_3s_*/\kappa^2(s_*,4)$. If $\log p = o(m)$ and $\max_j\wh \Sigma_{jj} = O(1)$ hold, (\ref{def_event_lbd3}) implies $\bar\lambda_3 = O(\sqrt{m/ n})$ by choosing $\epsilon=p^{-1}$. As a result, provided that $[\kappa(s_*,4)]^{-1}=  O(1)$, the average error per response satisfies $\sum_{j=1}^p[m^{-1}\sum_{\ell=1}^m (\wt \Theta_{j\ell} - \Ttheta_{j\ell})^2]^{1/2}=m^{-1/2}\|\wt \Theta - \Ttheta\|_{\l12}= O(s_*/\sqrt{n})$, which does not depend on logarithmic factors of the feature dimension $p$ and is faster than the standard rate of the lasso applied separately to each column of $\Y$. Such a phenomenon is known as the benefit of the group-lasso \citep{lounici2011}. Recall that we also use the group-lasso in our first step (\ref{est_F}) for estimating $\X F^*$. This benefit of the group-lasso remains and can be seen from the choice of $\lambda_1$ in (\ref{rate_lbd1}). Indeed, if $\log p = o(r_e(\Gamma_\eps))$ holds, by choosing $\epsilon = p^{-1}$ in (\ref{rate_lbd1}), the $\log p$ term in $\lambda_1$ is negligible. The quantity $r_e(\Gamma_\eps)$ is the effective rank of $\Gamma_{\eps}$ and it depends on the interplay of $B^{*T}\Sigma_W \B$ and $\Sigma_E$. If $\lambda_1(B^{*T}\Sigma_W \B)$ is small (e.g., upper bounded by a constant), then $r_e(\Gamma_\eps) \asymp m$, whereas if $\lambda_1(B^{*T}\Sigma_W \B) \asymp \lambda_K(B^{*T}\Sigma_W \B)\asymp m$, we have $r_e(\Gamma_\eps) \asymp K$ and $\log p = o(r_e(\Gamma_\eps))$ reduces to $\log p = o(K)$.
		
	\end{remark}

	In the following theorem, we establish non-asymptotic upper bounds of  the estimation error of our  estimator $\wh P_{\B}$ obtained from Section \ref{sec_est_P}. 
	The proof is based on a variant of the Davis-Kahan theorem \citep{Davis-Kahan-variant} together with careful control of the estimation error of $\wh\Sigma_{\eps}$. 
	Let $\Lambda_K$ denote the $K$th largest eigenvalue of $(B^*)^T\Sigma_W B^*$.
	
	\begin{thm}\label{thm_U}
		Under model (\ref{model}) and Assumption \ref{ass_error}, assume $ m \le e^n$. For some constants $c,c'>0$, one has
		\[
		\PP\left\{\|\wh P_{\B}- P_{\B}\|_F \le  c \cdot Rem(P_{\B})\right\} \ge 1-\epsilon'-5m^{-c'},
		\]
		where, with $V_{\eps}$ and $\Gamma_{\eps}$ defined in (\ref{def_V_eps}), 
		\begin{align}\label{def_RU}\nonumber
		Rem(P_{\B})  &=  \!\!\! \inf_{\substack{(\Psi_0, L_0):\\
				\Psi_0 + L_0 = F^*}}  {1\over \Lambda_K} \bigg \{ V_\eps\sqrt{\log m\over n} +  {\lambda_2\sigma_1 \over \lambda_2 + \sigma_1}{\|L_0\|_F^2}+\sum_{k=1}^q {\sigma_k \over \sigma_k + \lambda_2}{V_\eps \over n}\\
		&\hspace{1.2cm}    + {\lambda_2(\sigma_1 + \lambda_2) \over (\sigma_q + \lambda_2)^2} \max_{1\le j\le p} \wh \Sigma_{jj}\left(1 + {\log(p/\epsilon') \over r_e(\Gamma_{\eps})}\right){s_0\over [\kappa(s_0,4)]^2}{V_\eps \over n}
		\bigg \}.
		\end{align}
	\end{thm}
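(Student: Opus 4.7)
My plan is to apply a Frobenius-norm $\sin\Theta$ perturbation bound to reduce the claim to controlling $\|\wh\Sigma_{\eps} - \Sigma_{\eps}\|_F$, and then to decompose the perturbation into a sample-covariance concentration piece plus contributions governed by the prediction error $\X\wh F - \X F^*$. Under the homoscedastic assumption, identity (\ref{eq_Sigma_eps}) together with $\rank(B^*) = K$ and $\rank(\Sigma_W) = K$ shows that the eigenvalues of $\Sigma_{\eps}$ are $\Lambda_1+\tau^2 \geq \cdots \geq \Lambda_K+\tau^2 > \tau^2 = \cdots = \tau^2$, so the gap between the $K$th and $(K+1)$th eigenvalues equals $\Lambda_K$, and the span of the top $K$ eigenvectors of $\Sigma_{\eps}$ coincides with the row space of $B^*$. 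A Davis--Kahan variant (e.g.\ \cite{Davis-Kahan-variant}) then gives
\begin{equation*}
\|\wh P_{\B} - P_{\B}\|_F \;\lesssim\; \frac{\|\wh\Sigma_{\eps} - \Sigma_{\eps}\|_F}{\Lambda_K}.
\end{equation*}

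Writing $\Y - \X\wh F = \Eps + \X(F^* - \wh F)$, I would expand
\begin{equation*}
\wh\Sigma_{\eps} - \Sigma_{\eps} \;=\; \Bigl(\tfrac{1}{n}\Eps^T\Eps - \Sigma_{\eps}\Bigr) \;+\; \tfrac{1}{n}\bigl(\X(\wh F - F^*)\bigr)^T\X(\wh F - F^*) \;+\; \tfrac{2}{n}\,\mathrm{Sym}\bigl(\Eps^T\X(F^*-\wh F)\bigr),
\end{equation*}
and bound each term separately via the triangle inequality. For the first term, since the rows of $\Eps = (\B)^T\W + \E$ are i.i.d.\ sub-Gaussian with covariance $\Sigma_{\eps}$ whose trace is dominated by $V_{\eps}$ (up to sub-Gaussian constants, as encoded in $\Gamma_{\eps}$), a Hanson--Wright / Koltchinskii--Lounici type inequality yields $\|n^{-1}\Eps^T\Eps - \Sigma_{\eps}\|_F \lesssim V_{\eps}\sqrt{\log m/n}$ on an event of probability at least $1 - m^{-c'}$; the assumption $m \leq e^n$ ensures the $\sqrt{\log m/n}$ scale dominates the higher-order $\log m/n$ term. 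For the quadratic piece, its Frobenius norm equals $n^{-1}\|\X(\wh F - F^*)\|_F^2$, and applying Corollary \ref{cor_pred} together with taking the infimum over $(\Psi_0, L_0)$ with $\Psi_0 + L_0 = F^*$ produces exactly the ridge-bias, ridge-variance, and group-lasso terms appearing inside $Rem(P_{\B})$.

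The main obstacle is the cross term $n^{-1}\Eps^T\X(F^*-\wh F)$, because $\wh F$ depends on $\Eps$ in a non-trivial way and a naive Cauchy--Schwarz against $\|\Eps\|_{op}$ can be too loose when $m \gg n$. The key idea is to exploit the explicit representation (\ref{fit}) from Lemma \ref{lem_solution}, which gives
\begin{equation*}
\X(F^* - \wh F) \;=\; Q_{\lambda_2}\X(F^* - \wh\Psi) \;-\; P_{\lambda_2}\Eps.
\end{equation*}
The piece involving $n^{-1}\Eps^T P_{\lambda_2}\Eps$ is controlled directly by sub-Gaussian quadratic-form concentration and matches (up to constants) the ridge-variance term already appearing in the second bound, so it is absorbed. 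The remaining piece $n^{-1}\Eps^T Q_{\lambda_2}\X(F^* - \wh\Psi)$ is handled by the same KKT/dual-norm argument used in the proof of Theorem \ref{thm_pred}: bound it by $\lambda_1\|\wh\Psi - \Psi_0\|_{\l12}$ on the event that defines $\lambda_1$, then invoke the design impact factor to turn it into a multiple of $n^{-1}\|Q_{\lambda_2}\X(\wh\Psi - \Psi_0)\|_F^2$ via AM--GM, which is again absorbed into the group-lasso term of $Rem(P_{\B})$. Assembling the three bounds, dividing by $\Lambda_K$, and taking the infimum over $(\Psi_0, L_0)$ yields the claim; the delicate point throughout will be keeping the cross-term bound consistent with the algebraic form of $Rem(P_{\B})$, which is precisely why Lemma \ref{lem_solution} must be invoked rather than a black-box Cauchy--Schwarz.
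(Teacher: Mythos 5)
Your proposal follows essentially the same route as the paper's proof: a Frobenius-norm Davis--Kahan reduction with eigengap $\Lambda_K$, the decomposition of $\wh\Sigma_{\eps}-\Sigma_{\eps}$ into a sample-covariance concentration term, a quadratic term, and a cross term, and control of the cross term via the explicit representation $\X\wh F - \X F^* = P_{\lambda_2}\Eps - Q_{\lambda_2}\X L_0 + Q_{\lambda_2}\X(\wh\Psi-\Psi_0)$ from Lemma \ref{lem_solution} combined with the dual-norm/design-impact-factor machinery of Theorem \ref{thm_pred}. The only point to tighten is that your dual-norm bound on $n^{-1}\Eps^TQ_{\lambda_2}\X(F^*-\wh\Psi)$ covers only the $\wh\Psi-\Psi_0$ part; the remaining piece $n^{-1}\Eps^TQ_{\lambda_2}\X L_0$ must be bounded separately by a sub-Gaussian quadratic-form inequality (as in Lemma \ref{lem_P_eps}) and absorbed into the ridge-bias and $V_\eps\sqrt{\log m / n}$ terms, exactly as the paper does.
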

	%\begin{proof}
	%	The proof is deferred to Appendix \ref{sec_proof_thm_U}.
	%\end{proof}
	Recall that $\wh P_{\B}$ relies on the estimates of both $\Sigma_{\eps}$ and $\X F^*$. The first term $V_{\eps} \sqrt{\log m/n}$ in $Rem(P_{\B})$ is the oracle error of estimating $\Sigma_{\eps}$ in Frobenius norm even if $\X F^*$ were known. The other three terms in $Rem(P_{\B})$ originate from the errors of estimating $\X F^*$ in Corollary \ref{cor_pred}. 
	
	When $\wt\Theta$ is obtained from (\ref{est_Theta}) by using $\wh P_{\B}$, combining Theorem \ref{thm_Theta} and Theorem \ref{thm_U} yields the final rate of $\|\wt\Theta - \Ttheta\|_{\l12}$ with explicit dependency on all quantities. To simplify its expression, we introduce the parameter space 
	\begin{equation}\label{def_par}
	(\Ps, L^*) \in \left\{
	(\Psi, L): \Psi + L = F^*,  \|\Psi\|_{\ell_0/\ell_2} \le s_*, \|L\|_F^2 \le R_*
	\right\}
	\end{equation}
	for some $R_*\geq 0$. Without loss of generality, we standardize the design matrix such that $\wh\Sigma_{jj} = 1$ for $1\le j\le p$.  We assume the following conditions.

	%Recall that $\sigma_1$ is the largest eigenvalue of $\wh\Sigma$.
	\begin{ass}\label{ass_rates}\mbox{}
		\begin{enumerate}
			\item[(a)] $[\kappa(s_*, 4)]^{-1} =O(1)$, $n^{-1}\|\X F^*\|_{op}^2 = O(m + s_*)$;
			\item[(b)] $\Lambda_1 \asymp \Lambda_K \asymp m$ with $\Lambda_1$ and $\Lambda_K$ being the first and $K$th largest eigenvalues of $(B^*)^T\Sigma_W B^*$.
		\end{enumerate}
	\end{ass}
	
	The verification of Assumption \ref{ass_rates} is deferred to Section \ref{sec_dis_cond}. Under Assumption \ref{ass_rates}, the following Corollary \ref{cor_rate}  simplifies the rates of $\|\wt \Theta - \Ttheta\|_{\l12}$ by combining Theorem \ref{thm_Theta} with Theorem \ref{thm_U}. For two sequences $a_n$ and $b_n$, we write  $a_n \lessapprox b_n$ for $a_n = O(b_n)$  up to multiplicative logarithmic factors of $m$ or $p$. Recall that $q = \rank(\X)$.
	
	\begin{cor}\label{cor_rate}
		Under model (\ref{model}) and Assumptions \ref{ass_error} \& \ref{ass_rates}, assume $\kappa(s_*, 4) > 0$ and $K = O(n)$. For any $(\Ps, L^*)$ satisfying (\ref{def_par}), there exists a suitable choice of $\lambda_2$ in (\ref{est_F}) such that the following holds with probability tending to one, 
		\begin{align}\label{rate_theta_simp}
		{1\over \sqrt m}\|\wt \Theta - \Ttheta\|_{\l12} 
		~ \lessapprox ~ \max\left\{  {s_* \over \sqrt{n}}, ~
		\sqrt{s_*(m + s_*)\over  m}\cdot {\rm Err}(P_{\B})\right\},
		\end{align}
		where 
		\begin{align*}
		{\rm Err}(P_{\B}) = \min\Bigg\{
		&{\sigma_1 R_* \over m} + {Ks_*\over n}, {qK\over n} ,\sqrt{{(p+\sigma_1s_*)K R_*\over nm}} + {Ks_*\over n}
		\Bigg\} + {K\over \sqrt n}.
		\end{align*}
	\end{cor}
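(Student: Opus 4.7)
The plan is to combine Theorem~\ref{thm_Theta} (applied with $\wh P = \wh P_{\B}$) with Theorem~\ref{thm_U} (which supplies the event $\|\wh P_{\B} - P_{\B}\|_F \lesssim Rem(P_{\B})$ with probability tending to one), and then reduce all the resulting expressions using Assumption~\ref{ass_rates} together with the standardization $\wh\Sigma_{jj} = 1$. I would take $\xi_n = Rem(P_{\B})$ and choose $\lambda_3 = \max(\bar\lambda_3, \wt\lambda_3)$, which is admissible since $\lambda_3 \ge \bar\lambda_3$, and which balances the two branches in (\ref{rate_Theta_td}). This gives the clean starting point
\[
\|\wt\Theta - \Ttheta\|_{\l12} \lesssim \max(\bar\lambda_3, \wt\lambda_3)\cdot s_* / \kappa^2(s_*, 4).
\]

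Next, I would simplify $\bar\lambda_3$ and $\wt\lambda_3$ under the assumptions. Because $\wh\Sigma_{jj}=1$ and logarithmic factors in $p$ are absorbed by $\lessapprox$, (\ref{def_event_lbd3}) yields $\bar\lambda_3 \lessapprox \sqrt{m/n}$. By Assumption~\ref{ass_rates}(a)--(b) and $K = O(n)$, the bracket in (\ref{def_lbd3_td}) satisfies $n^{-1/2}\|\X F^*\|_{op} + \sqrt{\Lambda_1}(1+\sqrt{K/n}) \asymp \sqrt{m+s_*}$, so $\wt\lambda_3 \lessapprox \sqrt{(m+s_*)/s_*}\,\xi_n$. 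Using $\kappa(s_*,4)^{-1} = O(1)$ and dividing by $\sqrt m$, the two regimes collapse to $s_*/\sqrt n$ and $\sqrt{s_*(m+s_*)/m}\cdot\xi_n$, which matches the outer max in (\ref{rate_theta_simp}). It therefore suffices to show $\xi_n \lessapprox \mathrm{Err}(P_{\B})$ for a suitable $\lambda_2$.

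To obtain $\mathrm{Err}(P_{\B})$, I would control $Rem(P_{\B})$ term by term and optimize over the choice of $\lambda_2$. With $V_\eps \asymp Km$ and $\Lambda_K \asymp m$ (Assumption~\ref{ass_rates}(b)), the first (oracle) term becomes $V_\eps\sqrt{\log m/n}/\Lambda_K \lessapprox K/\sqrt n$, yielding the additive $K/\sqrt n$ in $\mathrm{Err}(P_{\B})$. The remaining three terms combine ridge bias, ridge variance, and the group-lasso remainder, and each of the three candidate bounds inside the $\min$ in $\mathrm{Err}(P_{\B})$ corresponds to a distinct regime: sending $\lambda_2\to\infty$ kills the variance and group-lasso pieces but leaves bias $\sigma_1 R_*/m$ and the lasso remainder $\lessapprox Ks_*/n$, giving the first option; sending $\lambda_2\to 0$ (when $\P$ exists) kills bias and the lasso remainder while the variance sum reduces to $\sum_{k=1}^q 1\cdot V_\eps/(nm) \asymp qK/n$, giving the second option; and an intermediate choice $\lambda_2$ of order $\sqrt{p V_\eps/(nR_*)}$ balances the ridge bias against the variance and lasso remainder, producing the third option $\sqrt{(p+\sigma_1 s_*)KR_*/(nm)} + Ks_*/n$. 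Taking the minimum across these three regimes delivers the claimed $\mathrm{Err}(P_{\B})$.

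The main obstacle is the joint optimization in the last paragraph: the lasso remainder depends on $\lambda_1$ through (\ref{rate_lbd1}), and $\lambda_1$ itself depends on $\lambda_2$ via $\max_j M_{jj}$, so propagating (\ref{Mjj}) cleanly through $Rem(P_{\B})$ and verifying that the intermediate $\lambda_2$ genuinely achieves the stated bound requires care in keeping track of the factors $\sigma_1/(\sigma_q+\lambda_2)$ and $\lambda_2/(\sigma_q+\lambda_2)$. Once these dependencies are isolated and each of the three regimes is checked separately, the final rate follows by substituting $\xi_n \lessapprox \mathrm{Err}(P_{\B})$ into the simplified bound from the second paragraph.
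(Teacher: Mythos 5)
Your proposal follows essentially the same route as the paper's proof: combine Theorem \ref{thm_Theta} with Theorem \ref{thm_U} by taking $\xi_n = Rem(P_{\B})$ and $\lambda_3=\max(\bar\lambda_3,\wt\lambda_3)$, simplify $\bar\lambda_3$ and $\wt\lambda_3$ via Assumption \ref{ass_rates} and $V_\eps\asymp Km$, $\Lambda_K\asymp m$, and obtain the three branches of ${\rm Err}(P_{\B})$ from the limits $\lambda_2\to\infty$, $\lambda_2\to 0$ and an intermediate balancing choice. The one slip is the stated order of the intermediate $\lambda_2$: balancing the ridge bias $\lambda_2 R_*/m$ against both the variance $pK/(\lambda_2 n)$ and the $\sigma_1 Ks_*/(\lambda_2 n)$ part of the group-lasso remainder requires $\lambda_2\asymp\sqrt{(p+\sigma_1 s_*)Km/(nR_*)}$ rather than $\sqrt{pV_\eps/(nR_*)}\asymp\sqrt{pKm/(nR_*)}$ --- with your choice the lasso remainder contributes $\sigma_1 s_*\sqrt{KR_*/(pnm)}$, which exceeds the third option whenever $\sigma_1 s_*>p$.
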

	%\begin{proof}
	%	The proof is deferred to Appendix \ref{sec_proof_cor_rate}.
	%\end{proof}
	In view of (\ref{rate_theta_simp}), $s_*/\sqrt{n}$ is the oracle rate for estimating $\Ttheta$ as discussed after Theorem \ref{thm_Theta}. The term ${\rm Err}(P_{\B})$  quantifies the minimum price to pay for estimating $P_{\B}$ by $\wh P_{\B}$ over all choices of $\lambda_2$. 
	%and it is the minimum of three error terms which are related to the estimation of $\X F^*$.  
	Recalling that ${\rm Rem}(P_{\B})$ in Theorem \ref{thm_U} depends on the choice of the tuning parameter $\lambda_2$, the derivation of  ${\rm Err}(P_{\B})$ minimizes ${\rm Rem}(P_{\B})$ with respect to $\lambda_2$. The three error terms in ${\rm Err}(P_{\B})$ correspond to different choices of $\lambda_2$ depending on the interplay of the terms in ${\rm Rem}(P_{\B})$ (see the proof of Corollary \ref{cor_rate} for more details).	To facilitate understanding, we further simplify (\ref{rate_theta_simp}) in low- and high-dimensional settings.

	\begin{remark}[Further simplified rates of $\|\wt \Theta - \Ttheta\|_{\l12}$]\label{rem_rate_Theta}\mbox{}
		
		(i) Suppose $p  < n$, $p \asymp s_*$, $\sigma_1  = O(1)$ and $K = O(\sqrt{p} \wedge \sqrt{n/p}\wedge m)$. Then (\ref{rate_theta_simp}) becomes
		\begin{alignat*}{2}
		{1\over \sqrt m}\|\wt \Theta - \Ttheta\|_{\l12} 
		&~\lessapprox ~ {p\over \sqrt n},\qquad&&\text{when }p = O(m);\\
		{1\over \sqrt m}\|\wt \Theta - \Ttheta\|_{\l12} 
		&~\lessapprox ~  {p \over \sqrt  n} + \left({p \over \sqrt  n}\right)^{2} ,\qquad&&\text{when $m=O(1)$}. 
		\end{alignat*}
		Recall that the oracle rate in this case is $p/\sqrt n$. As long as $p/\sqrt n = o(1)$ which is the minimum requirement for consistent estimation of $\Ttheta$ in $\l12$ norm, our estimator $\wt \Theta$ achieves the oracle rate. If $m$ grows, we also allow $K$ to grow but no faster than $\sqrt{p} \wedge \sqrt{n/p}\wedge m$.
		
		(ii) Suppose $p \ge n$, $s_*<n$ and $K=O(\sqrt{s_*} \wedge \sqrt{n/s_*} \wedge m)$. The upper bound in (\ref{rate_theta_simp}) becomes
		\begin{alignat*}{2}
		%&{1\over \sqrt m}\|\wt \Theta - \Ttheta\|_{\l12}  \lessapprox &&\\ 
		&\quad     {s_* \over \sqrt{n}} + \sqrt{s_*R_* \over m} \min\left\{\sigma_1\sqrt{R_* \over m}, ~\sqrt{{(p+\sigma_1s_*)K\over n}}\right\}, \quad &&\text{if }s_* = O(m);\\
		&\quad  {s_* \over \sqrt{n}} + \left({s_* \over \sqrt{n}}\right)^2 +s_*\sqrt{R_*}\min\left\{\sigma_1\sqrt{R_*}, ~\sqrt{{p+\sigma_1s_*\over n}}\right\}, \quad &&\text{if $m=O(1)$}. 
		\end{alignat*}
		In high-dimensional case, the dimension $m$ plays a more significant role. When $m$ is fixed, one needs 
		$\sigma_1 s_* R_* = o(1)$ or $s_*\sqrt{R_*(p+\sigma_1s_*)}= o(\sqrt n)$ for estimation consistency. This requirement is much more relaxed when $s_* = O(m)$ as $m$ tends to infinity. The benefit of a large $m$ can be viewed as the blessing of dimensionality. 
		In the sequel, we focus on $s_*=O(m)$, which leads to the following sub-cases:    
		\begin{alignat*}{2}
		{1\over \sqrt m}\|\wt \Theta - \Ttheta\|_{\l12} 
		&\lessapprox   {s_* \over \sqrt{n}} + {\sqrt{s_*} \sigma_1R_* \over m},\quad &&\text{if} ~~{R_* \over m}\leq {(p+\sigma_1s_*)K\over n \sigma_1^2}\\
		{1\over \sqrt m}\|\wt \Theta - \Ttheta\|_{\l12} 
		&\lessapprox    {s_* \over \sqrt{n}} + \!\sqrt{s_*(p+\sigma_1s_*)KR_*\over nm},\quad  &&\text{if}~~{R_* \over m}\geq {(p+\sigma_1s_*)K\over n \sigma_1^2}. 
		\end{alignat*}
		Intuitively, the first case is more likely to occur if $\sigma_1$, the largest eigenvalue of $\wh\Sigma$, has moderate magnitude, such as $\sigma_1= O(p/n)$.  We refer to Section \ref{sec_dis_cond} for more comments on this order of $\sigma_1$. In this case, assuming $m \asymp n^{\alpha}$ for some constant $\alpha \ge  1/2$, the rate matches the oracle rate $s_*/\sqrt{n}$ if $\sigma_1R_* = O(\sqrt{s_*n^{2\alpha-1}})$. The larger $\alpha$ is, the weaker the requirement on $R_*$ becomes. On the other hand, when $\wh\Sigma$ has spiked eigenvalues, for instance $\sigma_1 \asymp p$, the second case in the display above is more likely to hold. In this case, assuming $\sigma_1 \asymp p$ and $K=O(1)$, our estimator $\wt \Theta$ achieves the oracle rate $ s_*/\sqrt{n}$ if $R_* = O(m/p)$. In Section \ref{sec_dis_cond}, we provide examples under which $R_* = O(m/p)$ holds.
		
	\end{remark}

	\subsection{A special case under the condition $\Ps P_{\B} = 0$}
	
	As seen in Remark \ref{rem_ident_Psi}, if $\Ps P_{\B} = 0$ holds, then $\Ps=\Ttheta$ is identifiable. The estimator $\wh \Psi$ obtained in (\ref{est_F}) can be viewed as an initial estimator of $\Ps$. The convergence rate of $\|\wh \Psi - \Ps\|_{\l12}$ is shown in Lemma \ref{lem_pred_theta} of Appendix \ref{sec_proof_lem_pred_theta}. Because of $\Ps=\Ttheta$, $\wt \Theta$ can be viewed as a refined estimator of $\Ps$ and its rate of convergence has been analyzed in Theorem \ref{thm_Theta} and Corollary \ref{cor_rate}. 
	In the following remark, we elaborate the improvement of  $\wt \Theta$ over the initial estimator $\wh\Psi$ in terms of their convergence rates. Empirical comparisons of these two estimators are presented in Section \ref{sec_simulation}. 
	
	\begin{remark}[Comparison of $\wh\Psi$ and $\wt\Theta$]\label{rem_rate_theta_hat}
		Assume conditions of Corollary \ref{cor_pred} and
		Assumption \ref{ass_rates} hold. With suitable choices of $\lambda_1$ and $\lambda_2$, $\wh\Psi$ obtained from (\ref{est_F}) satisfies  
		\begin{equation}\label{rate_theta_hat}
		{1\over \sqrt m} \|\wh\Psi - \Ps\|_{\l12} \lessapprox  {s_*\sqrt{K} \over \sqrt n } + \sqrt{s_*}\sqrt{{\sigma_1R_* \over m}}.
		\end{equation}
		Comparing this rate to (\ref{rate_theta_simp}) (notice that $\Ps = \Ttheta$), the advantage of $\wt\Theta$ over the initial estimator $\wh\Psi$ is substantial. For instance, in the low-dimensional case (i) of Remark \ref{rem_rate_Theta},  we have $m^{-1/2}\|\wt\Theta - \Ps\|_{\l12} \lessapprox p/\sqrt{n}$ provided that $p/\sqrt{n}=o(1)$. In contrast, $m^{-1/2}\|\wh\Psi - \Ps\|_{\l12} \lessapprox p\sqrt{K/n}$ which has an extra $\sqrt{K}$ factor even if $\sigma_1R_* / m$ is sufficiently small. 
		In the high-dimensional case (ii), one has 
		\[
		{1\over \sqrt m}\|\wt \Theta - \Ps\|_{\l12} 
		\lessapprox   {s_* \over \sqrt{n}} + \sqrt{s_*(m+s_*)\over m} \left({\sigma_1R_* \over m} + \sqrt{s_*\over n}\right)
		\]
		which is always faster than (\ref{rate_theta_hat}) provided that $\sigma_1R_* = o(m / s_*)$ and $s_* = O(mK)$. Note that in (\ref{rate_theta_hat}) we need $\sigma_1R_* = o(m/s_*)$ for the consistency of $\wh \Psi$. For further illustration, suppose $s_*=O(m)$, $m\asymp n^\alpha$ for some $\alpha \ge 1/2$ and $\sigma_1R_*  = O(\sqrt{s_* n^{2\alpha - 1}})$, then 
		$m^{-1/2}\|\wt \Theta - \Ps\|_{\l12}  \lessapprox {s_* / \sqrt{n}}$ corresponds to the oracle rate, whereas (\ref{rate_theta_hat}) becomes $m^{-1/2}\|\wh \Psi - \Ps\|_{\l12}  \lessapprox s_*(K/n)^{1/2}+\sqrt{s_*}(s_*/n)^{1/4}$. 
	\end{remark}
	
	\subsection{Validity of Assumption \ref{ass_rates} and conditions in Remark \ref{rem_rate_Theta}}\label{sec_dis_cond}
	In this section we provide theoretical justifications for Assumption \ref{ass_rates} as well as some conditions on $\sigma_1$ and the radius $R_*$ that we mentioned in Remark \ref{rem_rate_Theta}.
	%though it is weaker than commonly assumed conditions in the literature of the surrogate variable analysis (SVA), see, for instance, \cite{wang2017, Lee2017, McKennan19}. 

	Part $(a)$ of Assumption \ref{ass_rates} contains standard conditions on the design matrix. Suppose the rows of $\X\Sigma^{-1/2}$ are i.i.d. sub-Gaussian random vectors with bounded sub-Gaussian constant. The validity of $[\kappa(s_*, 4)^{-1} =O(1)$ is already discussed in Remark \ref{rem_design_impact_factor}. Regarding condition $n^{-1}\|\X F^*\|_{op}^2 = O(m + s_*)$, suppose $s_* = O(n)$, $K = O(n)$ and $\lambda_1(\Sigma_Z) = O(1)$. We show in Lemma \ref{lem_cond} of Appendix \ref{sec_proof_auxiliary} that $n^{-1}\|\X F^*\|_{op}^2 = O_p(m + s_*)$ provided that $\|\Ps\|_{op}^2 = O(s_*+m)$, $\|B^*\|^2_{op} = O(m)$ and $\|\Sigma_{S_*S_*}\|_{op} = O(1)$ where $S_*$ is the set of non-zero rows of $\Ps$. Recall that $\|\Ps\|_{\ell_0 / \ell_2} \le s_*$ and $\B \in \RR^{K\times m}$ with $K< m$, $\|\Ps\|_{op}^2 =O(m + s_*)$ and $\|B^*\|^2_{op} = O(m)$ hold when either $\|\Ps\|_{\i} = O(1)$ and $\|B^*\|_{\i} = O(1)$ or entries of $\Ps$ and $\B$ are i.i.d. samples from a mean-zero distribution with bounded fourth moment \citep{Bai-Yin}.

	Condition $(b)$ is standard when $m$ (and also $K$) is fixed. When $m$ grows with $n$, we note that $\Eps = \W B^*+\E$ follows a factor model where $\W$ is the matrix of $K$ stochastic factors and $B^*$ is the factor loading matrix. Condition $(b)$ is known as the  pervasiveness assumption in the factor model literature for identification and consistent estimation of the row space of the factor loading $B^*$ \citep{Bai-factor-model-03, fan2011, fan2013large, fan2017}. In particular, condition $(b)$ holds if $c\le \lambda_K(\Sigma_W) \le \lambda_1(\Sigma_W)\le C$ for some constants $c,C>0$, and the columns of $B^*$ are i.i.d. copies of a $K$-dimensional sub-Gaussian random vector whose covariance matrix has bounded eigenvalues. %We refer to the aforementioned literature for further discussion of this condition.  
	It is worth mentioning that this assumption is only used to simplify the order of $\wt\lambda_3$ in (\ref{def_lbd3_td}) and $Rem(P_{\B})$ in (\ref{def_RU}). If $\Lambda_1$ and $\Lambda_K$ have different rates, we can replace them by the corresponding rates and simplify the error bounds of $\wt \Theta$ accordingly. We also verify the empirical performance of our procedure in  Appendix \ref{sec_supp_sim} when some of $\Lambda_1,\ldots,\Lambda_K$ are moderate or small.
	%This condition is also commonly assumed in the aforementioned SVA literature. 

	Regarding conditions on $\sigma_1$ in part (ii) of Remark \ref{rem_rate_Theta}, when $\|\Sigma\|_{op} = O(1)$, one has $\sigma_1 = O_p(p/n)$ by $\| \wh\Sigma - \Sigma\|_{op} =O_p(\sqrt{p/n} \vee (p/n))$ from \cite{vershynin_2012}. To see when $\sigma_1 \asymp p$ holds, suppose $\|\Sigma\|_{op} \asymp p$ and $\log p = o(n)$. Since $\|\wh\Sigma - \Sigma\|_{op} \le \|\wh \Sigma - \Sigma\|_F = O_p(p\sqrt{\log p / n})$ (for instance, see the argument in Lemma \ref{lem_eps_eps} of Appendix \ref{sec_proof_auxiliary_lemma}), one can deduce that $\sigma_1 \asymp p$ with high probability.

	In the end, we comment on the magnitude of $R_*$, the upper bound of $\|L^*\|^2_F$ in (\ref{def_par}). % Since $L^* = A^*B^*$ with $A^* = \{\EE[XX^T]\}^{-1}\EE[XZ^T]$, we can interpret $L^*$ as the effect of the hidden variables $Z$ on the response that can be explained by a linear combination of $X$. 
	In the  mediation analysis via structural equation models, $L^*= A^*B^*$ is known as the indirect effect of $X$ on $Y$. Under model (\ref{model}), the estimation of the non-sparse coefficient matrix $\Ps+L^*$ becomes challenging in high dimension when $\|L^*\|_{F}^2$ is large, and the estimation error is further accumulated in the rates of the final estimator $\wt \Theta$ as shown in Corollary \ref{cor_rate}. Thus, intuitively, $\|L^*\|_{F}^2$ cannot grow too fast in order to guarantee the consistency of $\wt \Theta$. This can be compared to the standard results in linear regression. For instance, in linear regression $ \y = \X\beta + {\bm \epsilon}$ where $\y\in\RR^n$ and $\beta\in \RR^p$ is dense, one needs $\|\beta\|_2^2= o(1)$ for consistent estimation when $p>n$; see \cite{Hsu2014, dicker2016} for the minimax lower bound.

	In the following, we discuss under what conditions $\|L^*\|_{F}^2$ is small and how small it can be. Provided that $\|\B\|^2_{op} = O(m)$, we first have $\|L^*\|_F^2/m \le \|A^*\|_F^2 \|B^*\|_{op}^2/m =O(\|A^*\|_{F}^2)$. Since 
	$A^* = \Sigma^{-1}\Cov(X,Z)$, intuitively $\|A^*\|_{F}$ is small when either (1) $\Cov(X,Z)$ is close to zero or (2) $\Sigma$ is ``large". 
	
	To show when case (1) holds, suppose the smallest eigenvalue of $\Sigma$ is bounded away from zero. When 
	$\Cov(X,Z)$ is sparse with $\|\Cov(X,Z)\|_{\ell_0}=O(1)$ and $\max_{j,k}|\Cov(X_j, Z_k)| \lesssim \xi $, one has $\|A^*\|_F^2 = O(\xi^2)$ which vanishes if $\xi = o(1)$. %When $\Cov(X,Z)$ is dense with $\|\Cov(X,Z)\|_{\ell_0} \ge c'(pK)$ for some small constant $c'>0$, if the range of the nonzero entries of $\Cov(X,Z)$ is bounded, an application of P{\'o}lya-Szeg{\"o}’s inequality (see, for instance, \cite{reverse_CS}) yields $\|\Cov(X,Z)\|_F \lesssim \|\Cov(X,Z)\|_{\ell_1/\ell_1} /\sqrt{pK}$. Therefore, provided that	$\|\Cov(X,Z)\|_{\ell_1/\ell_1}=O(1)$, one has $\|L^*\|_F^2/m = O(\|A^*\|_{F}^2) =O(1/(pK)).$

	To show when case (2) holds, we consider the setting that $X$ follows an approximate factor model $X = \Gamma F + W'$, where the noise $W'$ and the factor $F$ are independent, $\Cov(F)$ and $\Cov(W')$ have bounded eigenvalues and the loading matrix $\Gamma \in \RR^{p\times \bar K}$ satisfies the pervasiveness assumption $\lambda_{\bar K}(\Gamma \Gamma^T) \gtrsim p$. %We refer to the fourth paragraph of this section for discussions of this pervasiveness assumption. 
	The number of factors, $\bar K$, is often much smaller than $p$. In this scenario, $\Sigma = \Gamma \Cov(F) \Gamma^T + \Cov(W')$ has $\bar K$ spiked eigenvalues with order at least $p$. To show the order of $\|L^*\|_F^2$, we consider the eigen-decomposition of $\Sigma = \sum_{j=1}^p   d_j v_j v_j^T$ with $d_1 \ge \cdots \ge d_p$. Further write $V_{(\bar K)} = (v_1, \ldots, v_{\bar K})\in \RR^{p\times \bar K}$ and $V_{(-\bar K)} = (v_{\bar K+1}, \ldots, v_p)\in \RR^{p\times (p-\bar K)}$.  Provided that 
	\begin{equation}\label{eqVK}
	\left\|V_{(\bar K)}^T \Cov(X,Z)\right\|_{op} \le c\sqrt{p} ~~\textrm{and}~~\left\|V_{(-\bar K)}^T \Cov(X,Z)\right\|_{op} \le c/\sqrt{p},
	\end{equation}
	for some sufficiently small constant $c>0$, we obtain 
	\begin{align*}
	\|A^*\|_{op} &= \left\|\Sigma^{-1}\Cov(X,Z)\right\|_{op}\\
	&\le {1\over d_{\bar K}}\left\|V_{(\bar K)}^T \Cov(X,Z)\right\|_{op} + {1\over d_p}\left\|V_{(-\bar K)}^T \Cov(X,Z)\right\|_{op} = O(1/\sqrt{p}).
	\end{align*}
	We thus have$\|L^*\|_F^2 / m =O(\|A^*\|_F^2)= O(K/p)$. Also notice that this setting does not conflict with part (b) of Assumption \ref{ass_rates}. Indeed, provided that $c\le \lambda_K(\Sigma_Z) \le \lambda_1(\Sigma_Z)\le C$ and $cm \le \lambda_K(\B B^{*T})\le \lambda_1(\B B^{*T}) \le C m$ for some constants $c,C>0$, one can deduce $c/2\le \lambda_K(\Sigma_W) \le \lambda_1(\Sigma_W)\le C$ from  $\Sigma_W=\Sigma_Z-\Cov(Z,X)\Sigma^{-1}\Cov(X,Z)$. 
	
	Condition (\ref{eqVK}) requires that: (1) the order of $\|\Cov(X,Z)\|_{op}$ cannot be greater than $\sqrt{p}$; (2) the columns of $\Cov(X,Z)$ and $V_{(-\bar K)}$ are approximately orthogonal. From a practical perspective, under the structural equation model (\ref{modelZ}), condition (\ref{eqVK}) implies that the causal effect of $X$ on $Z$ (i.e., the matrix $A^*$) is weak due to the spiked eigenvalues of $\Sigma$ in high dimension. However, in view of the factor model $X = \Gamma F + W'$, the association between the hidden variable $Z$ and the low dimensional factor $F$ can be strong. 
	%Thus, requiring $\|L^*\|^2_F$ or $\|A^*\|_{op}$ small is reasonable for many practical situations and does not rule out the effect of hidden variables in model (\ref{model_1}). 
	Though $\|L^*\|_F$ or $\|A^*\|_{op}$ is required to be small when $p$ is large, our analysis allows nontrivial dependence between $Z$ and $F$, which could be reasonable in many practical situations.

	%As a result, the indirect effect of $X$ on $Y$ via $Z$ cannot be ignored when estimating $\Ttheta$. 

	\section{Extension to heteroscedastic noise}\label{sec_hetero}
	We have discussed the identifiability and estimation in model (\ref{model_1}) when the errors are homogeneous. In practice, the multivariate response $Y$ may correspond to measurement of different properties (e.g., phenotypes) whose values could differ in scales. To deal with this problem, in this section we extend the model by allowing heteroscedastic errors, $\Sigma_E = \diag(\tau_1^2, \ldots, \tau_m^2)$, and discuss how to modify our approach correspondingly.
	
	\subsection{Identifiability}
	
	From the identifiability in Section \ref{sec_ident}, we observe that the heteroscedasticity only affects the identification of $P_{\B}$ in step (2). When $\Sigma_E = \diag(\tau_1^2, \ldots, \tau_m^2)$, one has 
	\begin{equation}\label{eq_Sigma_eps_hetero}
	\Sigma_{\eps} = (B^*)^T\Sigma_W B^* + \diag(\tau_1^2, \ldots, \tau_m^2).
	\end{equation}
	In contrast to the homoscedastic case, the eigenspace of $\Sigma_{\eps}$ corresponding to the first $K$ eigenvalues, in general, no longer coincides with the row space of $B^*$. Consequently, one cannot identify $P_{\B}$ via the eigenspace of $\Sigma_{\eps}$ as in Section \ref{sec_ident}. To overcome this difficulty, we resort to a newly developed procedure called HeteroPCA proposed by \cite{hpca}. For completeness, we restate their procedure in Algorithm \ref{alg_hpca}. The main idea is to iteratively perform the singular value decomposition (SVD) on the estimates of $\Sigma_{\eps}$ to impute its diagonal. Under a mild incoherence condition on the row space of $B^*$, $P_{\B}$ can be recovered by applying Algorithm \ref{alg_hpca} to $\Sigma_{\eps}$. Thus $\Ttheta$ is identifiable from (\ref{model_Y_comp}). We  summarize the identifiability below in Proposition \ref{prop_ident_hetero}. 
	
	Recall that $P_{\B} = UU^T$ with $U := U(K)\in \RR^{m\times K}$ being the first $K$ right singular vectors of $B^*$, and $\Lambda_1$ and $\Lambda_K$ are the first and $K$th eigenvalues of $(B^*)^T\Sigma_W B^*$, respectively. Let $\{e_j\}_{j=1}^m$ denote the canonical basis of $\RR^m$. 
	\begin{prop}\label{prop_ident_hetero}
		Under model (\ref{model}), assume $\Sigma_E = \diag(\tau_1^2, \ldots, \tau_m^2)$ and $\rank(\Sigma_W) = K$. Further assume 
		\begin{equation}\label{cond_ic}
		{\Lambda_1 \over \Lambda_K} \max_{1\le j\le m}\|e_j^T U\|_2^2
		\le  C_U 
		\end{equation}
		for some constant $C_U >0$. Then $P_{\B}$ can be uniquely determined via Algorithm \ref{alg_hpca} with input $\wh\Sigma = \Sigma_{\eps}$, $r = K$ and some sufficiently large number of iterations $T$. As a result,	$\Ttheta$ is identifiable. 
	\end{prop}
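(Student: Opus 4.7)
The plan is to establish identifiability of $\Theta^*$ by proceeding in the same three steps outlined in Section \ref{sec_ident}, with only step (2) requiring modification to handle heteroscedasticity.

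\textbf{Step 1 (identify $F^*$ and hence $\Sigma_\eps$).} Exactly as in the homoscedastic case, $F^* = \Psi^* + L^*$ is the coefficient of the $L_2$ projection of $Y$ onto the linear span of $X$, and so is uniquely determined by the joint law of $(X,Y)$. Consequently $\eps = Y - (F^*)^T X$ is identifiable, and therefore so is $\Sigma_\eps = \Cov(\eps)$.

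\textbf{Step 2 (recover $P_{\B}$ from $\Sigma_\eps$ via HeteroPCA).} From (\ref{eq_Sigma_eps_hetero}) we may write
\begin{equation*}
\Sigma_\eps = M^* + D^*, \qquad M^* := (B^*)^T \Sigma_W B^*, \qquad D^* := \diag(\tau_1^2,\ldots,\tau_m^2).
\end{equation*}
Since $\rank(B^*) = K$ and $\rank(\Sigma_W) = K$, the matrix $M^*$ is a PSD matrix of rank exactly $K$ whose column (and row) space equals the row space of $B^*$, so $P_{\B} = UU^T$ with $U$ being the top-$K$ eigenvectors of $M^*$. If we could strip the diagonal perturbation $D^*$ from $\Sigma_\eps$, we would immediately recover $P_{\B}$. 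This is exactly what HeteroPCA is designed to do: at each iteration it replaces the diagonal of the current iterate by the diagonal of its best rank-$K$ approximation, while keeping the off-diagonal entries of $\Sigma_\eps$ intact. I would verify that $M^*$ itself is a fixed point of this iteration (its diagonal equals the diagonal of its own rank-$K$ SVD since it is already rank $K$), and then invoke the population-level convergence analysis of HeteroPCA from \cite{hpca}. The key quantitative input is the incoherence condition (\ref{cond_ic}): it bounds the entries of the diagonal perturbation relative to the spectral gap of $M^*$ (whose condition number is $\Lambda_1/\Lambda_K$), and this is precisely what is needed to guarantee that the iteration contracts to the fixed point $M^*$. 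Hence for $T$ sufficiently large, the output of Algorithm \ref{alg_hpca} with input $\Sigma_\eps$ and $r=K$ equals $M^*$, and its top $K$ eigenvectors uniquely determine $P_{\B} = UU^T$.

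\textbf{Step 3 (identify $\Theta^*$).} Given $P_{\B}$, multiplying model (\ref{model}) by $P_{\B}^\perp$ yields $P_{\B}^\perp Y = (\Theta^*)^T X + P_{\B}^\perp E$, as in (\ref{model_Y_comp}), since $B^* P_{\B}^\perp = 0$. Taking cross-covariance with $X$ and using $\Cov(X,E)=0$,
\begin{equation*}
\Theta^* = \bigl[\Cov(X)\bigr]^{-1} \Cov\bigl(X,\, P_{\B}^\perp Y\bigr),
\end{equation*}
so $\Theta^*$ is identifiable.

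\textbf{Main obstacle.} The nontrivial content is Step 2, namely the convergence of HeteroPCA to the correct fixed point. The homoscedastic argument fails outright because the eigenvectors of $\Sigma_\eps$ mix the low-rank signal and the heteroscedastic diagonal. The role of the incoherence condition (\ref{cond_ic}) is to ensure that no single coordinate of $U$ concentrates too much mass relative to the effective conditioning $\Lambda_1/\Lambda_K$; otherwise the diagonal imputation step cannot disentangle $M^*$ from $D^*$. Once this convergence (at the population level) is imported from \cite{hpca}, Steps 1 and 3 are routine consequences of the identification arguments already developed in Section \ref{sec_ident}.
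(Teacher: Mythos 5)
Your proposal is correct and follows essentially the same route as the paper: identify $F^*$ and hence $\Sigma_\eps$, apply the population-level HeteroPCA guarantee (the paper invokes its robust $\sin\Theta$ theorem, Theorem \ref{thm_rsintheta}, with $N=\Sigma_\eps$, $M=(B^*)^T\Sigma_W B^*$ and $Z=\diag(\tau_1^2,\ldots,\tau_m^2)$) to recover $P_{\B}$ exactly, and then repeat Step 3 of Proposition \ref{prop_ident}. The only refinement worth noting is that exact recovery at the population level comes from the fact that the perturbation $Z$ is purely diagonal, so $\|\Gamma(Z)\|_F=0$ and the $\sin\Theta$ bound vanishes as $T\to\infty$; condition (\ref{cond_ic}) constrains the coordinates of $U$ relative to $\Lambda_1/\Lambda_K$ rather than the size of the diagonal perturbation itself.
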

	An application of Theorem 3 in \cite{hpca} guarantees the recovery of $P_{\B}$ from $\Sigma_{\eps}$ and the  
	rest of the proof follows the same lines as the proof of Proposition \ref{prop_ident}. Compared to the homoscedastic case, we need an extra condition (\ref{cond_ic}) for identifying $P_{\B}$, which can be viewed as the price to pay for allowing heteroscedasticity. Inherent from the HeteroPCA algorithm, this condition prevents matrices $UQ$, for any orthogonal matrix $Q\in \RR^{K\times K}$, being well aligned with canonical basis vectors. Otherwise, one cannot separate $(B^*)^T\Sigma_W B^*$ from a diagonal matrix with unequal entries. We also note that $(m/K)\max_{1\le j\le m} \|e_j^TU\|_2^2$ is known as the \emph{incoherence constant} in the matrix completion literature \citep{ct,Candes}. When $\Lambda_1 \asymp \Lambda_K$, (\ref{cond_ic}) requires $\max_{1\le j\le m} \|e_j^TU\|_2^2 = O(1)$ which is much weaker than the typical incoherence condition $\max_{1\le j\le m} \|e_j^TU\|_2^2 = O(K/m)$, assumed in the matrix completion literature. Finally, Proposition 3 in \cite{hpca} implies that condition (\ref{cond_ic}) in general cannot be further relaxed in order to recover $P_{\B}$ from $\Sigma_{\eps}$. %We refer to \cite{hpca} for the details.
	
	\begin{remark}[Identification via PCA when $m\rightarrow\infty$]\label{rem_robust_PCA}
		We propose to use HeteroPCA to identify $P_{\B}$ in the presence of heteroscedasticity since it guarantees the identifiability of $\Ttheta$ for any $m> K$ under  condition (\ref{cond_ic}). Directly applying PCA to $\Sigma_{\eps}$ as in Section \ref{sec_ident} may not recover $P_{\B}$ hence not identify $\Ttheta$. 
		However, we remark that PCA is robust against the departure from homoscedasticity, and  even from the diagonal structure of $\Sigma_E$, when $\Lambda_K$, the $K$th eigenvalue of $(B^*)^T\Sigma_W B^*$, diverges fast enough as $m\to\infty$. Specifically, at the population level, applying PCA to $\Sigma_{\eps}$ identifies $P_{\B}$ asymptotically provided that
		$\sqrt{K}\|\Sigma_E\|_{op} = o(\Lambda_K)$, as $m\to\infty$. 
		This phenomenon is known as the blessing of dimensionality in the factor model literature  \citep{Bai-factor-model-03,fan2013large,fan2017}. Most of the SVA methods, for instance \cite{Lee2017, McKennan19}, rely on this robustness of PCA. Their methods thus only guarantee the asymptotic identifiability when $m\to\infty$, and are not applicable if $m$ is fixed.  
	\end{remark}
	
	{\begin{algorithm}[ht]
			\caption{HeteroPCA$(\wh\Sigma, r, T)$}\label{alg_hpca}
			\begin{algorithmic}[1]
				\State Input: matrix $\wh\Sigma$, rank $r$, number of iterations $T$.
				\State Set $N^{(0)}_{ij} = \wh \Sigma_{ij}$ for all $i\ne j$ and $N^{(0)}_{ii} = 0$. 
				\For{$t = 0,1,\ldots, T$}
				\State Calculate SVD: $N^{(t)} = \sum_i \lambda_i^{(t)}u_i^{(t)}(v_i^{(t)})^T$, where $\lambda_1^{(t)}\ge \lambda_2^{(t)}\ge \cdots \ge 0$.
				\State Let $\wt N^{(t)} = \sum_{i=1}^r\lambda_i^{(t)}u_i^{(t)}(v_i^{(t)})^T$.
				\State Set $N^{(t+1)}_{ij} = \wh\Sigma_{ij}$ for all $i\ne j$ and  $N^{(t+1)}_{ii} = \wt N^{(t)}_{ii}$. 
				\EndFor
				\State Output $U^{(T)} = [u_1^{(T)}, \ldots, u_r^{(T)}]$.
			\end{algorithmic}
	\end{algorithm} }
	
	\subsection{Estimation}\label{sec_hetero_est}
	
	Our estimation procedure under heteroscedasticity remains the same except estimating $U$ by HeteroPCA in Algorithm \ref{alg_hpca}. To be specific, we consider the estimator $\wt P_{\B} =\wt U \wt U^T$, where $\wt U$ is obtained from Algorithm \ref{alg_hpca} with the input $\wh\Sigma=\wh\Sigma_{\eps}$, $r=K$ and a large $T$ for the algorithm to converge. Our simulation reveals that $T=5$ usually yields satisfactory results. %Alternatively one can  keep iterating until the algorithm converges. 
	We still assume $K$ is known and defer the discussion of selecting $K$ to Section \ref{sec_select_K}.	We state the modified algorithm in Algorithm \ref{alg_2}, named as \underline{H}eteroscedastic \underline{HI}dden \underline{V}ariable adjustment \underline{E}stimation (H-HIVE). 
	
	{\begin{algorithm}[ht]
			\caption{The H-HIVE procedure for estimating $\Ttheta$.}\label{alg_2}
			\begin{algorithmic}[1]
				\Require Data $\X\in \RR^{n\times p}$ and $\Y \in \RR^{n\times m}$, rank $K$, number of iterations $T$, tuning parameters $\lambda_1$, $\lambda_2$ and $\lambda_3$.
				\State Estimate $\X\wh F$ with $\wh F = \wh \Psi + \wh L$ by solving (\ref{est_F}).
				\State Obtain $\wh \Sigma_{\eps}$ from (\ref{def_Sigma_eps_hat}). 
				\State Compute $\wt P_{\B} = \wt U\wt U^T$ with $\wt U$ obtained from HeteroPCA($\wh \Sigma_{\eps}, K,T$) in Algorithm \ref{alg_hpca}.
				\State Estimate $\Ttheta$ by solving (\ref{est_Theta}) with $\wt P_{\B}$ in lieu of $\wh P_{\B}$.
			\end{algorithmic}
	\end{algorithm} }
	
	\subsection{Statistical guarantees}
	
	Our estimation algorithm enjoys similar statistical guarantees as in Section \ref{sec_theory}. First, since $\wh F$ is the same estimator obtained from (\ref{est_F}), the deviation bounds of $\|\X \wh F - \X F^*\|_F$ in Theorem \ref{thm_pred} and Corollary \ref{cor_pred} still hold under Assumption \ref{ass_error}. Second, Theorem  \ref{thm_U_hetero} below provides non-asymptotic upper bounds for $\|\wt P_{\B} - P_{\B}\|_F$ with $\wt P_{\B} = \wt U \wt U^T$ and $\wt U$ obtained from Algorithm \ref{alg_hpca}.  Finally, since $\wt \Theta$ is obtained from the same criterion in (\ref{est_Theta}) by using $\wt P_{\B}$ in place of $\wh P_{\B}$,  the convergence rate of $\|\wt \Theta - \Ttheta\|_{\l12}$ immediately follows from the following theorem in conjunction with Theorem \ref{thm_Theta}.

	\begin{thm}\label{thm_U_hetero}
		Under the same conditions of Theorem \ref{thm_U},  assume condition (\ref{cond_ic}) holds and 
		$Rem(P_{\B}) \le c\sqrt{K}$ for some constant $c>0$ with $Rem(P_{\B})$ defined in (\ref{def_RU}). For some constants $c',c''>0$, the estimator $\wt P_{\B} =\wt U \wt U^T$ with $\wt U$ obtained from Algorithm \ref{alg_hpca} satisfies 
		\[
		\PP\left\{\|\wt P_{\B} - P_{\B}\|_F \le c'\cdot Rem(P_{\B})\right\} \ge 1-\epsilon' - 5m^{-c''}.
		\]	
	\end{thm}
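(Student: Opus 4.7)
The proof will mirror the structure of Theorem \ref{thm_U}, replacing the Davis--Kahan step by a HeteroPCA perturbation argument. The plan is to (i) bound $\wh\Sigma_{\eps}-\Sigma_{\eps}$ in an appropriate norm by $\Lambda_K\cdot Rem(P_{\B})$, and (ii) convert this into a Frobenius bound on $\wt U\wt U^T - UU^T$ via the HeteroPCA convergence theorem of \cite{hpca} combined with the robust $\sin\Theta$ result in Appendix \ref{sec_sintheta}.

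First I would decompose the pilot residual as $\Y-\X\wh F = \Eps + \X(F^*-\wh F)$ with $\Eps = \W B^* + \E$, which gives
\begin{align*}
\wh\Sigma_{\eps}-\Sigma_{\eps}
&= \Bigl(\tfrac{1}{n}\Eps^T\Eps-\Sigma_{\eps}\Bigr)
+ \tfrac{1}{n}\Eps^T\X(F^*-\wh F)
+ \tfrac{1}{n}(F^*-\wh F)^T\X^T\Eps \\
&\quad + \tfrac{1}{n}(F^*-\wh F)^T\X^T\X(F^*-\wh F).
\end{align*}
The first term is pure noise concentration controlled exactly as in the proof of Theorem \ref{thm_U}, producing the $V_\eps\sqrt{\log m/n}$ piece of $Rem(P_{\B})$. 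The last term equals $n^{-1}\|\X(\wh F-F^*)\|_F^2$ and is bounded by Corollary \ref{cor_pred}, which supplies the three remaining pieces of $\Lambda_K\cdot Rem(P_{\B})$. The cross terms are handled by Cauchy--Schwarz in conjunction with a sub-Gaussian concentration bound on $\|n^{-1}\X^T\Eps\|$, following the same template used in the homoscedastic proof. All estimates deliver Frobenius (not just operator) norm control, which is what the downstream $\sin\Theta$ argument requires.

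The second, more delicate step converts the perturbation bound into $\|\wt U\wt U^T-UU^T\|_F$. The crucial observation is that $\Sigma_{\eps}-\Sigma_B$ is \emph{diagonal} under heteroscedastic errors, where $\Sigma_B := B^{*T}\Sigma_W B^*$, so the off-diagonal part of the perturbation is $\mathcal{P}_{\mathrm{off}}(\wh\Sigma_{\eps}-\Sigma_B) = \mathcal{P}_{\mathrm{off}}(\wh\Sigma_{\eps}-\Sigma_{\eps})$, which is controlled by step (i). The HeteroPCA iterations only ever modify diagonal entries, so the effective noise driving the algorithm is precisely this off-diagonal part plus whatever error the diagonal imputation incurs. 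Invoking Theorem 3 of \cite{hpca} (together with the robust $\sin\Theta$ theorem in Appendix \ref{sec_sintheta} to upgrade from operator norm to Frobenius norm via the incoherence condition (\ref{cond_ic})) and taking $T$ large enough for the iteration to converge, we obtain a Frobenius bound on $\wt U\wt U^T - UU^T$ of order $Rem(P_{\B})$. The hypothesis $Rem(P_{\B})\le c\sqrt{K}$ is what puts us in the regime where the HeteroPCA convergence guarantee applies, since the theorem requires the perturbation to be sufficiently small relative to the signal $\Lambda_K$.

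The main obstacle is the second step: plain Davis--Kahan, which sufficed in Theorem \ref{thm_U}, no longer identifies $P_{\B}$ because the leading eigenvectors of $\Sigma_{\eps}$ differ from those of $\Sigma_B$ under heteroscedasticity. Existing HeteroPCA guarantees are typically stated in operator norm for the sine of canonical angles, and converting them to the Frobenius projection-distance bound we need here — under only the mild incoherence constraint (\ref{cond_ic}) rather than a full matrix-completion-type incoherence — is exactly why the new robust $\sin\Theta$ theorem of Appendix \ref{sec_sintheta} is required. A secondary technical difficulty is that $\wh\Sigma_{\eps}$ is not a clean noisy copy of $\Sigma_B$ but inherits the pilot error from $\wh F$; ensuring that these data-dependent errors still satisfy the hypotheses of the HeteroPCA perturbation theorem with high probability is what ties the two steps together.
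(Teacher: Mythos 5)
Your proposal is correct and follows essentially the same route as the paper: decompose $\wh\Sigma_{\eps} = B^{*T}\Sigma_W B^* + \Sigma_E + (\wh\Sigma_{\eps}-\Sigma_{\eps})$, observe that the diagonal matrix $\Sigma_E$ has vanishing off-diagonal part so only $\|\wh\Sigma_{\eps}-\Sigma_{\eps}\|_F \lesssim \Lambda_K\, Rem(P_{\B})$ (inherited from the proof of Theorem \ref{thm_U}) drives the perturbation, and then apply the robust $\sin\Theta$ theorem of Appendix \ref{sec_sintheta} under condition (\ref{cond_ic}) and the smallness hypothesis $Rem(P_{\B})\le c\sqrt{K}$. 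The only cosmetic difference is that the paper invokes its Theorem \ref{thm_rsintheta} directly as a self-contained Frobenius-norm $\sin\Theta$ bound rather than as an ``upgrade'' layered on the operator-norm result of \cite{hpca}, but the substance is identical.
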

	%\begin{proof}
	%	The proof is deferred to Appendix \ref{sec_proof_thm_U_hetero}.
	%\end{proof}
	
	The proof of Theorem \ref{thm_U_hetero} mainly relies on a new robust $\sin\Theta$ theorem stated in Appendix \ref{sec_sintheta}, which provides upper bounds for the Frobenius norm of $\sin\Theta(\wt U,U):=\wt U^T_\perp U$, where $\wt U$ is the output of Algorithm \ref{alg_hpca} and $\wt U_\perp$ is its orthogonal complement. The new $\sin\Theta$ theorem complements Theorem 3 in \cite{hpca} which controls the operator norm of $\sin\Theta(\wt U,U)$. In order to establish the rate of $\wt \Theta$, we need this new  result to control the Frobenius norm of the estimated eigenspace. This technical tool can be of its own interest and is potentially useful for many other problems.
	
	The validity of Theorem \ref{thm_U_hetero} also hinges on the condition $Rem(P_{\B}) \le c\sqrt{K}$. 
	Under conditions of Corollary \ref{cor_rate} and Remark \ref{rem_rate_Theta}, by inspecting their proofs in Appendix \ref{sec_proof_cor_rate}, one can verify that $Rem(P_{\B})= O(\sqrt{K})$  holds for a suitable choice of $\lambda_2$, provided that, up to a multiplicative logarithmic factor, $p\sqrt{K}=O(n)$ in the low-dimensional case or  $(s_*\vee \sqrt{K})\sqrt{K} = O(n)$ and $\sigma_1R_* = O(m\sqrt{K})$ in the high-dimensional case.

	\begin{remark}[Effect of heteroscedasticity on estimating $\Theta$]
		Heteroscedasticity affects the estimation error of $\wt\Theta$ implicitly via $V_{\eps}$  defined in (\ref{def_V_eps}) %(through $Rem(P_{\B})$) 
		and $\bar\lambda_3$ in (\ref{def_event_lbd3}). For simplicity of presentation, we assumed $\{\E_{ij}\}_{j=1}^m$ shares the same sub-Gaussian constant $\g_e$ in Assumption \ref{ass_error}. To illustrate the effect of heteroscedasticity,  one could instead assume $\E_{ij} / \tau_j$ is $\g_e$ sub-Gaussian for $1\le j\le m$. Then by inspecting the proof and using modified arguments in Lemmas \ref{lem_eps_subgaussian} -- \ref{lem_XE} in Appendix \ref{sec_proof_auxiliary_lemma}, it is straightforward to show that the same results in Theorems \ref{thm_pred}, \ref{thm_Theta}, \ref{thm_U} and \ref{thm_U_hetero} hold with $V_{\eps}$ and $\bar\lambda_3$ replaced by 
		\[
		V_{\eps}' = \g_w^2 \tr\left(B^{*T}\Sigma_W \B\right) + \g_e^ 2m \bar\tau^2,~~ 
		\bar\lambda_3' = 4\g_e \bar \tau \sqrt{\max_{1\le\ell\le p}\wh\Sigma_{\ell\ell}}{\sqrt{m} + \sqrt{2\log(p/\epsilon)} \over \sqrt{n}}.
		\]
		where $\bar\tau^2 = m^{-1}\sum_{j=1}^m\tau_j^2$. The quantity $\bar\tau^2$ reduces to $\tau^2$ in the homoscedastic case. But in the presence of strong heteroscedasticity, $\bar\tau^2$ can be of order different from $O(1)$.
	\end{remark}

	%To conclude this section, we further provide statistical guarantees for the robustness of using PCA to estimate $P_{\B}$. As we have mentioned in Remark \ref{rem_robust_PCA}, at the population level, PCA can identify $P_{\B}$ asymptotically  even in the presence of heteroscedasticity provided that the signal $(B^*)^T\Sigma_W\B$ dominates the noise $\Sigma_E$. The following theorem shows that this robustness of PCA remains at the sample level. Recall that $\Sigma_E = \diag(\tau_1^2, \ldots, \tau_p^2)$ and $\wh P_{\B}$ denotes the estimate of $P_{\B}$ by using PCA obtained in Section \ref{sec_est_P}. 
	To conclude this section, we compare the estimation errors of $\wt P_{\B}$ and the PCA-based estimator $\wh P_{\B}$ in Section \ref{sec_est_P} in the presence of heteroscedasticity. Recall that $\Lambda_K$ denotes the $K$th eigenvalue of $B^{*T}\Sigma_W \B$.
	
	\begin{thm}\label{thm_robust_PCA}
		Suppose the same conditions of Theorem \ref{thm_U} hold. Then 
		\[
		\PP\left\{\|\wh P_{\B}- P_{\B}\|_F \le  c\cdot Rem^{(h)}(P_{\B})\right\} \ge 1-\epsilon'-5m^{-c'}
		\]
		for some constants $c, c'>0$, where 
		\begin{align}\label{def_RU_hetero}
		Rem^{(h)}(P_{\B}) =   Rem(P_{\B}) +  {1 \over \Lambda_K}\left[
		\sum_{j=1}^m\left(\tau_j^2-\bar\tau^2\right)^2\right]^{1/2}
		\end{align}
		with $Rem(P_{\B})$ defined in (\ref{def_RU}).
	\end{thm}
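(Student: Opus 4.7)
The plan is to mimic the proof structure of Theorem \ref{thm_U} but with a different ``oracle'' target matrix that absorbs the average noise variance. Specifically, define the auxiliary matrix
\[
M := B^{*T}\Sigma_W B^* + \bar\tau^{2}\,\bI_m, \qquad \bar\tau^2 = \frac{1}{m}\sum_{j=1}^m \tau_j^2.
\]
Since adding $\bar\tau^2 \bI_m$ only shifts all eigenvalues by the same constant, the top $K$ eigenvectors of $M$ coincide with those of $B^{*T}\Sigma_W B^*$, which span the row space of $B^*$; in particular the $K$-th spectral gap of $M$ equals $\Lambda_K$. Thus $\wh P_{\B} = \wh U \wh U^T$, the projector onto the top-$K$ eigenspace of $\wh\Sigma_{\eps}$, estimates the projector onto the top-$K$ eigenspace of $M$, which is exactly $P_{\B}$.

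Next I would decompose the perturbation that drives $\wh\Sigma_\eps$ away from $M$ into two pieces:
\[
\wh\Sigma_{\eps} - M \;=\; \underbrace{\bigl(\wh\Sigma_{\eps} - \Sigma_{\eps}\bigr)}_{\text{estimation error}} \;+\; \underbrace{\bigl(\Sigma_{\eps} - M\bigr)}_{\text{heteroscedasticity}},
\]
where $\Sigma_\eps - M = \diag(\tau_1^2 - \bar\tau^2,\ldots,\tau_m^2 - \bar\tau^2)$ by (\ref{eq_Sigma_eps_hetero}). The Frobenius norm of this diagonal matrix is exactly $[\sum_{j=1}^m(\tau_j^2 - \bar\tau^2)^2]^{1/2}$, which is the additional term appearing in $Rem^{(h)}(P_{\B})$.

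With this reduction in place, I would apply the same variant of the Davis--Kahan theorem used in the proof of Theorem \ref{thm_U} (cf.\ \cite{Davis-Kahan-variant}) to the pair $(M,\wh\Sigma_{\eps})$. Using that the $K$-th eigen-gap of $M$ is $\Lambda_K$, this yields
\[
\bigl\|\wh P_{\B} - P_{\B}\bigr\|_F \;\le\; \frac{c}{\Lambda_K}\,\bigl\|\wh\Sigma_{\eps} - M\bigr\|_F
\;\le\; \frac{c}{\Lambda_K}\Bigl(\bigl\|\wh\Sigma_{\eps} - \Sigma_{\eps}\bigr\|_F + \bigl[{\textstyle \sum_{j}}(\tau_j^2-\bar\tau^2)^2\bigr]^{1/2}\Bigr).
\]
The first summand is controlled by the same arguments already carried out inside the proof of Theorem \ref{thm_U}, namely bounding $\|\wh\Sigma_\eps - \Sigma_\eps\|_F$ by the sum of (a) an oracle PCA error $V_\eps\sqrt{\log m / n}$ arising from the cross-product of subgaussian residuals, and (b) the in-sample prediction error of $\X\wh F$ from Corollary \ref{cor_pred}, each divided by $\Lambda_K$. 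After taking the infimum over $(\Psi_0, L_0)$ with $\Psi_0+L_0 = F^*$, this contribution exactly reproduces $Rem(P_{\B})$ as defined in (\ref{def_RU}), while the second summand supplies the new heteroscedastic term in $Rem^{(h)}(P_{\B})$. The high-probability event $1 - \epsilon' - 5m^{-c'}$ is inherited from the same concentration lemmas used in Theorem \ref{thm_U}, since the heteroscedasticity contribution is deterministic.

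The main obstacle is verifying that the Frobenius-norm Davis--Kahan inequality can be invoked with the gap $\Lambda_K$ despite our having shifted from $\Sigma_\eps$ to $M$: one must check that the top-$K$ spectral subspace of $M$ still agrees with the row space of $B^*$ (which follows because $\bar\tau^2\bI_m$ is a scalar shift) and that $\Lambda_K$ lower-bounds the spectral gap separating $M$'s top-$K$ eigenvalues from the remaining $m-K$ eigenvalues equal to $\bar\tau^2$. Once this observation is made, everything else reduces to algebra already performed for Theorem \ref{thm_U} plus the trivial computation of $\|\diag(\tau_j^2-\bar\tau^2)\|_F$.
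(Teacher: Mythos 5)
Your proposal is correct and follows essentially the same route as the paper: the paper's proof also introduces the shifted matrix $\Pi := B^{*T}\Sigma_W B^* + \bar\tau^2\bI_m$ (your $M$), applies the same Davis--Kahan variant to the pair $(\wh\Sigma_{\eps},\Pi)$ with eigengap $\Lambda_K$, and splits $\|\wh\Sigma_{\eps}-\Pi\|_F \le \|\wh\Sigma_{\eps}-\Sigma_{\eps}\|_F + \|\Sigma_E-\bar\tau^2\bI_m\|_F$, reusing the bound on $\|\wh\Sigma_{\eps}-\Sigma_{\eps}\|_F$ from the proof of Theorem \ref{thm_U} and computing the deterministic diagonal term exactly as you do.
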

	%\begin{proof}
	%	The proof is deferred to Appendix \ref{sec_proof_thm_robust_PCA}.
	%\end{proof}
	Comparing (\ref{def_RU_hetero}) with (\ref{def_RU}), the last term in (\ref{def_RU_hetero}) is the bias of PCA due to heteroscedasticity and  it is exactly zero when the error $E$ is homoscedastic.
	In general, this bias term could vanish if $\Lambda_K$ is large and the degree of heteroscedasticity is small, such as $\Lambda_K \gtrsim m$ and $\sum_{j=1}^m(\tau_j^2-\bar\tau^2)^2 = O(m)$ as $m\to \infty$. This can be viewed as the sample analog of the robustness of PCA in Remark \ref{rem_robust_PCA}. However, we note that, even if the bias term converges to 0, it may have a slower rate than $Rem(P_{\B})$ which renders the rate of $\wh P_{\B}$ slower than that of $\wt P_{\B}$ from HeteroPCA. 
	
	%We remark that similar robustness result of using PCA to estimate $P_{\B}$ could be established even when $\Sigma_E$ is non-diagonal by following the outline of current arguments. However, we omit these results here for ease of presentation, as  it requires several modifications of the current results, including Theorem \ref{thm_pred}, Corollary \ref{cor_pred} and their subsequent remarks. 

	\section{Practical considerations}\label{sec_practical}
	
	In this section, we address several practical concerns. First, we consider how to select $K$, the number of hidden variables. Then, we discuss the effect of overestimating/underestimating $K$ on the estimation of $\Ttheta$.  Selection of tuning parameters and recommendation of standardization are discussed subsequently. In the end, we discuss in details the practical usage of our estimator of $\Ttheta$ for inferring $\Ps$.
	
	\subsection{Selection of  $K$}\label{sec_select_K}
	Recall that $\Eps = \W B^* + \E$ and $K$ corresponds to the rank of the unknown coefficient matrix $B^*$. When $\Eps$ and $\W$ are both observable, estimation of the rank of $\B$ has been studied by \cite{bunea2011, bunea2012, giraud2011, bing2019} in the framework of multivariate response regression. However, since $\Eps$ and $\W$ are both unobserved, we view $\Eps = \W B^* + \E$ as a factor model with $K$ being the number of factors. \cite{Bai-Ng-K} proposed information based criterion to select $K$. However, both this approach and the aforementioned ones in the regression setting require to know the noise level quantified by $\|\Sigma_E\|_{op}$. While it might be possible to estimate $\Sigma_E$ in view of (\ref{eq_Sigma_eps_hetero}), the theoretical justification of this class of methods is unclear under our model.

	%	rely on choosing the regularization parameter based on the noise level. More specifically, the chosen tuning parameter needs to be proportional to the noise level quantifed as $\|\E\|_{op}$ when $\Eps$ is observable. However, since $\Eps$ is not observed and has to be estimated in our case, the selection of tuning parameter should also take this estimation error into account. What is worse is that the estimation error of $\wh \Eps - \Eps$ is hard to quantify and can only be upper bounded. This makes the selection of tuning parameter problematic. 
	
	In the following we consider an eigenvalue ratio approach originally developed by \cite{lam2012, Ahn-2013} for factor models. Specifically, we estimate $\Eps$ by $\wh\Eps = \Y - \X\wh F$ with $\wh F$ obtained from (\ref{est_F}) and construct $\wh\Sigma_{\eps}$ as (\ref{def_Sigma_eps_hat}). We then propose to estimate $K$ by
	\begin{equation}\label{select_K}
	\wh K = \arg\max_{j\in \{1,2,\ldots, \bar K\}}   \wh \lambda_{j} / \wh \lambda_{j+1},
	\end{equation}
	where $\wh\lambda_1 \ge \wh \lambda_2 \ge \cdots$ are the eigenvalues of $\wh \Sigma_{\eps}$ and $\bar K$ is a pre-specified number, for example, $\bar K =\floor{(n\wedge m)/2}$ \citep{lam2012} with $\floor{x}$ standing for the largest integer that is no greater than $x$. This procedure does not require the knowledge of any unknown quantity, such as the noise level $\|\Sigma_E\|_{op}$. 	The following theorem provides theoretical justification for the above procedure. %We allow the noise to be heteroscedastic $\Sigma_E = \diag(\tau_1^2, \ldots, \tau_m^2)$.
	%Recall that $\Lambda_1 \ge \Lambda_2 \ge \cdots \ge \Lambda_K$ denote the first $K$ eigenvalues of $(B^*)^T\Sigma_W B^*$, and we allow the noise to be heteroscedastic $\Sigma_E = \diag(\tau_1^2, \ldots, \tau_m^2)$.
	\begin{thm}\label{thm_K}
		Under model (\ref{model_1}) or equivalently (\ref{model}) with heteroscedastic noise $\Sigma_E = \diag(\tau_1^2, \ldots, \tau_m^2)$, suppose condition (b) in Assumption \ref{ass_rates} holds.  Assume $\max_{1\le j\le m} \tau_j^2 = O(1)$, $Rem(P_{\B}) = o(1)$ with $Rem(P_{\B})$ defined in (\ref{def_RU}). 
		Then with probability $1-\epsilon' - 5m^{-c''}$ for some constant $c''>0$, 
		\[
		{\wh \lambda_{j}\over \wh \lambda_{j+1}} \asymp 1,~\text{ for $1\leq j\leq K-1$}, \text{ and}\quad {\wh \lambda_{K+1} \over \wh \lambda_{K}} = O\left(Rem(P_{\B}) + m^{-1}\right).
		\]
	\end{thm}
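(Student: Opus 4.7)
The plan is to first pin down the population-level eigenvalue pattern of $\Sigma_{\eps} = (B^*)^T\Sigma_W B^* + \Sigma_E$ and then transfer it to $\wh\Sigma_{\eps}$ by a perturbation argument. Condition (b) of Assumption \ref{ass_rates} gives $\lambda_j((B^*)^T\Sigma_W B^*) \asymp m$ for $1\le j\le K$ and $=0$ otherwise, while $\max_j\tau_j^2 = O(1)$ yields $\|\Sigma_E\|_{op} = O(1)$. Weyl's inequality then produces $\lambda_j(\Sigma_{\eps}) \asymp m$ for $1\le j\le K$ and $\lambda_j(\Sigma_{\eps}) = O(1)$ for $j > K$. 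In particular, the population ratios satisfy $\lambda_j(\Sigma_{\eps})/\lambda_{j+1}(\Sigma_{\eps}) \asymp 1$ for $j \le K-1$ and $\lambda_{K+1}(\Sigma_{\eps})/\lambda_K(\Sigma_{\eps}) = O(1/m)$, which is the target pattern up to empirical perturbation.

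Next, using $\Y - \X\wh F = \Eps - \X(\wh F - F^*)$, I would expand
\[
\wh\Sigma_{\eps} - \Sigma_{\eps} = \bigl(n^{-1}\Eps^T\Eps - \Sigma_{\eps}\bigr) - n^{-1}\Eps^T\X(\wh F - F^*) - n^{-1}(\wh F - F^*)^T\X^T\Eps + n^{-1}(\wh F - F^*)^T\X^T\X(\wh F - F^*).
\]
The first term is controlled in operator norm by $O(V_{\eps}\sqrt{\log m / n})$ via sub-Gaussian concentration on $n^{-1}\Eps^T\Eps$; the last term is dominated by $n^{-1}\|\X\wh F - \X F^*\|_F^2$, which Corollary \ref{cor_pred} bounds; and the cross terms are handled by Cauchy--Schwarz combined with the deviation of $n^{-1/2}\|\X^T\Eps\|_{op}$. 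These are exactly the pieces feeding the proof of Theorem \ref{thm_U}, and together with $\Lambda_K \asymp m$ they yield, on the same event used there,
\[
\|\wh\Sigma_{\eps} - \Sigma_{\eps}\|_{op} \;\lesssim\; m\cdot Rem(P_{\B}),
\]
with probability at least $1-\epsilon' - 5m^{-c''}$. Weyl's inequality then gives $|\wh\lambda_j - \lambda_j(\Sigma_{\eps})| \lesssim m\cdot Rem(P_{\B})$ for every $j$. Since $Rem(P_{\B})=o(1)$, for $1\le j\le K$ we have $\wh\lambda_j \asymp m$, so $\wh\lambda_j/\wh\lambda_{j+1} \asymp 1$ in that range. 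Finally, $\wh\lambda_{K+1} \le \lambda_{K+1}(\Sigma_{\eps}) + O(m\cdot Rem(P_{\B})) = O(1 + m\cdot Rem(P_{\B}))$ while $\wh\lambda_K \asymp m$, so the last ratio is $O(Rem(P_{\B}) + m^{-1})$ as claimed.

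The main obstacle is converting the mixture of Frobenius-norm bounds on $n^{-1}\|\X\wh F - \X F^*\|_F^2$ into a sharp operator-norm perturbation of $\wh\Sigma_{\eps}$. The quadratic term is already operator-norm-bounded by its Frobenius norm, and the cross terms factor through the sub-Gaussian quantity $n^{-1/2}\|\X^T\Eps\|_{op}$, so no new concentration inequality is needed beyond those already invoked in the proof of Theorem \ref{thm_U}. The definition of $Rem(P_{\B})$ conveniently carries the factor $1/\Lambda_K$, so multiplication by $\Lambda_K \asymp m$ strips it out and leaves a perturbation whose scale is exactly what Weyl's inequality needs to separate the top-$K$ eigenvalues of $\Sigma_{\eps}$ (of order $m$) from the rest (of order $1$).
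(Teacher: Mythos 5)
Your proposal is correct and follows essentially the same route as the paper's proof: control $\|\wh\Sigma_{\eps}-\Sigma_{\eps}\|$ on the event already established in the proof of Theorem \ref{thm_U}, apply Weyl's inequality, and use $\Lambda_K\asymp m$ together with $Rem(P_{\B})=o(1)$ and $\max_j\tau_j^2=O(1)$ to separate the top $K$ empirical eigenvalues (of order $m$) from the rest. The only cosmetic difference is that the paper applies Weyl directly against $M=(B^*)^T\Sigma_W B^*$ (absorbing $\Sigma_E$ into the perturbation and bounding the operator norm by the Frobenius norm), whereas you first locate the eigenvalues of $\Sigma_{\eps}$ and re-derive operator-norm control of the perturbation, which is redundant but harmless.
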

	Under Assumption \ref{ass_rates}, $K = O(1)$, $s_*=o(n)$ and $\sigma_1 R_* = o(m)$, one can deduce from the proof of Corollary \ref{cor_rate} that $Rem(P_{\B}) = o(1)$ for a suitable choice of $\lambda_2$. In addition, if $m\rightarrow\infty$,  we obtain $\wh \lambda_K / \wh \lambda_{K+1} \to \i$. Thus, the maximizer of $\wh \lambda_{j} / \wh \lambda_{j+1}$ is no smaller than $K$ asymptotically, i.e., $\wh K\geq K$, which partially justifies the criterion in (\ref{select_K}). 
	
	The criterion (\ref{select_K}) is also related to the ``elbow" approach, which is often used to determine the number of principle components in PCA. If we plot the ratio $\wh \lambda_{j} / \wh \lambda_{j+1}$ against $j$, we expect that the curve has a sharp increase at $j=K$ (since $\wh \lambda_K / \wh \lambda_{K+1} \to \i$), giving an angle in the graph. We can then select this value $j$ as an estimate of $K$. In our simulation, this simple elbow approach and the criterion (\ref{select_K}) usually yield the same results. In Section \ref{sim_2}, we conduct extensive simulations to compare our criterion (\ref{select_K}) with some other existing methods for selecting $K$ \cite{PA}.

	%	Another popular method for selecting $K$ is proposed in \cite{PA} based on the permutation test.  This method has good empirical performance as observed by \cite{Lee2017}, though it becomes computationally expensive for large $m$. We compare this method with our proposed criterion (\ref{select_K}) in our simulation study.

	\subsection{Consequence of overestimating or underestimating $K$}\label{sec_remark_K}
	It is of interest to understand the effect of selecting an incorrect $K$ on the estimation of $\Ttheta$. Recall that, after estimating $K$ by $\wh K$, we construct $\wh P_{\wh K} = \wh U_{\wh K} \wh U_{\wh K}^T$  and use it in lieu of $\wh P_{\B}$ in (\ref{est_Theta}) to estimate $\Ttheta$. For illustration purpose, we consider the case that $\wh K=r$ for some fixed integer $1\le r\le m$. At the population level, suppose we know the semi-orthogonal matrix $ U_r = (u_1, \ldots, u_r)$ such that when $r<K$, $U_r$ contains the first $r$ columns of $U:=U_K$,  the right singular vectors of $B^*$, and when $r\ge K$, the first $K$ columns of $U_r$ align with those of $U_K$ and the rest of $r-K$ columns are arbitrary but orthogonal to $U_K$. Similar to $P_{B^*}=U_KU_K^T$, $P_r=U_rU_r^T$ is also a projection matrix. The following lemma demonstrates that the effect of using 
	$P_r$ to estimate $\Ttheta$ 
	%in (\ref{est_Theta}) 
	is characterized by the difference of two projection matrices $P_r$ and $P_{B^*}$. 
	
	%	The following lemma shows the effect of using $P^{\perp}_r$ in (\ref{est_Theta}).
	\begin{lemma}\label{lem_mis_K}
		Under model (\ref{model}), $P^{\perp}_r Y$ is equal to
		\begin{alignat*}{2}
		&\left[\Ttheta+(\Ps P_{\B}+A^*B^*)\left(P_{B^*}-P_r\right)\right]^TX  + P^{\perp}_r[(B^*)^TW+E], ~&& \text{ if }r < K;\\
		&(\Ttheta)^TX-(P_r-P_{B^*})(\Ttheta)^TX+P^{\perp}_r E,&&\text{ if }r > K;\\
		&(\Ttheta)^TX+P^{\perp}_r E,&&\text{ if }r = K.
		\end{alignat*}
	\end{lemma}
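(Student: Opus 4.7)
The plan is to verify each of the three cases by a direct algebraic computation, starting from the decomposition
\[
Y \;=\; \Theta^{*T} X + P_{B^*}\Psi^{*T} X + L^{*T} X + (B^*)^T W + E
\]
derived from (\ref{model}) using $\Theta^{*T} = P_{B^*}^\perp \Psi^{*T}$ and $(\Psi^* P_{B^*})^T = P_{B^*}\Psi^{*T}$. Applying $P_r^\perp = \bI_m - U_r U_r^T$ term by term, everything reduces to evaluating the five projection products $P_r^\perp P_{B^*}^\perp$, $P_r^\perp P_{B^*}$, $P_r^\perp B^{*T}$, and $P_r^\perp E$ in each regime, using the prescribed relationship between $U_r$ and $U = U_K$.

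For $r<K$, the columns of $U_r$ lie inside the range of $U$, so $P_r P_{B^*} = P_r$ and $P_r P_{B^*}^\perp = 0$. This gives $P_r^\perp P_{B^*}^\perp = P_{B^*}^\perp$ (so the $\Theta^*$ contribution is preserved) and $P_r^\perp P_{B^*} = P_{B^*}-P_r$, which hits both $P_{B^*}\Psi^{*T}X$ and $L^{*T}X = B^{*T}A^{*T}X$ (the latter because $B^{*T} = P_{B^*} B^{*T}$). Collecting these three contributions recovers $[\Theta^* + (\Psi^* P_{B^*}+A^*B^*)(P_{B^*}-P_r)]^T X$; the noise term $P_r^\perp[(B^*)^TW+E]$ is left untouched.

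For $r>K$, the range of $U$ is contained in the range of $U_r$, so $P_r P_{B^*} = P_{B^*}$, hence $P_r^\perp P_{B^*} = 0$. This immediately kills both $P_{B^*}\Psi^{*T}X$ and $L^{*T}X$, as well as the factor loading $(B^*)^T$ in the noise, leaving only $P_r^\perp\Psi^{*T} X + P_r^\perp E$. The identity $P_r^\perp \Psi^{*T} = \Theta^{*T} - (P_r - P_{B^*})\Theta^{*T}$ is the short computation
\[
\Theta^{*T} - (P_r-P_{B^*})\Theta^{*T} \;=\; [P_{B^*}^\perp - (P_r-P_{B^*})P_{B^*}^\perp]\Psi^{*T} \;=\; (\bI_m - P_r)\Psi^{*T},
\]
where one uses $(P_r-P_{B^*})P_{B^*}^\perp = P_r - P_{B^*}$ (a consequence of $P_r P_{B^*}^\perp = P_r - P_{B^*}$ in this regime). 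The case $r=K$ is immediate since $P_r = P_{B^*}$, so $P_r^\perp$ annihilates everything in the row space of $B^*$ and fixes $\Theta^{*T}X$.

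None of the three cases is genuinely hard; the only thing to get right is bookkeeping of which projection products vanish or simplify, and keeping track of the fact that $B^{*T}$ lives in the range of $P_{B^*}$ (which is what makes $L^*$ and the hidden-variable noise behave the same way under $P_r^\perp$). I would therefore present the proof as a single computation of $P_r^\perp Y$ from the decomposition above, followed by three short case-analyses of the projection products $P_r^\perp P_{B^*}$ and $P_r^\perp P_{B^*}^\perp$.
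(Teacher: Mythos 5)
Your computation is correct: decomposing $Y$ via (\ref{model}) into the $\Ttheta$, $\Ps P_{\B}$, $L^*$ and noise pieces, then applying $P_r^\perp$ and using $P_rP_{\B}=P_r$ (for $r<K$), $P_rP_{\B}=P_{\B}$ (for $r>K$) together with $(B^*)^T=P_{\B}(B^*)^T$ reproduces all three displayed cases, and your identity $P_r^\perp\Psi^{*T}=\Theta^{*T}-(P_r-P_{B^*})\Theta^{*T}$ in the $r>K$ regime checks out. This is essentially the same direct projection-algebra argument the paper uses, so nothing further is needed.
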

	
	As we can see, if $r < K$, the estimand of (\ref{est_Theta}) is $\Ttheta+(\Ps P_{\B}+A^*B^*)(P_{B^*}-P_r)=\Ttheta + [\Ps B^{*T}(B^*B^{*T})^{-1}+A^*](B^*)_{(-r)}$ 
	where we apply SVD to $B^* = \sum_{j} d_j u_j v_j^T$ with $d_j$ being non-increasing singular values and $(B^*)_{(-r)} = \sum_{j > r} d_ju_j v_j^T$. Thus, the estimator in (\ref{est_Theta}) has bias $[\Ps B^{*T}(B^*B^{*T})^{-1}+A^*](B^*)_{(-r)}$. Intuitively, if the last $K-r$ singular values of $B^*$, $d_{r+1},...,d_K$, are relatively small and close to zero, we expect the bias to be negligible. In this case, underestimating $K$ may still lead to a reasonably accurate estimate of $\Ttheta$. On the other hand, if $r > K$, our estimator is also biased, and the bias is equal to $-(P_r-P_{B^*})(\Ttheta)^T=-P_r(\Ttheta)^T$ (the equality holds by the orthogonality between $P_{\B}$ and $P_{\B}^{\perp}$). Its magnitude depends on the angle between rows of $\Ttheta$ and the last $r-K$ columns of $U_r$.

	\subsection{Choosing tuning parameters $\lambda_1,\lambda_2$ and $\lambda_3$}\label{sec_tuning}
	Recall that our procedure (Algorithms \ref{alg_1} and \ref{alg_2}) requires three tuning parameters $\lambda_1,\lambda_2$ and $\lambda_3$. Since the first two parameters $(\lambda_1,\lambda_2)$ and the third one $\lambda_3$ appear in two optimization problems (\ref{est_F}) and (\ref{est_Theta}), respectively, we propose to select $(\lambda_1,\lambda_2)$ and $\lambda_3$ separately by cross validation. When estimating $F^*$ in (\ref{est_F}), we can search $\lambda_1$ and $\lambda_2$ over a two-way grid to minimize the mean squared prediction error via $k$-fold cross validation.\footnote{When both $p$ and $m$ are large, searching ($\lambda_1, \lambda_2$) over a fine two-way grid could be computationally intensive. We offer an alternative way of selecting $\lambda_1$ and $\lambda_2$ in Appendix \ref{sec_supp_sim} which costs less computation.} Similarly, when estimating $\Ttheta$ in (\ref{est_Theta}), we can tune $\lambda_3$ by  $k$-fold cross validation over a grid of $\lambda_3$. %We set $k=10$ in our simulation. 

	\subsection{Standardization}\label{sec_standardize}
	
	In steps (\ref{est_F}) and (\ref{est_Theta}) of our estimation procedure, the tuning parameters $\lambda_1$ and $\lambda_3$ depend on $\max_{1\le j\le p}\wh \Sigma_{jj}$ from Theorems \ref{thm_pred} and \ref{thm_Theta}. This dependency comes from the union bounds argument for controlling $\max_{1\le j\le p} \|\X_j^T\P\Eps\|_2$. To tighten the bound in practice, we recommend  standardizing the columns of $\X$ to unit variance. Since the means of $\Y$ and $\X$ do not affect the estimation of $\Ttheta$, one can center both $\X$ and $\Y$ before fitting the model.
	
	%In multivariate regression, standardizing columns of the response matrix $\Y$ such that different responses have unit variance is recommended in practice. In our procedure, such standardization could be beneficial for estimating the conditional mean in (\ref{est_F}) and for estimating $\Ttheta$ in (\ref{est_Theta}), especially when different responses exhibit clearly dissimilar variances. 

	\subsection{Practical usage of $\Ttheta$ for inferring $\Ps$}\label{sec_usage_theta}
	When the parameter $\Ps$ is of primary interest, the information in the parameter $\Ttheta = \Ps P_{\B}^{\perp}$ is still helpful to infer $\Ps$. 
	In the following, we discuss this usage of $\Ttheta$ in two scenarios. The first scenario corresponds to $\Ttheta \approx \Ps$ whence one can use $\wh \Theta$ to estimate $\Ps$. We also provide sufficient conditions for $\Ttheta \approx \Ps$.
	We then suggest further usage of $\Ttheta$ when $\Ttheta \not\approx \Ps$ in the second scenario. Finally, we offer our recommendations to practitioners. 
	
	{\bf Case (1): $\Ttheta \approx \Ps$.} In this case, our estimator of $\Ttheta$ also estimates $\Ps$ consistently. To see when $\Ttheta \approx \Ps$ holds, recall that $\Ps - \Ttheta =  \Ps P_{\B}$. Then $\Ps \approx \Ttheta$ is implied by $\Ps P_{\B}\to 0$ as $m\to \i$. %Condition $\Ps P_{\B}\to 0$ is commonly assumed in the existing SVA literature \citep{Lee2017,McKennan19,wang2017}. 
	The following two lemmas provide different sets of sufficient conditions for $\|\Ps P_{\B}\|_{\ell_\i/\ell_2} =\max_{1\le j\le p}\|P_{\B}\Ps_{j\cdot}\|_2 = o(\sqrt{m})$ and $\|\Ps P_{\B}\|_\i=o(1)$ whence
	\[
	{1\over \sqrt m}\|\Ps - \Ttheta \|_{\ell_\i/\ell_2} = o(1),\qquad \|\Ps  - \Ttheta \|_\i = o(1).
	\]
	Their proofs can be found in Appendix \ref{sec_proofs_lemma_Theta}. 
	Recall that $U\in\RR^{m\times K}$ contains the right singular vectors of $\B\in \RR^{K\times m}$. 
	\begin{lemma}\label{lem_unif_B}
		Suppose the columns of $U$ are uniformly distributed over the families of $K$ orthonormal vectors. Provided that $K = o(m)$, for any $1\le j\le p$ and $1\le \ell \le m$, one has 
		\[
		\left\|P_{\B}\Ps_{j\cdot}\right\|_2^2 = O_p\left(\left\|\Ps_{j\cdot}\right\|_2^2{K\over m}\right),\quad 
		\left|e_\ell^T P_{\B}\Ps_{j\cdot}\right| = O_p\left(\left\|\Ps_{j\cdot}\right\|_2{K\over m}\right).
		\]
		If $\|\Ps\|_\i =O(1)$ holds additionally, then 
		\[
		{1\over m}\left\|P_{\B}\Ps_{j\cdot}\right\|_2^2 = O_p\left({K\over m}\right),\qquad \left|e_\ell^T P_{\B}\Ps_{j\cdot}\right| = O_p\left({K\over \sqrt m}\right).
		\]
	\end{lemma}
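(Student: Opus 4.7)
The plan is to exploit the rotational invariance of the Haar distribution on the Stiefel manifold $V_K(\RR^m)$. Writing $P_{\B} = UU^T$, for any fixed vector $w\in\RR^m$ one has $\|P_{\B}w\|_2^2 = \|U^T w\|_2^2$. Because $U$ is uniformly distributed over orthonormal $K$-frames, for any fixed unit vector $v\in\RR^m$ the vector $U^T v$ is equal in distribution to the first $K$ coordinates of a uniform random point on the sphere $S^{m-1}$. Representing such a point as $g/\|g\|_2$ with $g\sim N(0,\bI_m)$ yields
\[
\|U^T v\|_2^2 \stackrel{d}{=} \frac{\sum_{k=1}^K g_k^2}{\sum_{k=1}^m g_k^2}\ \sim\ \mathrm{Beta}\!\left(\tfrac{K}{2},\tfrac{m-K}{2}\right),
\]
whose mean is $K/m$ and whose variance is $O(K/m^2)$ when $K=o(m)$. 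By Markov's (or Chebyshev's) inequality this gives $\|U^T v\|_2^2 = O_p(K/m)$.

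I would then obtain the first inequality by taking $v = \Ps_{j\cdot}/\|\Ps_{j\cdot}\|_2$, producing
\[
\|P_{\B}\Ps_{j\cdot}\|_2^2 = \|\Ps_{j\cdot}\|_2^2\,\|U^T v\|_2^2 = O_p\!\left(\|\Ps_{j\cdot}\|_2^2\,\tfrac{K}{m}\right).
\]
For the second inequality I would decompose $e_\ell^T P_{\B}\Ps_{j\cdot} = (U^T e_\ell)^T(U^T\Ps_{j\cdot})$ and apply Cauchy--Schwarz,
\[
|e_\ell^T P_{\B}\Ps_{j\cdot}| \le \|U^T e_\ell\|_2\cdot \|U^T\Ps_{j\cdot}\|_2.
\]
Applying the Beta distribution bound once with $v=e_\ell$ and once with $v=\Ps_{j\cdot}/\|\Ps_{j\cdot}\|_2$ shows that the first factor is $O_p(\sqrt{K/m})$ and the second is $\|\Ps_{j\cdot}\|_2\cdot O_p(\sqrt{K/m})$, giving the product bound $O_p(\|\Ps_{j\cdot}\|_2\,K/m)$.

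The last two inequalities follow by substituting $\|\Ps_{j\cdot}\|_2\le \sqrt{m}\,\|\Ps\|_\infty = O(\sqrt m)$ into the two bounds just established, which yields $m^{-1}\|P_{\B}\Ps_{j\cdot}\|_2^2=O_p(K/m)$ and $|e_\ell^T P_{\B}\Ps_{j\cdot}|=O_p(K/\sqrt m)$. I do not foresee a real obstacle here: the whole argument hinges on the invariance computation that identifies the law of $\|U^T v\|_2^2$, which is a standard consequence of the Haar distribution; the remainder is elementary Cauchy--Schwarz and rescaling.
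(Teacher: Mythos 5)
Your proof is correct, and it takes a genuinely different route from the paper's. The paper realizes $U$ as the right singular vectors of a $K\times m$ Gaussian matrix $G$ and bounds $\|U^T\Ps_{j\cdot}\|_2^2 \le \|G\Ps_{j\cdot}\|_2^2/\lambda_K(GG^T)$, then invokes two non-asymptotic concentration inequalities: a singular-value bound $\lambda_K(GG^T)\ge(\sqrt m-\sqrt K-t)^2$ and a chi-square--type tail bound for $\|G\Ps_{j\cdot}\|_2^2$, combining them under $K=o(m)$. You instead use the rotational invariance of the Haar measure directly to identify the exact law $\|U^Tv\|_2^2\sim \mathrm{Beta}(K/2,(m-K)/2)$ for a fixed unit vector $v$, and then a single application of Markov's inequality to the mean $K/m$ delivers the $O_p(K/m)$ bound. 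Your second step (Cauchy--Schwarz via $e_\ell^TP_{\B}\Ps_{j\cdot}=(U^Te_\ell)^T(U^T\Ps_{j\cdot})$, using $P_{\B}^2=P_{\B}$) and the final rescaling by $\|\Ps_{j\cdot}\|_2\le\sqrt m\|\Ps\|_\infty$ coincide with the paper's. The trade-off: your argument is more elementary and does not actually need $K=o(m)$ for the stated $O_p$ conclusion, whereas the paper's Gaussian-matrix route yields explicit exponential tail probabilities (of the form $1-2e^{-t^2/2}-e^{-t}$) that would be useful if one wanted a quantitative high-probability statement rather than a stochastic-order bound. One small remark: the identification of the full vector $U^Tv$ with the first $K$ coordinates of a uniform point on $S^{m-1}$ deserves a word of justification (both laws are rotation-invariant in $\RR^K$ with the same norm distribution), but only the norm enters your argument, and the norm identity $\|UU^Tv\|_2^2\stackrel{d}{=}\|P_0 w\|_2^2$ for $w$ uniform on the sphere is immediate from writing $UU^T=QP_0Q^T$ with Haar $Q\in O(m)$.
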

	
	Lemma \ref{lem_unif_B} states that when the directions of columns of $U$ are random enough (more specifically, uniformly distributed) and $K=o(m)$, the matrix $P_{\B}$ is incoherent to $\Ps_{j\cdot}$. The uniformity assumption of $U$ is commonly made in the matrix completion literature, under which \cite{Candes,ct} prove that $\max_{1\le j\le m}\|P_{\B}e_j\|_2^2 = O_p(K/m)$ where $\{e_j\}_{1\le j\le m}$ is the canonical basis of $\RR^m$. Our result in Lemma \ref{lem_unif_B} reduces to this existing result when $\Ps_{j\cdot}$ is aligned with canonical vectors.

	\begin{lemma}\label{lem_sparse_psi}
		Suppose $\max_{1\le j\le p}\|\Ps_{j\cdot}\|_{0} \le d$ and $\|\Ps\|_\i \le c$ for some constant $c>0$ and integer $1\le d\le m$. Further assume
		$\lambda_K(m^{-1}\B{\B}^T) \ge c'$ and $\|\B\|_\i\le c''$ for some constants $c', c''>0$.  Then, for any $1\le j\le p$ and $1\le \ell \le m$, one has
		\[
		{1\over m}\left\|P_{\B}\Ps_{j\cdot}\right\|_2^2 \le {(cc'')^2 \over c'}{d^2K \over m^2},\qquad \left|e_\ell^T P_{\B}\Ps_{j\cdot}\right| \le {c(c'')^2\over c'}{K d \over m}.
		\]
	\end{lemma}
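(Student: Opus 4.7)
\textbf{Proof proposal for Lemma \ref{lem_sparse_psi}.} The plan is to exploit the explicit form of the projector $P_{\B} = B^{*T}(B^*B^{*T})^{-1}B^*$ and combine three ingredients: (i) the eigenvalue lower bound $\lambda_K(m^{-1}\B\B^T)\geq c'$ to control $(B^*B^{*T})^{-1}$ in operator norm, (ii) the entrywise bound $\|B^*\|_\i\leq c''$ together with the row-sparsity $\|\Ps_{j\cdot}\|_0\leq d$ and $\|\Ps\|_\i\leq c$ to bound the vector $B^*\Ps_{j\cdot}\in\RR^K$ entrywise, and (iii) Cauchy--Schwarz for the second (entrywise) bound.

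First I would handle the $\ell_2$ bound. Write $v:=B^*\Ps_{j\cdot}\in\RR^K$. Since $P_{\B}$ is an orthogonal projector,
\[
\|P_{\B}\Ps_{j\cdot}\|_2^2 \;=\; \Ps_{j\cdot}^T P_{\B}\Ps_{j\cdot} \;=\; v^T(B^*B^{*T})^{-1}v \;\leq\; \frac{\|v\|_2^2}{\lambda_K(B^*B^{*T})} \;\leq\; \frac{\|v\|_2^2}{c'm},
\]
using the hypothesis $\lambda_K(B^*B^{*T})\geq c'm$. Each coordinate of $v$ is $v_k=\sum_{\ell=1}^m B^*_{k\ell}\Ps_{j\ell}$, a sum with at most $d$ nonzero terms, each of magnitude at most $cc''$; thus $|v_k|\leq cc''d$ and $\|v\|_2^2\leq (cc'')^2 d^2 K$. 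Dividing by $m$ yields the first claim.

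For the entrywise bound, set $u:=B^*e_\ell\in\RR^K$ (the $\ell$-th column of $B^*$), so that
\[
e_\ell^T P_{\B}\Ps_{j\cdot} \;=\; u^T(B^*B^{*T})^{-1}v.
\]
Applying the operator-norm bound $\|(B^*B^{*T})^{-1}\|_{op}\leq 1/(c'm)$ together with Cauchy--Schwarz gives
\[
|e_\ell^T P_{\B}\Ps_{j\cdot}| \;\leq\; \frac{\|u\|_2\,\|v\|_2}{c'm}.
\]
Since $\|u\|_2^2=\sum_{k=1}^K(B^*_{k\ell})^2\leq K(c'')^2$ and $\|v\|_2\leq cc''d\sqrt{K}$ as before, the product is at most $\sqrt{K}\,c''\cdot cc''d\sqrt{K}=c(c'')^2 d K$, which after dividing by $c'm$ yields the stated entrywise bound.

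There is no real obstacle here; the only subtlety is to avoid the cruder bound $\|v\|_2\leq \|B^*\|_{op}\|\Ps_{j\cdot}\|_2$, which would lose the gain from row-sparsity and entrywise boundedness, and instead to exploit the sparsity of $\Ps_{j\cdot}$ coordinatewise when bounding each $v_k$. All other steps are standard linear-algebraic manipulations under the stated assumptions.
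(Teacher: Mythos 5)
Your proof is correct and follows essentially the same route as the paper's: both reduce to bounding $v=B^*\Ps_{j\cdot}$ coordinatewise via the row-sparsity and sup-norm assumptions, and both control $(B^*B^{*T})^{-1}$ through $\lambda_K(\B\B^T)\ge c'm$ (the paper phrases this as $\|\B^T(\B\B^T)^{-1}\|_{op}^2=\lambda_K^{-1}(\B\B^T)$, which is the same bound as your quadratic-form estimate). The Cauchy--Schwarz step for the entrywise bound with $\|\B e_\ell\|_2\le c''\sqrt{K}$ also matches the paper's argument.
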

	
	From Lemma \ref{lem_sparse_psi}, under certain regularity conditions on $\Ps$ and $\B$, one has $m^{-1}\|\Ps - \Ttheta\|_{\ell_\i/\ell_2}^2 = o(1)$ provided that $d^2K/m^2= o(1)$. This holds when either the rows of $\Ps$ are sufficiently sparse or $K$ is much smaller than $m$. 
	Sparsity of rows of $\Ps$ is commonly assumed in the SVA literature and is practically meaningful in many biological applications, see, for instance, \cite{Gagnon2012,wang2017,McKennan19}. In particular, similar sets of sufficient conditions for $m^{-1}\|\Ps - \Ttheta\|_{\ell_\i/\ell_2}^2 = o(1)$ are given in \cite{McKennan19,wang2017}. When $K = O(1)$ and $d = o(m)$, Lemma \ref{lem_sparse_psi} also ensures that
	$
	    \|\Ps  - \Ttheta\|_{\i} = o(1).
	$
	%This rate tends to zero as long as $m\to\i$ even when the rows of $\Ps$ are dense, that is, $d\asymp m$. Therefore, when $m$ is large and $K$ is small, coupled with conditions in Lemma \ref{lem_sparse_psi}, one should always expect $\Ps \approx \Ttheta$.
	\\

	{\bf Case (2): $\Ttheta \not\approx \Ps$.} We provide two ways of using the estimator of $\Ttheta$ to infer $\Ps$ in this scenario. 
	\begin{enumerate} 
		\item[(i)] Suppose we are interested in $\Ps C$ for some known constraint matrix $C\in\RR^{m\times q}$. Then provided that $P_{\B} C$ is small, one could use $\wh \Theta C$ to estimate $\Ps C$ because 
		$\Ttheta C = \Ps C - \Ps P_{\B} C \approx \Ps C$. In practice, since $P_{\B}$ can be estimated by $\wh P_{\B}$ or $\wt P_{\B}$ (see Section \ref{sec_est_P} for homoscedastic error and Section \ref{sec_hetero_est} for heteroscedastic error), researchers could empirically decide whether or not using $\wh \Theta C$ to estimate $\Ps C$ by comparing the magnitude of $\|\wh P_{\B} C\|_F / \sqrt{mq}$ to a small tolerance level. 
		
		As a simple yet important example, suppose $\B = [B_1 ~ {\bm 0}]$ for some $B_1 \in \RR^{K\times m_1}$ and $1\le m_1 \le m$. From model (\ref{model_1}), this structure of $\B$ implies that there are $m_1$ responses affected by the hidden variables $Z$. Let $S\subseteq \{1,\ldots,m\}$ denote the index set of columns of $B_1$ and write $S^c = \{1,\ldots,m\}\setminus S$. Since the set $S$ can be estimated from the sparsity pattern of $\wh P_{\B}$, we assume $S$ is known for simplicity. 
		Then it is easy to see that, for any $1\le j\le p$,
		\[
		\Ttheta_{j\cdot} = P^{\perp}_{\B} \Ps_{j\cdot} = \begin{bmatrix}
		P_{B_1}^{\perp}\Ps_{jS} \\  \Ps_{jS^c}
		\end{bmatrix},
		\]
		which further implies $\Ttheta_{j\ell} = \Ps_{j\ell}$ for any $\ell \in S^c$ and $1\le j\le p$. Intuitively, since the $\ell$th response is not associated with hidden variables, the parameter $\Psi^*_{\cdot\ell}\in\RR^p$ in the multivariate response regression is identifiable and is indeed identical to our estimand $\Ttheta_{\cdot\ell}$. In this case, for any given constraint matrix $C=[{\bm 0}~C_1]^T\in \RR^{m\times q}$ with $C_1\in \RR^{q\times (m-m_1)}$ and the index set of rows of $C_1^T$ in $C$ being $S^c$, we can use our estimator $\wh \Theta C$ to infer $\Ps C$. 
		
		%In this case, the estimator of $\Theta_{\cdot S^c}$ can be directly used to infer $\Ps_{\cdot S^c}$. 
		
		%Notice that the set $S$ can be estimated from $\wh P_{\B}$. This also suggests to use the estimation of $P_{\B}$ to find responses that are not affected by the hidden variables. 
		
		\item[(ii)] Another usage of $\Ttheta$ is to further infer the non-zero rows of $\Ps$ based on the fact that $\Ttheta_{j\cdot}\ne 0$ implies $\Ps_{j\cdot}\ne 0$. Specifically, for any $j$ such that $\Ttheta_{j\cdot}\ne 0$, the first display in Section \ref{sec_ident} yields 
		$
		F^*_{j\cdot} = \Ps_{j\cdot} + {\B}^T A^*_{j\cdot}.
		$
		When $\Ps_{j\cdot}$ is sufficiently sparse, one could resort to the robust regression 
		%by regression $F_{j\cdot}^*$ on the $K$ right singular vectors of ${\B}^T$ 
		to estimate $\Ps_{j\cdot}$ (see details in \cite{wang2017}). Our procedure yields the index of non-zero rows of $\Ps$ as well as the estimates of $F^*$ and the row space of $B^*$. A full exploration of this approach is beyond the scope of this work and is left for future investigation.\\ 
		
		%(For this usage, our procedure yields the index of non-zero rows of $\Ps$ as well as the critical estimates of both $F^*$ and $B^*$.)
	\end{enumerate}

	In practice, we suggest to first check whether the conditions in Lemmas \ref{lem_unif_B} and \ref{lem_sparse_psi} are reasonable. If this is the case, our estimator $\wh\Theta$ can be directly used for estimating $\Ps$. For example, the uniformity of $U$ in Lemma \ref{lem_unif_B} can be verified by comparing $\|\wh P_{\B} v\|_2^2$ with $\wh K/m$ for some chosen unit vector $v\in \RR^m$. Here $\wh P_{\B}$ is the estimate of $P_{\B}$ and $\wh K$ is the estimated number of hidden variables. For conditions in Lemma \ref{lem_sparse_psi}, one could use $\Lambda_K$, the $K$th eigenvalue of ${\B}^T \Sigma_W \B$, as a surrogate of $\lambda_K(\B {\B}^T)$. The former can be estimated from the $\wh K$th eigenvalue of the residual matrix $\wh \Sigma_{\eps}$, see Section \ref{sec_est_P} for details. Even if there is no prior information on the sparsity of rows of $\Ps$, Lemma \ref{lem_sparse_psi} may still hold if $\wh K$ is much smaller than $m$. 
	%In practice, the sparsity assumption on rows of $\Ps$ can only be judged based on prior knowledge in specific data application.  
	
	When conditions in Lemmas \ref{lem_unif_B} and \ref{lem_sparse_psi} seem questionable, we recommend to apply the procedure in (i) of {\bf Case (2)} to check if $\wh\Theta$ could be used to infer $\Ps C$ for some constraint matrix $C$ with scientific interest. If this is not the case either, one may possibly apply the robust regression in (ii) of {\bf Case (2)} to estimate certain rows of $\Ps$.

	\section{Simulation study}\label{sec_simulation}
	
	In this section, we conduct simulations to verify our theoretical results. As mentioned in the Introduction, the SVA methods such as \citep{Lee2017} require the condition $\Ps P_{\B} \to 0$. To compare with \citep{Lee2017} and other competing methods introduced below, we force $\Ps P_{\B} = 0$ so that $\Ttheta = \Ps$ throughout this section.
	
	%We first compare our proposed approach with other existing methods in different settings for known $K$. We then show the performance of selecting $K$ by using the criterion in (\ref{select_K}) and the permutation test in \cite{PA}. 

	\paragraph{Methods} We consider both HIVE and H-HIVE in Algorithms \ref{alg_1} and \ref{alg_2}. 
	All tuning parameters $\lambda_1$, $\lambda_2$ and $\lambda_3$ are chosen via 10-fold cross validation as described in Section \ref{sec_tuning}. We set the number of iterations $T = 5$ for H-HIVE, as the algorithm converges quickly in our simulation. 
	
	Depending on the setting, we compare our method with competitors from the following list:
	\begin{itemize}[itemsep = 0mm]
		\item Oracle: the estimator from (\ref{est_Theta}) by using $P_{B} =  B^T(B^TB)^{-1}B$  with the true $B$.
		\item Lasso: the group-lasso estimator from R-package \textsf{glmnet}.
		\item Ridge: the multivariate ridge estimator from R-package \textsf{glmnet}.
		\item HIVE-init: $\wh\Psi$ obtained from solving (\ref{est_F}) in step (1) of Algorithm \ref{alg_1}. 
		\item SVA: the surrogate variable analysis summarized in the following three steps: (i) compute $\wh \Theta_{LS} = (\X^T\X)^{-1}\X^T \Y$; (ii) obtain $\wh P$ by the first $K$ right singular vectors of $\Y - \X\wh \Theta_{LS}$; (iii) estimate $\Ttheta$ by $\wh \Theta_{LS}(\bI_m - \wh P)$.\footnote{This procedure is based on \cite{Lee2017}. There are other variants of SVA in the literature, for instance, \cite{wang2017, McKennan19}. Since they have similar performances in our setting, we only consider the aforementioned one. A detailed comparison with other SVA-related procedures is given in Appendix \ref{sec_supp_sim}.} 
		\item OLS: the ordinary least squares estimator $\wh \Theta_{LS} = (\X^T\X)^{-1}\X^T \Y$.
		%\item HS-ratio: $\wt \Theta$ obtained in step(4) by using selected $\wh K$ from (\ref{select_K}).
		%\item HS-PA: $\wt \Theta$ obtained in step(4) by using selected $\wh K$ from the permutation test in \cite{PA}.
	\end{itemize}
	
	The Oracle estimator requires the knowledge of true $B$ and is used as a benchmark to show  the effect of estimating $P_{\B}$ on the estimation of $\Ttheta$ in (\ref{est_Theta}). We also consider HIVE-init, which is used as an initial estimator in Algorithms \ref{alg_1} and \ref{alg_2}, to illustrate the improvement of HIVE (H-HIVE) via (\ref{est_Theta}) (see more discussions in Remark \ref{rem_rate_theta_hat}). 
	
	To make fair comparison, we provide the true $K$ for SVA, HIVE and H-HIVE in Section \ref{sim_1}. We then show the performance of selecting $K$ by using the criterion  (\ref{select_K}) and the permutation test by \cite{PA} in Section \ref{sim_2}. 
	
	\paragraph{Data generating mechanism} We set $K = s_* = 3$ throughout the simulation settings. The design matrix is sampled from $\X_{i\cdot} \sim N_p(0, \Sigma)$ for $1\le i\le n$ where 
	$\Sigma_{j\ell} = (-1)^{j+\ell}\rho^{|j-\ell|}$ for all $1\le j, \ell \le p$.  Under $Z = A^TX + W$, to generate $A$ and $B$, we sample $A_{jk} \sim \eta \cdot N(0.5, 0.1)$ and $B_{k\ell}\sim N(0.1, 1)$ independently for all $1\le j\le p$, $1\le k\le K$ and $1\le \ell \le m$. We use $\eta$ to control the magnitude of $A$ hence the dense matrix $L = AB$. We generate the first $s_*$ rows of $\Theta_{raw}$ by sampling each entry independently from $N(\mu_{\Theta}, \sigma_{\Theta}^2)$ and set the rest rows to $0$. The final $\Theta$ is chosen as $\Theta_{raw}(\bI_m - B^T(B^TB)^{-1}B)$ which has the same row sparsity as $\Theta_{raw}$ and satisfies $\Theta P_{B}= 0$.  For the error terms, we independently generate $\W_{ik}\sim N(0, 1)$  for all $1\le i\le n$ and $1\le k\le K$. For homoscedastic case, $\E_{ij}$ for $1\le i\le n$ and $1\le j\le m$ are i.i.d. realizations of $N(0, 1)$. For heteroscedastic case, we independently generate $\E_{ij}\sim N(0, \tau_j^2)$, where, to vary the degree of heterogeneity, we follow the simulation setting in \cite{hpca} and choose $\tau_j^2 = {m v_j^{\alpha} / \sum_j v_j^{\alpha}}$, where $v_1,\ldots, v_m$ are i.i.d. $\textrm{Unif}[0,1]$.
	This choice of $\tau_j^2$ guarantees $\sum_{j=1}^m\tau_j^2 / m =1$ and $\alpha$ controls the degree of heterogeneity: a larger $\alpha$ corresponds to more heterogeneity.

	\subsection{Comparison with existing methods}\label{sim_1}
	In this section, we compare the performance of Oracle, Lasso, Ridge, SVA, HIVE-init, HIVE and H-HIVE in three different settings: (1) small $p$ and small $m$ ($m = p = 20$); (2) small $p$ and large $m$ ($m=150$, $p=20$); (3) large $p$ and small $m$ ($m = 20$, $p = 150$).  For each setting, we fix $n=100$ and consider both homoscedastic and heteroscedastic cases.  
	
	%Specifically, we set $n = 100$, $m = p = 20$ in the first setting, $n=100$, $m=150$, $p=20$ in the second one and $n = 100$, $m = 20$, $p = 150$ in the third one.  
	We choose $\mu_{\Theta} = 3$ and $\sigma_{\Theta} = 0.1$ and vary $\rho \in \{0, 0.5\}$ across all settings. For the homoscedastic case we vary $\eta \in \{0.1, 0.3, 0.5, \ldots, 1.1, 1.3\}$, while for the heteroscedastic case we vary $\alpha \in \{0, 3, 6, \ldots, 12, 15\}$ and fix $\eta = 0.5$. Within each combination of $\eta$ and $\rho$ (or $\alpha$ and $\rho$), we generate $\X$, $A$, $B$ and $\Theta$ once and generate $100$ replicates of the stochastic errors $\W$ and $\E$. For each method with their estimator $\wh \Theta$ and the prediction $\X\wh F$ (if available), we record the averaged Root Sum Squared Error (RSSE) $\|\wh \Theta - \Theta\|_F$ and the averaged Prediction Mean Squared Error (PMSE)  $\|\X \wh F - \X F\|_F^2 / (nm)$. We only report the results for $\rho = 0.5$ as the ones for $\rho = 0$ are similar.

	\subsubsection{RSSE}
	The averaged RSSE of all methods are reported in Figure \ref{fig_MSE} for homoscedastic cases and Figure \ref{fig_MSE_hetero} for heteroscedastic cases. To illustrate the difference, we take the $\log_{10}$ transformation. 
	
	{\bf Homoscedastic cases:} HIVE dominates the other methods and has the closest performance to the Oracle across all settings. H-HIVE is the second best and has similar performance to HIVE when $p$ is small. This is expected since H-HIVE also works when the errors are homoscedastic. However, when $p$ is large, its performance deteriorates comparing to HIVE as $\eta$ increases such that the dense matrix $L$ has larger magnitude. The reason is that the condition $Rem(P_{\B}) \le c\sqrt{K}$ in Theorem \ref{thm_U_hetero} becomes restrictive for large $p$, small $m$ and large $\eta$ (say $\eta \ge 0.8$), since in this scenario the prediction error gets larger and so does $Rem(P_{\B})$. 
	
	Among the competing methods, when $n > p$ (the first two panels of Figure \ref{fig_MSE}), SVA also has good performance 
	but is still outperformed by HIVE since SVA does not adapt to the sparsity structure of $\Ttheta$. OLS is comparable to Ridge. Lasso has clear advantage over Ridge when the signal is sparse enough, that is, when $\eta$ is small. HIVE-init outperforms both Lasso and Ridge. When $n<p$, SVA and OLS are not well defined and become infeasible in the third panel of Figure \ref{fig_MSE}. HIVE-init has similar performance as Lasso but has larger error when $\eta$ increases. HIVE and H-HIVE dramatically reduce the error of the initial estimator HIVE-init in all setting. This agrees with the theoretical results in Remark \ref{rem_rate_theta_hat}. 
	
	\begin{figure}[ht]
		\centering
		\includegraphics[width = \textwidth]{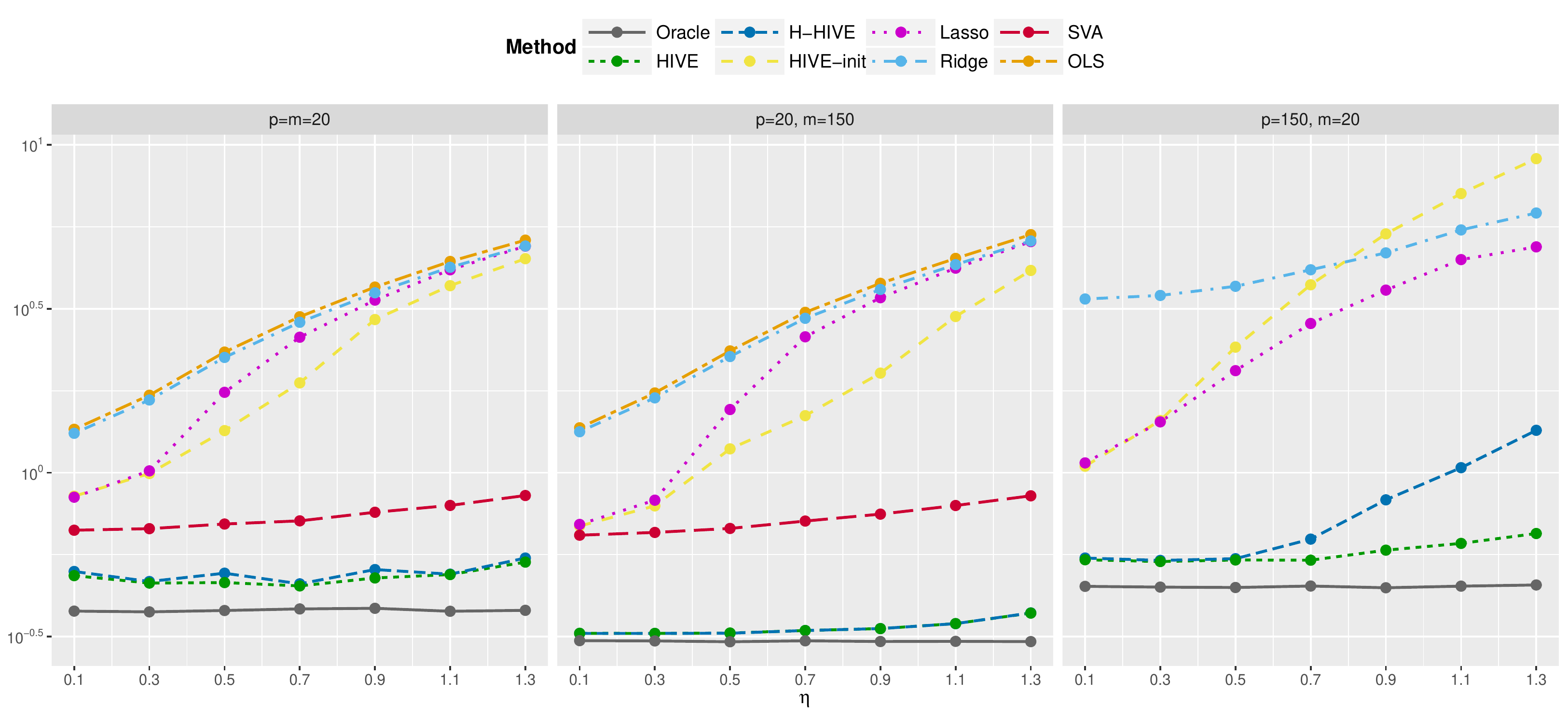} 
		\caption{RSSE under the homoscedastic settings with $n = 100$. }
		\label{fig_MSE}
	\end{figure}

	{\bf Heteroscedastic cases:} Figure \ref{fig_MSE_hetero} shows that H-HIVE, tailored for the heteroscedastic error, has the smallest RSSE among all the methods and its advantage over the second best method, HIVE, becomes evident when $m$ is small (see the first and third panels) and the degree of heteroscedasticity is moderate or large (i.e., $\alpha \ge 9)$. This agrees with our theoretical analysis that the HIVE estimator may not be consistent when $m$ is finite under heteroscedastic errors. It is worth mentioning that when $m$ is large (see the second panel), HIVE is nearly identical to H-HIVE suggesting similar performance between PCA and HeteroPCA. This is expected in light of Remark \ref{rem_robust_PCA}. Finally, Ridge, Lasso and HIVE-init are robust to the degree of heteroscedasticity in all cases, whereas SVA shows inflated RSSE  as the degree of heteroscedasticity ($\alpha$) increases when $m$ is small (see the first panel).

	\begin{figure}[ht]
		\centering
		\includegraphics[width = \textwidth]{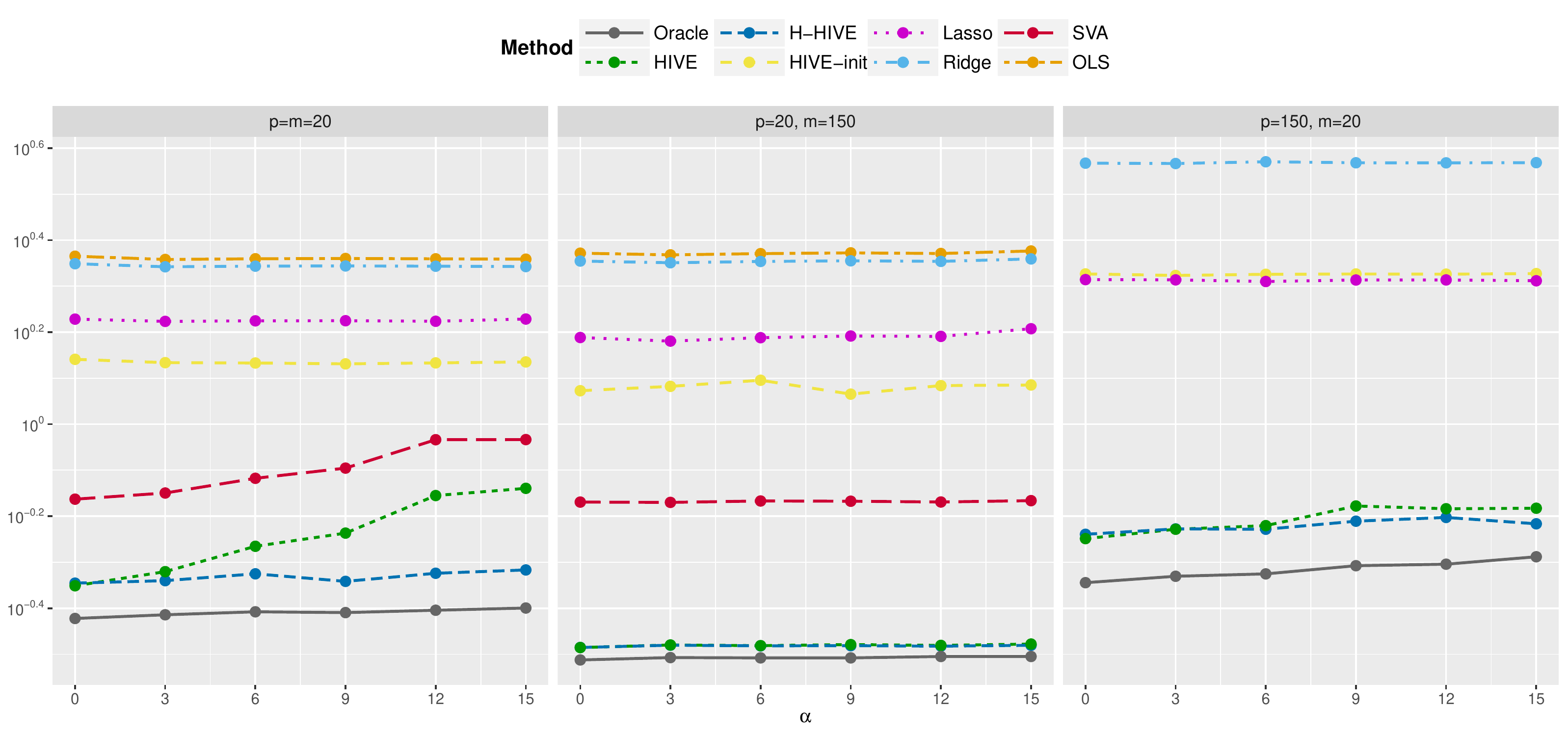} 
		\caption{RSSE under the heteroscedastic settings with $n = 100$. }
		\label{fig_MSE_hetero}
		\vspace{-3mm}
	\end{figure}

	\subsubsection{PMSE}
	
	The PMSE for different methods are reported in Figure \ref{fig_PMSE} for both homoscedastic cases and heteroscedastic cases. Notice that OLS and SVA have the same PMSE and so do HIVE-init, HIVE and H-HIVE. 
	
	{\bf Homoscedastic cases:}  As seen in the top row of Figure \ref{fig_PMSE}, when $n<p$ (the third panel), HIVE has much smaller PMSE than both Lasso and Ridge. This demonstrates the advantage of the proposed procedure in (\ref{est_F}) for prediction.  When $p<n$ (the first two panels), HIVE and Lasso have comparable performance and clearly outperform OLS and Ridge for small $\eta$ (i.e. the signal $\Theta + L$ is approximately sparse). These findings are in line with Theorem \ref{thm_pred} and its subsequent remarks.
	
	{\bf Heteroscedastic cases:} The bottom row of Figure \ref{fig_PMSE} shows that all methods have robust prediction performance under the heteroscedastic cases and the advantage of HIVE (H-HIVE) becomes more evident when $p>n$ (the last panel). %The robustness of HIVE to heteroscedasticity corroborates our results in Theorem \ref{thm_pred}. 
	
	\begin{figure}[ht]
		\centering
		\begin{tabular}{cc} 	\hspace{-3mm}	
			\includegraphics[width = .65\textwidth, height=0.28\textheight]{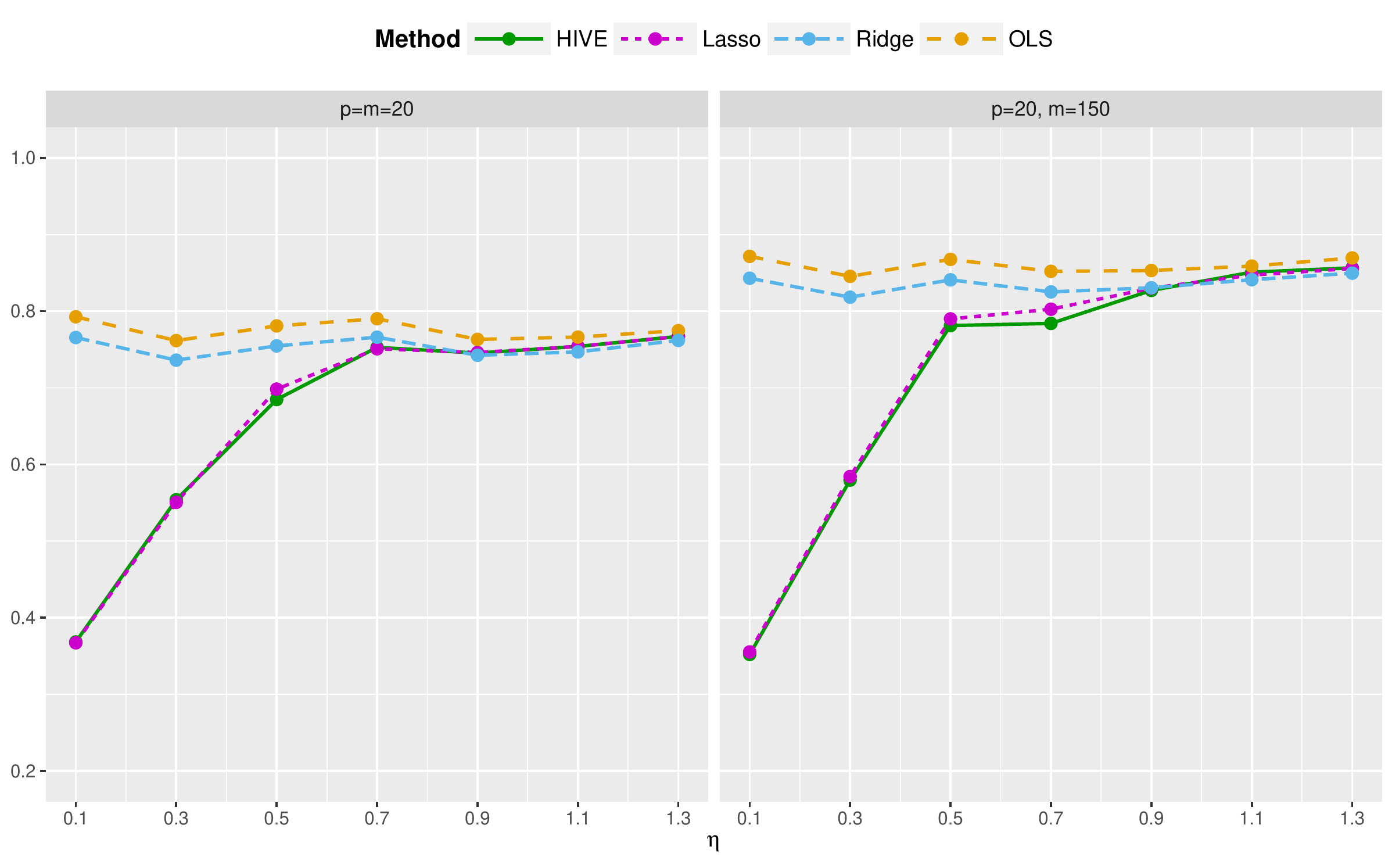} &\hspace{-5mm} 	\includegraphics[width =.35\textwidth, height=0.28\textheight]{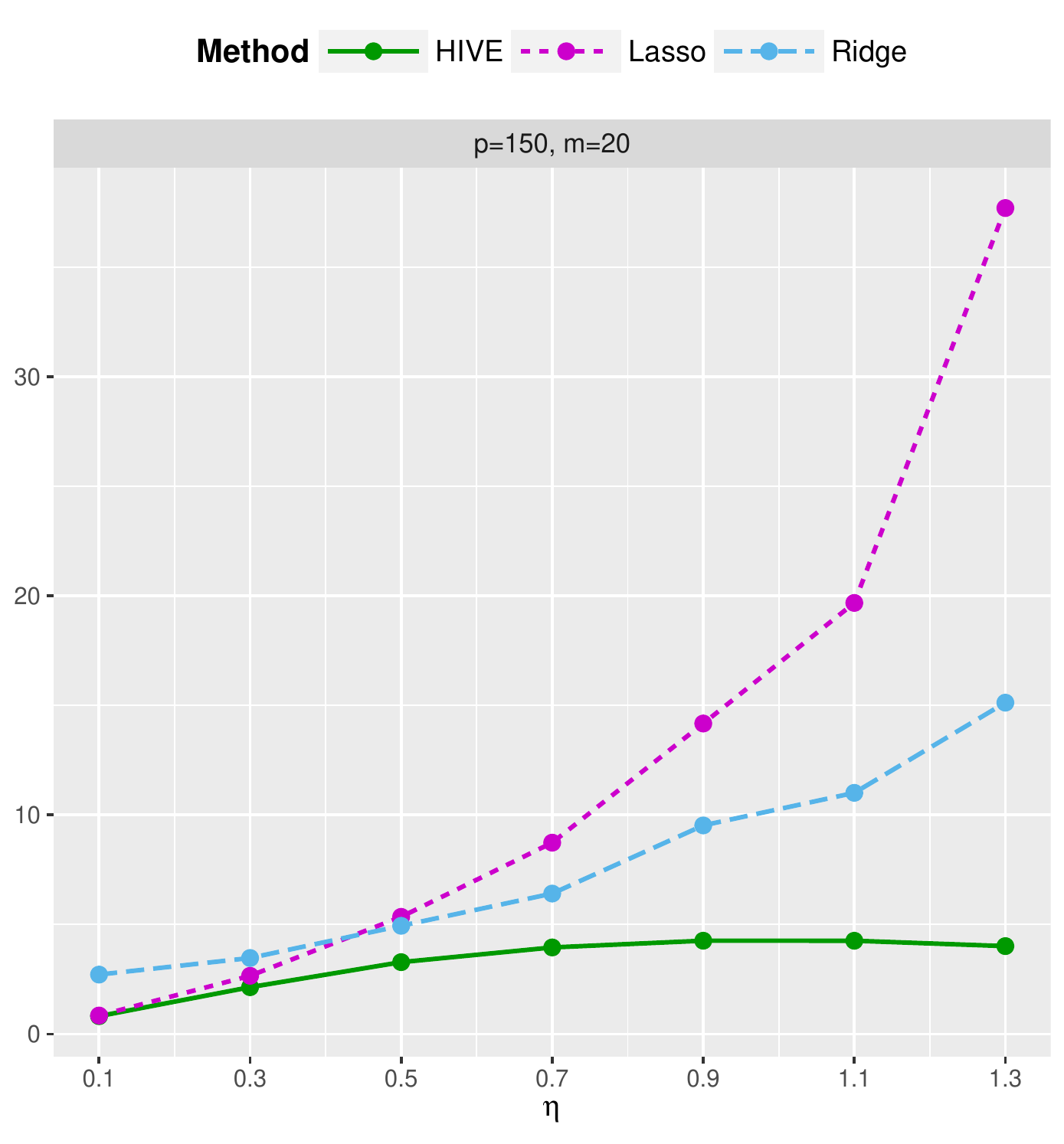}\\
			\includegraphics[width = .65\textwidth, height=0.28\textheight]{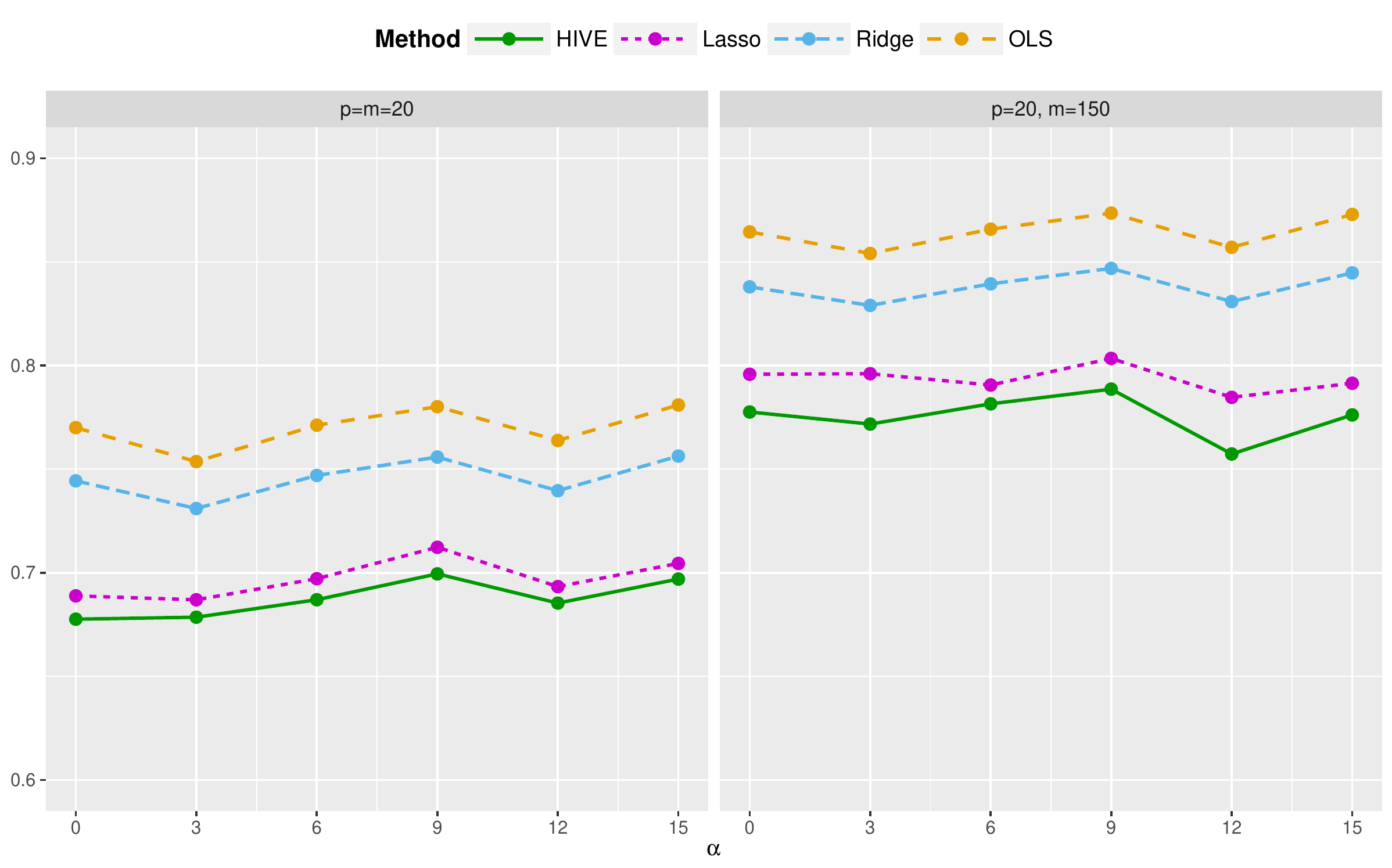} &\hspace{-5mm} 	\includegraphics[width =.35\textwidth, height=0.28\textheight]{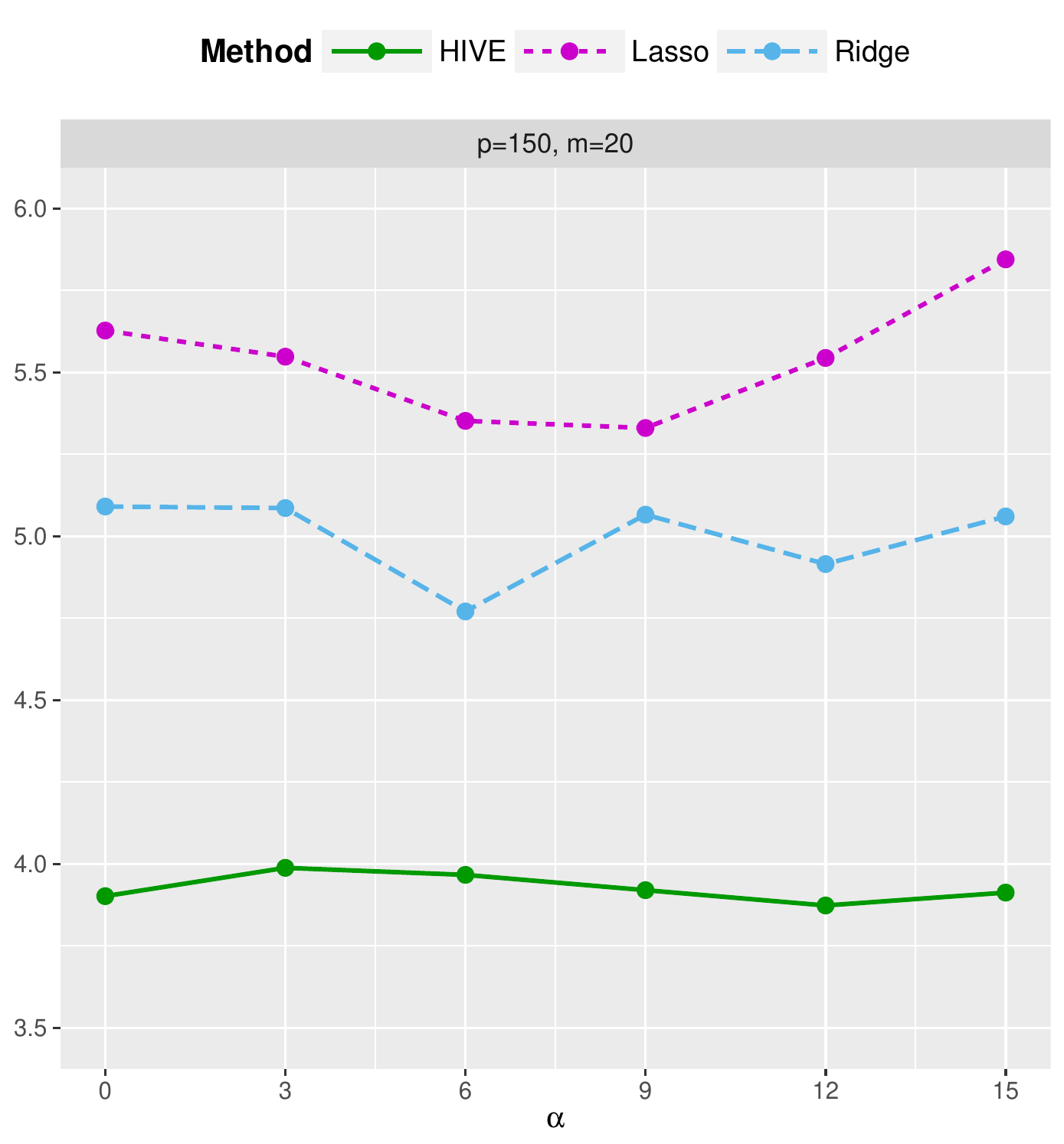}
		\end{tabular}
		\caption{PMSE under the  homoscedastic (the top row) and heteroscedastic (the bottom row) settings with $n = 100$.}
		\label{fig_PMSE}
	\end{figure}

	\subsection{Performance of selecting $K$}\label{sim_2}
	We report our simulation results of selecting $K$ by using (\ref{select_K}) (Ratio) and the permutation test (PA) in \cite{PA}. In the setting of $n = 100$, $m=150$, $p = 20$, both methods select $K$ consistently, which is expected for large $m$ and small $p$. %In particular, this agrees with the results in Theorem \ref{thm_K} for the Ratio method. %Recall that in Theorem \ref{thm_K}, in order to consistently estimate $K$, one needs two conditions  $m\rightarrow\infty$ and the prediction error $R_F / m$ converges to $0$, which often hold  when $m$ is large and $p$ is small. 
	
	We mainly investigate the selection of $K$ in two settings: $n = 100$, $m = 20$, $p = 20$ and $n=100$, $m = 20$, $p = 150$. For each setting,  we fix $\mu_{\Theta}=\sigma_{\Theta} = 1$, $\eta = 0.3$, $\rho = 0.3$  and vary  the signal-to-noise ratio (SNR) defined as 
	$\lambda_{K}(B^T\Sigma_W B) / (m\tau^2)$, where $\tau^2 = 1$ and $B_{kj} \sim N(0.1, 1)$. We choose $\Sigma_W =  \sigma_W^2\bI_K$ with  $\sigma_W \in \{0.1, 0.3, 0.5, \ldots, 1.3, 1.5\}$ such that SNR $\approx \sigma_W^2$.  Recall that the true $K$ is equal to $3$. Figure \ref{fig_K} shows the boxplot of the selected $K$ by using Ratio and PA over $100$ simulations. It is clear that as long as the SNR is large enough, both methods consistently select $K$. By comparing the two panels, we can see that when $p$ is large, we need stronger SNR in order to consistently select $K$. 
	
	In practice, we recommend using PA when $m$ is small, say around $20$. When $m$ is large or moderate, PA becomes computationally expensive due to the implementation of SVD on the permuted data. For this reason, we recommend Ratio for moderate or large $m$.

	\begin{figure}[ht]
		\centering
		\includegraphics[width = .8\textwidth]{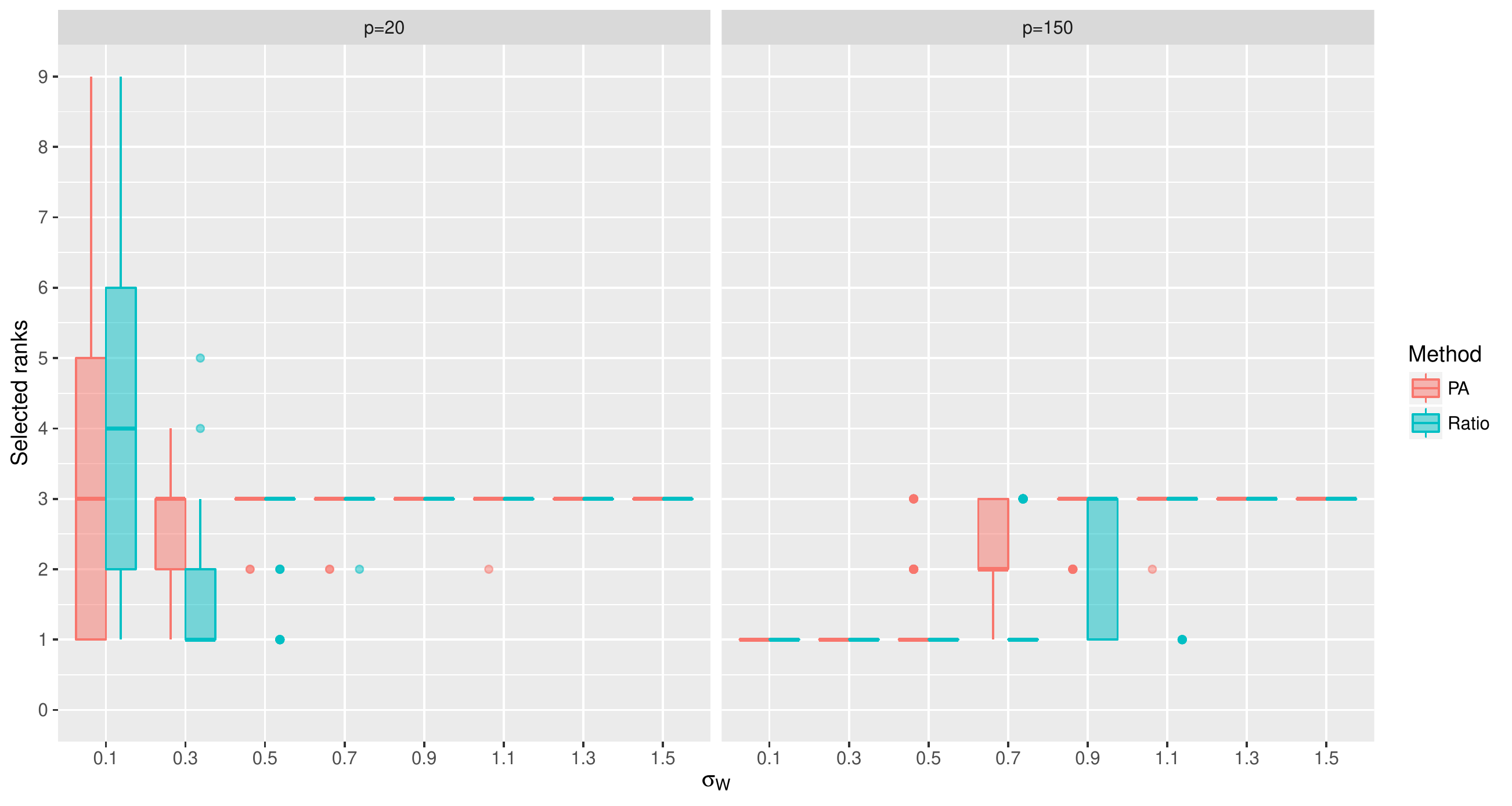}
		\caption{Boxplots of the selected $K$ using PA and Ratio in two settings. }
		\label{fig_K}
		\vspace{-3mm}
	\end{figure}

	\section{Real data application}\label{sec_data}
	
	We apply our procedure, Algorithms \ref{alg_1} and \ref{alg_2}, to two real world datasets: the Norwegian dataset and the yeast cross dataset. Due to the lack of knowledge of the ground truth in real data applications, we only compare the prediction performance of our procedure with several competing methods. %, though prediction is not the main focus of our procedure,  

	\paragraph{Norwegian dataset} This dataset available in \cite{i08} was collected to study the effect of three variables $X_1$, $X_2$ and $X_3$ on the quality of the paper from a Norwegian paper factory. The quality of the paper is measured by $13$ continuous responses while all $X_i$ taking values in $\{-1, 0, 1\}$ represent the location of the design point. In addition to the main effect terms ($X_1, X_2, X_3$), six second order interaction terms ($X_1^2, X_2^2, X_3^2$, $X_1X_2$, $X_1X_3$, $X_2X_3$) were also considered as predictors. In total, the dataset consists of $ n =29$ fully observed observations with $m = 13$ responses and $p = 9$ predictors. The design matrix is centered and standardized to unit variance while the response matrix is centered. 
	
	\cite{bunea2011} showed that the data may exhibit a low-rank structure with estimated rank $K = 3$ via reduced-rank regression. This finding is consistent with \cite{aldrin} based on the smallest leave-one-out cross-validation (LOOCV) error, which is 326.2 (total sum of squared errors), over all possible ranks. A later analysis of \cite{bunea2012} via the sparse reduced-rank regression (SSR) further reduces the LOOCV error to 304.5. Specifically, \cite{bunea2012} estimated the coefficient matrix of the multivariate linear regression by 
	\begin{equation}\label{procedure_bunea}
	\wh F_{k} = \min_{\rank(F)\le k}\|\Y - \X F\|_F^2 + 2\lambda \|F\|_{\l12}
	\end{equation}
	with $k = 3$ and $\lambda > 0$ selected from cross validation. The resulting estimator $\wh F_{k}$ is both low-rank and row-sparse, and selects 6 predictors by excluding the following three terms $X_1^2$, $X_1X_2$ and $X_2X_3$. 
	
	To compare the prediction performance, we applied HIVE in Algorithm \ref{alg_1} to this dataset and the permutation test in Section \ref{sec_select_K} for estimating $K$ as $m$ is small. Our procedure yields $\wh K = 3$. The results from the H-HIVE algorithm are similar and thus omitted. For comparison, we also applied Ridge, group-lasso (Lasso) and SVA to this dataset (note that the prediction of SVA is the same as OLS). The LOOCV errors for all methods are summarized in Table \ref{tab_loocv}. The HIVE algorithm has the smallest LOOCV error among all methods. Thus, our approach yields the most accurate prediction.

	\begin{table}[ht]
		\centering
		\caption{LOOCV errors of HIVE, ridge, group-lasso (Lasso), SVA, reduced-rank regression (RRR) and sparse reduced-rank regression in (\ref{procedure_bunea}) (SRR) on Norwegian dataset}
		\label{tab_loocv}
		\begin{tabular}{l|cccccc}
			\hline 
			Method &  HIVE  &  Ridge &  Lasso & SVA & RRR & SRR\\\hline
			LOOCV error &    288.9 & 324.3 & 317.3 & 338.1  & 326.2 & 304.5\\ 
			\hline
		\end{tabular}
	\end{table}
	
	In addition, the results in Table \ref{tab_loocv} imply that the low-rank structure of the coefficient matrix in RRR and SRR may not be sufficient to model the association between the predictors and responses. 	\cite{bunea2011} showed that the reduced-rank regression by using all $p$ predictors can explain $86.9\%$ of the total variation of $\Y$ quantified by $\tr(\Y^T\X(\X^T\X)^{-1}\X^T\Y)$ (see \cite{i08} for the definition). Note that we can rewrite model (\ref{model}) as a reduced-rank regression of $\Y - \X \Psi$ on $\X$. By replacing $\Psi$ with our estimator $\wh\Psi$, we can show that our model (\ref{model}) can explain $98.6\%$ of the total variation, a much higher percentage than the reduced-rank regression. This implies that our model may provide a better fit to the data than the reduced-rank regression. The main reason is that model (\ref{model}) is able to capture the sparse signal that cannot be explained by the low rank structure. To quantify this statement, we calculate, in Table \ref{tab_1}, the $\ell_2$ norm of rows of our estimator $\wh \Psi +\wh L $ corresponding to all the predictors. As a comparison, we also compute the reduced-rank estimator $\wt L$ by regressing $\Y$ on $\X$ directly. The results are also shown in Table \ref{tab_1}. Similar to the results from the SRR, the estimator $\wt L$ corresponding to the three predictors $X_1^2$, $X_1X_2$ and $X_2X_3$ has small $\ell_2$ norm. However, the association between the three predictors and responses is indeed strong as shown by our estimator $\wh \Psi + \wh L$ when the sparse signal $\wh \Psi$ is taken into account. This suggests that model (\ref{model_1}) can successfully capture both the low rank signal and the sparse signal, whereas the latter is omitted in the (sparse) reduced-rank regression. % which may yield misleading results.

	%Recall that SRR in (\ref{procedure_bunea}) shrinks the coefficients of $X_1^2$, $X_1X_2$ and $X_2X_3$ to 0. Our estimate of the low rank matrix $\wh L$ demonstrates the similar pattern, as the three predictors have smaller $\ell_2$ norm than the other predictors. However, the association between the three predictors and responses becomes more evident in $\wh \Theta + \wh L$, if the sparse signal $\wh \Theta$ is taken into account. This suggests that model (\ref{model_1}) can successfully capture both the low rank signal and the sparse signal, whereas the latter is omitted in the (sparse) reduced-rank regression.

	\begin{table}[ht]
		\centering
		\caption{$\ell_2$ norms of rows of $\wt L$ and $\wh \Psi + \wh L$. The bold numbers correspond to the three excluded predictors, $X_1^2, X_1X_2$ and $X_2X_3$ in \cite{bunea2012}.}
		\label{tab_1}
		\begin{tabular}{l|rrrrrrrrr}
			\hline
			& $X_1$ & $X_2$ & $X_3$ & $X_1^2$ & $X_2^2$ & $X_3^2$ & $X_1X_2$ & $X_1X_3$ & $X_2X_3$ \\ 
			\hline
			$\wt L$ & 1.33 & 0.60 & 1.05 & {\bf 0.28} & 0.44 & 0.64 & {\bf 0.14} & 0.71 & {\bf 0.35} \\ 
			$\wh\Psi + \wh L$ & 1.61 & 0.94 & 1.15 & {\bf 0.59} & 0.56 & 0.78 & {\bf 0.23} & 0.88 & {\bf 0.54} \\ 
			\hline
		\end{tabular}
	\end{table}
	
	\paragraph{Yeast cross dataset}
	
	The yeast cross dataset\footnote{The dataset is downloaded from \url{http://genomics-pubs.princeton.edu/YeastCross_BYxRM/home.shtml}.} consists of 1,008 prototrophic haploid segregants from a cross between a laboratory strain and a wine strain of yeast. This dataset was collected via high-coverage sequencing and consists of genotypes at 30,594 high-confidence single-nucleotide polymorphisms (SNPs) that distinguish the strains and densely cover the genome. There are 46 traits in this dataset corresponding to the measured growth under multiple conditions, including different temperatures, pHs and carbon sources, as well as addition of metal ions and small molecules \citep{yeast_data}. The goal is to study the relationship between genotypes and traits, which could be used for predicting traits or selecting significant genotypes for further scientific investigation \citep{yeast_data}. A multivariate linear regression by regressing traits on genotypes could be suitable for this purpose. However, it is likely that there exist hidden factors that also affect traits. We thus fit our model (\ref{model}) for prediction and variable selection. After removing the segregants with missing values in traits and SNPs which have Pearson correlations above $0.97$, we end up with  $n = 303$ segregants with $m = 46$ traits and $p = 571$ SNPs. 
	
	To evaluate the prediction performance, we randomly split the data into $70\%$ training set and $30\%$ test set. We center and normalize the SNPs in the training set to zero mean and unit variance. Traits in the training set are also centered. The corresponding means and scales from the training set are used to standardize the test set. We then apply the HIVE Algorithm \ref{alg_1} together with group-lasso (Lasso) and Ridge to the training set and evaluate the fitted model on the test set. The test mean square errors (MSE) of Lasso and Ridge are 7.29 and 6.29, respectively, while HIVE has a smaller test MSE 5.92. This suggests that HIVE has better prediction performance than Lasso and Ridge. We then refit the model to the whole dataset and apply HIVE, H-HIVE and Lasso for variable selection. Lasso and HIVE select, respectively, 261 and 259 SNPs with 205 common ones. For H-HIVE, it selects 263 SNPs in which 222 SNPs are identical to those selected from Lasso. The difference of the selected SNPs between HIVE (H-HIVE) and Lasso is due to the fact that Lasso does not account for the potential hidden variables. We expect that the results from HIVE (H-HIVE) may provide new insight on understanding how SNPs are associated with different traits. For instance, further confirmatory analysis such as controlled experiments can be conducted by the investigators to study the effect of the selected SNPs. 
	
	%Finally, we mentioned that the estimated number of hidden factors is $2$ by using (\ref{select_K}) and $10$ by using the permutation test. To take a closer look at this low-dimensional structure, after removing $\X\wh \Theta$ from $\Y$, the rank $2$ reduced-rank estimator by regressing $\Y - \X \wh \Theta$ on $\X$ explains $61.5\%$ variation of $\tr((\Y-\X\wh\Theta)^T\X(\X^T\X)^+\X^T(\Y-\X\wh\Theta))$\footnote{We use $M^+$ to denote the Moore-Penrose inverse of any matrix $M$.} while the rank $10$ reduced-rank estimator explains $83.6\%$ which is not a significant improvement. From this perspective, it is plausible that there exist approximately two hidden variables. 

	\section{Discussion}
	
	In this paper, we study the high-dimensional multivariate response regression model with hidden variables. We establish sufficient and necessary conditions for model identifiability. We propose the HIVE algorithm for estimating the coefficient matrix $\Ttheta$, which is adaptive to the unknown sparsity of $\Ttheta$. The algorithm is further extended to settings with heteroscedastic noise. Theoretically, we establish non-asymptotic upper bounds of the estimation error of our estimator, which are valid for any finite $n$, $p$, $m$ and $K$. 
	
	There are several future directions that are worthy of further investigation. First, it is appealing to study the variable selection property of the proposed algorithm. In this paper, we focus on the adaptive estimation of the coefficient matrix. To establish the variable selection consistency property, a different set of conditions (e.g., minimum signal strength condition) are required. %We aim to investigate this property in some future work. 
	Second, it is of great interest to construct confidence intervals or hypothesis tests for the high-dimensional matrix $\Ttheta$ \citep{guo2020doubly}. The inference results can be further used to control the false discovery rate (FDR) in multiple testing, which is of central importance in many biological applications. We refer to the SVA literature for discussions on the FDR control. Third, 
	%the proposed estimation procedures can be extended to handle a variety of different sparsity patterns of $\Ttheta$. Since we assume that there exists a small subset of common features associated with the responses, we apply the group-lasso penalty to estimate the row-sparse matrix $\Ttheta$.  In other applications, a different sparsity structure of $\Ttheta$ may be more suitable (for instance, column-sparsity or block-sparsity). One can modify our procedure to account for the specific sparsity structures.
	our model (\ref{model_1}) also covers a particular yet important model, the confounding model \citep{cevid2018spectral}, which, in addition to (\ref{model_1}), assumes $X = D'Z + W'$ for some random noise $W'\in\RR^p$ independent of $Z$. As a result, the coefficient $\Ps$ represents the causal effect of $X$ on $Y$. Both our methods and analysis are directly applicable to this case. But it is also worth mentioning that assuming this extra structure brings the advantage of predicting $Z$ by using $X$ when $p$ is large. The predicted $Z$ in turn could potentially be used to improve the estimation of $\Ps$, especially when $m$ is small. We do not pursue this direction in this paper and leave it to future research.
	
	%as mentioned in the Introduction, our model (\ref{model_1}) covers the structural equation model as a particular case. The latter assumes (\ref{modelZ}) additionally and as a result, the coefficient $\Ps$ represents the causal effect of $X$ on $Y$. Both our methods and analysis are directly applicable to this case. But it is also worth mentioning that assuming the extra structure in (\ref{modelZ}) brings the advantage of predicting $Z$ by using $X$ when $p$ is large. The predicted $Z$ in turn could potentially be used to improve the estimation of $\Ps$. We do not pursue this direction in this paper and leave it to future research.

	\section*{Acknowledgement}
	We thank the Associate Editor and two anonymous referees for their many insightful and helpful suggestions.
	%Yang Ning was partially supported by NSF-DMS 1854637. Xin Bing was supported in part by NSF-DMS 1712709.
	
	The first author was supported in part by NSF Grant DMS 1712709 and NSF Grant DMS 2015195. 
	
	The second author was supported in part by NSF Grant DMS-1854637 and CAREER Award DMS-1941945.
	
	\begin{supplement}
		\sname{Supplement to ``Adaptive Estimation in Multivariate Response Regression with Hidden Variables''}\label{supp}
		%\stitle{supplementary proofs}
		%\slink[url]{http://www.e-publications.org/ims/support/dowload/imsart-ims.zip}
		\slink[doi]{TBA}
		\sdescription{The supplementary document includes the proofs, the comparison with reduced-rank estimator and additional numerical results.}
	\end{supplement}
	
	%\vspace{.5cm}
	\bibliography{ref}

	\newpage

	\appendix
	
	Appendix \ref{sec_sintheta} states a variant of the robust $\sin\Theta$ Theorem and its proof. Appendix \ref{sec_main_proof} contains the proofs of the main results. The proofs of auxiliary results and technical lemmas are collected in Appendix \ref{sec_proof_auxiliary}. Appendix \ref{sec_comp_rr} contains both theoretical and empirical comparisons of the multivariate ridge estimator and the reduced-rank estimator. Supplementary simulation results and an alternative way of selecting $(\lambda_1, \lambda_2)$ can be found in Appendix \ref{sec_supp_sim}.

	\section{A variant of the robust $\sin\Theta$ theorem}\label{sec_sintheta}	
	We state a variant of Theorem 3 (Robust $\sin\Theta$ theorem) in \cite{hpca}. Let $N, M$ and $Z$ be $m\times m$ deterministic symmetric matrices satisfying  
	\begin{equation}\label{model_decomp}
	N = M + Z
	\end{equation}
	where $N$ is the observation matrix while $M$ is the matrix of interest with $\rank(M) = K$. Let $V\in \RR^{m\times K}$ denote the first $K$ eigenvectors of $M$ with non-increasing eigenvalues denoted as $\lambda_1(M) \ge \lambda_2(M) \ge \cdots \ge \lambda_K(M)$. Further let $\wh V$ be the output of Algorithm \ref{alg_hpca} applied to $N$. Define $\sin\Theta(\wh V, V) := \wh V_{\perp}^T V$ where $\wh V_{\perp} \in \RR^{m \times (m-K)}$ is the orthogonal complement of $\wh V$ such that $[\wh V, \wh V_{\perp}]$ is the complete orthogonal matrix. The following theorem provides upper bounds for the Frobenius norm of $\sin\Theta(\wh V, V)$. For any matrix $D$, let $G(D)$ be the matrix with diagonal entries equal to those of $D$ and off-diagonal entries equal to zero. Let $\Gamma(D)= D-G(D)$.  %This is different from bounding from above the operator norm of $\sin\Theta(\wh V, V)$ as studied in \cite{hpca}. 
	
	\begin{thm}\label{thm_rsintheta}
		Suppose $M \in \RR^{m\times m}$ is a rank-$K$ symmetric matrix and $V$ consists of its first $K$ eigenvectors. Assume there exists a universal constant $c > 0$ such that 
		\begin{equation}\label{cond_ic_prime}
		\max_{1\le j\le m}\left\|e_j^TV\right\|_2^2\cdot {\lambda_1(M) \over \lambda_K(M)} \le c
		\end{equation}
		and $\|\Gamma(Z)\|_F \le c \sqrt{K}\lambda_K(M)$. 
		Then the output $\wh V$ of HeteroPCA($N, K, T$) in Algorithm \ref{alg_hpca} with $T = \Omega\bigl(1 \vee \log {\sqrt{K}\lambda_K(M)\over \|\Gamma(Z)\|_F}\bigr)$ satisfies 
		\[
		\|\sin\Theta(\wh V, V)\|_F  ~ \lesssim ~ {\|\Gamma(Z)\|_F \over \lambda_K(M)} \wedge \sqrt K.
		\]
	\end{thm}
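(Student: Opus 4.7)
The plan is to adapt the proof of Theorem 3 in \cite{hpca} from the operator norm to the Frobenius norm, exploiting the fact that HeteroPCA is specifically designed to suppress the diagonal contamination of the noise across iterations. Since the algorithm restores $N^{(t+1)}_{ij}=N_{ij}$ on the off-diagonal and sets $N^{(t+1)}_{ii}=\wt N^{(t)}_{ii}$, one has the clean decomposition $N^{(t+1)}-M=\Gamma(Z)+G(\wt N^{(t)}-M)$. Writing $\eta_t:=\|G(\wt N^{(t)}-M)\|_F$ for the diagonal error, the goal is to show that $\eta_t$ decays geometrically in $t$, so that only the iteration-invariant off-diagonal term $\|\Gamma(Z)\|_F$ governs the final bound.

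The main analytic tool will be a Frobenius Wedin bound of the form
\[
\|\sin\Theta(V^{(t+1)},V)\|_F \;\lesssim\; \frac{\|(N^{(t+1)}-M)V\|_F}{\lambda_K(M)} \;\le\; \frac{\|\Gamma(Z)\|_F+\mu_V\,\eta_t}{\lambda_K(M)},
\]
where $V^{(t+1)}$ denotes the top $K$ eigenvectors of $N^{(t+1)}$ and the eigengap is controlled under the assumption $\|\Gamma(Z)\|_F\le c\sqrt{K}\,\lambda_K(M)$. The first inequality is a standard Frobenius Davis--Kahan bound in the rank-$K$ setting, and the second uses $\|\Gamma(Z)V\|_F\le\|\Gamma(Z)\|_F$ together with the elementary identity $\|GV\|_F\le\mu_V\|G\|_F$ for diagonal $G$ (with $\mu_V:=\max_j\|e_j^TV\|_2$). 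The incoherence hypothesis (\ref{cond_ic_prime}) gives $\mu_V^2\le c\,\lambda_K(M)/\lambda_1(M)\le c$, so the multiplier in front of $\eta_t$ is bounded by a constant.

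The heart of the argument, and the main obstacle, is establishing a contraction $\eta_t\le\rho\,\eta_{t-1}+C\|\Gamma(Z)\|_F$ with $\rho<1$. My plan is to bootstrap from the operator-norm version: Theorem 3 of \cite{hpca} applies under our hypotheses (since $\|\Gamma(Z)\|_{op}\le\|\Gamma(Z)\|_F\le c\sqrt{K}\lambda_K(M)$) and yields $\|\sin\Theta(V^{(t)},V)\|_{op}\to 0$ geometrically. This operator-norm closeness propagates the incoherence of $V$ to $V^{(t)}$, giving $\max_j\|e_j^TV^{(t)}\|_2^2\lesssim\lambda_K(M)/\lambda_1(M)$ after a few iterations. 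Expanding the diagonal as $(\wt N^{(t)}-M)_{ii}=e_i^TV^{(t)}\Lambda^{(t)}(V^{(t)})^Te_i-e_i^TV\Lambda V^Te_i$ (with $\Lambda^{(t)}$ the top $K$ eigenvalues of $N^{(t)}$), splitting into eigenvalue and eigenvector perturbations, and using the inherited incoherence of both $V$ and $V^{(t)}$, converts operator-norm closeness into Frobenius-norm contraction of $\eta_t$. Iterating for $T=\Omega(1\vee\log[\sqrt{K}\lambda_K(M)/\|\Gamma(Z)\|_F])$ steps drives $\rho^T\eta_0\lesssim\|\Gamma(Z)\|_F$, so $\eta_T\lesssim\|\Gamma(Z)\|_F$. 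Plugged into the Wedin bound, this produces $\|\sin\Theta(\wh V,V)\|_F\lesssim\|\Gamma(Z)\|_F/\lambda_K(M)$. The minimum with $\sqrt{K}$ is trivial, since $\sin\Theta(\wh V,V)=\wh V_\perp^TV$ has at most $K$ columns each of Euclidean norm at most $1$.
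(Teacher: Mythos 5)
Your overall architecture matches the paper's: the off-diagonal error $\|\Gamma(Z)\|_F$ is iteration-invariant, the diagonal error must contract geometrically, and a Frobenius-norm Davis--Kahan bound applied to $N^{(T)}$ and $M$ finishes the argument (the paper tracks $K_t=\|N^{(t)}-M\|_F$ and proves by induction that $K_t\le 2\|\Gamma(Z)\|_F+\sqrt{K}\lambda_K(M)2^{-(t+4)}$). The decomposition $N^{(t+1)}-M=\Gamma(Z)+G(\wt N^{(t)}-M)$ and the trivial $\sqrt{K}$ cap are fine. The gap is in your contraction mechanism, which you correctly flag as the heart of the matter. The incoherence-propagation step does not follow from the hypotheses: operator-norm closeness only gives $\max_j\|e_j^TV^{(t)}\|_2\le\max_j\|e_j^TV\|_2+\sqrt{2}\,\|\sin\Theta(V^{(t)},V)\|_{op}$, so to conclude $\max_j\|e_j^TV^{(t)}\|_2^2\lesssim\lambda_K(M)/\lambda_1(M)$ you would need $\|\sin\Theta(V^{(t)},V)\|_{op}\lesssim\sqrt{\lambda_K(M)/\lambda_1(M)}$. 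The hypotheses only control $\|\Gamma(Z)\|_{op}\le\|\Gamma(Z)\|_F\le c\sqrt{K}\lambda_K(M)$, so even the limiting operator-norm error $\|\Gamma(Z)\|_{op}/\lambda_K(M)$ can be of order $\sqrt{K}$, while condition (\ref{cond_ic_prime}) allows $\lambda_1(M)/\lambda_K(M)$ to be as large as $m/K$; nothing forces the former below $\sqrt{\lambda_K(M)/\lambda_1(M)}$. (Invoking the operator-norm theorem of \cite{hpca} as a black box is itself delicate here, since its smallness condition is stated for $\|\Gamma(Z)\|_{op}$ without the $\sqrt{K}$ slack your Frobenius hypothesis permits.) Moreover, your eigenvalue/eigenvector split of $e_i^T(\wt N^{(t)}-M)e_i$ reintroduces a factor of $\lambda_1(M)$ through $\Lambda$ or $\Lambda^{(t)}$ that must then be cancelled by incoherence of $V^{(t)}$; the surviving terms carry $\sqrt{\lambda_1(M)/\lambda_K(M)}$ and do not yield $\rho<1$.

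The paper's proof sidesteps both problems by never using incoherence of the iterates and never paying $\lambda_1(M)$. It writes $\wt N^{(t-1)}-M=P_V(N^{(t-1)}-M)+(P_{V^{(t-1)}}-P_V)(N^{(t-1)}-M)-P_{V^{(t-1)}_{\perp}}M$, so that the diagonal-extraction bound $\|G(AP_V)\|_F\le\|A\|_F\max_j\|e_j^TV\|_2$ of Lemma \ref{lem_1_new} is only ever applied with the true, assumed-incoherent $V$; it bounds $\|P_{V^{(t-1)}_{\perp}}M\|_F\le 2\|N^{(t-1)}-M\|_F$ via the best rank-$K$ approximation property of $\wt N^{(t-1)}$ (so no $\lambda_1(M)$ appears); and it handles the cross term crudely as a quadratic $4K_{t-1}^2/\lambda_K(M)$, which the induction keeps small. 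This yields $K_t\le\|\Gamma(Z)\|_F+3K_{t-1}\max_j\|e_j^TV\|_2+4K_{t-1}^2/\lambda_K(M)$, a genuine contraction under (\ref{cond_ic_prime}). You would need to replace your bootstrap-through-operator-norm step with a decomposition of this kind.
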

	
	\begin{proof}[Proof of Theorem \ref{thm_rsintheta}]
		The proof follows the same arguments as that of Theorem 7 (General robust $\sin\Theta$ theorem) in \cite{hpca} by taking the sparsity set  $\mathcal{G} = \{(i, i): 1\le i\le m\}$ and $b = \eta = 1$. We will use the same notations as \cite{hpca}. 
		
		Define $T_0 = \|\Gamma(N-M)\|_F = \|\Gamma(Z)\|_F$ and $K_t = \|N^{(t)} - M\|_F$ for $t = 0, 1, \ldots,$. Note that  $\|H\|_F \le \|\Gamma(H)\|_F + \|G(H)\|_F$ for all matrix $H \in \RR^{m\times m}$. 
		We then revisit the three steps in the proof of Theorem 7 in \cite{hpca} by only restating the main results and differences from \cite{hpca}. We use $V$ and $V^{(t)}$ in lieu of $U$ and $U^{(t)}$ in the original proofs of \cite{hpca} for all $t\ge 0$. 
		
		Step 1: The initial errors satisfy
		\begin{align*}
		K_0 &\le \|\Gamma(Z)\|_F + \|G(P_VMP_V)\|_F\\
		& \le \|\Gamma(Z)\|_F + \|M\|_F \max_i \|e_i^TV\|_2\\
		& \le \|\Gamma(Z)\|_F + c\sqrt{K}\lambda_K(M)
		\end{align*}
		for some sufficiently small constant $c>0$
		where the first inequality follows from the same arguments in \cite{hpca}, the second inequality uses the modified Lemma \ref{lem_1_new} and the third inequality uses $\|M\|_F \le \sqrt{K}\|M\|_{op} = \sqrt{K}\lambda_1(M)$ together with condition (\ref{cond_ic_prime}). 
		
		Step 2:  From the proof of \cite{hpca}, one needs to upper bound the two terms on the right hand side of the following display
		\[
		K_t \le \|\Gamma(N^{(t)}) - M\|_F + \|G(N^{(t)} - M)\|_F.
		\]
		The first term satisfies 
		\begin{equation}\label{bd_t0}
		\|\Gamma(N^{(t)}) - M\|_F = \|\Gamma(N-M)\|_F = T_0
		\end{equation}
		for all $t\ge 0$.  The second term can be upper bounded by (see display (6.15) in \cite{hpca})
		\begin{align*}
		\|G(N^{(t)} - M)\|_F &~ \le ~ \|G(P_V(N^{(t-1)} - M))\|_F  + \|G(P_{V^{(t-1)_{\perp}}} M)\|_F\\
		&\qquad  + \|\left(P_{V^{(t-1)}} - P_V\right)(N^{(t-1)} - M)\|_F.
		\end{align*}
		To bound them separately, we have 
		\begin{align}\label{bd_t1}\nonumber
		\|G(P_V(N^{(t-1)} - M))\|_F  &~ \le~  \|N^{(t-1)} - M\|_F \max_i \|e_i^TV\|_2\\
		&~= ~K_{t-1}\max_i \|e_i^TV\|_2
		\end{align}
		by Lemma \ref{lem_1_new}, and 
		\begin{align}\label{bd_t2}\nonumber
		\|G(P_{V^{(t-1)_{\perp}}} M)\|_F  &~ \le ~ \|G(P_{V^{(t-1)_{\perp}}} MP_V)\|_F\\ \nonumber
		& ~ \le ~ \|P_{V^{(t-1)_{\perp}}}M\|_F \max_i \|e_i^TV\|_2\\\nonumber
		&~ \le  ~2\|N^{(t-1)} - M\|_F  \max_i \|e_i^TV\|_2 \\
		&~ = ~ 2K_{t-1}\max_i \|e_i^TV\|_2
		\end{align}
		by using Lemma \ref{lem_1_new} in the second inequality and using Lemma 7 in \cite{hpca} in the third inequality. Finally, the proof of Theorem 7 in \cite{hpca} shows that 
		\begin{align*}
		\|P_{V^{(t-1)}} - P_V\|_{op} & \le  {4\|N^{(t-1)} - M\|_{op} \over \lambda_K(M)} \wedge 1 \le {4K_{t-1} \over \lambda_K(M)}
		\end{align*}
		which further gives 
		\begin{equation}\label{bd_t3}
		\|\left(P_{V^{(t-1)}} - P_V\right)(N^{(t-1)} - M)\|_F \le \|P_{V^{(t-1)}} - P_V\|_{op} \|N^{(t-1)} - M\|_F \le {4K^2_{t-1} \over \lambda_K(M)}.
		\end{equation}
		Collecting (\ref{bd_t0}) -- (\ref{bd_t3}) yields 
		\[
		K_t \le T_0+ 3K_{t-1}\max_i \|e_i^TV\|_2 + {4K^2_{t-1}\over \lambda_K(M)}
		\]
		
		Step 3:  Finally, by using the same induction arguments in \cite{hpca}, under (\ref{cond_ic_prime}) and $\|\Gamma(Z)\|_F = T_0 \le c\sqrt{K}\lambda_K(M)$ with sufficiently small $c$,  one can show that for all $t\ge 0$, 
		\[
		K_t \le 2T_0  + {\sqrt{K}\lambda_K(M) \over 2^{t+4}}.
		\]
		Therefore, for all $t \ge \Omega(1 \vee \log({\sqrt{K}\lambda_K(M) / T_0}))$, one has $K_t \le 3T_0$. The proof is completed by invoking the variant of Davis-Kahan's $\sin\Theta$ theorem \citep{Davis-Kahan-variant} applied to $N^{(t)}$ and $M$ with $d = s = K$, $r = 1$ and $\lambda_{K+1}(M) = 0$.
	\end{proof}

	\medskip 
	The following lemma is a variant of Lemma 1 in \cite{hpca} and it provides upper bounds for the Frobenius norms of some diagonal matrices. Recall that $G(A)$ has the same diagonal elements with $A$ but all off-diagonal entries equal to zero. 
	\begin{lemma}\label{lem_1_new}
		For any two orthogonal matrices $U, V\in \RR^{m\times K}$, let $P_U = UU^T$ and $P_V = VV^T$. Then for any matrix $A \in \RR^{m\times m}$, we have 
		\begin{align*}
		&\|G(P_UA)\|_F^2 \le \|A\|_F^2 \max_{1\le i\le m} \left\|e_i^TU\right\|_2^2,\\
		&\|G(AP_V)\|_F^2 \le \|A\|_F^2 \max_{1\le j\le m} \left\|e_j^TV\right\|_2^2,\\
		& \|G(P_UAP_V)\|_F^2 \le  \|A\|_F^2\left(\max_{1\le i\le m} \left\|e_i^TU\right\|_2^2  \wedge \max_{1\le j\le m} \left\|e_j^TV\right\|_2^2\right).
		\end{align*}
	\end{lemma}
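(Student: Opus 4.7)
The plan is to prove the three inequalities in succession, with the third following from the first two applied to matrix products. The essential tool throughout is the elementary identity that the $i$th diagonal entry of $P_U A$ can be written as $(P_U A)_{ii} = (U^T e_i)^T(U^T A e_i)$, after which a single application of Cauchy--Schwarz pulls out the incoherence factor $\|e_i^T U\|_2$.

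For the first inequality, I will write $(P_U A)_{ii} = e_i^T U U^T A e_i$ and bound it via Cauchy--Schwarz by $\|U^T e_i\|_2 \cdot \|U^T A e_i\|_2 \le \|e_i^T U\|_2 \cdot \|A e_i\|_2$, where the second step uses $\|U^T\|_{op} = 1$ because $U$ has orthonormal columns. Squaring and summing over $i$ gives
\begin{align*}
\|G(P_U A)\|_F^2 = \sum_{i=1}^m (P_U A)_{ii}^2 \le \max_{1\le i\le m}\|e_i^T U\|_2^2 \sum_{i=1}^m \|A e_i\|_2^2 = \max_{1\le i\le m}\|e_i^T U\|_2^2 \cdot \|A\|_F^2,
\end{align*}
which is the desired bound. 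The second inequality is symmetric: write $(AP_V)_{ii} = (A^T e_i)^T V (V^T e_i)$, apply Cauchy--Schwarz with the roles reversed, and sum using $\sum_i \|A^T e_i\|_2^2 = \|A\|_F^2$.

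For the third inequality, I will not redo the computation from scratch but simply feed one bound into the other. Applying the first inequality to the matrix $A' := A P_V$ in place of $A$ yields
\begin{align*}
\|G(P_U A P_V)\|_F^2 \le \|A P_V\|_F^2 \max_{1\le i\le m} \|e_i^T U\|_2^2 \le \|A\|_F^2 \max_{1\le i\le m} \|e_i^T U\|_2^2,
\end{align*}
where the last step uses $\|P_V\|_{op} \le 1$. Symmetrically, applying the second inequality to $A'' := P_U A$ gives the bound with $\max_j \|e_j^T V\|_2^2$. Taking the minimum of these two bounds produces the claimed inequality. There is no hard step here; the only care needed is to verify that $\|U^T\|_{op} = \|P_V\|_{op} = 1$ so that the intermediate factors collapse cleanly, which is immediate from the orthonormality of the columns of $U$ and $V$.
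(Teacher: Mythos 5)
Your proof is correct and follows essentially the same route as the paper's: the identical Cauchy--Schwarz step on $(P_UA)_{ii}=e_i^TUU^TAe_i$ to extract $\|e_i^TU\|_2^2$, and the third inequality obtained by feeding $AP_V$ (resp.\ $P_UA$) into the first two bounds together with $\|P_V\|_{op}\le 1$. The only cosmetic difference is that you bound $\|U^TAe_i\|_2\le\|Ae_i\|_2$ term by term before summing, whereas the paper sums first and then bounds $\tr(A^TUU^TA)\le\|A\|_F^2$.
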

	\begin{proof}
		Since $G(A)$ has non-zero entries only on the diagonal, by writing $A = (A_1, \ldots, A_m)$, one has 
		\begin{align*}
		\|G(P_UA)\|_F^2  = \sum_{i=1}^{m}\left(e_i^T UU^T A_i \right)^2 &\le \max_{1\le i\le m} \|e_i^TU\|_2^2 \cdot \sum_{i=1}^{m} A_i^TUU^T A_i\\
		&= \max_{1\le i\le m} \|e_i^TU\|_2^2\cdot \tr(A^TUU^T A)\\
		&\le \max_{1\le i\le m} \|e_i^TU\|_2^2\cdot \|A\|_F^2
		\end{align*}
		where we used Cauchy-Schwarz inequality in the first line and the inequality $\tr(A^TUU^TA) \le \|A\|_F^2 \|UU^T\|_{op} \le \|A\|_F^2$ in the last line. Since $G(P_UA)$ is symmetric to $G(AP_V)$, the second result follows. The last result follows by using the first two results together with $\|P_V\|_{op}\le 1$ and $\|P_U\|_{op} \le 1$.
	\end{proof}
	\medskip

	\section{Main proofs}\label{sec_main_proof}
	
	\subsection{Proof of Proposition \ref{prop_ident_nece}: $\Ps$ is not identifiable}\label{sec_proof_ident_nece}
	Under model (\ref{model_1}), we will show that there exists $\Psi_1\ne \Psi_2$ such that they yield the same likelihood. 
	For simplicity, we suppress the super script $*$ for related parameters and use lower case letters $x, y, z$ to denote the realizations of random variables $X, Y, Z$. Since the joint density of $(X,Y)$ is factorized as 
	\begin{align*}
	f_{(Y,X)}(y, x) = f_{Y|X}(y|x) f_X(x) = f_X(x)  \int f_{Y|X,Z}(y|x,z)f_{Z|X}(z|x)dz, 
	\end{align*}
	under model (\ref{model_1}), we can write the likelihood of $(Y,X)$ as
	\begin{align}\label{eq_density}
	f_{(Y,X)}\left(y,x; \Psi, B, \mu, \nu\right) &= f_X(x) \int f_E\left(y-\Psi^Tx-B^Tz; \mu\right)f_{Z|X}(z; \nu)dz
	\end{align}
	where $f_E$ denotes the density of $E$ parametrized by $\mu$ and $f_{Z|X}$ is the conditional p.d.f. of $Z=z|X=x$ parametrized by $\nu$. We emphasize that $\nu = \nu(x)$ depends on $X=x$ as $Z$ is correlated with $X$ as $A^*\neq 0$. For any $\Psi$ such that $\Psi P_{B} \ne 0$ (this choice exists as $\rank(B) = K$),  write 
	$$
	\wt\Psi = \Psi - \Psi P_B = \Psi - \Psi B^T(BB^T)^{-1}B,\qquad
	\wt Z = Z + (BB^T)^{-1}B\Psi^T X.
	$$
	Note that $\wt \Psi \ne \Psi$ as $\Psi P_{B} \ne 0$. We prove the following to conclude the proof
	\[
	f_{(Y,X)}\left(y,x; \Psi, B, \mu, \nu\right) = f_{(Y,X)}\left(y,x; \wt\Psi, B, \mu, \wt\nu\right)
	\]  
	where $\wt \nu$ parametrizes the p.d.f of $\wt Z | X$. 
	
	To proceed, write $\Delta = \Psi B^T(BB^T)^{-1}$ such that $\wt Z = Z + \Delta^T X$. Observe that the density functions of $Z|X$ and $\wt Z|X$ only differ by a shift of the mean. We then deduce 
	\[
	f_{\wt Z|X}\left(z; \wt \nu\right) = f_{Z|X}\left(
	z - \Delta^T x; \nu
	\right),
	\]
	from which we further obtain 
	\begin{align*}
	&f_{(Y,X)}\left(y,x; \wt \Psi, B, \mu, \wt \nu\right)\\ 
	&= f_X(x) \int f_E\left(y-\wt \Psi^Tx-B^Tz; \mu\right)f_{\wt Z|X}\left(z; \wt \nu\right)dz\\
	&= f_X(x) \int f_E\left(y-\wt \Psi^Tx-B^Tz; \mu\right)f_{Z|X}\left(z-\Delta^Tx; \nu\right)dz\\
	& = f_X(x) \int f_E\left(y-\wt \Psi^Tx-B^Tt-B^T\Delta^Tx; \mu\right)f_{Z|X}\left(t; \nu\right)dt\\
	& = f_X(x) \int f_E\left(y- \Psi^Tx-B^Tt; \mu\right)f_{Z|X}\left(t; \nu\right)dt\\
	& =  f_{(Y,X)}\left(y,x; \Psi, B, \mu, \nu\right).
	\end{align*}
	We use (\ref{eq_density}) in the first and last equality, set $t = z - \Delta^T x$ to derive the third equality and use 
	$$\wt \Psi^T x = \Psi^T x - B^T(BB^T)^{-1}B\Psi^Tx = \Psi^T x - B^T\Delta^T x$$
	to arrive at the fourth equality. This completes the proof.\qed

	\subsection{Proofs of Propositions \ref{prop_ident} and \ref{prop_ident_hetero}: identifiability for both homoscedastic and heteroscedastic cases}\label{sec_proof_prop_ident}
	\begin{proof}[Proof of Proposition \ref{prop_ident}]
		When $\Ps P_{\B} + A^*\B = 0$, the uniqueness of $\Ttheta$ follows trivially. We only prove case (2). Model (\ref{model}) implies 
		\[
		F^* = \Ps + A^*B^* = [\Cov(X)]^{-1}\Cov(X,Y),
		\]
		from which we can identify 
		$$
		\Sigma_{\eps} := \Cov(\eps) = \Cov(Y - (F^*)^TX).
		$$
		By further using $\Sigma_E = \tau^2\bI_m$, we  have 
		\[
		\Sigma_{\eps} =(B^*)^T\Sigma_W B^* +\Sigma_E =  (B^*)^T\Sigma_W B^* + \tau^2\bI_m
		\]
		Recall that $P_{\B}$ denotes the projection onto the row space of $B^*$. One can identify it from
		\[
		P_{\B} = U_K U_K^T
		\]
		where $U_K$ are the eigenvectors of $\Sigma_\eps$ corresponding to the largest $K$ eigenvalues. Projecting $Y$ onto $P_{\B}^{\perp} = \bI_m - P_{\B}$ gives 
		\[
		P_{\B}^{\perp}Y = (\Ps P_{\B}^{\perp})^T X + P_{\B}^{\perp}E = (\Ttheta)^T X + P_{\B}^{\perp}E.
		\]
		We then conclude 
		$$
		\Ttheta = [\Cov(X)]^{-1}\Cov(X,P_{\B}^{\perp}Y)
		$$
		which is uniquely defined. 
	\end{proof}
	
	\begin{proof}[Proof of Proposition \ref{prop_ident_hetero}]
		The proof of Proposition \ref{prop_ident_hetero} uses the same arguments as that of Proposition \ref{prop_ident} except the identifiability of $P_{\B}$, the projection onto the row space of $B^*$. After identifying $\Sigma_{\eps}$, note that 
		\[
		\Sigma_{\eps} = (B^*)^T\Sigma_W B^*  + \diag(\tau_1^2,\ldots, \tau_m^2).
		\]
		Under (\ref{cond_ic}), applying Theorem \ref{thm_rsintheta} with $N = \Sigma_{\eps}$, $M =  (B^*)^T\Sigma_W B^*$, $Z = \diag(\tau_1^2,\ldots, \tau_m^2)$ and $T \to \infty$ identifies $P_{\B}$. Hence the identifiability of $\Ttheta$ follows from the same arguments in the previous proof. 
	\end{proof}

	\subsection*{Proof of Proposition \ref{prop_ident_W}: the necessity of $\rank(\Sigma_W) = K$}
	We drop the superscripts $*$ for simplicity. 
	Under model
	\[
	Y = (\Theta + \Psi P_{B} + AB)^T X + B^TW+ E,
	\]
	with $\Theta = \Psi P_{B}^{\perp}$, suppose $\EE[W|X] = 0$ and $\Psi P_{B} + AB \ne 0$.
	We will show that $\Theta$ is not identifiable if $\rank(\Sigma_W)<K$.   Towards this end, we shall construct  $(\Psi, A, B)  \ne (\wt \Psi, A, \wt B)$ such that 
	\[
	\wt \Psi + A\wt B = \Psi + AB,\quad  \wt B^T W = B^T W, \text{ a.s.}
	\]
	meanwhile 
	\begin{equation}\label{eq_target}
	\wt \Psi P_{\wt B} + A\wt B \ne 0, \quad \wt \Theta = \wt \Psi P_{\wt B}^{\perp }\ne \Psi P_B^{\perp} = \Theta.
	\end{equation}
	Suppose $\rank(\Sigma_W) < K$. Then there exists a subspace $\mathcal{S} \subseteq \RR^{K}$ with $\rank(\mathcal{S}) = K - \rank(\Sigma_W)$ such that
	\begin{equation}\label{disp_D_Sigma_W}
	v^T\Sigma_W v = 0,\qquad \text{for any }v\in \mathcal{S}.
	\end{equation}
	Let $P_{\mathcal{S}}\in \RR^{K\times K}$ denote the projection matrix onto $\mathcal{S}$ and $P_{\mathcal{S}}^{\perp} = \bI_K - P_{\mathcal{S}}$. 
	
	Write
	\[
	\wt B = P_{\mathcal{S}}^{\perp}B = B - P_{\mathcal{S}}B,\qquad \wt \Psi = \Psi + A P_{\mathcal{S}}B.
	\]
	Note that (\ref{disp_D_Sigma_W}) implies $\wt B^TW = B^T W$ a.s. and 
	$$
	\wt \Psi + A \wt B = \Psi +  A P_{\mathcal{S}}B + A P_{\mathcal{S}}^{\perp}B = \Psi + AB.
	$$
	It remains to show (\ref{eq_target}). We choose $\Psi$ such that $\Psi P_{B} = 0$ which also implies $\Psi P_{\wt B} = 0$ and $A B\ne 0$ from the constraint $\Psi P_{B} + AB \ne 0$. Then
	\begin{align*}
	\wt \Psi P_{\wt B} + A\wt B &= \Psi  P_{\wt B}+  A P_{\mathcal{S}} B P_{\wt B} + A\wt B\\
	&=A P_{\mathcal{S}} B P_{\wt B} + A\wt B  P_{\wt B}\\
	& =AB  P_{\wt B}.
	\end{align*}
	Furthermore, $\Psi P_{B} = \Psi P_{\wt B} = 0$ gives
	\begin{align*}
	\wt \Psi P_{\wt B}^{\perp}  - \Psi P_B^{\perp} &= \left(\Psi+ AP_{\mathcal{S}}B \right)P_{\wt B}^{\perp} - \Psi = AP_{\mathcal{S}}B P_{\wt B}^{\perp} = ABP_{\wt B}^{\perp}.
	\end{align*}
	Since there exists various choices of $A$ such that $AB  P_{\wt B} \ne 0$ and $AB  P_{\wt B}^{\perp}\ne 0$ (for instance, any $A$ with $\rank(A) = K$ suffices), this concludes that $\Ttheta$ is not identifiable, hence completes the  proof.
	\qed

	\subsection{Proof of Theorem \ref{thm_pred}: in-sample prediction risk}\label{sec_proof_thm_pred}
	Before proving Theorem \ref{thm_pred}, we first prove Lemma \ref{lem_solution} as it provides a simpler analytical  expression  for the later proof. 
	
	\begin{proof}[Proof of Lemma \ref{lem_solution}]
		From (\ref{est_F}), for any fixed $\Psi$, we have 
		\begin{align*}
		\wh L(\Psi) &= \arg\min_{L} {1\over n}\|\Y - \X\Psi - \X L\|_F^2 + \lambda_2\|L\|_F^2\\
		&= (\X^T\X + n\lambda_2\bI_p)^{-1}\X^T(\Y - \X \Psi).
		\end{align*}
		Plugging this into (\ref{est_F}) yields
		\begin{align*}
		\wh\Psi &= \arg\min_{\Psi} {1\over n}\|\Y - \P\Y + \P \X\Psi - \X\Psi\|_F^2 +\lambda_1 \|\Psi\|_{\l12}+ \lambda_2\|\wh L(\Psi)\|_F^2\\
		& = \arg\min_{\Psi} {1\over n}\|\Q(\Y - \X\Psi)\|_F^2 +\lambda_1 \|\Psi\|_{\l12}\\
		&\hspace{2cm}+ \lambda_2\|(\X^T\X + n\lambda_2\bI_p)^{-1}\X^T(\Y - \X \Psi)\|_F^2.
		\end{align*}
		Since 
		\begin{align*}
		&{1\over n}\|\Q(\Y - \X\Psi)\|_F^2 + \lambda_2\|(\X^T\X + n\lambda_2\bI_p)^{-1}\X^T(\Y - \X \Psi)\|_F^2 \\
		&= \tr \left\{
		(\Y-\X\Psi)^T\left[ {1\over n}\Q^2 + \lambda_2\X(\X^T\X + n\lambda_2 \bI_p)^{-2}\X^T
		\right](\Y - \X\Psi)
		\right\}
		\end{align*}
		and 
		\begin{align*}
		& {1\over n}\Q^2 + \lambda_2\X(\X^T\X + n\lambda_2 \bI_p)^{-2}\X^T \\
		&= {1\over n}\biggl[
		\bI_p + \X(\X^T\X + n\lambda_2 \bI_p)^{-1}\X^T\X(\X^T\X + n\lambda_2 \bI_p)^{-1}\X^T\\&
		\qquad -2\X(\X^T\X + n\lambda_2 \bI_p)^{-1}\X^T+ n\lambda_2 \X(\X^T\X + n\lambda_2 \bI_p)^{-2}\X^T
		\biggr]\\
		&= {1\over n}\biggl[
		\bI_p -\X(\X^T\X + n\lambda_2 \bI_p)^{-1}\X^T
		\biggr]\\
		&= {1\over n}\Q
		\end{align*}
		by using $n\lambda_2 \X(\X^T\X + n\lambda_2 \bI_p)^{-2}\X^T =\X(\X^T\X + n\lambda_2 \bI_p)^{-1}(n\lambda_2\bI_p)(\X^T\X + n\lambda_2 \bI_p)^{-1}\X^T$ in the last line, 
		we obtain 
		\[
		\wh\Psi = \arg\min_{\Psi} {1\over n}\|\Q^{1/2}(\Y - \X\Psi)\|_F^2 +\lambda_1 \|\Psi\|_{\l12}.
		\]
		This proves (\ref{crit_Theta}). As a result, we have 
		\[
		\X\wh L(\wh \Psi) = \X (\X^T\X + n\lambda_2\bI_p)^{-1}\X^T(\Y - \X \wh \Psi) = \P\Y - \P \X \wh \Psi
		\]
		hence 
		\[
		\X\wh F = \X\wh L(\wh\Psi) + \X\wh\Psi = \P\Y + \Q\X\wh\Psi.
		\]
		The proof is complete. 
	\end{proof}
	
	To prove Theorem \ref{thm_pred}, recall that, for any $(\Psi_0, L_0)$ such that $F^* = \Psi_0 + L_0$, 
	\begin{align}\label{error_decomp}\nonumber
	\X\wh F - \X F^* &=   \X \wh L - \X L_0 + \X\wh \Psi - \X \Psi_0\\
	&=P_{\lambda_2}(\Y- \X\Psi_0) - \X L_0+ Q_{\lambda_2}(\X\wh \Psi - \X \Psi_0).
	\end{align}
	The result of Theorem \ref{thm_pred} follows by invoking Lemmas \ref{lem_ridge} -- \ref{lem_lasso} and noting that 
	\[
	{1\over n}\|\X\wh F- \X F^*\|_F^2 \le {2\over n}\|P_{\lambda_2}(\Y- \X\Psi_0) - \X L_0\|_F^2 + {2\over n}\| Q_{\lambda_2}(\X\wh \Psi - \X \Psi_0)\|_F^2
	\]
	from the basic inequality $(a+b)^2 \le 2(a^2 + b^2)$. We then proceed to upper bound the two terms on the right hand side separately. 
	\begin{lemma}\label{lem_ridge}
		Under conditions in Theorem \ref{thm_pred},  with probability $1-\epsilon$, 
		\begin{align*}
		{1\over n}\|\P(\Y - \X\Psi_0) - \X L_0\|_F^2 &\le 2\|\Q\|_{op} \cdot \lambda_2 {\rm tr}\left[
		L_0^T\wh\Sigma (\wh \Sigma + \lambda_2\bI_p)^{-1}L_0
		\right]\\
		& \quad + {2V_\eps \over n}\left[
		\sqrt{{\rm tr}(\P^2)}+\sqrt{2\|\P^2\|_{op}\log(m/\epsilon)}
		\right]^2
		\end{align*}
		where $V_\eps$ is defined in (\ref{def_V_eps}).
	\end{lemma}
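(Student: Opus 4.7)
The plan is to decompose the error into a bias term coming from the ridge shrinkage of $L_0$ and a stochastic term coming from $\P\Eps$, then bound each separately. Because $\Y-\X\Psi_0 = \X L_0 + \Eps$ and $\P - \bI_n = -\Q$,
\[
\P(\Y-\X\Psi_0) - \X L_0 \;=\; (\P-\bI_n)\X L_0 + \P\Eps \;=\; -\Q \X L_0 + \P\Eps,
\]
so by the elementary inequality $(a+b)^2\le 2a^2+2b^2$,
\[
\tfrac{1}{n}\|\P(\Y-\X\Psi_0) - \X L_0\|_F^2 \;\le\; \tfrac{2}{n}\|\Q\X L_0\|_F^2 \;+\; \tfrac{2}{n}\|\P\Eps\|_F^2.
\]

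For the first (deterministic) term, I would compute $\X^T\Q\X$ directly. Using $\X^T\X = n\wh\Sigma$ and the resolvent identity $\bI_p - (\wh\Sigma+\lambda_2\bI_p)^{-1}\wh\Sigma = \lambda_2(\wh\Sigma+\lambda_2\bI_p)^{-1}$, one finds
\[
\X^T\Q\X \;=\; n\wh\Sigma - n\wh\Sigma(\wh\Sigma+\lambda_2\bI_p)^{-1}\wh\Sigma \;=\; n\lambda_2\,\wh\Sigma(\wh\Sigma+\lambda_2\bI_p)^{-1}.
\]
Combined with the operator-norm bound $\|\Q A\|_F^2 \le \|\Q\|_{op}\,\tr(A^T\Q A)$ applied to $A=\X L_0$, this yields
\[
\tfrac{1}{n}\|\Q\X L_0\|_F^2 \;\le\; \|\Q\|_{op}\,\lambda_2\,\tr\!\bigl[L_0^T\wh\Sigma(\wh\Sigma+\lambda_2\bI_p)^{-1}L_0\bigr],
\]
which is exactly the first summand in the claimed bound. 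This step is routine once the identity for $\X^T\Q\X$ is in hand.

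For the stochastic term, I would decompose column-wise: $\|\P\Eps\|_F^2 = \sum_{j=1}^m \|\P\Eps_{\cdot j}\|_2^2$. Since rows of $\Eps$ are i.i.d., each column $\Eps_{\cdot j}\in\RR^n$ consists of $n$ i.i.d.\ centered sub-Gaussian entries with variance proxy bounded by $\g_{\eps,j}^2 := \g_w^2\,(B^*_{\cdot j})^T\Sigma_W B^*_{\cdot j} + \g_e^2$ (this follows from Assumption \ref{ass_error} since $\eps_{ij} = (B^*_{\cdot j})^T W_i + E_{ij}$ is a sum of independent sub-Gaussians). Applying a standard concentration inequality for the $\ell_2$-norm of a linear image of an i.i.d.\ sub-Gaussian vector (Hanson--Wright in the squared form, or equivalently Borell--type concentration on the Lipschitz map $x\mapsto \|\P x\|_2$ with Lipschitz constant $\|\P\|_{op}=\|\P^2\|_{op}^{1/2}$ and variance $\g_{\eps,j}^2\tr(\P^2)$) gives, with probability at least $1-\delta_j$,
\[
\|\P\Eps_{\cdot j}\|_2 \;\le\; \g_{\eps,j}\Bigl[\sqrt{\tr(\P^2)} + \sqrt{2\|\P^2\|_{op}\log(1/\delta_j)}\,\Bigr].
\]
Choosing $\delta_j=\epsilon/m$ and applying a union bound over $j=1,\ldots,m$, then squaring and summing the resulting column bounds, produces
\[
\|\P\Eps\|_F^2 \;\le\; V_\eps\,\Bigl[\sqrt{\tr(\P^2)} + \sqrt{2\|\P^2\|_{op}\log(m/\epsilon)}\,\Bigr]^2,
\]
since $\sum_j\g_{\eps,j}^2 = \tr(\Gamma_\eps) = V_\eps$. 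Dividing by $n$, multiplying by $2$ and adding the bias bound completes the proof.

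The main obstacle is the variance step: one must invoke the appropriate sub-Gaussian deviation inequality for $\|\P\Eps_{\cdot j}\|_2$ in the form that cleanly separates the expected scale $\g_{\eps,j}\sqrt{\tr(\P^2)}$ from the fluctuation scale $\g_{\eps,j}\sqrt{\|\P^2\|_{op}\log(1/\delta_j)}$, so that the column-wise union bound assembles into the compact square-bracket expression in the lemma. The rest is algebra.
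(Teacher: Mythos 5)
Your proposal is correct and follows essentially the same route as the paper: the identical decomposition $\P(\Y-\X\Psi_0)-\X L_0=-\Q\X L_0+\P\Eps$ with $(a+b)^2\le 2a^2+2b^2$, the same algebraic identity $\X^T\Q\X=n\lambda_2\wh\Sigma(\wh\Sigma+\lambda_2\bI_p)^{-1}$ (the paper's Fact \ref{fact_QX}) for the bias term, and the same column-wise sub-Gaussian quadratic-form concentration plus union bound for $\|\P\Eps\|_F^2$ (the paper's Lemmas \ref{lem_eps_subgaussian}, \ref{lem_P_eps} and \ref{lem_quad}). No gaps.
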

	\begin{proof}
		By $\Y = \X\Psi_0 + \X L_0 + \Eps$ and the basic inequality $(a+b)^2 \le 2(a^2 + b^2)$, we have 
		\[
		\|\P(\Y - \X\Psi_0) - \X L_0\|_F^2 \le 2\|\P\Eps\|_F^2 + 2\|\Q \X L_0\|_F^2.
		\]
		Note that the second term satisfies
		\begin{align}\label{disp_QXL}
		\|\Q \X L_0\|_F^2 \le \|\Q\|_{op}\|\Q^{1/2}\X L_0\|_F^2 =\|\Q\|_{op} \cdot n\lambda_2 \tr\left[
		L_0^T\wh\Sigma (\wh \Sigma + \lambda_2\bI_p)^{-1}L_0
		\right]
		\end{align}
		by using Fact \ref{fact_QX} in the second equality. The result follows by  invoking Lemma \ref{lem_P_eps} for the term $\|\P\Eps\|_F^2$. 
	\end{proof}
	
	\medskip
	
	\begin{lemma}\label{lem_lasso}
		Under conditions in Theorem \ref{thm_pred},  with probability $1-\epsilon'$, 
		\begin{align*}
		&{1\over n}\|\Q\X(\wh \Psi - \Psi_0)\|_F^2\\
		& \le 4\|\Q\|_{op}\cdot \max\left\{
		4\lambda_2 {\rm tr}\left[
		L_0^T\wh\Sigma (\wh \Sigma + \lambda_2\bI_p)^{-1}L_0
		\right], ~ {\lambda_1^2 \over [\kappa_1(1/2, \Psi_0, \lambda_1, \lambda_2]^2}
		\right\}
		\end{align*} 
	\end{lemma}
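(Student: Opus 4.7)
The plan is to reduce the bound to a one-dimensional quadratic inequality in $R := \|\wt\X\Delta\|_F/\sqrt n$, where $\Delta := \wh\Psi - \Psi_0$, and then resolve it by a case split that separates the ridge and group-lasso contributions. To set this up, I would use the optimality of $\wh\Psi$ in the reduced problem (\ref{crit_Theta}) together with the decomposition $\Y - \X\Psi_0 = \X L_0 + \Eps$ (which follows from $\Psi_0 + L_0 = F^*$ and $\Y = \X F^* + \Eps$) to obtain the basic inequality
\[
\frac{1}{n}\|\wt\X\Delta\|_F^2 \le \frac{2}{n}\langle\wt\X\Delta,\wt\X L_0\rangle + \frac{2}{n}\langle\wt\X\Delta, Q_{\lambda_2}^{1/2}\Eps\rangle + \lambda_1\bigl(\|\Psi_0\|_{\l12} - \|\wh\Psi\|_{\l12}\bigr).
\]

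Next I would control the two cross terms separately. For the deterministic ridge cross term, Cauchy-Schwarz gives $\frac{2}{n}|\langle\wt\X\Delta,\wt\X L_0\rangle| \le 2SR$ with $S := \|\wt\X L_0\|_F/\sqrt n$, where $S^2 = \lambda_2\,\tr[L_0^T\wh\Sigma(\wh\Sigma+\lambda_2\bI_p)^{-1}L_0]$ by a direct computation using the definition of $Q_{\lambda_2}$. For the stochastic term, a row-wise concentration argument on $\X_j^T Q_{\lambda_2}\Eps$ --- exploiting the sub-Gaussianity of $\Eps$ from Assumption \ref{ass_error} together with the identity $\|Q_{\lambda_2}\X_j\|_2^2/n = M_{jj}$ --- shows that the choice of $\lambda_1$ in (\ref{rate_lbd1}) is calibrated so that $\max_{1\le j\le p}\|(\X^T Q_{\lambda_2}\Eps)_{j\cdot}\|_2 \le n\lambda_1/4$ holds with probability at least $1-\epsilon'$. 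Duality between $\|\cdot\|_{\l12}$ and $\|\cdot\|_{\ell_\infty/\ell_2}$ then yields $\frac{2}{n}|\langle\wt\X\Delta, Q_{\lambda_2}^{1/2}\Eps\rangle| \le \tfrac{\lambda_1}{2}\|\Delta\|_{\l12}$. Substituting both bounds into the basic inequality and absorbing the group-lasso terms into the penalized difference produces the quadratic
\[
R^2 \le 2SR + \lambda_1 T_{1/2},\qquad T_{1/2} := \|\Psi_0\|_{\l12} - \|\Psi_0+\Delta\|_{\l12} + \tfrac{1}{2}\|\Delta\|_{\l12}.
\]

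The last step is a case split on which of $2SR$ and $\lambda_1 T_{1/2}$ dominates the right-hand side. When $2SR \ge \lambda_1 T_{1/2}$, the inequality reduces to $R^2 \le 4SR$, so $R \le 4S$ and hence $R^2 \le 16S^2 = 16\lambda_2\,\tr[L_0^T\wh\Sigma(\wh\Sigma+\lambda_2\bI_p)^{-1}L_0]$. In the opposite case, $R^2 \le 2\lambda_1 T_{1/2}$, which places $\Delta$ in the restricted set $\R(1/2,\Psi_0,\lambda_1,\lambda_2)$ of (\ref{def_R}); the design impact factor from (\ref{RE_pred}) then delivers $T_{1/2} \le R/\kappa_1(1/2,\Psi_0,\lambda_1,\lambda_2)$, giving $R \le 2\lambda_1/\kappa_1$ and $R^2 \le 4\lambda_1^2/\kappa_1^2$. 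Taking the maximum of the two cases and multiplying by $\|Q_{\lambda_2}\|_{op}$ (which enters via $\|Q_{\lambda_2}\X\Delta\|_F^2 \le \|Q_{\lambda_2}\|_{op}\|\wt\X\Delta\|_F^2$) produces the stated bound. The main obstacle is the second branch: verifying that the quadratic inequality really forces $\Delta$ into the cone $\R(1/2,\Psi_0,\lambda_1,\lambda_2)$ so that $\kappa_1$ can be invoked. This is where the precise calibration $c=1/2$ in (\ref{def_R}) is critical and where the coupling between the ridge cross term $2SR$ and the group-lasso penalty has to be disentangled carefully to extract the stated constants.
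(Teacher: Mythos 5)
Your proposal is correct and follows essentially the same route as the paper's proof: the same basic inequality from the optimality of $\wh\Psi$ in (\ref{crit_Theta}), the same Cauchy--Schwarz bound $\|\wt\X L_0\|_F^2/n=\lambda_2\,{\rm tr}[L_0^T\wh\Sigma(\wh\Sigma+\lambda_2\bI_p)^{-1}L_0]$ for the ridge cross term, the same high-probability event $\max_j\|\X_j^TQ_{\lambda_2}\Eps\|_2\le n\lambda_1/4$ calibrated by (\ref{rate_lbd1}) and Lemma \ref{lem_XQeps}, and the same dichotomy whose second branch places $\Delta$ in $\R(1/2,\Psi_0,\lambda_1,\lambda_2)$ and invokes $\kappa_1$. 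Your case split on whether $2SR$ or $\lambda_1 T_{1/2}$ dominates is only a cosmetic reorganization of the paper's split on $\|\wt\X\Delta\|_F\lessgtr 4\|\wt\X L_0\|_F$ and yields the identical constants.
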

	\begin{proof}
		Write $\wt \Y = \Q^{1/2}\Y$ and $\wt \X = \Q^{1/2}\X$. Starting with  (\ref{crit_Theta}),  we have
		\[
		{1\over n}\|\wt \Y - \wt\X \wh\Psi\|_F^2 + \lambda_1 \|\wh \Psi\|_{\l12} \le {1\over n}\|\wt \Y - \wt\X \Psi_0\|_F^2 + \lambda_1 \| \Psi_0\|_{\l12}.
		\] 
		Let $\langle A, B\rangle  = \tr(A^TB)$ for any commensurate matrices $A$ and $B$.  
		By writing $\Delta := \wh\Psi -\Psi_0$ and noting that $\wt \Y = \wt \X \Psi_0 + \wt \X L_0 + \wt \Eps$ with $\wt \Eps = \Q^{1/2}\Eps$, standard arguments yield 
		\begin{align}\nonumber
		&{1\over n}\|\wt \X \Delta\|_F^2\\\nonumber
		&\le {2\over n}\left|
		\langle \wt \X L_0 + \wt \Eps, \wt \X \Delta\rangle
		\right|+ \lambda_1 \left(
		\|\Psi_0\|_{\l12} - \|\wh \Psi\|_{\l12}
		\right)\\\nonumber
		&\le {2\over n}\|\wt \X L_0\|_F\|\wt \X \Delta\|_F + {2\over n}\left| \langle \wt \Eps , \wt \X\Delta \rangle \right| +  \lambda_1 \left(
		\|\Psi_0\|_{\l12} - \|\Delta + \Psi_0\|_{\l12}
		\right)\\\label{disp_Xtd_Delta}
		&\le {2\over n}\|\wt \X L_0\|_F\|\wt \X \Delta\|_F + {2\over n}\max_{1\le j\le p}\|\wt \X_j^T\wt \Eps\|_2 \|\Delta\|_{\l12}+  \lambda_1 \left(
		\|\Psi_0\|_{\l12} - \|\Delta + \Psi_0\|_{\l12}
		\right)
		\end{align}
		where we use Cauchy-Schwarz in the second inequality and the following display to derive the third inequality,
		\[
		\left| \langle \wt \Eps , \wt \X\Delta \rangle \right| = \left|
		\sum_{j=1}^p \wt \X_j^T\wt\Eps\Delta_{j\cdot}
		\right|\le \max_{1\le j\le p}\|\wt \X_j^T\wt \Eps\|_2 \|\Delta\|_{\l12}.
		\]
		On the event
		\begin{equation}\label{def_event}
		\cE := \left\{
		\max_{1\le j\le p} \|\X_j^T\Q\Eps\|_2  \le {n\over 4}\cdot \lambda_1
		\right\},
		\end{equation}
		by $\|\wt \X_j^T\wt \Eps\|_2 = \|\X_j^T\Q \Eps\|_2$, we further have 
		\begin{align}\label{disp_XDelta}
		{1\over n}\|\wt \X \Delta\|_F^2\left(
		1 - {2\|\wt \X L_0\|_F \over \|\wt \X\Delta\|_F}
		\right) \le \lambda_1\left(
		\|\Psi_0\|_{\l12} - \|\Delta + \Psi_0\|_{\l12}+{1\over 2}\|\Delta\|_{\l12}
		\right)
		\end{align}
		Notice that 
		\begin{equation}\label{disp_XL0}
		\|\wt \X L_0\|_F^2 = \tr\left[
		L_0^T \X^T \Q\X  L_0
		\right] = n\lambda_2 \tr\left[
		L_0^T\wh\Sigma (\wh \Sigma + \lambda_2\bI_p)^{-1}L_0
		\right]
		\end{equation}
		by using Fact \ref{fact_QX}. 
		When $\|\wt \X\Delta\|_F \le 4\|\wt \X L_0\|_F$, we obtain the desired result from (\ref{disp_XL0}). It suffices to consider the case $\|\wt \X\Delta\|_F \ge 4\|\wt \X L_0\|_F$. Display (\ref{disp_XDelta}) then implies
		\[
		{1\over n}\|\wt \X \Delta\|_F^2 \le 2\lambda_1\left(
		\|\Psi_0\|_{\l12} - \|\Delta + \Psi_0\|_{\l12}+{1\over 2}\|\Delta\|_{\l12}
		\right),
		\]
		from which we conclude $\Delta \in \R(1/2, \Psi_0, \lambda_1, \lambda_2)$ defined in (\ref{def_R}).
		Invoking condition (\ref{RE_pred}) with $c = 1$ gives
		\[
		{1\over \sqrt n}\|\wt \X \Delta\|_F \le {2\lambda_1 \over \kappa_1(1/2, \Psi_0, \lambda_1, \lambda_2)}.
		\] 
		Therefore, on the event $\cE$, by combining with (\ref{disp_XL0}), we have 
		\begin{equation}\label{bd_pred_X_tilde}
		{1\over n}\|\wt \X \Delta\|_F^2 \le \max\left\{
		16\lambda_2 \tr\left[
		L_0^T	\wh\Sigma (\wh \Sigma + \lambda_2\bI_p)^{-1}L_0
		\right], {4\lambda_1^2 \over \kappa_1^2(1/2, \Psi_0, \lambda_1, \lambda_2)}
		\right\}.
		\end{equation}
		Since the choice of $\lambda_1$ in (\ref{rate_lbd1}) together with Lemma \ref{lem_XQeps} implies 
		$\PP(\cE) = 1-\epsilon'$, we conclude the proof by invoking (\ref{rate_lbd1}) in the above display. 
	\end{proof}

	\subsection{Proof of Theorem \ref{thm_Theta}: convergence rate of $\|\wt\Theta - \Ttheta\|_{\l12}$}\label{sec_proof_thm_Theta}
	We work on the event $\cE_{\B} := \{\|\wh P - P_{\B}\|_F\le c\xi_n\}$  for some constant $c>0$. Define 
	\begin{equation}\label{def_R5}
	Rem_5 = C\left\{{1\over \sqrt n}\|\X F^*\|_{op} +  \Lambda_1^{1/2}\left(1+ \sqrt{K \over n}\right)\right\}\xi_n
	\end{equation}
	for some constant $C>0$.
	We will prove that for any $\lambda_3 \ge \bar \lambda_3$, the solution $\wt\Theta$ from (\ref{est_Theta}) satisfies
	\begin{align}\label{rate_fit}
	&{1\over \sqrt{n}}\|\X\wt \Theta - \X\Ttheta\|_F  
	\le\max\left\{4Rem_5, ~ {3\lambda_3\sqrt{s_*}\over \kappa(s_*, 3)} \right\},\\\label{rate_Theta}
	&\|\wt \Theta - \Ttheta\|_{\l12}
	\lesssim \max\left\{{Rem_5^2\over \lambda_3}, ~ {\lambda_3 s_*\over \kappa^2(s_*, 4)} \right\}
	\end{align}
	with probability $1- \eps - 2e^{-c''K}$ for some constant $c''>0$. Then the result of Theorem \ref{thm_Theta} follows immediately by noting that $Rem_5 = \wt \lambda_3 \sqrt{s_*}/\kappa(s_*,4)$. 
	
	We proceed  to prove (\ref{rate_fit}) and (\ref{rate_Theta}). Pick any $\lambda_3 \ge \bar \lambda_3$.
	Starting from (\ref{crit_Theta}), by writing $\Delta := \wt \Theta - \Ttheta$, standard arguments yield
	\begin{align*}
	{1\over n}\|\X\Delta \|_F^2
	&\le {2\over n}\left|
	\langle \Y\wh P^{\perp} -\X\Ttheta, \X\Delta\rangle
	\right| + \lambda_3\left(
	\|\Ttheta\|_{\l12} - \|\wt\Theta\|_{\l12}
	\right)\\
	&\le {2\over n}\left|
	\langle \E\wh P^{\perp}, \X\Delta \rangle \right| + {2\over n}\left|\langle (\X F^* + \W B^*)\wh P^{\perp} -\X\Ttheta, \X\Delta\rangle
	\right|\\
	&\qquad+\lambda_3\left(
	\|\Delta_{S_*\cdot}\|_{\l12} - \|\Delta_{S_*^c \cdot}\|_{\l12}
	\right)
	\end{align*}
	with $S_*:= \{1\le j\le p: \|\Ttheta_{j\cdot}\|_2 \ne 0\}$. Since 
	\[
	\left|
	\langle \E\wh P^{\perp}, \X\Delta \rangle \right| \le \|\Delta\|_{\l12}\max_{1\le j\le p}\|\X_j^T \E \wh P^{\perp}\|_2,
	\]
	on the event $\cE'$ defined as
	$$
	\left\{\max_{1\le j\le p} \|\X_j^T\E \wh P^{\perp}\|_2 \le n\lambda_3 / 4\right\} \bigcap \left\{
	{1\over \sqrt{n}}\left\| (\X F^* + \W B^*)\wh P^{\perp} -\X\Ttheta\right\|_{F} \le Rem_5
	\right\},
	$$
	by using $|\langle M, N\rangle| \le \|M\|_F \|N\|_F$ for any commensurate matrices, we obtain
	\begin{equation}\label{disp_XDelta_prime}
	{1\over n}\|\X\Delta\|_F^2
	\le {2\over \sqrt n}\|\X\Delta\|_F\cdot Rem_5 + {\lambda_3 \over 2 }\left(
	3\|\Delta_{S_*\cdot}\|_{\l12} - \|\Delta_{S_*^c \cdot}\|_{\l12}
	\right).
	\end{equation}
	By rearranging terms, we have 
	\begin{align*}
	{1\over n}\|\X\Delta\|_F^2\left(
	1 - {2Rem_5 \over \|\X\Delta\|_F/\sqrt{n}}
	\right) \le{\lambda_3 \over 2 }\left(
	3\|\Delta_{S_*\cdot}\|_{\l12} - \|\Delta_{S_*^c \cdot}\|_{\l12}
	\right).
	\end{align*}
	When $\|\X\Delta\|_F/\sqrt{n} \le 4Rem_5$,   (\ref{rate_fit}) holds. When $\|\X\Delta\|_F/\sqrt{n} \ge 4Rem_5$, we have 
	\[
	{1\over n}\|\X\Delta\|_F^2\le {\lambda_3 }\left(
	3\|\Delta_{S_*\cdot}\|_{\l12} - \|\Delta_{S_*^c \cdot}\|_{\l12}
	\right).
	\]
	Hence $\Delta \in \C(S_*, 3) \subseteq \C(S_*,4)$. Invoking (\ref{RE_X}) with $s = s_*$ and $\alpha = 3$ yields 
	\begin{align*}
	{1\over n}\|\X\Delta\|_F^2\le 3\lambda_3
	\|\Delta_{S_*\cdot}\|_{\l12}
	& \le 3\lambda_3\sqrt{s_*}\|\Delta_{S_*\cdot}\|_F\le {3\lambda_3\sqrt{s_*} \over \kappa(s_*, 3)}{1\over \sqrt{n}}\|\X\Delta\|_F,
	\end{align*}
	which implies the first result on the event $\cE' \cap \cE_{\B}$. 
	
	To show (\ref{rate_Theta}), note that $\kappa(s_*,4)>0$ and consider two cases:\\
	(1) When $\Delta\in \C(S_*, 4)$, from the definition of $\kappa(s_*, 4)$, one has 
	\begin{align*}
	\|\Delta\|_{\l12} & = \|\Delta_{S_*\cdot}\|_{\l12} + \|\Delta_{S_*^c \cdot}\|_{\l12}\\
	&\le 5\|\Delta_{S_*\cdot}\|_{\l12}\\
	&\le 5\sqrt{s_*} \|\Delta_{S_*\cdot}\|_F\\
	&\le{5\sqrt{s_*}\over \kappa(s_*, 4)}{1\over \sqrt{n}}\|\X\Delta\|_F.
	\end{align*}
	(2) When $\Delta \notin \C(S_*, 4)$,  we have 
	$
	4 \|\Delta_{S_*\cdot}\|_{\l12} < \|\Delta_{S_*^c \cdot}\|_{\l12}
	$
	by definition.  Plugging this into display (\ref{disp_XDelta_prime}), we have 
	\begin{align*}
	{1\over n}\|\X\Delta\|_F^2
	\le {2\over \sqrt n}\|\X\Delta\|_F\cdot Rem_5 -{\lambda_3 \over 8} \|\Delta_{S_*^c \cdot}\|_{\l12}.
	\end{align*}
	It implies $\|\X\Delta\|_F/\sqrt{n} \le 2Rem_5$ and 
	\[	
	\lambda_3\|\Delta_{S_*^c \cdot}\|_{\l12} \le   {16\over \sqrt{n}}\|\X\Delta\|_F\cdot Rem_5
	\]
	We thus obtain
	\begin{align*}
	\|\Delta\|_{\l12} & \le {5 \over 4}\|\Delta_{S_*^c \cdot}\|_{\l12}\le 20{Rem_5 \over \lambda_3} {1\over \sqrt n}\|\X\Delta\|_F \le 40{Rem_5^2 \over \lambda_3}.
	\end{align*}
	Combining these two cases and invoking (\ref{rate_fit}) give the desired result on the event $\cE' \cap \cE_{\B}$. Finally, invoking Lemmas \ref{lem_U} and \ref{lem_XE} yield $\PP(\cE') \ge 1-\epsilon - 2m^{-c''K}$. This completes the proof. \qed

	\bigskip
	
	The following lemma gives that the probability of $\cE'$ on the event $\cE_{\B}$.  Recall that $Rem_5$ is defined in (\ref{def_R5}).
	\begin{lemma}\label{lem_U}
		Under conditions of Theorem \ref{thm_Theta}, on the event $\cE_{\B}= \{\|\wh P - P_{\B}\|_F\le c\xi_n\}$ for some constant $c>0$, the following holds with probability $1-2e^{-c'K}$ for some constant $c'>0$, 
		\[
		{1\over \sqrt{n}}\left\| (\X F^* + \W B^*)\wh P^{\perp} -\X\Ttheta\right\|_F \le Rem_5.
		\]
	\end{lemma}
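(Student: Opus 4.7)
The plan is to first simplify the difference $(\X F^* + \W B^*)\wh P^{\perp} - \X\Ttheta$ into a product form $M \cdot (P_{\B} - \wh P)$, so that the event $\cE_{\B}$ controls one factor and a high-probability bound on the operator norm of $M$ controls the other. The key algebraic observation is that $B^* P_{\B}^{\perp} = 0$, since $P_{\B}$ is the projection onto the row space of $B^*$. Using $F^* = \Ps + A^*B^*$ and $\Ttheta = \Ps P_{\B}^{\perp}$, I would compute
\begin{align*}
\X F^*\wh P^{\perp} - \X\Ttheta
&= \X F^*\bigl(\wh P^{\perp} - P_{\B}^{\perp}\bigr) + \X(F^* - \Ps)P_{\B}^{\perp}\\
&= \X F^*\bigl(P_{\B} - \wh P\bigr) + \X A^* B^* P_{\B}^{\perp}
= \X F^*\bigl(P_{\B} - \wh P\bigr),
\end{align*}
and similarly $\W B^*\wh P^{\perp} = \W B^*(P_{\B} - \wh P)$ because $B^*P_{\B}^{\perp} = 0$. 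Therefore the quantity to be controlled collapses to $(\X F^* + \W B^*)(P_{\B} - \wh P)$.

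Next I would apply the submultiplicative inequality $\|AB\|_F \le \|A\|_{op}\|B\|_F$ to obtain
\begin{equation*}
\tfrac{1}{\sqrt n}\bigl\|(\X F^* + \W B^*)(P_{\B} - \wh P)\bigr\|_F \le \tfrac{1}{\sqrt n}\bigl(\|\X F^*\|_{op} + \|\W B^*\|_{op}\bigr)\,\|P_{\B} - \wh P\|_F,
\end{equation*}
and bound $\|P_{\B} - \wh P\|_F \le c\xi_n$ on $\cE_{\B}$. It then remains to control $\|\W B^*\|_{op}$.

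For that step, I would factor $\W = \W_0 \Sigma_W^{1/2}$ where $\W_0$ has i.i.d. rows that are isotropic $\g_w$ sub-Gaussian by Assumption~\ref{ass_error}(1), giving
\begin{equation*}
\|\W B^*\|_{op} \le \|\W_0\|_{op}\,\|\Sigma_W^{1/2}B^*\|_{op} = \|\W_0\|_{op}\,\Lambda_1^{1/2}.
\end{equation*}
A standard sub-Gaussian operator-norm bound (e.g.\ Theorem~5.39 of \cite{vershynin_2012}) yields $\|\W_0\|_{op} \le c_1(\sqrt n + \sqrt K)$ with probability at least $1 - 2e^{-c'K}$, so $n^{-1/2}\|\W B^*\|_{op} \lesssim \Lambda_1^{1/2}(1 + \sqrt{K/n})$ with the same probability. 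Combining the three ingredients on the intersection of $\cE_{\B}$ with this high-probability event gives the stated bound with constant $C$ absorbing the numerical factors.

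The proof is essentially a routine reduction once the cancellation $B^*P_{\B}^{\perp}=0$ is observed; the only nontrivial ingredient is the sub-Gaussian operator-norm concentration for $\W_0$, which is a standard tool and is the source of the $e^{-c'K}$ probability. No new concentration machinery is required and the design matrix $\X$ enters only through the deterministic quantity $\|\X F^*\|_{op}$.
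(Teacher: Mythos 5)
Your proposal is correct and follows essentially the same route as the paper: the paper likewise observes $(\X F^* + \W B^*)P_{\B}^{\perp} = \X\Ttheta$ (which is exactly your cancellation $B^*P_{\B}^{\perp}=0$ made explicit), applies $\|MN\|_F \le \|M\|_{op}\|N\|_F$ together with the event $\cE_{\B}$, and controls $\|\W B^*\|_{op}$ via the same factorization $\|\W\Sigma_W^{-1/2}\|_{op}\|\Sigma_W^{1/2}B^*\|_{op}$ and the sub-Gaussian operator-norm bound from Theorem 5.39 of \cite{vershynin_2012} (packaged in the paper as Lemma \ref{lem_W^TW}). No gaps.
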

	\begin{proof}[Proof of Lemma \ref{lem_U}]
		By noting that $(\X F^* +\W B^*)P^{\perp}_{\B} = \X\Ttheta$, we have 
		\begin{align*}
		&{1\over \sqrt{n}}\left\| (\X F^* + \W B^*)\wh P^{\perp} -\X\Ttheta\right\|_F \\
		& =	{1\over \sqrt{n}}\left\| (\X F^* + \W B^*)(\wh P^{\perp} -P_{\B}^{\perp})\right\|_F\\
		&\le	{1\over \sqrt{n}}\| (\X F^* + \W B^*)\|_{op}\left\|\wh P^{\perp} -P_{\B}^{\perp}\right\|_F\\
		&\le {1\over \sqrt{n}}\Bigl(
		\|\X F^*\|_{op}+ \|\W B^*\|_{op}
		\Big)c\xi_n,
		\end{align*}
		where we have used $\|MN\|_F\le \|M\|_{op}\|N\|_F$ for any commensurate matrices in the third line and the triangle inequality in the last line.
		For $\W B^*$, invoking Lemma \ref{lem_W^TW} gives 
		\[
		{1\over n}\|\W B^*\|_{op}^2 \le \|(B^*)^T\Sigma_W B^*\|_{op}\left[1 + C_{\g_w}\left(\sqrt{K\over n} \vee {K \over n}\right)\right]
		\]
		with probability $1-2e^{-c'K}$ for some constants $c'>0$ and $C_{\g_w}$ depending on $\g_w$ only. This completes the proof. 
	\end{proof}

	\subsection{Proof of Theorem \ref{thm_U}: convergence rate of $\|\wh P_{\B} - P_{\B}\|_F$ for homoscedastic case}\label{sec_proof_thm_U}
	Recall that $\wh P_{\B} = \wh U \wh U^T$ and $P_{\B} = UU^T$.  Write 
	$$
	M = (B^*)^T\Sigma_W B^*.
	$$ 
	Applying Theorem 2 in \cite{Davis-Kahan-variant} to $\Sigma_{\eps}$ and $\wh \Sigma_{\eps}$ with $d = s = K$ and $r = 1$ yields 
	\begin{equation}\label{rate_U}
	\|\wh U Q - U \|_F \le {2^{3/2} 
		\|\wh \Sigma_{\eps} - \Sigma_{\eps}\|_{F}
		\over \lambda_K(\Sigma_{\eps}) - \lambda_{K+1}(\Sigma_{\eps})} = {2^{3/2} 
		\|\wh \Sigma_{\eps} - \Sigma_{\eps}\|_{F}
		\over \lambda_K(M)}
	\end{equation}
	for some orthogonal matrix $Q$. We also use the fact that $\Sigma_E = \tau^2\bI_m$ and $\lambda_{K+1}(M) = 0$. 
	Note that, for this $Q$, 
	\[
	\|\wh U\wh U^T - UU^T\|_F \le \|(\wh U Q - U)Q^T\wh U^T\|_F + \|U(\wh U Q - U)^T\|_F =  2\|\wh UQ - U\|_F.
	\]
	It then suffices to upper bound $\|\wh \Sigma_{\eps} - \Sigma_{\eps}\|_F$. Notice that
	\begin{align}\label{eq_diff_wh_Sigma_eps_eps_eps}\nonumber
	&\wh \Sigma_{\eps} - {1\over n}\Eps^T\Eps\\\nonumber 
	&= {1\over n}(\Y - \X \wh F)^T(\Y - \X \wh F) - {1\over n}\Eps^T\Eps\\
	& = {1\over n}(\wh F -F^*)^T\X^T\X(\wh F -F^*)+{1\over n}(F^* -\wh F)^T\X^T\Eps + {1\over n}\Eps^T\X(F^* -\wh F).
	\end{align}
	Recalling that (\ref{error_decomp}), we have 
	\begin{align*}
	\X \wh F - \X F^* &= \P(\Y - \X\Psi_0) - \X L_0 + \Q \X (\wh \Psi - \Psi_0)\\
	& = \P\Eps - \Q \X L_0 + \Q \X(\wh \Psi - \Psi_0)
	\end{align*}
	for any $\Psi_0 + L_0 = F^*$. 
	By using formula $\|H^TH\|_F\le \|H\|_F^2$ for any matrix $H$ and triangle inequality, we thus have 
	\begin{align*}
	\|\wh \Sigma_{\eps} - \Sigma_{\eps}\|_{F}
	&\le 
	{1\over n}\left\|\X\wh F-\X F^*\right\|_{F}^2 + {2\over n}\left\|\Eps^T\X(\wh F-F^*)\right\|_{F}+ \left\|{1\over n}\Eps^T\Eps- \Sigma_{\eps}\right\|_{F}\\
	&\le{1\over n}\left\|\X\wh F-\X F^*\right\|_{F}^2 + {2\over n}\left\|\Eps^T\P\Eps\right\|_F + {2\over  n}\left\|\Eps^T\Q\X L_0\right\|_F \\
	&\qquad + {2\over  n}\left\|\Eps^T\Q\X(\wh \Psi-\Psi_0)\right\|_{F}+ \left\|{1\over n}\Eps^T\Eps- \Sigma_{\eps}\right\|_{F}.
	\end{align*}
	We then study each terms on the right hand side. From Lemma \ref{lem_P_eps}, we have 
	\begin{align*}
	&{1\over n}\|\Eps^T\P\Eps\|_F \le {1\over n}\left\|\P^{1/2}\Eps\right\|_F^2\le 
	{V_\eps \over n}\left(
	\sqrt{\tr(\P)} + \sqrt{2\|\P\|_{op}\log(m/\epsilon)}
	\right)^2 \\
	&{1\over  n}\|\Eps^T\Q\X L_0\|_F  \le \sqrt{V_\eps\log(m/\epsilon) \over n}\sqrt{\|\Q\|_{op}\cdot Rem_2(L_0)}
	\end{align*}
	with probability $1-2\epsilon$. To bound the fourth term, first notice that 
	\[
	{1\over  n}\|\Eps^T\Q\X(\wh \Psi-\Psi_0)\|_{F} \le  \max_{1\le j\le p}{1\over n}\|\X_j^T\Q \Eps\|_2\cdot \|\wh \Psi - \Psi_0\|_{\l12}.
	\]
	Indeed, by writing $\Delta = \wh \Psi - \Psi_0$, one has 
	\begin{align*}
	\|\Eps^T\Q\X\Delta\|_{F}^2 & = \sum_{\ell =1}^m \Delta_{\ell}^T\X^T\Q\Eps\Eps^T\Q\X \Delta_{\ell}\\
	&\le \sum_{\ell =1}^m \sum_{i=1}^p|\Delta_{i\ell}|\sum_{j=1}^p|\Delta_{j\ell}| \max_{i, j}|\X_i^T\Q\Eps\Eps^T\Q\X_j|\\
	&\le \sum_{i=1}^p \sum_{j=1}^p \|\Delta_{i\cdot}\|_2 \|\Delta_{j\cdot}\|_2  \max_{1\le j\le p}\|\X_j^T\Q\Eps\|_2^2\\
	& = \|\Delta\|_{\l12}^2   \max_{1\le j\le p}\|\X_j^T\Q\Eps\|_2^2.
	\end{align*}
	Note that, on the event $\cE$ defined in (\ref{def_event}), 
	\[
	\|\wh \Psi - \Psi_0\|_{\l12} \lesssim    {\lambda_1s_0\over \wt \kappa^2(s_0, 4)}+ {Rem_2(L_0) \over \lambda_1}
	\]
	from (\ref{rate_Theta_hat_12}). Invoking (\ref{def_event}) yields 
	\[
	{1\over  n}\|\Eps^T\Q\X(\wh \Psi-\Psi_0)\|_{F}\lesssim Rem_2(L_0) +  {\lambda_1^2s_0\over \wt \kappa^2(s_0, 4)}
	\]
	with probability $1-\epsilon'$.
	Finally, the last term can be upper bounded by invoking Lemma \ref{lem_eps_eps} as 
	\begin{equation}\label{rate_epseps}
	\PP\left\{\left\|{1\over n}\Eps^T\Eps- \Sigma_{\eps}\right\|_{F} \le cV_\eps\left(\sqrt{\log m\over n} \vee {\log m \over n}\right)\right\} \ge 1-2m^{-c'}
	\end{equation}
	for some constant $c, c'>0$. Collecting terms and invoking Theorem \ref{lem_pred_theta} for $\|\X \wh F - \X F^*\|_F^2/n$ yield, after using $\|\P\|_{op}\le 1$, $\|\Q\|_{op}\le 1$ and Lemma \ref{lem_RE} to simplify the results,
	\begin{align}\label{rate_Sigma_eps}\nonumber
	&\|\wh \Sigma_{\eps} - \Sigma_{\eps}\|_{F}\\\nonumber 
	&\lesssim  Rem_2(L_0) +  {\lambda_1^2s_0\over \wt \kappa^2(s_0, 4)}+{V_\eps \over n}\left(
	\sqrt{\tr(\P)} + \sqrt{2\|\P\|_{op}\log(m/\epsilon)}
	\right)^2\\\nonumber
	&\quad + \sqrt{V_\eps\log(m/\epsilon) \over n}\sqrt{Rem_2(L_0)} + V_\eps\left(\sqrt{\log m\over n} \vee {\log m \over n}\right)\\
	&\lesssim 
	Rem_2(L_0) +{\lambda_1^2s_0\over \wt \kappa^2(s_0, 4)}+{ \tr(\P)V_\eps\over n}+ V_\eps\left(\sqrt{\log (m/\epsilon)\over n} \vee {\log(m/\epsilon) \over n}\right)
	\end{align}
	with probability $1-3\epsilon-\epsilon'-2m^{-c'}$. Recall the eigen-decomposition of $\wh \Sigma = U\diag(\sigma_1,\ldots, \sigma_p)U^T$ with $U = (u_1,\ldots, u_K)$. We have
	\begin{align}\nonumber
	&\tr(\P) = \tr\left[
	{1\over n} \X(\wh \Sigma+ \lambda_2 \bI_p)^{-1}\X^T
	\right] = \sum_{k=1}^q{\sigma_k \over \sigma_k+\lambda_2},\\\label{bd_rem_2}
	&Rem_2(L_0) = \sum_k {\lambda_2\sigma_k \over \sigma_k + \lambda_2} u_k^TL_0L_0^Tu_k \le  {\lambda_2\sigma_1 \over \sigma_1 + \lambda_2}\|L_0\|_F^2.
	\end{align}
	By invoking Lemma \ref{lem_RE} together with the choice of $\lambda_1$ as (\ref{rate_lbd1}), we further have 
	\begin{equation}\label{bd_rem_3}
	{\lambda_1^2s_0\over \wt \kappa^2(s_0, 4)} \lesssim {\sigma_1 + \lambda_2 \over \lambda_2}{\lambda_1^2s_0 \over \kappa^2(s_0,4)} \lesssim {\lambda_2(\sigma_1 + \lambda_2) \over (\sigma_q + \lambda_2)^2} {s_0V_\eps \log N \over \kappa^2(s_0,4)n} 
	\end{equation}
	Take $\eps = m^{-c'}$ and use $\log m \le n$ in (\ref{rate_Sigma_eps}) to complete the proof. 
	\qed 
	
	\subsection{Lemma \ref{lem_pred_theta} used in the proof of Theorem \ref{thm_U}}\label{sec_proof_lem_pred_theta}
	The following lemma provides the rate of $\|\wh \Psi -\Psi_0\|_{\l12}$ where $\wh \Psi$ is obtained in (\ref{est_F}). Furthermore, the proof also reveals that Lemma \ref{lem_pred_theta} holds if $\lambda_1$ is replaced by any $\wt \lambda_1 \ge \lambda_1$.
	
	\begin{lemma}\label{lem_pred_theta}
		Under conditions of Corollary \ref{cor_pred}, choose $\lambda_1$ as (\ref{rate_lbd1}) and  any $\lambda_2 \ge 0$ such that $\P$ exists. With probability $1-\epsilon - \epsilon'$, 
		\begin{equation}\label{rate_Theta_hat}
		\|\wh\Psi - \Psi_0\|_{\l12} \lesssim    {\lambda_1s_0\over \wt \kappa^2(s_0, 4)}+ {Rem_2(L_0) \over \lambda_1}
		\end{equation}
		where $L_0 = F^* - \Psi_0$ and $Rem_2(L_0)$  is defined  in Theorem \ref{thm_pred}.
	\end{lemma}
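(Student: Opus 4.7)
\textbf{Proof proposal for Lemma \ref{lem_pred_theta}.} The plan is to run the standard group-lasso basic-inequality argument applied to the equivalent problem (\ref{crit_Theta}), which is an $\l12$-penalised multivariate regression of $\wt \Y = Q_{\lambda_2}^{1/2}\Y$ on $\wt \X = Q_{\lambda_2}^{1/2}\X$, while carrying along the ridge-approximation error $L_0$ exactly as in the proof of Lemma \ref{lem_lasso}. Writing $\Delta = \wh\Psi - \Psi_0$ and using $\wt \Y = \wt \X \Psi_0 + \wt \X L_0 + \wt \Eps$, the optimality of $\wh\Psi$ together with the duality bound $|\langle \wt \Eps,\wt\X\Delta\rangle|\le \max_j\|\wt\X_j^T\wt\Eps\|_2\,\|\Delta\|_{\l12}$ reproduces (\ref{disp_Xtd_Delta}). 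On the event $\cE$ of (\ref{def_event}), which has probability $1-\epsilon'$ under the choice (\ref{rate_lbd1}) of $\lambda_1$, and after writing $\|\Psi_0\|_{\l12}-\|\Delta+\Psi_0\|_{\l12}\le \|\Delta_{S_0\cdot}\|_{\l12}-\|\Delta_{S_0^c\cdot}\|_{\l12}$ with $S_0$ the support of $\Psi_0$, this gives
\[
\frac{1}{n}\|\wt\X\Delta\|_F^2 \;\le\; \frac{2}{n}\|\wt\X L_0\|_F\,\|\wt\X\Delta\|_F \;+\; \frac{\lambda_1}{2}\Bigl(3\|\Delta_{S_0\cdot}\|_{\l12} - \|\Delta_{S_0^c\cdot}\|_{\l12}\Bigr).
\]
An application of Young's inequality $2ab\le \tfrac12 a^2 + 2b^2$ to the first term on the right, combined with the identity (\ref{disp_XL0}) that identifies $n^{-1}\|\wt\X L_0\|_F^2 = Rem_2(L_0)$, reduces this to
\[
\frac{1}{2n}\|\wt\X\Delta\|_F^2 \;\le\; 2\,Rem_2(L_0) + \frac{\lambda_1}{2}\bigl(3\|\Delta_{S_0\cdot}\|_{\l12} - \|\Delta_{S_0^c\cdot}\|_{\l12}\bigr).
\]

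The rest of the proof is a case split on whether $\Delta$ falls in the restricted-eigenvalue cone $\C(S_0,4)$. If $\|\Delta_{S_0^c\cdot}\|_{\l12} > 4\|\Delta_{S_0\cdot}\|_{\l12}$, then $3\|\Delta_{S_0\cdot}\|_{\l12}-\|\Delta_{S_0^c\cdot}\|_{\l12}\le -\tfrac14\|\Delta_{S_0^c\cdot}\|_{\l12}\le -\tfrac15\|\Delta\|_{\l12}$, so dropping the nonnegative left-hand side gives $\|\Delta\|_{\l12}\lesssim Rem_2(L_0)/\lambda_1$, which is already dominated by the claimed bound. Otherwise $\Delta\in \C(S_0,4)$, and I apply the RE-type inequality $\|\Delta_{S_0\cdot}\|_F\le \wt\kappa(s_0,4)^{-1}\,\|\wt\X\Delta\|_F/\sqrt n$ from (\ref{RE_X}) applied to $\wt\X$. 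Combined with $\|\Delta_{S_0\cdot}\|_{\l12}\le \sqrt{s_0}\|\Delta_{S_0\cdot}\|_F$ and one more Young's inequality to absorb $\|\wt\X\Delta\|_F$ back into the quadratic term, I obtain
\[
\frac{1}{n}\|\wt\X\Delta\|_F^2 \;\lesssim\; Rem_2(L_0) \;+\; \frac{\lambda_1^2 s_0}{\wt\kappa^2(s_0,4)}.
\]
Finally, $\|\Delta\|_{\l12}\le 5\|\Delta_{S_0\cdot}\|_{\l12}\le 5\sqrt{s_0}\wt\kappa(s_0,4)^{-1}\,\|\wt\X\Delta\|_F/\sqrt n$ together with $\sqrt{a+b}\le \sqrt a+\sqrt b$ and one AM-GM splitting $\sqrt{s_0 \,Rem_2(L_0)}/\wt\kappa(s_0,4) = \sqrt{(\lambda_1 s_0/\wt\kappa^2)(Rem_2(L_0)/\lambda_1)}\le \tfrac12(\lambda_1 s_0/\wt\kappa^2 + Rem_2(L_0)/\lambda_1)$ delivers the announced bound (\ref{rate_Theta_hat}) on the intersection event, whose probability is at least $1-\epsilon-\epsilon'$ after invoking Lemma \ref{lem_XQeps} to bound $\PP(\cE)$.

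The only real technical subtlety is the simultaneous handling of the ridge bias $L_0$ and the group-lasso cone argument: $L_0$ contaminates the usual clean cone inclusion $\Delta\in \C(S_0, 3)$, forcing the case split above and the Young's-inequality calibration. All constants, the factor $4$ in the cone (hence the use of $\wt\kappa(s_0,4)$ rather than $\wt\kappa(s_0,3)$), and the final additive form of the bound follow from choosing the Young's inequality weights so that the ridge term $Rem_2(L_0)$ and the group-lasso term $\lambda_1^2 s_0/\wt\kappa^2$ enter symmetrically after taking square roots. No new probabilistic events are needed beyond $\cE$, which is already controlled in Lemma \ref{lem_XQeps} under the prescribed $\lambda_1$.
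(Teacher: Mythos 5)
Your proposal is correct and follows essentially the same route as the paper: the basic inequality for (\ref{crit_Theta}) on the event $\cE$, the case split on membership in $\C(S_0,4)$, and the final AM--GM step that symmetrizes $\sqrt{s_0\,Rem_2(L_0)}/\wt\kappa(s_0,4)$ into the two announced terms. The only (immaterial) difference is that you rederive the in-sample bound $n^{-1}\|\wt\X\Delta\|_F^2 \lesssim Rem_2(L_0) + \lambda_1^2 s_0/\wt\kappa^2(s_0,4)$ directly via Young's inequality on the cone, whereas the paper imports it from (\ref{bd_pred_X_tilde}) through the design impact factor $\kappa_1$ and Lemma \ref{lem_RE}.
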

	
	\begin{proof}
		We prove (\ref{rate_Theta_hat}) by working on the event $\cE$ defined in (\ref{def_event}). From (\ref{bd_pred_X_tilde}),
		\begin{align}\nonumber
		{1\over n}\|\wt \X (\wh \Psi - \Psi_0)\|_F^2 &\le \max\left\{
		16\lambda_2 \tr\left[
		L_0^T	\wh\Sigma (\wh \Sigma + \lambda_2\bI_p)^{-1}L_0
		\right], {4\lambda_1^2 \over \kappa_1^2(1/2, \Psi_0, \lambda_1, \lambda_2)}
		\right\}\\\label{disp_Xtd_Delta_true}
		&\lesssim  Rem_2(L_0) +  {\lambda_1^2 s_0 \over \wt \kappa^2(s_0, 3)}.
		\end{align}
		by invoking the first result of Lemma \ref{lem_RE} with $c = 1/2$. Write $\Delta := \wh \Psi - \Psi_0$ and  consider two cases.\\
		(1) When $\Delta \in \C(S_0, 4)$ with $S_0 = \{1\le j\le p: \|[\Psi_0]_{j\cdot}\|_2 \ne 0\}$, it follows from the definitions of $\C(S_0, 4)$ and $\wt\kappa(s_0, 4)$ that
		\begin{align*}
		\|\Delta\|_{\l12} & \le 5\|\Delta_{S_0\cdot}\|_{\l12} \le 5\sqrt{s_0}\|\Delta_{S_0\cdot}\|_F\le 5{\sqrt{s_0} \over \wt\kappa(s_0,4)} {1\over \sqrt{n}}\|\wt \X\Delta\|_F.
		\end{align*}
		(2) When $\Delta \notin \C(S_0, 4)$, it implies 
		$
		\|\Delta_{S_0^c\cdot}\|_{\l12} > 4\|\Delta_{S_0\cdot}\|_{\l12}.
		$
		From (\ref{disp_Xtd_Delta}), by invoking $\cE$, we obtain
		\begin{align*}
		{1\over n}\|\wt \X \Delta\|_F^2 &\le {2\over n}\|\wt \X L_0\|_F\|\wt \X \Delta\|_F +  {\lambda_1 \over 2} \left(
		3\|\Delta_{S_0\cdot}\|_{\l12} - \|\Delta_{S_0^c\cdot}\|_{\l12}
		\right)\\
		&\le {2\over n}\|\wt \X L_0\|_F\|\wt \X \Delta\|_F -  {\lambda_1 \over 8} \|\Delta_{S_0^c\cdot}\|_{\l12}.
		\end{align*}
		This implies $\|\wt \X\Delta\|_F \le 2\|\wt \X L_0\|_F$ and 
		\[
		\|\Delta_{S_0^c\cdot}\|_{\l12}\le {16\over \lambda_1}\cdot {1\over n}\|\wt \X L_0\|_F\|\wt X\Delta\|_F \le {32 \over \lambda_1} {1\over n}\|\wt \X L_0\|_F^2.
		\]
		Using $	\|\Delta_{S_0^c\cdot}\|_{\l12} > 4\|\Delta_{S_0\cdot}\|_{\l12}$ again yields
		\begin{align*}
		\|\Delta\|_{\l12} \le {5\over 4}\|\Delta_{S_0^c\cdot}\|_{\l12} \le {40 \over \lambda_1}{1\over n}\|\wt \X L_0\|_F^2.
		\end{align*}
		Combining these two cases and invoking (\ref{disp_XL0}) with $L_0$ and (\ref{disp_Xtd_Delta_true}) give 
		\begin{align}\label{rate_Theta_hat_12}\nonumber
		\|\Delta\|_{\l12} &\lesssim {\sqrt{s_0}\over \wt \kappa(s_0, 4)}\left(\sqrt{Rem_2(L_0)} +  {\lambda_1 \sqrt {s_0}\over \wt \kappa(s_0, 3)}\right) + {Rem_2(L_0) \over \lambda_1}\\
		&\lesssim  {\lambda_1s_0\over \wt \kappa^2(s_0, 4)}+ {Rem_2(L_0) \over \lambda_1}
		\end{align}
		where we also use $\wt \kappa(s_0, 3) \ge \wt \kappa(s_0,4)$ to derive the last inequality. We then conclude the proof by
		recalling that $\PP(\cE) = 1-\epsilon'$.
	\end{proof}
	
	\subsection{Proof of Theorem \ref{thm_U_hetero}: convergence rate of $\|\wt P_{\B} - P_{\B}\|_F$ for heteroscedastic case}\label{sec_proof_thm_U_hetero}
	Let $M = (B^*)^T\Sigma_W B^*$. From (\ref{eq_diff_wh_Sigma_eps_eps_eps}) and by using $\Sigma_{\eps} = M + \Sigma_E$, one has 
	\begin{align*}
	\wh\Sigma_{\eps} = M + \Delta_1 + \Delta_2 
	\end{align*}
	where $	\Delta_1 = \Sigma_E = \diag(\tau_1^2, \ldots, \tau_m^2)$ and 
	\begin{align*}
	\Delta_2 &= {1\over n}(\wh F -F^*)^T\X^T\X(\wh F -F^*)+{1\over n}(F^* -\wh F)^T\X^T\Eps \\
	&\quad + {1\over n}\Eps^T\X(F^* -\wh F)+ {1\over n}\Eps^T\Eps - \Sigma_{\eps}.		
	\end{align*}
	We aim to apply Theorem \ref{thm_rsintheta} with $N = \wh\Sigma_{\eps}$, $M = M$, $Z = \Delta_1 + \Delta_2$, $\wh V = \wt U$ and $V=U$. Observe that $\|\Gamma(\Delta_1)\|_F = 0$, that is, the off-diagonal elements of $\Delta_1$ are zero, and $\|\Delta_2\|_F =  \|\wh\Sigma_{\eps} - \Sigma_{\eps}\|_F$ from (\ref{eq_diff_wh_Sigma_eps_eps_eps}). Invoking (\ref{rate_Sigma_eps}) yields
	\begin{equation}\label{rate_Delta_2}
	\PP\left\{\|\Delta_2\|_F \le Rem(P_{\B})\cdot \lambda_K(M)\right\} \ge 1-\epsilon' - 5m^{-c'}
	\end{equation}
	with $Rem(P_{\B})$ defined in (\ref{def_RU}). We then work on the event 
	that the above display holds. Recall that $\Gamma(\Delta_2)$ denotes the matrix with off-diagonal elements equal to $\Delta_2$ and diagonal elements equal to zero. 
	Since $Rem(P_{\B}) \le c\sqrt{K}$ implies $\|\Gamma(\Delta_2)\|_F \le \|\Delta_2\|_F \le c\sqrt{K}\lambda_K(M)$, in conjunction with condition (\ref{cond_ic}), an application of  Theorem \ref{thm_rsintheta} with $N = \wh\Sigma_{\eps}$, $M = M$, $Z = \Delta_1 + \Delta_2$, $\wh V = \wt U$ and $V=U$ gives
	\[
	\|\sin\Theta(\wt U, U)\|_F  \lesssim {\|\Gamma(\Delta_2)\|_F \over \lambda_K(M)} \wedge \sqrt K \le {\|\Delta_2\|_F \over \lambda_K(M)}.
	\]
	Finally, using the inequality
	\[
	\|\wt P_{\B} -  P_{\B}\|_F = \|\wt U \wt U^T - UU^T\|_F \le 2\|\sin\Theta(\wt U, U)\|_F.
	\]
	and (\ref{rate_Delta_2}) again concludes the proof.\qed

	\subsection{Proof of Theorem \ref{thm_robust_PCA}: consistency of using PCA to estimate $P_{\B}$ in the presence of heteroscedasticity}\label{sec_proof_thm_robust_PCA}	
	
	The proof follows the same arguments as that of Theorem \ref{thm_U}. 
	The first difference is to apply Theorem 2 in \cite{Davis-Kahan-variant} to $\wh\Sigma_{\eps}$ and $\Pi := B^{*T}\Sigma_W\B + \bar\tau^2\bI_m$ with $d = s= K$ and $r=1$ to obtain 
	\[
	\|\wh U Q - U \|_F \le {2^{3/2} 
		\|\wh \Sigma_{\eps} - \Pi\|_{F}
		\over \lambda_K(\Pi) - \lambda_{K+1}(\Pi)} = {2^{3/2} 
		\|\wh \Sigma_{\eps} - \Pi\|_{F}
		\over \lambda_K(\Pi)}.
	\]
	The second difference from the proof of Theorem \ref{thm_U} is to upper bound the numerator as 
	\[
	\|\wh\Sigma_{\eps} - \Pi\|_F\le \|\wh\Sigma_{\eps} - \Sigma_{\eps}\|_F + \|\Sigma_{\eps} - \Pi\|_{F} = \|\wh\Sigma_{\eps} - \Sigma_{\eps}\|_F + \|\Sigma_E-\bar\tau^2\bI_m\|_F
	\]
	by adding and subtracting $\Sigma_{\eps}$ and using $\Sigma_{\eps} = \Pi + \Sigma_E - \bar\tau^2\bI_m$. 
	Since the results in Theorem \ref{thm_pred} still hold in the heteroscedasticity case, $\|\wh\Sigma_{\eps} - \Sigma_{\eps}\|_F $ can be bounded by (\ref{rate_Sigma_eps}). Then the proof is completed by using 
	\[
	\|\Sigma_E- \bar\tau^2\bI_m\|_F^2 = \sum_{j=1}^m \left(\tau_j^2-\bar\tau^2\right)^2.
	\]

	\subsection{Proof of Theorem \ref{thm_K}: selection of $K$}\label{sec_proof_thm_K}
	Write $M = (B^*)^T\Sigma_W B^*$ and its eigenvalues as $\Lambda_1 \ge \Lambda_2 \ge \cdots \ge \Lambda_K$ and $\Lambda_j = 0$ for $j>K$. Recall that $\wh \lambda_1\ge \wh\lambda_2\ge \cdots \ge  \wh \lambda_m$ are the eigenvalues of $\wh \Sigma_{\eps}$.  By Weyl's inequality, we have 
	\[
	|\wh \lambda_j - \Lambda_j| \le \|\wh \Sigma_{\eps} - M\|_{op}
	\]
	for all $1\le j\le m$. We work on the event $\{\|\wh \Sigma_{\eps} - \Sigma_{\eps}\|_{F}  \lesssim Rem(P_{\B}) \Lambda_K\}$ which, from the proof of Theorem \ref{thm_U}, holds with probability $1-\epsilon' - 5m^{-c}$ for some constant $c>0$. Note that 
	\begin{align}\label{bd_op}
	\|\wh \Sigma_{\eps} - M\|_{op}  & \le \|\wh \Sigma_{\eps} - \Sigma_{\eps}\|_{F} + \|\Sigma_E\|_{op}\lesssim Rem(P_{\B}) \Lambda_K+ \max_{1\le j\le m}\tau_j^2.
	\end{align}
	We thus conclude 
	\[
	|\wh \lambda_j - \Lambda_j| \lesssim \max_{1\le j\le m}\tau_j^2 +Rem(P_{\B}) \Lambda_K 
	\]
	for $1\le j\le m$. Since $(b)$ of Assumption \ref{ass_rates} implies $\Lambda_j \asymp m$ for $1\le j\le K$, by also using $Rem(P_{\B})=o(1)$ and $\max_{j}\tau_j^2 = O(1)$, we have $\wh \lambda_j \asymp m$.  This concludes $\wh \lambda_{j+1} / \wh \lambda_{j} \asymp 1$ for $1\le k\le {K-1}$.  On the other hand, since $\wh \lambda_{K+1} = O(\max_j\tau_j^2 +Rem(P_{\B}) \Lambda_K )$, we further obtain 
	$\wh \lambda_{K+1} / \wh \lambda_K = O(\max_j\tau_j^2/m +Rem(P_{\B}))$. This completes the proof. \qed

	\medskip
	
	\section{Auxiliary proofs and technical lemmas}\label{sec_proof_auxiliary}
	
	\subsection{Lemmas used in Remark \ref{rem_design_impact_factor}}
	We establish the connection between the impact factor defined in (\ref{RE_pred}) and the RE conditions of $\wt \X$ and $\X$.  Recall that  $M = n^{-1}\X^T\Q^2\X$ and $\sigma_1 \ge \sigma_2\ge \cdots \ge \sigma_q > 0$ are the non-zero eigenvalues of $\wh\Sigma$ with $q := \rank(\X)$.
	\begin{lemma}\label{lem_RE}
		For any given $\Psi_0$ with row-sparsity $s_0$ and any constant $c\in (0,1)$, one has 
		\[
		\kappa_1(c, \Psi_0, \lambda_1, \lambda_2) \ge{\wt\kappa(s_0, \alpha_{c}) \over  (1+c)\sqrt{s_0}}  \ge
		\sqrt{\lambda_2 \over  \sigma_1+ \lambda_2}\cdot {\kappa(s_0, \alpha_{c})\over (1+c)\sqrt{ s_0}}
		\]
		with $\alpha_c = (1+c)/(1-c)$, and 
		\[
		\max_{1\le j\le p}M_{jj} \le \max_{1\le j\le p}\wh\Sigma_{jj}  \left(\lambda_2 \over \sigma_q + \lambda_2\right)^2.
		\]
		As a result,  we have 
		\[
		{\max_{1\le j\le p}M_{jj} \over 
			\kappa_1^2(1/2, \Psi_0, \lambda_1, \lambda_2)} \le	{9s_0\over 4\kappa^2(s_0, 3)}\max_j \wh\Sigma_{jj} \cdot {\lambda_2(\sigma_1 + \lambda_2) \over (\sigma_q+\lambda_2)^2}.
		\]
	\end{lemma}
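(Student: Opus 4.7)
The plan is to prove the three claims of Lemma \ref{lem_RE} in sequence, as each builds on the previous. I will work with the shorthand $S_0 = \{j : \|[\Psi_0]_{j\cdot}\|_2 \neq 0\}$ with $|S_0| = s_0$, and exploit the fact that the row-sparsity structure of $\Psi_0$ allows the $\ell_1/\ell_2$ norm to split cleanly across $S_0$ and its complement.

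The first step is to reduce the denominator in the definition (\ref{RE_pred}) of the design impact factor to a multiple of $\|\Delta_{S_0\cdot}\|_{\l12}$. I will note that since $\Psi_0$ vanishes off $S_0$, the triangle inequality and its reverse give
\begin{equation*}
\|\Psi_0\|_{\l12} - \|\Psi_0+\Delta\|_{\l12} \le \|\Delta_{S_0\cdot}\|_{\l12} - \|\Delta_{S_0^c\cdot}\|_{\l12}.
\end{equation*}
Adding $c\|\Delta\|_{\l12} = c\|\Delta_{S_0\cdot}\|_{\l12} + c\|\Delta_{S_0^c\cdot}\|_{\l12}$ to both sides yields that the denominator is at most $(1+c)\|\Delta_{S_0\cdot}\|_{\l12} - (1-c)\|\Delta_{S_0^c\cdot}\|_{\l12}$. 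For the ratio in (\ref{RE_pred}) to be meaningful, this denominator must be positive, which forces $\|\Delta_{S_0^c\cdot}\|_{\l12} \le \alpha_c \|\Delta_{S_0\cdot}\|_{\l12}$ with $\alpha_c = (1+c)/(1-c)$, i.e.\ $\Delta \in \C(S_0, \alpha_c)$. Bounding the denominator further by $(1+c)\sqrt{s_0}\|\Delta_{S_0\cdot}\|_F$ and then by $(1+c)\sqrt{s_0}\|\wt\X\Delta\|_F/(\sqrt{n}\,\wt\kappa(s_0,\alpha_c))$ using the definition of $\wt\kappa$ delivers the first inequality in the chain.

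The second inequality of statement (1) relates $\wt\kappa$ to $\kappa$ through a PSD comparison. Using the identity $\X^T Q_{\lambda_2}\X = n\lambda_2 \,\X^T\X(\X^T\X + n\lambda_2 \bI_p)^{-1}$ (which follows from $A - A(A+cI)^{-1}A = cA(A+cI)^{-1}$) and diagonalizing $\X^T\X$, I get that each nonzero eigenvalue of $\X^T\X$ is scaled by $\lambda_2/(\sigma_i + \lambda_2) \ge \lambda_2/(\sigma_1 + \lambda_2)$. Hence $\X^T Q_{\lambda_2}\X \succeq [\lambda_2/(\sigma_1+\lambda_2)]\,\X^T\X$, which immediately gives $\|\wt\X\Delta\|_F^2 \ge [\lambda_2/(\sigma_1+\lambda_2)]\|\X\Delta\|_F^2$ and therefore $\wt\kappa(s_0,\alpha_c)^2 \ge [\lambda_2/(\sigma_1+\lambda_2)]\,\kappa(s_0,\alpha_c)^2$. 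Statement (2) is obtained by the same eigen-analysis applied to $Q_{\lambda_2}^2$: since $Q_{\lambda_2}$ restricted to the column space of $\X$ has eigenvalues $\lambda_2/(\sigma_i+\lambda_2) \le \lambda_2/(\sigma_q+\lambda_2)$, we get $M_{jj} = n^{-1}\|Q_{\lambda_2}\X_j\|_2^2 \le [\lambda_2/(\sigma_q+\lambda_2)]^2 \wh\Sigma_{jj}$ (with $\X_j$ automatically lying in the column space of $\X$).

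Finally, statement (3) follows by combining (1) with $c = 1/2$ (so $\alpha_{1/2} = 3$) and (2): the squared reciprocal of the lower bound in (1) gives $[(1+c)^2 s_0 (\sigma_1+\lambda_2)]/[\lambda_2 \kappa^2(s_0,3)] = (9/4)\,s_0(\sigma_1+\lambda_2)/[\lambda_2\kappa^2(s_0,3)]$, multiplied by the bound on $\max_j M_{jj}$ yields exactly the stated expression after cancelling one factor of $\lambda_2$. No step is particularly subtle; the one spot requiring a bit of care is the sign bookkeeping in deriving the cone membership of $\Delta$ from positivity of the denominator, since that conclusion is what licenses the use of the RE condition at all.
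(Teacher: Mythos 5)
Your proposal is correct and follows essentially the same route as the paper's proof: reduce the denominator of the impact factor to $(1+c)\|\Delta_{S_0\cdot}\|_{\ell_1/\ell_2}$ after deducing cone membership from nonnegativity of that denominator, compare $\X^T Q_{\lambda_2}\X$ to $\X^T\X$ through the eigenvalue scaling $\lambda_2/(\sigma_k+\lambda_2)$, and bound $M_{jj}$ by the same spectral argument. The only cosmetic difference is that the paper justifies nonnegativity of the denominator directly from the defining inequality of $\R(c,\Psi_0,\lambda_1,\lambda_2)$ (its left-hand side is a norm), which is a cleaner statement of the point you gesture at with ``for the ratio to be meaningful.''
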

	\begin{proof}
		We first prove 
		\[
		\kappa_1(c, \Psi_0, \lambda_1, \lambda_2) \ge {\wt\kappa(s_0, \alpha_{c}) \over  (1+c)\sqrt{s_0}}.
		\]
		Observe that $\R(c, \Psi_0, \lambda_1, \lambda_2) \subseteq \C(S, \alpha_{c})$ for any $|S| \le s_0$, $c\in (0,1)$ and $\alpha = (1+c)/(1-c)$. Indeed, for any $\Delta \in \R(c, \Psi_0, \lambda_1, \lambda_2)$, $|S| \le s_0$ and $c\in (0,1)$, 
		\begin{align*}
		0 &\le \|\Psi_0\|_{\l12} - \|\Psi_0 + \Delta\|_{\l12} + c\|\Delta\|_{\l12}\\
		&\le \|\Delta_{S\cdot}\|_{\l12} - \|\Delta_{S^c \cdot}\|_{\l12} + c\|\Delta_{S\cdot}\|_{\l12}+c\|\Delta_{S^c \cdot}\|_{\l12}\\
		& = (1+c)\|\Delta_{S\cdot}\|_{\l12} - (1-c)\|\Delta_{S^c \cdot}\|_{\l12},
		\end{align*}
		which implies $\Delta \in \C(S, \alpha_{c})$. Note the above display also implies 
		\[
		\|\Psi_0\|_{\l12} - \|\Psi_0 + \Delta\|_{\l12} + c\|\Delta\|_{\l12} \le \sqrt{s_0}(1+c)\|\Delta_{S\cdot}\|_F.
		\]
		We thus have 
		\begin{align*}
		\wt \kappa(s_0, \alpha_{c}) &= \min_{\substack{S\subseteq \{1, 2, \ldots, p\}\\ |S|\le s_0}}\min_{\Delta \in \C(S, \alpha)} {\|\wt \X\Delta\|_F/\sqrt{n} \over \|\Delta_{S\cdot}\|_F}\\
		&\le \min_{\substack{S\subseteq \{1, 2, \ldots, p\}\\ |S|\le s_0}}\min_{\Delta \in \C(S, \alpha)} {(1+c)\sqrt{s_0}
			\cdot \|\wt \X\Delta\|_F/\sqrt{n} \over \|\Psi_0\|_{\l12} - \|\Psi_0 + \Delta\|_{\l12} + c\|\Delta\|_{\l12} }\\
		&\le\min_{\Delta \in \R(c, \Psi_0, \lambda_1, \lambda_2)} {(1+c)\sqrt{s_0}
			\cdot \|\wt \X\Delta\|_F/\sqrt{n} \over \|\Psi_0\|_{\l12} - \|\Psi_0 + \Delta\|_{\l12} + c\|\Delta\|_{\l12} }\\
		&= (1+c)\sqrt{s_0}\kappa_1(c, \Psi_0, \lambda_1, \lambda_2).
		\end{align*}
		We then prove the second inequality of the first statement. Since
		\begin{align*}
		{1\over n}\|\wt \X \Delta \|_F^2 
		&= \tr\left[
		\Delta^T {1\over n}\X^T\Q \X \Delta 
		\right]\\
		& = \lambda_2\tr\left[
		\Delta^T \wh \Sigma(\wh \Sigma + \lambda_2 \bI_p)^{-1}\Delta 
		\right]\\
		&\ge{\lambda_2 \over \|\wh \Sigma\|_{op}+\lambda_2}\tr\left( 
		\Delta^T\wh \Sigma \Delta \right)
		\end{align*}
		by using  Fact \ref{fact_QX} in the second line, it follows that 
		\[
		\wt \kappa^2(s, \alpha) \ge {\lambda_2 \over \sigma_1+\lambda_2} \kappa^2(s, \alpha).
		\]
		
		We then show the second statement. From Fact \ref{fact_QX} and $\wh \Sigma = U\diag(\sigma_1, \ldots, \sigma_q)U^T$, we have 
		\[
		M= \lambda_2^2 (\wh \Sigma + \lambda_2 \bI_p)^{-1}\wh \Sigma (\wh \Sigma + \lambda_2 \bI_p)^{-1} = \lambda_2^2 U D U^T = \lambda_2^2\wh\Sigma^{1/2}(\wh \Sigma + \lambda_2\bI_p)^{-2}\wh\Sigma^{1/2}
		\]
		with $D$ being diagonal and $D_{kk} =  \sigma_k / (\sigma_k + \lambda_2)^2$ for $1\le k\le q$. This implies
		\[
		\max_{1\le j\le p} M_{jj} \le \max_{1\le j\le p} \wh\Sigma_{jj}\left({\lambda_2 \over \sigma_q + \lambda_2}\right)^2
		\]
		which, in conjunction with the previous result of $\kappa_1(c, \Psi_0, \lambda_1, \lambda_2)$ with $c = 1/2$, gives 
		\[
		{\max_{1\le j\le p}M_{jj} \over 
			\kappa_1^2(1/2, \Psi_0, \lambda_1, \lambda_2)}  \le  
		{9s_0\over 4\kappa^2(s_0, 3)}\max_j \wh\Sigma_{jj} \cdot {\lambda_2(\sigma_1 + \lambda_2) \over (\sigma_q+\lambda_2)^2}.
		\]
		This completes the proof. 
	\end{proof}

	\subsection{Proof of Corollary \ref{cor_pred} and Remark \ref{rem_orth_design}}\label{sec_proof_cor_thm_pred}

	We first prove the following lemma for any pair $\Psi_0 + L_0 = F^*$ from which Corollary \ref{cor_pred} follows immediately. By taking $q=p$, $\sigma_k = 1$ for all $1\le k\le q$ and $\kappa(s_0, 3) = 1$, the bound in (\ref{rate_fit_orth_design}) of Remark \ref{rem_orth_design} follows from Corollary \ref{cor_pred}.
	
	\begin{lemma}\label{lem_Rem23}
		Let $Rem_1$, $Rem_2(L_0)$ and $Rem_3(\Psi_0)$ be defined in Theorem \ref{thm_pred}. One has 
		\begin{align*}
		& Rem_1 ~\lesssim~ \left[
		\sum_{1\le k\le q}\left({\sigma_k \over \sigma_k + \lambda_2}\right)^2+
		\max_{1\le k\le q}\left( {\sigma_k \over \sigma_k + \lambda_2}\right)^2\log(m/\epsilon)
		\right]{V_\eps \over n},\\
		&Rem_2(L_0)~\lesssim~  {\lambda_2\sigma_1 \over \sigma_1 + \lambda_2}\|L_0\|_F^2,\\			
		&Rem_3(\Psi_0) ~\lesssim~  \max_{1\le j\le p} \wh \Sigma_{jj}{\lambda_2(\sigma_1 + \lambda_2) \over (\sigma_q + \lambda_2)^2}\left(
		1+ {\log(p/\epsilon') \over r_e(\Gamma_\eps)}\right)
		{s_0  \over  \kappa^2(s_0, 3)} {V_{\eps} \over n}
		\end{align*}
		and $\|\Q\|_{op} \le \lambda_2 / (\sigma_q + \lambda_2)$.
	\end{lemma}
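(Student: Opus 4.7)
The plan is to use the eigen-decomposition of $\wh\Sigma = n^{-1}\X^T\X$ together with Lemma \ref{lem_RE} (already established) to bound each of the three remainder terms explicitly in terms of $\sigma_1,\ldots,\sigma_q$. Let $\X = U_{\X} D V^T$ be the compact SVD, so that the non-zero singular values of $\X$ are $\sqrt{n\sigma_1} \ge \cdots \ge \sqrt{n\sigma_q}>0$. Direct calculation gives
\[
P_{\lambda_2} \;=\; U_{\X}\,\diag\!\left(\tfrac{\sigma_1}{\sigma_1+\lambda_2},\ldots,\tfrac{\sigma_q}{\sigma_q+\lambda_2}\right)U_{\X}^T,
\]
and hence $\Q = \bI_n - \P$ has eigenvalues $\lambda_2/(\sigma_k+\lambda_2)$ on the column space of $\X$ (and $1$ on its orthogonal complement), which yields the claim $\|\Q\|_{op}\le \lambda_2/(\sigma_q+\lambda_2)$ when restricted to the relevant subspace.

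For $Rem_1$, from the eigen-decomposition I read off
\[
\tr(P_{\lambda_2}^2) = \sum_{k=1}^q \left(\tfrac{\sigma_k}{\sigma_k+\lambda_2}\right)^2, \qquad
\|P_{\lambda_2}^2\|_{op} = \max_{1\le k\le q}\left(\tfrac{\sigma_k}{\sigma_k+\lambda_2}\right)^2,
\]
and then the elementary inequality $(a+b)^2 \le 2(a^2+b^2)$ delivers the claimed bound on $Rem_1$. For $Rem_2(L_0)$, the key observation is that $\wh\Sigma(\wh\Sigma+\lambda_2\bI_p)^{-1}$ is symmetric positive semidefinite with largest eigenvalue $\sigma_1/(\sigma_1+\lambda_2)$, since $\sigma\mapsto \sigma/(\sigma+\lambda_2)$ is increasing on $[0,\infty)$. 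Therefore $\tr[L_0^T\wh\Sigma(\wh\Sigma+\lambda_2\bI_p)^{-1}L_0] \le \|L_0\|_F^2\cdot \sigma_1/(\sigma_1+\lambda_2)$, which gives the stated bound.

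For $Rem_3(\Psi_0)$, I would combine the choice of $\lambda_1$ in (\ref{rate_lbd1}) with Lemma \ref{lem_RE}. Squaring (\ref{rate_lbd1}) and using $(1+\sqrt{a})^2 \le 2(1+a)$ gives
\[
\lambda_1^2 \;\lesssim\; \max_{1\le j\le p} M_{jj}\cdot \left(1+\tfrac{\log(p/\epsilon')}{r_e(\Gamma_{\eps})}\right)\cdot \tfrac{V_\eps}{n}.
\]
Then the inequality
\[
\frac{\max_{1\le j\le p} M_{jj}}{\kappa_1^2(1/2, \Psi_0, \lambda_1, \lambda_2)} \;\le\; \tfrac{9 s_0}{4\kappa^2(s_0,3)}\,\max_j \wh\Sigma_{jj}\cdot \tfrac{\lambda_2(\sigma_1+\lambda_2)}{(\sigma_q+\lambda_2)^2}
\]
from Lemma \ref{lem_RE} produces the desired bound on $Rem_3(\Psi_0) = \lambda_1^2/\kappa_1^2(1/2,\Psi_0,\lambda_1,\lambda_2)$ after multiplying the two displays.

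The proof is essentially bookkeeping: nothing deep is needed beyond invoking Lemma \ref{lem_RE} and the spectral description of $P_{\lambda_2}$. The one mild subtlety is the interplay of the $\max_j M_{jj}$ term between $\lambda_1$ and the lower bound on $\kappa_1$, which is why it is important that Lemma \ref{lem_RE} provides both bounds in compatible form so their product collapses to the stated quantity.
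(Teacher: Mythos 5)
Your proposal is correct and follows essentially the same route as the paper: the spectral description of $P_{\lambda_2}$ gives $\tr(P_{\lambda_2}^2)$ and $\|P_{\lambda_2}^2\|_{op}$ for $Rem_1$, the bound $\|\wh\Sigma(\wh\Sigma+\lambda_2\bI_p)^{-1}\|_{op}\le \sigma_1/(\sigma_1+\lambda_2)$ handles $Rem_2(L_0)$, and the product of the squared $\lambda_1$ from (\ref{rate_lbd1}) with the last display of Lemma \ref{lem_RE} handles $Rem_3(\Psi_0)$.

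One remark on the final claim: your qualifier that $\|\Q\|_{op}\le \lambda_2/(\sigma_q+\lambda_2)$ holds ``when restricted to the relevant subspace'' is in fact more careful than the paper's own argument, which deduces it from $\|\Q\|_{op}\le 1-\|\P\|_{op}$ --- an inequality that fails whenever $q=\rank(\X)<n$, since $\Q$ then has eigenvalue $1$ on the orthogonal complement of the column space of $\X$ and so $\|\Q\|_{op}=1$. The discrepancy is harmless because every place the bound is invoked (e.g., $\|\Q\X L_0\|_F^2\le\|\Q\|_{op}\|\Q^{1/2}\X L_0\|_F^2$) applies $\Q$ only to vectors in the column space of $\X$, where the eigenvalues are exactly $\lambda_2/(\sigma_k+\lambda_2)$; your restricted statement is precisely what is needed downstream.
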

	\begin{proof}
		Recall that $\wh\Sigma = U\diag(\sigma_1, \ldots, \sigma_q)U^T$ with $U = (u_1,\ldots, u_K)$ and $\P = \X(\X^T\X + n\lambda_2\bI_p)^{-1}\X^T$. The first result follows by observing
		\begin{align}\label{tr_P_square}
		\tr(\P^2) &= \tr\left[
		{1\over n} \X(\wh \Sigma+ \lambda_2 \bI_p)^{-1}\wh\Sigma (\wh \Sigma+ \lambda_2 \bI_p)^{-1}\X^T
		\right] = \sum_{k=1}^q\left({\sigma_k \over \sigma_k+\lambda_2}\right)^2,\\\nonumber
		\|\P\|_{op} &= \left\|
		(\wh \Sigma+ \lambda_2 \bI_p)^{-1/2} {1\over n}\X^T\X (\wh \Sigma+ \lambda_2 \bI_p)^{-1/2}
		\right\|_{op} \le  {\sigma_1 \over \sigma_1+\lambda_2}.
		\end{align}
		By noting that  
		\[
		\tr\left(L_0^T \wh \Sigma (\wh \Sigma + \lambda_2\bI_p)^{-1}L_0\right) = \sum_{k=1}^q {\sigma_k \over \sigma_k + \lambda_2} L_0^Tu_k u_k^TL_0 \le  {\sigma_1 \over \sigma_1 + \lambda_2}\|L_0\|_F^2,
		\]
		with $U = (u_1, \ldots, u_q)$, the second result follows. The bound of $Rem_3(\Psi_0)$ can be derived from Lemma \ref{lem_RE}. Finally, since $\|\P\|_{op} \ge \sigma_q / (\lambda_2 + \sigma_q)$,  we immediately have
		\begin{equation}\label{bd_Q_op}
		\|\Q\|_{op} \le 1 - \|\P\|_{op} \le   {\lambda_2 \over \sigma_q + \lambda_2}.
		\end{equation}		
		The proof is complete.
	\end{proof}

	\subsection{Proof of Corollary \ref{cor_rate}, Remarks \ref{rem_rate_Theta} and \ref{rem_rate_theta_hat}}\label{sec_proof_cor_rate}
	
	\begin{proof}[Proof of Corollary \ref{cor_rate}]
		By inspecting the proof of Theorem \ref{thm_pred}, we can change the logarithmic factors in Theorem \ref{thm_Theta} and Theorem \ref{thm_U} to $\log(N)$ with $N = n\vee m\vee p$. The resulting probabilities will tend to $1$ as $n\to \infty$. 
		
		We first upper bound $Rem(P_{\B}; \lambda_2) := Rem(P_{\B})$ defined in (\ref{def_RU}). Here we write the dependency on $\lambda_2$ explicitly. By replacing $s_0$ and $\|L_0\|_F^2$ by $s_*$ and $R_*$, respectively, we have
		\begin{align*}
		Rem(P_{\B}; \lambda_2)  &\le {1\over \Lambda_K}\Bigg\{V_\eps\sqrt{\log m\over n} +  {\lambda_2\sigma_1 \over \lambda_2 + \sigma_1}{R_*}+\sum_{k=1}^q {\sigma_k \over \sigma_k + \lambda_2}{V_\eps \over n}\\
		&\hspace{0.5cm} + {\lambda_2(\sigma_1 + \lambda_2) \over (\sigma_q + \lambda_2)^2}\cdot \max_{1\le j\le p} \wh \Sigma_{jj}\left(1 + {\log(p/\epsilon') \over r_e(\Gamma_{\eps})}\right){s_*\over \kappa^2(s_*,4)}{V_\eps \over n}\Bigg\}.
		\end{align*}
		Further recalling that (\ref{def_V_eps}), we have $
		V_\eps \asymp (K\Lambda_1\g_w^2 + m\g_e^2),
		$ under part $(b)$ of Assumption \ref{ass_rates}. This implies 
		\begin{equation}\label{rate_V_Lambda_K}
		{V_\eps \over \Lambda_K}  \asymp K\g_w^2 + \g_e^2 = O(K).
		\end{equation}
		From $\kappa^{-1}(s_*,4)=O(1)$ in part $(a)$ of Assumption \ref{ass_rates} and $\wh\Sigma_{jj} = 1$ for all $1\le j\le p$, we conclude 
		\begin{align}\label{simp_RU}
		Rem(P_{\B}; \lambda_2) & \lessapprox {\lambda_2\sigma_1 \over \sigma_1 + \lambda_2}{R_* \over m}+{K\over n} \sum_{k=1}^q{\sigma_k \over \sigma_k+\lambda_2}+{\lambda_2(\sigma_1 + \lambda_2) \over (\sigma_q + \lambda_2)^2} {Ks_*\over n}  + {K \over \sqrt n}.
		\end{align}
		We then prove
		\begin{align}\label{bd_Err_PB}\nonumber
		&\min_{\lambda_2}Rem(P_{\B}; \lambda_2)  \\
		&\lessapprox  \min\Bigg\{
		{\sigma_1R_* \over m} + {Ks_*\over n},~ {qK\over n} ,~\sqrt{ {(p+\sigma_1s_*)KR_*\over nm}} + {Ks_*\over n}
		\Bigg\} + {K\over \sqrt n}.
		\end{align}
		To prove the first bound, by choosing $\lambda_2 \to \i$ in (\ref{simp_RU}), we have
		\[
		\min_{\lambda_2} Rem(P_{\B})  ~ \lessapprox ~  {\sigma_1R_* \over m} + {s_*K \over n} + {K\over \sqrt n}.
		\]
		To prove the second bound, take $\lambda_2 \to 0$ in (\ref{simp_RU}) to obtain
		\[
		\min_{\lambda_2} Rem(P_{\B})  ~ \lessapprox  ~ {qK\over n} + {K\over \sqrt n}.
		\]
		Finally, from $\sum_k \sigma_k = \tr(\wh\Sigma) = p$, display (\ref{simp_RU}) yields
		\begin{align*}
		Rem(P_{\B}; \lambda_2)  &~ \lessapprox ~  \lambda_2 {R_* \over m} + {pK\over \lambda_2 n} + {\sigma_1 + \lambda_2 \over \lambda_2}{s_*K \over n} + {K\over \sqrt n}\\
		&~ = ~ \lambda_2 {R_* \over m} + {(p+\sigma_1 s_*)K\over \lambda_2 n} + {s_*K \over n} + {K\over \sqrt n}.
		\end{align*}
		Optimizing the above display over $\lambda_2$ yields 
		\[
		\lambda_2^2 ~ =  ~ \left({pK \over n} + {\sigma_1}{s_*K\over n}\right){m\over R_*}
		\]
		such that 
		\[
		\min_{\lambda_2} Rem(P_{\B}; \lambda_2) ~ \lessapprox ~ \sqrt{\left({pK \over n} + {\sigma_1}{s_*K\over n}\right){R_*\over m}} + {s_*K\over n}+{K\over \sqrt n}.
		\]
		We thus have proved (\ref{bd_Err_PB}). 
		
		Finally, using $K = O(n)$ and Assumption \ref{ass_rates} concludes 
		\begin{equation}\label{rate_prefix}
		{1\over \sqrt n}\|\X F^*\|_{op}+  \Lambda_1^{1/2}\left(1+ \sqrt{K\log m \over n}\right) \lessapprox \sqrt{m + s_*}.
		\end{equation}
		On the other hand, we have 
		\begin{equation}\label{simp_lbd3}
		{\bar\lambda_3s_* \over \kappa^2(s_*,4)} \lessapprox s^*\sqrt{ m \over  n}.
		\end{equation}
		This completes the proof of Corollary \ref{cor_rate}. 
	\end{proof}
	
	\medskip
	
	\begin{proof}[Proof of Remark \ref{rem_rate_Theta}]
		When  $p<n$, $s_*\asymp p$ and $\sigma_1 = O(1)$, from Corollary \ref{cor_rate} and $q = \min\{n,p\} \le p$, we have
		\begin{align*}
		{\rm Err}(P_{\B}) &= \min\Bigg\{
		{\sigma_1 R_* \over m} + {Ks_*\over n},~ {qK\over n} ,~\sqrt{ {(p+\sigma_1s_*)K R_*\over nm}} + {Ks_*\over n}
		\Bigg\} + {K\over \sqrt n}\\
		&\lesssim \min\Bigg\{ {R_* \over m} + {pK\over n},~ {pK\over n} ,~\sqrt{ {R_* \over m} \cdot {pK\over n}} + {pK\over n}
		\Bigg\} + {K\over \sqrt n}\\
		&\le {pK\over n} + {K \over \sqrt{n}}.
		\end{align*}
		Using $p\asymp s_*$ gives 
		\[
		\sqrt{s_*(m + s_*)\over  m}\cdot {\rm Err}(P_{\B}) \lessapprox \sqrt{p(m+p)\over m}\left(
		{pK\over n} + {K \over \sqrt{n}}
		\right).
		\]
		The result of $p<n$ then follows by (\ref{simp_lbd3}) and $K = O(\sqrt{p}\wedge \sqrt{n/p})$.
		
		To prove the result of $p\ge n$, note that
		\begin{align*}
		{\rm Err}(P_{\B}) &\le  \min\Bigg\{
		\sigma_1 {R_* \over m} ,~\sqrt{ {(p+\sigma_1s_*)K R_*  \over nm}}
		\Bigg\} + {Ks_*\over n}+ {K\over \sqrt n}.
		\end{align*}
		Condition $K = O(\sqrt{s_*} \wedge \sqrt{n/s_*} \wedge m)$ implies
		\[
		\sqrt{s_*(m+s_*) \over m}\left(
		{K\over \sqrt n} + {Ks_*\over n}
		\right)= \left\{
		\begin{array}{ll}
		O\left(s_*/\sqrt{n}\right)   & \text{if }s_* = O(m); \\
		O\left(s_*/\sqrt n + (s_*/\sqrt{n})^2\right) & \text{if }m = O(1).
		\end{array}
		\right.
		\]
		The desired result then follows. 
	\end{proof}
	
	\medskip
	
	\begin{proof}[Proof of Remark \ref{rem_rate_theta_hat}]
		Note that $\Ttheta = \Ps$ when $\Ps P_{\B} = 0$. As a result, both $\Ps$ and $L^*$ are identifiable. 
		From Lemma \ref{lem_pred_theta}, we have 
		\begin{align*}
		\|\wh\Psi - \Ps\|_{\l12} = O_p\left(    {\wt\lambda_1s_*\over \wt \kappa^2(s_*, 4)}+ {Rem_2(L^*) \over \wt\lambda_1}\right)
		\end{align*}
		for any $\wt\lambda_1 \ge \lambda_1$ with $\lambda_1$ defined in (\ref{rate_lbd1}). 
		For a suitable choice of $\wt \lambda_1$, we can deduce that
		\[
		\|\wh\Psi - \Ps\|_{\l12} = O_p\left(   
		{\lambda_1s_*\over \wt \kappa^2(s_*, 4)} + \sqrt{{s_*Rem_2(L^*)\over \wt \kappa^2(s_*, 4)}}\right)
		\]
		By Lemma \ref{lem_RE} and (\ref{bd_rem_2}) together with $V_\eps = O(Km)$ and $[\kappa(s_*,4)]^{-1}=O(1)$ under Assumption \ref{ass_rates}, we have
		\[
		{1\over \sqrt m} \|\wh\Psi - \Ps\|_{\l12}  
		\lessapprox {\sigma_1 + \lambda_2 \over \sigma_q + \lambda_2}\cdot {s_*\sqrt{K} \over \sqrt n} + \sqrt{{s_* \sigma_1R_*\over m}}.
		\]
		The result then follows by taking $\lambda_2 \gtrsim \sigma_1$. 
	\end{proof}
	
	\subsection{Lemma used in Section \ref{sec_dis_cond}}
	
	The following lemma provides the upper bound of $n^{-1}\|\X F^*\|_{op}^2$. Recall that $L^* = A^*B^*$ with $A^*$ defined in (\ref{def_A}). Without loss of generality, we can assume $Y$, $X$ and $Z$ are centered such that 
	\[
	A^* = \Sigma^{-1}\Sigma_{XZ}
	\]
	by writing $\Sigma_{XZ} = \Cov(X,Z)$. Recall that $\Sigma = \Cov(X)$, $\Sigma_Z = \Cov(Z)$ and $S_* = \{1\le j\le p: \|\Ps_{j\cdot}\|_2 \ne 0\}$.

	\begin{lemma}\label{lem_cond}
		Suppose $\Sigma^{-1/2}\X_{i\cdot}$ are i.i.d. $\g_X$ sub-Gaussian random vectors for $1\le i\le n$ where $\g_X$ is some positive constant. Further assume $\|\Sigma_{S_*S_*}\|_{op} = O(1)$, $\|\Sigma_Z\|_{op} = O(1)$, $K=O(n)$, $s_* = O(n)$ and $\|\B\|_{op}^2 = O(m)$. Then 
		\[
		{1\over n}\|\X F^*\|_{op}^2 = O_p(s_* + m).
		\]
	\end{lemma}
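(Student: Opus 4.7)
The plan is to decompose $F^* = \Ps + A^*B^*$ and bound each summand separately via the triangle inequality $\|\X F^*\|_{op} \le \|\X\Ps\|_{op} + \|\X A^*B^*\|_{op}$. It will then suffice to show $\|\X\Ps\|_{op}^2/n = O_p(s_*+m)$ and $\|\X A^*B^*\|_{op}^2/n = O_p(m)$, whose sum is $O_p(s_*+m)$.

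For the sparse part, because only the rows indexed by $S_*$ of $\Ps$ are nonzero, $\X\Ps = \X_{S_*}\Ps_{S_*\cdot}$ and hence $\|\X\Ps\|_{op} \le \|\X_{S_*}\|_{op}\,\|\Ps_{S_*\cdot}\|_{op} = \|\X_{S_*}\|_{op}\,\|\Ps\|_{op}$. The matrix $\X_{S_*}\in\RR^{n\times s_*}$ has rows of the form $\Sigma_{S_*S_*}^{1/2}$ times $\g_X$-sub-Gaussian isotropic vectors, so a standard concentration bound for sub-Gaussian matrices (e.g., Theorem 5.39 in \cite{vershynin_2012}), together with $\|\Sigma_{S_*S_*}\|_{op} = O(1)$ and $s_* = O(n)$, yields $\|\X_{S_*}\|_{op}^2 \lesssim (\sqrt n + \sqrt{s_*})^2\,\|\Sigma_{S_*S_*}\|_{op} = O_p(n)$. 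Combined with the condition $\|\Ps\|_{op}^2 = O(s_*+m)$ (inherited from the discussion preceding the lemma), this delivers $\|\X\Ps\|_{op}^2/n = O_p(s_*+m)$.

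The dense term is the more delicate piece, because $\|A^*\|_{op}$ is not controlled directly and can in fact be large when $\Sigma$ is poorly conditioned. The key observation is that, by the definition of $A^*$ in (\ref{def_A}) as the $L_2$-projection coefficient of $Z$ onto $X$, one has $Z = (A^*)^TX + W$ with $\Cov(X,W) = 0$, so $\Sigma_Z = A^{*T}\Sigma A^* + \Sigma_W$ with $\Sigma_W\succeq 0$; therefore $\|A^{*T}\Sigma A^*\|_{op} \le \|\Sigma_Z\|_{op} = O(1)$. Set $\wt A = \Sigma^{1/2}A^*\in\RR^{p\times K}$, so that $\|\wt A\|_{op}^2 = \|A^{*T}\Sigma A^*\|_{op} = O(1)$. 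Using the thin SVD $\wt A = U_{\wt A} D_{\wt A} V_{\wt A}^T$ with $U_{\wt A}\in\RR^{p\times K}$ having orthonormal columns, and setting $\wt \X = \X\Sigma^{-1/2}$, one gets $\X A^* = \wt\X\wt A = (\wt\X U_{\wt A})D_{\wt A}V_{\wt A}^T$, whence $\|\X A^*\|_{op} \le \|\wt\X U_{\wt A}\|_{op}\,\|\wt A\|_{op}$. The rows of $\wt\X U_{\wt A}\in\RR^{n\times K}$ are i.i.d.\ $\g_X$-sub-Gaussian with isotropic covariance $I_K$, so invoking the same sub-Gaussian operator-norm bound gives $\|\wt\X U_{\wt A}\|_{op}^2 \lesssim (\sqrt n + \sqrt K)^2 = O_p(n)$ by $K = O(n)$. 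Multiplying through by $\|B^*\|_{op}^2 = O(m)$ finally yields $\|\X A^*B^*\|_{op}^2/n \le \|\X A^*\|_{op}^2\,\|B^*\|_{op}^2/n = O_p(m)$.

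The only nontrivial step is the SVD reduction in the dense part, which sidesteps any bound on $\|A^*\|_{op}$ by reducing the problem to an isotropic sub-Gaussian matrix of size $n\times K$; the rest is standard sub-Gaussian matrix concentration applied to a submatrix (for the sparse part) and to a projection of the whitened design (for the dense part).
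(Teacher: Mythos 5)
Your proposal is correct and follows essentially the same route as the paper's proof: the same split $\|\X F^*\|_{op}\le\|\X\Ps\|_{op}+\|\X A^*B^*\|_{op}$, the same sub-Gaussian operator-norm concentration for $\X_{S_*}$ combined with $\|\Ps\|_{op}^2=O(s_*+m)$, and the same key observation that $A^{*T}\Sigma A^*=\Sigma_{ZX}\Sigma^{-1}\Sigma_{XZ}\preceq\Sigma_Z$ reduces the dense term to an $n\times K$ concentration bound with $K=O(n)$. Your explicit whitening via the SVD of $\Sigma^{1/2}A^*$ is merely a slightly more careful execution of the step the paper handles by noting that $A^{*T}\X_{i\cdot}$ is $\g_X\sqrt{\|\Sigma_Z\|_{op}}$ sub-Gaussian.
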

	\begin{proof}
		First, we have 
		$$\|\X F^*\|_{op} \le \|\X \Ps \|_{op} + \|\X L^*\|_{op}.$$ 
		To bound the first term, one has 
		\[
		\|\X \Ps \|_{op}  = \|\X_{S_*}\|_{op}\|\Ps_{S_*\cdot}\|_{op}.
		\]
		Since $\Sigma^{-1/2}\X_{i\cdot}$ are i.i.d. $\gamma_X$ sub-Gaussian, Theorem 5.39 in \cite{vershynin_2012} yields
		\[
		{1\over n}\|\X_{S_*}\|_{op}^2 = {1\over n}\|\X_{S_*}^T\X_{S_*}\|_{op} = O_p\left( \|\Sigma_{S_*S_*}\|_{op}\left(1 + {s_* \over n}\right)\right).
		\]
		Invoking $\|\Sigma_{S_*S_*}\|_{op} = O(1)$ and $s_* = O(n)$ together with $\|\Ps\|_{op}^2 = O(s_*+m)$ yields 
		\[
		{1\over n}\|\X_{S_*}\|_{op}^2 = O_p(m + s_*).
		\]
		To bound $\|\X L^*\|_{op} \le \|\X A^*\|_{op} \|\B\|_{op}$, recall  that 
		$
		A^*  = \Sigma^{-1}\Sigma_{XZ}.
		$
		The fact that $\Sigma^{-1/2}\X_{i\cdot}$ is  $\gamma_X$ sub-Gaussian implies
		$
		A^{*T}\X_{i\cdot}
		$
		is $\g_X\sqrt{\|\Sigma_{ZX}\Sigma^{-1}\Sigma_{XZ}\|_{op}}$ sub-Gaussian (see, for instance, \cite{vershynin_2012}), hence $\g_X \sqrt{\|\Sigma_Z\|_{op}}$ sub-Gaussian. By using Theorem 5.39 in \cite{vershynin_2012} again, one has 
		\[
		\PP\left\{
		{1\over n}\|\X A^*\|_{op}^2 \lesssim \g_X^2\left( 1 +  {K\over n}\right)
		\right\}\ge 1-2e^{-cK}.
		\]
		In conjunction with $\|\B\|_{op}^2 = O(m)$, we then conclude
		$$
		{1\over n}\|\X L^*\|_{op}^2 = O(m).
		$$
		This completes the proof. 
	\end{proof}

	\subsection{Proofs of Lemmas \ref{lem_unif_B} and \ref{lem_sparse_psi} in \cite{bing2020adaptive}}\label{sec_proofs_lemma_Theta}
	
	\begin{proof}[Proof of Lemma \ref{lem_unif_B} in \cite{bing2020adaptive}]
		Let $G\in \RR^{K\times m}$ be a random matrix whose entries are i.i.d. $N(0,1)$. Write its SVD as $G = V_GD_GU_G^T$ where $U_G\in \RR^{m\times K}$ contains the $K$ right singular vectors. Since $GQ$ has the same distribution as $G$ for any orthogonal matrix $Q\in\RR^{m\times m}$, columns of $U_G$ are uniformly distributed over the families of $K$ orthonormal vectors. Hence $U_G$ has the same distribution as $U$ and it remains to show
		\[
		\left\|U_G U_G^T \Ps_{j\cdot}\right\|_2^2  =  O_p\left(\left\|\Ps_{j\cdot}\right\|_2^2{K\over m}\right).
		\]
		To this end, we have 
		\begin{align*}
		\left\|U_G U_G^T \Ps_{j\cdot}\right\|_2^2 &=  \left\|U_G^T \Ps_{j\cdot}\right\|_2^2 =  \left\|D_G^{-1}V_G^T G \Ps_{j\cdot}\right\|_2^2\le {\|G \Ps_{j\cdot}\|_2^2 \over \lambda_K(GG^T)}.
		\end{align*}
		Concentration inequalities for random matrices with i.i.d. Gaussian entries yield (see, for instance, \cite[Corollary 3.35]{vershynin_2012})
		\[
		\PP\left\{\lambda_K(GG^T) \ge (\sqrt{m} - \sqrt{K}-t)^2
		\right\} \ge 1-2e^{-t^2/2},\quad \forall t>0.
		\]
		By noting that $G\Ps_{j\cdot} \sim N(0, \|\Ps_{j\cdot}\|_2^2\cdot {\bm I}_K)$, invoking Lemma \ref{lem_quad} yields 
		\[
		\PP\left\{
		{\Ps_{j\cdot}}^T GG^T \Ps_{j\cdot}  > \|\Ps_{j\cdot}\|_2^2\left(
		\sqrt{K} + \sqrt{2t}
		\right)^2
		\right\}\le e^{-t},\quad \forall t>0. 
		\]
		Combining these two displays and using $K=o(m)$ complete the proof. The proof of 
		\[
		\left|e_\ell^TU_G U_G^T \Ps_{j\cdot}\right| =  O_p\left(\left\|\Ps_{j\cdot}\right\|_2{K\over m}\right),
		\]
		for any $1\le \ell \le m$, follows immediately by noting that 
		\[
		\left|e_\ell^TU_G U_G^T \Ps_{j\cdot}\right| \le \left\|U_G U_G^T e_\ell\right\|_2 \left\|U_G U_G^T\Ps_{j\cdot}\right\|_2.
		\]
		The second statement follows immediately from the first result and $\|\Ps_{j\cdot}\|_2^2\le m\|\Ps\|_\i^2 = O(m)$
	\end{proof}
	
	\begin{proof}[Proof of Lemma \ref{lem_sparse_psi} in \cite{bing2020adaptive}]
		Pick any $1\le j\le p$. We have
		\begin{align*}
		\left\|P_{\B}\Ps_{j\cdot}\right\|_2^2  &= \left\|{\B}^T(\B{\B}^T)^{-1}\B\Ps_{j\cdot}\right\|_2^2\\
		&\le \|{\B}^T(\B{\B}^T)^{-1}\|_{op}^2\|\B\Ps_{j\cdot}\|_2^2\\
		& = \lambda_K^{-1}(\B{\B}^T) \sum_{k=1}^K |{\B_{k\cdot}}^T \Ps_{j\cdot}|^2.
		\end{align*}
		The result follows from $\lambda_K(\B{\B}^T) \ge c' m$ and 
		$|{\B_{k\cdot}}^T \Ps_{j\cdot}| \le d \|\Ps\|_\i\|\B\|_\i$. 
		
		Pick $1\le \ell \le m$. Similar arguments yield 
		\begin{align*}
		\left|e_\ell^T P_{\B}\Ps_{j\cdot}\right|  &= \left|e_\ell^T{\B}^T(\B{\B}^T)^{-1}\B\Ps_{j\cdot}\right|\\
		&\le \|{\B} e_\ell\|_2 \|(\B{\B}^T)^{-1}\|_{op}^2\|\B\Ps_{j\cdot}\|_2\\
		&\le  K d ~ {\|\Ps\|_\i\|\B\|_\i^2 \over \lambda_K(\B{\B}^T)}.
		\end{align*}
		The proof is complete.
	\end{proof}

	\subsection{Technical lemmas for controlling the stochastic terms}\label{sec_proof_auxiliary_lemma}
	
	Recall that $\Eps = \W B^* + \E$ and $\P$, $\Q$ are defined in (\ref{def_P_Q_lbd2}). Further recall that 
	$$
	V_\eps = \tr(\Gamma_{\eps}) = \g_w^2 \|\Sigma_W^{1/2} B^*\|_F^2 + m\g_e^2,\qquad \Gamma_{\eps} = \g_w^2 B^{*T}\Sigma_W \B + \g_e^2 \bI_m.
	$$
	We first state a lemma which studies the tail behaviour of $\Eps$. 
	\begin{lemma}\label{lem_eps_subgaussian}
		Under Assumption \ref{ass_error},  $\Eps_{ij}$ is $\g_{\Eps_j}$ sub-Gaussian for any $i\in [n]:=\{1,\ldots,n\}$ and $j\in [m]$ with
		\[
		\g_{\Eps_j}^2 =  \g_w^2\left[(B_j^*)^T\Sigma_W B_j^*\right]+ \g_e^2.
		\]
		Furthermore, the random vector $\Eps_j$ is $\g_{\Eps_j}$ sub-Gaussian for any $j\in [m]$ and the random vector $\Gamma_{\eps}^{-1/2}\Eps_{i\cdot}$ is sub-Gaussian with sub-Gaussian constant equal to $1$ for any $i\in [n]$
	\end{lemma}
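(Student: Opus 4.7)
The plan is to decompose $\Eps = \W B^* + \E$ and invoke Assumption \ref{ass_error} together with two basic facts: (i) for any deterministic vector $a$, if $Y$ is a $\gamma$ sub-Gaussian random vector, then $a^T Y$ is $\gamma\|a\|_2$ sub-Gaussian; (ii) the sum of two independent mean-zero sub-Gaussian random variables with variance proxies $\gamma_1^2$ and $\gamma_2^2$ is sub-Gaussian with variance proxy $\gamma_1^2 + \gamma_2^2$.

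For the first statement, write $\Eps_{ij} = \W_{i\cdot}\B_j + \E_{ij} = (\Sigma_W^{1/2}\B_j)^T (\Sigma_W^{-1/2}\W_{i\cdot}) + \E_{ij}$, where $\B_j$ is the $j$-th column of $B^*$. By Assumption \ref{ass_error}(1) and fact (i), the first summand is $\g_w\sqrt{(\B_j)^T\Sigma_W \B_j}$ sub-Gaussian. Combining with the $\g_e$ sub-Gaussianity of $\E_{ij}$ from Assumption \ref{ass_error}(2) and the independence of $\W$ and $\E$, fact (ii) yields the claim with $\g_{\Eps_j}^2 = \g_w^2(\B_j)^T\Sigma_W\B_j + \g_e^2$.

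For the second statement, fix a unit vector $u\in\RR^n$ and write $u^T\Eps_j = \sum_{i=1}^n u_i\W_{i\cdot}\B_j + \sum_{i=1}^n u_i\E_{ij}$. By independence across $i$ (the rows of $\W$ are i.i.d., and for each $j$ the entries $\{\E_{ij}\}_{i=1}^n$ are i.i.d.\ and independent of $\W$), applying the scalar sub-Gaussianity from the previous step to each summand and iterating fact (ii) gives that $u^T\Eps_j$ is sub-Gaussian with variance proxy $\|u\|_2^2\cdot\g_{\Eps_j}^2 = \g_{\Eps_j}^2$, which is the definition of $\g_{\Eps_j}$ sub-Gaussianity for the vector $\Eps_j$.

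For the third statement, fix a unit vector $v\in\RR^m$ and set $u = \Gamma_\eps^{-1/2}v$. Then
\[
v^T\Gamma_\eps^{-1/2}\Eps_{i\cdot} = u^T\Eps_{i\cdot} = (\Sigma_W^{1/2}B^*u)^T(\Sigma_W^{-1/2}\W_{i\cdot}) + \sum_{j=1}^m u_j\E_{ij}.
\]
The first term is $\g_w\sqrt{u^TB^{*T}\Sigma_W B^*u}$ sub-Gaussian by Assumption \ref{ass_error}(1) and fact (i); the second term is $\g_e\|u\|_2$ sub-Gaussian by Assumption \ref{ass_error}(2), the independence of $\{\E_{ij}\}_{j=1}^m$, and fact (ii). Adding them via independence of $\W$ and $\E$ and fact (ii), the variance proxy equals
\[
\g_w^2 u^TB^{*T}\Sigma_W B^*u + \g_e^2\|u\|_2^2 = u^T\Gamma_\eps u = v^T\Gamma_\eps^{-1/2}\Gamma_\eps\Gamma_\eps^{-1/2}v = 1,
\]
so $\Gamma_\eps^{-1/2}\Eps_{i\cdot}$ is sub-Gaussian with constant $1$. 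The argument is essentially a bookkeeping exercise; the only mildly delicate point is to keep track of which independence structure (across $i$ versus across $j$) is being used at each stage, since Assumption \ref{ass_error} treats these two indices asymmetrically.
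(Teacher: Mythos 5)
Your proof is correct and follows essentially the same route as the paper's: decompose $\Eps = \W B^* + \E$, use the independence of $\W$ and $\E$ to factor (the moment generating function of) each linear functional, and read off the variance proxy, which for the third claim collapses to $u^T\Gamma_\eps u = 1$ exactly as in the paper. The "facts" you invoke are precisely the MGF bounds the paper writes out explicitly, so there is no substantive difference.
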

	\begin{proof}
		Fix any $i\in [n]$ and $j\in[m]$. For any $t\ge 0$, by the independence of $\E$ and $\W$, we have
		\begin{align*}
		\EE[\exp(t\Eps_{ij})] & = \EE[\exp(t\W_{i\cdot}^TB_j^*)]\cdot \EE[\exp(t\E_{ij})]\\
		&\le \exp\left(
		t^2\g_w^2\|\Sigma_W^{1/2}B_j^*\|_2^2/2
		\right)\exp(t^2\g_e^2/2)
		\end{align*}
		where we use Assumption \ref{ass_error} in the second line. This proves the first claim. The second claim follows immediately from the i.i.d. property over $1\le i\le n$. To prove the third result, for any $i\in [n]$ and any fixed $u\in \RR^m$, we have
		\begin{align*}
		&\EE\left[
		\exp\left(u^T\Gamma_{\eps}^{-1/2}\Eps_{i\cdot}\right)
		\right]\\
		& = \EE\left[
		\exp\left(u^T\Gamma_{\eps}^{-1/2}B^{*T}\W_{i\cdot}\right)
		\right]\cdot \EE\left[\exp\left(u^T \Gamma_{\eps}^{-1/2}\E_{i\cdot}\right)\right]\\
		& \le \exp\left\{{1\over 2}\left[
		\g_w^2 u^T\Gamma_{\eps}^{-1/2}B^{*T}\Sigma_W\B  \Gamma_{\eps}^{-1/2}u
		+
		\g_e^2u^T\Gamma_{\eps}^{-1/2}\bI_m \Gamma_{\eps}^{-1/2}u
		\right]
		\right\}\\
		& = \exp\left(\g_w^2 \|u\|_2^2 / 2\right).
		\end{align*}
		We used the independence between $\W$ and $\E$ in the first equality and used Assumption \ref{ass_error} to derive the second line. This completes the proof. 
	\end{proof}
	
	We present several lemmas which control different terms related with $\Eps$, $\W$ and $\E$. The following lemma states the deviation inequality of $\|\P\Eps\|_F^2$, $\|\P^{1/2}\Eps\|_F^2$ and $\|\Eps^T\Q\X L^*\|_F$.
	
	\begin{lemma}\label{lem_P_eps}
		Under Assumption \ref{ass_error},  one has  
		\begin{align*}
		&\PP\left\{\left\|\P\Eps\right\|_F^2 \le V_\eps \left(\sqrt{\tr(\P^2)} + \sqrt{2\|\P^2\|_{op}\log(m / \epsilon)} \right)^2\right\} \ge 1-\epsilon\\
		&\PP\left\{\left\|\P^{1/2}\Eps\right\|_F^2 \le V_\eps \left(\sqrt{\tr(\P)} + \sqrt{2\|\P\|_{op}\log(m / \epsilon)} \right)^2\right\} \ge 1-\epsilon\\
		&\PP\left\{\left\|\Eps^T\Q\X L^*\right\|_F \le \sqrt{nV_\eps} \sqrt{\|\Q\|_{op}Rem_2(L^*)\log(m/\epsilon)}\right\} \ge 1-\epsilon.
		\end{align*}
	\end{lemma}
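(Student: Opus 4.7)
The three claims all concern quadratic forms of the noise matrix $\Eps$, and the plan is to reduce each to a per-column application of the sub-Gaussian quadratic-form concentration (Lemma \ref{lem_quad}) followed by a union bound over $j\in\{1,\ldots,m\}$. The starting observation is that the rows of $\Eps$ are independent under Assumption \ref{ass_error}, so for each fixed $j$ the column $\Eps_j\in\RR^n$ has i.i.d.\ $\g_{\Eps_j}$ sub-Gaussian entries by Lemma \ref{lem_eps_subgaussian}, and moreover $\sum_{j=1}^m \g_{\Eps_j}^2 = \g_w^2\,\tr(B^{*T}\Sigma_W B^*) + m\g_e^2 = V_\eps$ directly from the definitions.

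For the first two bounds, write
\[
\|\P\Eps\|_F^2 = \sum_{j=1}^m \Eps_j^T \P^2 \Eps_j, \qquad \|\P^{1/2}\Eps\|_F^2 = \sum_{j=1}^m \Eps_j^T \P \Eps_j,
\]
and note that both $\P$ and $\P^2$ are symmetric positive semidefinite since $\P = \X(\X^T\X + n\lambda_2\bI_p)^{-1}\X^T$ has eigenvalues $\sigma_k/(\sigma_k+\lambda_2)\in[0,1]$. Applying Lemma \ref{lem_quad} to $\Eps_j$ with matrix $\P^2$ (respectively $\P$) yields, for any $t>0$,
\[
\Eps_j^T \P^2 \Eps_j \le \g_{\Eps_j}^2 \left(\sqrt{\tr(\P^2)} + \sqrt{2\|\P^2\|_{op}\, t}\right)^2
\]
with probability at least $1-e^{-t}$. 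Taking $t = \log(m/\epsilon)$ and a union bound over $j$ makes this per-$j$ bound hold simultaneously; summing over $j$ and using $\sum_j \g_{\Eps_j}^2 = V_\eps$ yields the first claim, and the second follows by replacing $\P^2$ with $\P$.

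For the third bound, set $M := \Q\X L^*\in\RR^{n\times m}$ so that
\[
\|\Eps^T \Q\X L^*\|_F^2 = \sum_{j=1}^m \Eps_j^T M M^T \Eps_j,
\]
with $MM^T$ symmetric positive semidefinite. The same per-column application of Lemma \ref{lem_quad} combined with a union bound at $t = \log(m/\epsilon)$ gives, simultaneously in $j$,
\[
\Eps_j^T M M^T \Eps_j \le \g_{\Eps_j}^2\left(\|M\|_F + \sqrt{2\|MM^T\|_{op}\log(m/\epsilon)}\right)^2.
\]
Summing over $j$ and invoking the inequality already used in (\ref{disp_QXL}), namely
\[
\|M\|_F^2 \le \|\Q\|_{op}\cdot n\lambda_2\,\tr\!\left[L^{*T}\wh\Sigma(\wh\Sigma+\lambda_2\bI_p)^{-1}L^*\right] = n\|\Q\|_{op}\, Rem_2(L^*),
\]
together with $\|MM^T\|_{op}\le\|M\|_F^2$, produces the claimed bound up to an absolute constant, after absorbing the non-logarithmic ``expectation'' term into the logarithmic ``fluctuation'' term (valid whenever $\log(m/\epsilon)\gtrsim 1$, the regime of interest).

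The main technical obstacle is essentially bookkeeping: verifying that the per-column sub-Gaussian constants aggregate to exactly $V_\eps$, and that $\P$, $\P^2$, and $MM^T$ are PSD so that the symmetric form of Lemma \ref{lem_quad} applies cleanly. The only mildly subtle point is for the third bound, where the ``$(\sqrt{\tr} + \sqrt{2\|\cdot\|_{op}\, t})^2$'' shape naturally delivered by Hanson--Wright must be matched to the more compact $\sqrt{nV_\eps\|\Q\|_{op} Rem_2(L^*)\log(m/\epsilon)}$ form stated in the lemma; this is reconciled by the bound $\|MM^T\|_{op}\le\|M\|_F^2\le n\|\Q\|_{op}\,Rem_2(L^*)$, which forces both the trace and operator-norm contributions onto the same scale.
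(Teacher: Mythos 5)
Your proposal is correct and follows essentially the same route as the paper: write each Frobenius norm as $\sum_{j=1}^m \Eps_j^T K \Eps_j$ for the appropriate PSD matrix $K$ (namely $\P^2$, $\P$, and $D=\Q\X L^*(L^*)^T\X^T\Q$), apply the sub-Gaussian quadratic-form bound of Lemma \ref{lem_quad} columnwise via Lemma \ref{lem_eps_subgaussian}, take a union bound at $t=\log(m/\epsilon)$, and aggregate using $\sum_j\g_{\Eps_j}^2=V_\eps$, with $\tr(D)\le n\|\Q\|_{op}Rem_2(L^*)$ and $\|D\|_{op}\le\tr(D)$ for the third claim. Your remark that the third bound only matches the stated form up to an absolute constant is a fair observation that applies equally to the paper's own derivation, and does not constitute a gap.
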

	\begin{proof}
		We now prove the first result. 
		Note that $\|\P\Eps\|_F^2 = \sum_{j=1}^m \|\P\Eps_j\|_2^2$. Pick any $j\in [m]$. Since $\Eps_j$ is $\g_{\Eps_j}$ sub-Gaussian from Lemma \ref{lem_eps_subgaussian}, applying Lemma \ref{lem_quad} with $\g_{\xi} = \g_{\Eps_j}$ and $K = \P^2$ yields 
		\[
		\PP\left\{
		\Eps_j^T \P^2\Eps_j > \g_{\Eps_j}^2\left(
		\sqrt{\tr(\P^2)} + \sqrt{2\|\P^2\|_{op}t}
		\right)^2
		\right\} \le e^{-t},
		\]
		for all $t\ge 0$. Since $\sum_{j=1}^m \g_{\Eps_j}^2 = V_\eps$, choosing $t = \log(m/\epsilon)$ and taking the union bounds over $1\le j\le m$ complete the proof of the first result. By the same arguments, the second result follows immediately, and we also have
		\[
		\PP\left\{
		\left\|\Eps^T\Q\X L^*\right\|_F^2 \le {V_\eps} \left(\sqrt{\tr(D)} + \sqrt{2\|D\|_{op}\log(m / \epsilon)} \right)^2
		\right\} \ge 1-\epsilon
		\]
		where $D = \Q\X L^*(L^*)^T \X^T\Q$. The third result then follows by observing that 
		\[
		\tr(D) \le \|\Q\X L^*\|_F^2 \le n \|\Q\|_{op}\cdot Rem_2(L^*)
		\]
		from (\ref{disp_QXL}) and $\|D\|_{op}\le \tr(D)$.
	\end{proof}
	
	The following lemma provides the deviation inequality of $\max_{1\le j\le p}\|\X_j^T\Q \Eps\|_2$. Recall that $M = n^{-1}\X^T\Q^2 \X$ and $\Gamma_{\eps} = \g_w^2B^{*T}\Sigma_W \B + \g_e^2\bI_m$.
	\begin{lemma}\label{lem_XQeps}
		Under Assumption \ref{ass_error}, with probability $1-\epsilon$, one has 
		\[
		\max_{1\le j\le p} \|\X_j^T\Q \Eps\|_2^2 \le  \left(\sqrt{{\rm tr}(\Gamma_{\eps})}+\sqrt{2 \|\Gamma_{\eps}\|_{op} \log(p / \epsilon)}\right)^2 n\max_{1\le j\le p}M_{jj}.
		\]
	\end{lemma}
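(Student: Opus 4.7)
The plan is to reduce the bound to a single quadratic form in a sub-Gaussian vector, apply the Hanson--Wright-type inequality of Lemma \ref{lem_quad}, and then take a union bound over $j=1,\ldots,p$. Fix $j$ and set $v := \Q\X_j \in \RR^n$, so that $\|v\|_2^2 = \X_j^T\Q^2\X_j = n M_{jj}$ and
\[
\|\X_j^T\Q\Eps\|_2^2 ~=~ \|\Eps^T v\|_2^2 ~=~ \Bigl\|\sum_{i=1}^n v_i\,\Eps_{i\cdot}^{\,T}\Bigr\|_2^2.
\]

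The key step is to recast this as a quadratic form in a $1$-sub-Gaussian vector. Let $\xi_i := \Eps_{i\cdot}^{\,T} \in \RR^m$ and stack $\eta := \mathrm{vec}(\Eps^T) = (\xi_1^T,\ldots,\xi_n^T)^T \in \RR^{nm}$. Then $\Eps^T v = (v^T \otimes \bI_m)\eta$, so after whitening via $\zeta := (\bI_n \otimes \Gamma_\eps^{-1/2})\eta$,
\[
\|\Eps^T v\|_2^2 ~=~ \eta^T\bigl(vv^T \otimes \bI_m\bigr)\eta ~=~ \zeta^T K\zeta, \qquad K := vv^T \otimes \Gamma_\eps.
\]
By Lemma \ref{lem_eps_subgaussian}, each $\Gamma_\eps^{-1/2}\xi_i$ is a $1$-sub-Gaussian vector, and the $\xi_i$ are independent across $i$ (using part~(2) of Assumption \ref{ass_error} and independence of $\W$ and $\E$). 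Hence for any unit $w=(w_1;\ldots;w_n)\in\RR^{nm}$,
\[
w^T\zeta ~=~ \sum_{i=1}^n w_i^T\bigl(\Gamma_\eps^{-1/2}\xi_i\bigr),
\]
which is a sum of independent sub-Gaussians with parameter $\|w_i\|_2$, and so is $\sqrt{\sum_i\|w_i\|_2^2}=1$-sub-Gaussian. Thus $\zeta$ is a $1$-sub-Gaussian vector in $\RR^{nm}$, which is exactly what Lemma \ref{lem_quad} needs.

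Applying Lemma \ref{lem_quad} with $\gamma_\xi = 1$ and matrix $K = vv^T \otimes \Gamma_\eps$, and using $\tr(K) = \|v\|_2^2\,\tr(\Gamma_\eps)$ together with $\|K\|_{op} = \|v\|_2^2\,\|\Gamma_\eps\|_{op}$, yields
\[
\PP\!\left\{\|\X_j^T\Q\Eps\|_2^2 > n M_{jj}\Bigl(\sqrt{\tr(\Gamma_\eps)} + \sqrt{2\,\|\Gamma_\eps\|_{op}\,t}\Bigr)^2\right\} ~\le~ e^{-t}.
\]
Choosing $t = \log(p/\epsilon)$ and applying a union bound over $j=1,\ldots,p$ gives the stated inequality. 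The only point requiring care is verifying the $1$-sub-Gaussianity of the stacked whitened vector $\zeta$; this is handled by the independence of rows of $\Eps$ combined with the vector sub-Gaussian bound from Lemma \ref{lem_eps_subgaussian}, and the Kronecker factorization of $K$ keeps the $\tr$ and operator-norm computations transparent.
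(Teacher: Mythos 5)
Your proof is correct and follows essentially the same route as the paper: whiten with $\Gamma_\eps^{-1/2}$ via Lemma \ref{lem_eps_subgaussian}, use independence of the rows of $\Eps$ to get sub-Gaussianity, apply Lemma \ref{lem_quad}, and union bound over $j$ with $t=\log(p/\epsilon)$. The only difference is bookkeeping: the paper absorbs the factor $nM_{jj}$ into the sub-Gaussian constant of the $m$-dimensional vector $\Gamma_\eps^{-1/2}\Eps^T\Q\X_j$ and takes $K=\Gamma_\eps$, whereas you keep a $1$-sub-Gaussian $nm$-dimensional vector and put it into $K=vv^T\otimes\Gamma_\eps$ — the resulting quadratic form and bound are identical.
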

	\begin{proof}
		Fix any $1\le j\le p$. Notice that 
		$$
		\|\X_j^T\Q\Eps\|_2^2 =  \X_j^T \Q\Eps \Gamma_{\eps}^{-1/2} \Gamma_{\eps} \Gamma_{\eps}^{-1/2}\Eps^T\Q \X_j.
		$$
		We first show that 
		$\Gamma_{\eps}^{-1/2}\Eps^T\Q \X_j$ is $\sqrt{nM_{jj}}$ sub-Gaussian. By independence across $i\in [n]$, we have, for any $v\in \RR^m$, 
		\begin{align*}
		\EE\left[
		\exp\left(v^T\Gamma_{\eps}^{-1/2}\Eps^T\Q \X_j\right)
		\right] & = \prod_{i=1}^n \EE\left[
		\exp\left(v^T\Gamma_{\eps}^{-1/2}\Eps_{i\cdot} e_i^T\Q \X_j\right)\right]\\
		&\le \prod_{i=1}^n 
		\exp\left(\|v\|_2^2 e_i^T\Q \X_j\X_j^T \Q e_i / 2\right)\\
		& = \exp(\|v\|_2^2 nM_{jj} / 2).
		\end{align*}
		We used Lemma \ref{lem_eps_subgaussian} to derive the second line. Then invoke Lemma \ref{lem_quad} with $\g_{\xi}= \sqrt{nM_{jj}}$ and $K = \Gamma_{\eps}$ gives 
		\[
		\PP\left\{
		\|\X_j^T\Q\Eps\|_2^2 > nM_{jj}\left(
		\sqrt{\tr(\Gamma_{\eps})} + \sqrt{2\|\Gamma_{\eps}\|_{op} t} 
		\right)^2 
		\right\}\le e^{-t},\quad \text{for all }t\ge0.
		\]
		Choose $t = \log(p / \epsilon)$ and take the union bounds over $j\in[p]$ to complete the proof. 
	\end{proof}
	
	The next tail inequality is for $\max_{1\le j\le p}\|\X_j^T\E\|_2$, derived based on the quadtratic form of a sub-Gaussian random vector. 
	\begin{lemma}\label{lem_XE}
		Under Assumption \ref{ass_error}, with probability $1-\epsilon$, one has 
		\[
		\max_{1\le j\le p} \|\X_j^T\E\|_2^2 \le  \g_e^2\left(\sqrt{m}+ \sqrt{2\log(p/\epsilon)}\right)^2n\max_{1\le j\le p}\wh \Sigma_{jj}.
		\]
	\end{lemma}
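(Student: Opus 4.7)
The plan is to fix $j\in[p]$, show that the random vector $\E^T\X_j\in\RR^m$ is sub-Gaussian with constant proportional to $\|\X_j\|_2$, then apply the quadratic-form tail bound (Lemma \ref{lem_quad}) with $K=\bI_m$ and conclude via a union bound over $j$.

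First, I would verify sub-Gaussianity of $\E^T\X_j$. For any unit vector $v\in\RR^m$, write
\[
v^T\E^T\X_j \;=\; \sum_{i=1}^n \X_{ij}\Bigl(\sum_{\ell=1}^m v_\ell \E_{i\ell}\Bigr).
\]
For each fixed $i$, Assumption \ref{ass_error}(2) gives that $\{\E_{i\ell}\}_{\ell=1}^m$ are independent $\g_e$ sub-Gaussian, so $\sum_{\ell}v_\ell\E_{i\ell}$ is $\g_e\|v\|_2=\g_e$ sub-Gaussian. Across $i$, Assumption \ref{ass_error}(2) also gives independence of the rows of $\E$ (since for each $j$, $\{\E_{ij}\}_{i=1}^n$ are i.i.d.), so the weighted sum over $i$ is $\g_e\sqrt{\sum_i\X_{ij}^2}=\g_e\|\X_j\|_2$ sub-Gaussian. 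This holds for every unit $v$, hence $\E^T\X_j$ is $\g_e\|\X_j\|_2$ sub-Gaussian.

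Next, I would apply Lemma \ref{lem_quad} with $\xi=\E^T\X_j$, $\g_\xi=\g_e\|\X_j\|_2$ and $K=\bI_m$, yielding for every $t\ge 0$,
\[
\PP\Bigl\{\|\E^T\X_j\|_2^2 > \g_e^2\|\X_j\|_2^2\bigl(\sqrt{m}+\sqrt{2t}\bigr)^2\Bigr\}\le e^{-t}.
\]
Setting $t=\log(p/\epsilon)$ and taking the union bound over $1\le j\le p$ gives, with probability at least $1-\epsilon$,
\[
\max_{1\le j\le p}\|\E^T\X_j\|_2^2 \;\le\; \g_e^2\Bigl(\sqrt{m}+\sqrt{2\log(p/\epsilon)}\Bigr)^2\max_{1\le j\le p}\|\X_j\|_2^2.
\]
Finally, substituting $\|\X_j\|_2^2=n\wh\Sigma_{jj}$ yields the stated bound.

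The only subtle step is the sub-Gaussian constant of $\E^T\X_j$: one must carefully combine the two parts of Assumption \ref{ass_error} (column-wise independence within a row, i.i.d.\ rows for each column) to justify that the linear form $v^T\E^T\X_j$ has sub-Gaussian norm $\g_e\|\X_j\|_2$ uniformly in $v$. Everything after that is a direct application of the quadratic-form concentration inequality and a union bound, so this is the shortest of the technical lemmas in this appendix.
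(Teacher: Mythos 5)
Your proposal is correct and follows essentially the same route as the paper: establish that $\E^T\X_j$ is $\g_e\|\X_j\|_2=\g_e\sqrt{n\wh\Sigma_{jj}}$ sub-Gaussian (the paper does this by computing the moment generating function of $\langle u,\E^T\X_j\rangle$ as a product over all entries, which is the same computation as your row-by-row decomposition), then apply Lemma \ref{lem_quad} with $K=\bI_m$ and take a union bound at $t=\log(p/\epsilon)$. The implicit use of joint independence of all entries of $\E$, which you rightly flag as the one subtle point, is made equally implicitly in the paper's own argument.
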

	\begin{proof}
		Pick any $1\le j\le p$. Note that $\E^T\X_j$ is $\g_e\sqrt{n\wh\Sigma_{jj}}$ sub-Gaussian. Indeed, for any $u\in \RR^m$, we have 
		\begin{align*}
		\EE[\exp(\langle u, \E^T\X_j\rangle)] &= \prod_{t=1}^n \prod_{i=1}^m\EE[\exp(u_i \X_{tj}\E_{ti})]\\
		&\le \prod_{t=1}^n \prod_{i=1}^m\exp(u_i^2\X_{tj}^2\g_e^2/2)]\\
		&= \exp(\|u\|_2^2 \X_j^T\X_j \g_e^2/2) 
		\end{align*}
		by using the independence of entries of $\E$ and  $\E_{ti}$ is $\g_e$ sub-Gaussian. Applying Lemma \ref{lem_quad} with $K = \bI_m$ and $\g_{\xi} = \g_e\sqrt{n\wh\Sigma_{jj}}$ gives 
		\[
		\PP\left\{
		\X_j^T\E\E^T\X_j > \g_e^2 n\wh\Sigma_{jj}\left(
		\sqrt{m}+ \sqrt{2t}
		\right)^2
		\right\} \le e^{-t}.
		\]
		Choosing $t = \log(p/\epsilon)$ and taking the union bounds over $1\le j\le p$ conclude the proof. 
	\end{proof}
	
	%A similar tail inequality can also be derived for $\max_{1\le j\le p}\|\X_j^T\W B^*\|_2$. Let $V_W = \tr((B^*)^T\Sigma_{W}B^*)$.
	%\begin{lemma}\label{lem_XW}
	%	Under Assumption \ref{ass_error}, with probability $1-\epsilon$, one has 
	%	\[
	%	\max_{1\le j\le p} \|\X_j^T\W B^*\|_2^2 \le  \g_e^2\left(\sqrt{V_W}+ \sqrt{2\Lambda_1\log(p/\epsilon)}\right)^2n\max_{1\le j\le p}\wh \Sigma_{jj}.	\]
	%\end{lemma}
	%\begin{proof}
	%	Pick any $1\le j\le p$ and write $\X_j^T \W B^* = \X_j^T \wt \W (\Sigma_{W}^{1/2}B^*)$ with $\wt \W = \W \Sigma^{-1/2}$. Note that $\wt \W^T\X_j$ is $\g_w\sqrt{n\wh\Sigma_{jj}}$ sub-Gaussian by using the similar argument in the proof of Lemma \ref{lem_XE}. Applying Lemma \ref{lem_quad} with $K = \Sigma_{W}^{1/2}B^*(B^*)^T\Sigma_{W}^{1/2}$ and $\g_{\xi} = \g_w\sqrt{n\wh\Sigma_{jj}}$ gives 
	%	\[		\PP\left\{
	%	\X_j^T\W B^*(B^*)^T\W^T\X_j > \g_w^2 n\wh\Sigma_{jj}\left(
	%	\sqrt{V_W}+ \sqrt{2\Lambda_1 t}
	%	\right)^2	\right\} \le e^{-t}.\]
	%	by recalling that $V_W = \tr((B^*)^T\Sigma_{W}B^*)$ and  $\Lambda_1 = \|(B^*)^T\Sigma_{W}B^*\|_{op}$.	Choosing $t = \log(p/\epsilon)$ and taking the union bounds over $1\le j\le p$ conclude the proof. 
	%\end{proof}

	The following lemma states the tail behaviour of the operator norm of $\Sigma_W^{-1/2}\W^T\W\Sigma_W^{-1/2}$.

	\begin{lemma}\label{lem_W^TW}
		Under Assumption \ref{ass_error},  with probability $1- 2e^{-cK}$, one has
		\[
		{1\over n}\left\|\Sigma_W^{-1/2}\W^T\W\Sigma_W^{-1/2}\right\|_{op} \le 1 + C\left(\sqrt{K\over n} \vee {K \over n}\right)
		\]
		where $c=c(\g_w)$ and $C = C(\g_w)$ are positive constants. 
	\end{lemma}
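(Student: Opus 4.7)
The statement is a standard operator-norm concentration result for the sample covariance of i.i.d.\ isotropic sub-Gaussian vectors, so the plan is to reduce it directly to such a result. By Assumption~\ref{ass_error}(1), the rows of the matrix $A := \W\Sigma_W^{-1/2}\in\RR^{n\times K}$ are i.i.d.\ $\gamma_w$ sub-Gaussian random vectors with identity covariance, i.e.\ isotropic. The quantity of interest is
\[
\frac{1}{n}\left\|\Sigma_W^{-1/2}\W^T\W\Sigma_W^{-1/2}\right\|_{op} = \frac{1}{n}\,s_{\max}(A)^2,
\]
so it suffices to control $s_{\max}(A)$ from above.

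To this end I would invoke a standard concentration inequality for the extreme singular values of random matrices with independent sub-Gaussian isotropic rows, for example Theorem~5.39 of \cite{vershynin_2012}, which yields constants $c_1,C_1>0$ depending only on $\gamma_w$ such that, for every $t\ge 0$,
\[
\PP\Bigl\{s_{\max}(A)\le \sqrt{n}+C_1\sqrt{K}+t\Bigr\}\ge 1-2\exp(-c_1 t^2).
\]
Taking $t=\sqrt{K}$ gives, with probability at least $1-2e^{-c_1 K}$,
\[
s_{\max}(A)\le \sqrt{n}+(C_1+1)\sqrt{K},
\]
and squaring and dividing by $n$ produces
\[
\frac{1}{n}s_{\max}(A)^2 \le \left(1+(C_1+1)\sqrt{K/n}\right)^{\!2}\le 1+C\left(\sqrt{K/n}+K/n\right),
\]
which is exactly the claimed bound after absorbing the cross term into the maximum of $\sqrt{K/n}$ and $K/n$ (the two cases $K\le n$ and $K>n$ are handled by the same inequality $(a+b)^2\le 2a^2+2b^2$ combined with $\sqrt{K/n}\vee K/n$).

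There is essentially no serious obstacle here; the proof is a one-line invocation of the sub-Gaussian matrix deviation inequality followed by arithmetic. The only minor technical points are (i)~checking that the isotropy and sub-Gaussian constant of the rows of $A$ transfer correctly from Assumption~\ref{ass_error}(1) (immediate, since left-multiplying a mean-zero sub-Gaussian vector with covariance $\Sigma_W$ by $\Sigma_W^{-1/2}$ preserves the sub-Gaussian constant while whitening the covariance), and (ii)~combining the $\sqrt{K/n}$ and $K/n$ regimes into a single expression, which is done by the elementary bound above. The constants $c$ and $C$ in the statement then inherit their dependence on $\gamma_w$ from the Vershynin-type bound.
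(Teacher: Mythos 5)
Your proposal is correct and follows exactly the route the paper takes: the paper's proof is a one-line citation of Theorem 5.39 in \cite{vershynin_2012}, and you have simply written out the whitening of the rows, the choice $t=\sqrt{K}$, and the squaring step that the paper leaves implicit. No gaps.
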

	\begin{proof}
		The result follows directly from the proof of Theorem 5.39 in \cite{vershynin_2012}.
	\end{proof}
	
	The following lemma states the deviation inequality of $\|n^{-1}\Eps^T\Eps - \Sigma_{\eps}\|_F$.
	\begin{lemma}\label{lem_eps_eps}
		Under Assumption \ref{ass_error}, one has
		\[
		\PP\left\{\left\|{1\over n}\Eps^T\Eps - \Sigma_{\eps}\right\|_F \le cV_\eps\left(\sqrt{\log(m) \over n} \vee {\log(m) \over n}\right) \right\}\ge 1-2m^{-c'}
		\]
		for some absolute constants $c, c'>0$.
	\end{lemma}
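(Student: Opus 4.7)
The plan is to reduce the Frobenius-norm deviation bound to an entrywise concentration bound via Bernstein's inequality, then sum the entrywise bounds using the structure of the sub-Gaussian parameters.

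First, I would note that for each fixed $j\in[m]$, Lemma \ref{lem_eps_subgaussian} gives that $\Eps_{ij}$ is $\gamma_{\Eps_j}$ sub-Gaussian, where $\gamma_{\Eps_j}^2 = \gamma_w^2 (B_j^*)^T \Sigma_W B_j^* + \gamma_e^2$. Crucially, $\sum_{j=1}^m \gamma_{\Eps_j}^2 = \gamma_w^2\tr(B^{*T}\Sigma_W B^*) + m\gamma_e^2 = V_\eps$, so $\sum_{j,k}\gamma_{\Eps_j}^2\gamma_{\Eps_k}^2 = V_\eps^2$. Although the entries $\{\Eps_{ij}\}_{j=1}^m$ are not independent (they share the hidden component $\W_{i\cdot}B^*$), for each fixed pair $(j,k)$ the sequence $\{\Eps_{ij}\Eps_{ik}\}_{i=1}^n$ is i.i.d. across $i$, which is the only independence needed.

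Next I would apply an entrywise Bernstein-type inequality. Since the product of two sub-Gaussian variables is sub-exponential with parameter bounded (up to constants) by the product of the sub-Gaussian parameters, the centred variables $\Eps_{ij}\Eps_{ik} - \EE[\Eps_{ij}\Eps_{ik}]$ are sub-exponential with Orlicz-$\psi_1$ norm $\lesssim \gamma_{\Eps_j}\gamma_{\Eps_k}$. Bernstein's inequality for sums of independent sub-exponential random variables then yields, for every $(j,k)$ and every $t>0$,
\[
\PP\!\left\{\left|\Bigl[\tfrac{1}{n}\Eps^T\Eps - \Sigma_\eps\Bigr]_{jk}\right| > t\right\}
\le 2\exp\!\left(-c\,n\min\!\left(\frac{t^2}{\gamma_{\Eps_j}^2\gamma_{\Eps_k}^2},\;\frac{t}{\gamma_{\Eps_j}\gamma_{\Eps_k}}\right)\right).
\]

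I would then choose $t_{jk} = C\,\gamma_{\Eps_j}\gamma_{\Eps_k}\bigl(\sqrt{\log(m)/n}\vee \log(m)/n\bigr)$ with $C$ large enough that the exponent is dominated by $-c''\log m$ uniformly in $(j,k)$. A union bound over the $m^2$ pairs gives that, with probability at least $1-2m^{-c'}$,
\[
\left|\Bigl[\tfrac{1}{n}\Eps^T\Eps - \Sigma_\eps\Bigr]_{jk}\right|
\le C\,\gamma_{\Eps_j}\gamma_{\Eps_k}\left(\sqrt{\tfrac{\log m}{n}}\vee \tfrac{\log m}{n}\right)
\quad\text{for all } j,k.
\]

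Squaring and summing gives
\[
\left\|\tfrac{1}{n}\Eps^T\Eps - \Sigma_\eps\right\|_F^2
\le C^2 \Bigl(\sum_{j,k}\gamma_{\Eps_j}^2\gamma_{\Eps_k}^2\Bigr)\left(\sqrt{\tfrac{\log m}{n}}\vee \tfrac{\log m}{n}\right)^2
= C^2\,V_\eps^2\left(\sqrt{\tfrac{\log m}{n}}\vee \tfrac{\log m}{n}\right)^2,
\]
which is exactly the advertised bound after taking square roots. The only technical step that requires a moment of care is the passage from the sub-Gaussianity of $\Eps_{ij}$ and $\Eps_{ik}$ to a sub-exponential bound on the centred product $\Eps_{ij}\Eps_{ik} - \EE[\Eps_{ij}\Eps_{ik}]$ with the correct parameter $\gamma_{\Eps_j}\gamma_{\Eps_k}$; this is standard (e.g.\ via the identity $xy = \tfrac{1}{4}[(x+y)^2-(x-y)^2]$ combined with sub-Gaussian rotation), but is the one place where the dependence of $\Eps_{ij}$ on $\W_{i\cdot}$ enters and needs to be handled through the joint sub-Gaussianity of $\Eps_{i\cdot}$ from Lemma \ref{lem_eps_subgaussian} rather than entrywise independence.
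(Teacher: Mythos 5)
Your proposal is correct and follows essentially the same route as the paper: the paper also bounds each entry $|n^{-1}\Eps_j^T\Eps_{\ell}-(\Sigma_{\eps})_{j\ell}|$ by $c\,\g_{\Eps_j}\g_{\Eps_\ell}(\sqrt{\log(m)/n}\vee\log(m)/n)$ via a cited Bernstein-type concentration result for cross products of sub-Gaussian variables, takes a union bound over the $m^2$ pairs, and sums using $\sum_j\sum_\ell \g_{\Eps_j}^2\g_{\Eps_\ell}^2 = V_\eps^2$. The only difference is that you spell out the sub-Gaussian-to-sub-exponential step explicitly where the paper invokes an external lemma; your handling of the within-row dependence (independence is only needed across $i$) is exactly right.
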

	\begin{proof}
		Fix any $j,\ell \in [m]$. We first upper bound $|n^{-1}\Eps_j^T\Eps_{\ell} - (\Sigma_{\eps})_{j\ell}|$. Since entries of $\Eps_j$ and $\Eps_{\ell}$ are $\g_{\Eps_j}$ and $\g_{\Eps_\ell}$ sub-Gaussian, respectively, from Lemma \ref{lem_eps_subgaussian}, invoking Lemma 15 in \cite{essential} with $ t = \min\{\sqrt{\log(m)/n} \vee \log(m)/n\}$  gives 
		\[
		\PP\left\{
		\left| {1\over n}\Eps_j^T\Eps_{\ell} - (\Sigma_{\eps})_{j\ell}\right| \le c \g_{\Eps_j} \g_{\Eps_\ell} \left(\sqrt{\log(m) \over n} \vee {\log(m) \over n}\right)
		\right\}\ge 1- 2m^{-c'}
		\]
		for some absolute constants $c, c'>0$.  The result then follows by taking the union bounds over $1\le j,\ell \le m$ and noting that 
		$\sum_j\sum_\ell \g_{\Eps_j}^2\g_{\Eps_\ell}^2 = V_\eps^2$. 
	\end{proof}
	
	\subsection{An algebraic fact and one auxillary lemma}
	We first state an algebraic fact that is used in our analysis.
	\begin{fact}\label{fact_QX}
		Let $\Q$ be defined in (\ref{def_P_Q_lbd2}). Then 
		\[
		\Q \X = \lambda_2 \X(\wh \Sigma + \lambda_2 \bI_p)^{-1}.
		\]
	\end{fact}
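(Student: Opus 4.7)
The plan is to verify this identity by direct algebraic manipulation, starting from the definitions $\Q = \bI_n - \P$ and $\P = \X(\X^T\X + n\lambda_2 \bI_p)^{-1}\X^T$. First I would write
$$
\Q\X = \X - \P\X = \X\bigl[\bI_p - (\X^T\X + n\lambda_2 \bI_p)^{-1}\X^T\X\bigr],
$$
which is valid since $\P\X = \X(\X^T\X + n\lambda_2 \bI_p)^{-1}\X^T\X$.

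Next I would apply the elementary identity $\bI_p - (A+B)^{-1}A = (A+B)^{-1}B$ with $A = \X^T\X$ and $B = n\lambda_2 \bI_p$, which follows by factoring $(A+B)^{-1}$ on the left and observing $(A+B) - A = B$. This yields
$$
\Q\X = \X (\X^T\X + n\lambda_2 \bI_p)^{-1}(n\lambda_2 \bI_p) = n\lambda_2 \X(\X^T\X + n\lambda_2 \bI_p)^{-1}.
$$

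Finally I would pull the factor $n$ out of the inverse by writing $\X^T\X + n\lambda_2 \bI_p = n(\wh\Sigma + \lambda_2 \bI_p)$, so that $(\X^T\X + n\lambda_2 \bI_p)^{-1} = n^{-1}(\wh\Sigma + \lambda_2 \bI_p)^{-1}$ and hence $\Q\X = \lambda_2 \X(\wh\Sigma + \lambda_2 \bI_p)^{-1}$, as claimed. There is really no obstacle here; the only step that demands a bit of care is the bookkeeping of the normalization factor $n$ between $\X^T\X$ and $\wh\Sigma$, and the implicit assumption that $\wh\Sigma + \lambda_2 \bI_p$ is invertible, which is ensured whenever $\P$ exists (as stipulated in the definition of $\P$ in \eqref{def_P_Q_lbd2}).
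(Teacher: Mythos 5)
Your proof is correct and follows essentially the same route as the paper's: expand $\Q\X = \X - \P\X$, simplify to $n\lambda_2\X(\X^T\X + n\lambda_2\bI_p)^{-1}$, and absorb the factor $n$ via $\wh\Sigma = n^{-1}\X^T\X$. The only difference is cosmetic—you make the intermediate identity $\bI_p - (A+B)^{-1}A = (A+B)^{-1}B$ explicit, which the paper leaves implicit.
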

	\begin{proof}
		The proof follows by noting that
		\[
		\Q \X = \X - \X (\X^T\X + n\lambda_2\bI_p)^{-1}\X^T\X = n\lambda_2\X(\X^T\X + n\lambda_2\bI_p)^{-1}
		\]
		and the definition $\wh\Sigma = \X^T\X/n$.
	\end{proof}
	
	The following lemma is used in our analysis. The tail inequality is for a quadratic form of sub-Gaussian random vector. It is a slightly simplified version of Lemma 8 in \cite{Hsu2014}. 
	\begin{lemma}\label{lem_quad}
		Let $\xi\in \RR^d$ be a $\gamma_\xi$ sub-Gaussian random vector. For all symmetric positive semidefinite matrices $K$, and all $t\ge 0$, 
		\[
		\PP\left\{
		\xi^T K\xi > \gamma_\xi^2\left(
		\sqrt{{\rm tr}(K)}+ \sqrt{2\|K\|_{op}t}
		\right)^2
		\right\} \le e^{-t}.
		\] 
	\end{lemma}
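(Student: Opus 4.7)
\textbf{Proof proposal for Lemma \ref{lem_quad}.} The plan is to produce a standard Hanson-Wright-type bound via the moment generating function (MGF) of $\xi^T K \xi$, then apply a Chernoff bound. Without loss of generality, rescale by $\gamma_\xi$ so we may assume $\gamma_\xi = 1$; this absorbs the $\gamma_\xi^2$ factor uniformly. Let $K = \sum_i \lambda_i u_i u_i^T$ with $\lambda_i \ge 0$, so $\tr(K) = \sum_i \lambda_i$ and $\|K\|_{op} = \max_i \lambda_i$.

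First I would use the Gaussian decoupling identity
\[
\exp\!\bigl(s\, \xi^T K \xi\bigr) \;=\; \EE_g\!\left[\exp\!\bigl(\sqrt{2s}\, g^T K^{1/2} \xi\bigr) \mid \xi\right], \qquad g \sim N(0, I_d),
\]
valid for every $s > 0$. Taking expectation over $\xi$, interchanging with $\EE_g$ by Fubini, and invoking the sub-Gaussian hypothesis in the form $\EE_\xi \exp(v^T \xi) \le \exp(\|v\|_2^2/2)$ with $v = \sqrt{2s}\, K^{1/2} g$, one obtains
\[
\EE\!\left[\exp\!\bigl(s\, \xi^T K \xi\bigr)\right] \;\le\; \EE_g\!\left[\exp\!\bigl(s\, g^T K g\bigr)\right] \;=\; \prod_i (1 - 2 s \lambda_i)^{-1/2}
\]
for every $0 \le s < 1/(2\|K\|_{op})$, reducing the problem to the Gaussian case.

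Next I would take logarithms and use the elementary inequality $-\tfrac{1}{2}\log(1-x) \le \tfrac{x}{2} + \tfrac{x^2}{2(1-x)}$ on each factor with $x = 2 s \lambda_i$, which yields
\[
\log \EE\!\left[\exp\!\bigl(s\, \xi^T K \xi\bigr)\right] \;\le\; s\,\tr(K) \;+\; \frac{s^2 \tr(K^2)}{1 - 2 s \|K\|_{op}}.
\]
Combining with the Chernoff/Markov step gives, for any $u > 0$,
\[
\PP\bigl\{\xi^T K \xi > \tr(K) + u\bigr\} \;\le\; \exp\!\left( - s u + \frac{s^2 \tr(K^2)}{1 - 2 s \|K\|_{op}} \right).
\]
Optimizing in $s$ (the classical Bernstein tuning $s = u/\bigl[2(\tr(K^2) + \|K\|_{op} u)\bigr]$) produces the two-regime bound
\[
\PP\bigl\{\xi^T K \xi > \tr(K) + 2\sqrt{\tr(K^2)\, t} + 2\|K\|_{op}\, t\bigr\} \;\le\; e^{-t}, \qquad t \ge 0.
\]

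Finally, I would collapse this into the form stated in the lemma using the deterministic inequality $\tr(K^2) \le \|K\|_{op} \tr(K)$, which upgrades the middle term: $2\sqrt{\tr(K^2) t} \le 2\sqrt{\tr(K)\|K\|_{op} t} \le 2\sqrt{2\tr(K)\|K\|_{op} t}$. Then $\tr(K) + 2\sqrt{\tr(K^2)t} + 2\|K\|_{op} t \le \bigl(\sqrt{\tr(K)} + \sqrt{2\|K\|_{op} t}\bigr)^2$, restoring the $\gamma_\xi^2$ factor finishes the proof. The main (minor) obstacle is bookkeeping the constants so the final expression exactly matches the ``$\sqrt{\tr(K)} + \sqrt{2\|K\|_{op} t}\,$'' form rather than the slightly sharper Bernstein form; this is purely an application of $\sqrt{a} + \sqrt{b} \le \sqrt{2(a+b)}$ together with the $\tr(K^2) \le \|K\|_{op}\tr(K)$ step.
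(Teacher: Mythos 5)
Your proposal is correct, but it takes a more self-contained route than the paper. The paper's proof is two lines: it cites Lemma 8 of Hsu, Kakade and Zhang (2014) for the Bernstein-form tail bound $\PP\{\xi^TK\xi > \gamma_\xi^2(\tr(K)+2\sqrt{\tr(K^2)t}+2\|K\|_{op}t)\}\le e^{-t}$ and then applies $\tr(K^2)\le\|K\|_{op}\tr(K)$ to collapse it into the stated square form. You instead re-derive that cited inequality from scratch via Gaussian decoupling of the MGF, comparison to the Gaussian chaos $g^TKg$, the logarithmic inequality, and Chernoff--Bernstein tuning --- which is essentially the internal proof of the Hsu--Kakade--Zhang lemma itself --- and then finish with the identical $\tr(K^2)\le\|K\|_{op}\tr(K)$ step. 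What your approach buys is transparency and independence from the external reference; what the paper's buys is brevity. One small bookkeeping inconsistency: your stated elementary inequality $-\tfrac12\log(1-x)\le\tfrac{x}{2}+\tfrac{x^2}{2(1-x)}$ actually yields the MGF bound $s\,\tr(K)+\tfrac{2s^2\tr(K^2)}{1-2s\|K\|_{op}}$, a factor of $2$ larger in the quadratic term than what you write (the sharper inequality $-\tfrac12\log(1-x)\le\tfrac{x}{2}+\tfrac{x^2}{4(1-x)}$ gives your displayed bound). This is harmless: the weaker MGF bound only inflates the middle term of the Bernstein tail to $2\sqrt{2\tr(K^2)t}$, which is still dominated by the cross term $2\sqrt{2\tr(K)\|K\|_{op}t}$ in $\bigl(\sqrt{\tr(K)}+\sqrt{2\|K\|_{op}t}\bigr)^2$ after applying $\tr(K^2)\le\|K\|_{op}\tr(K)$, so the lemma as stated still follows.
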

	\begin{proof}
		From Lemma 8 in \cite{Hsu2014}, one has
		\[
		\PP\left\{
		\xi^T K\xi > \gamma_\xi^2\left(
		\tr(K) + 2\sqrt{\tr(K^2)t}+2\|K\|_{op}t
		\right)
		\right\} \le e^{-t},
		\] 
		for all $t\ge 0$. The result then follows from $\tr(K^2) \le \|K\|_{op}\tr(K)$.
	\end{proof}
	
	%The next tail probability is for a sum of independent centered sub-exponential random variables. 
	
	%\begin{lemma}[Corollary 5.17 in \cite{vershynin_2012}]
	%	Let $X_1, \ldots, X_n$ be independent centered sub-exponential random variables, and let $\zeta= \max_i \|X_i\|_{\psi_1}$. Then, for every $t\ge 0$, we have 
	%	\[
	%		\PP\left\{				{1\over n}\left|\sum_{i=1}^n X_i\right| \ge \zeta t
	%		\right\} \le 2\exp \left[
	%				-c \min\left( t^2, t\right)n
	%		\right]		\]
	%	where $c>0$ is an absolute constant. 
	%\end{lemma}
	
	\section{Comparison of the multivariate ridge estimation and the reduced-rank estimation}\label{sec_comp_rr}
	
	\subsection{Comparison with the reduced-rank estimator}
	
	In this section, we state our reasoning for using the ridge penalty in (\ref{est_F}) rather than the commonly used low-rank approach. In particular, we compare our estimator (\ref{est_F}), or equivalently the multivariate ridge regression, with the reduced-rank estimator under our model (\ref{model}), $Y= (L^*)^TX + (B^*)^TW + E$ when $\Ps = 0$. Since $L^* = A^*B^*$ exhibits a low-rank structure when both $p$ and $m$ are relatively large comparing to $K$, one could estimate $L^*$ by the reduced-rank estimator \citep{i75,i08,bunea2011}
	\begin{equation}\label{est_rr}
	\wh L^{(RR)} = \arg\min_{L} \left\|\Y - \X L\right\|_F^2 +  \mu\cdot \rank(L)
	\end{equation}
	for some tuning parameter $\mu > 0$. 
	
	There have been extensive research on the estimation of a low-rank matrix in both regression and matrix completion settings, for instance, \cite{bing2019, bunea2011,bunea2012,cp, ct, rt, rv98,giraud2011,GiraudBook,ob, nw, koltchinskii2011,yuan2007}, a list that is far from exhaustive. In regression setting, the reduced-rank estimator $\wh L^{(RR)}$ is shown to be optimal for both prediction and estimation in high-dimensional settings \citep{bunea2011}. Despite of its popularity, we give several reasons why the multivariate ridge-type estimator is more suitable in our setting.
	
	First, we focus on the comparison of the convergence rate of prediction error. Suppose 
	part (b) of Assumption \ref{ass_rates} holds. From Corollary \ref{cor_pred}, by using $s_*=0$, $V_\eps = O(Km)$ and $\sum_k\sigma_k \le q \sigma_1$ with $q = \rank(\X)$, 
	one can deduce that our estimator $\X \wh F$ satisfies (notice that $F^* = L^*$ as $\Ps = 0$)
	\[
	{1\over n}\left\|\X \wh F - \X L^*\right\|_F^2  \lesssim 
	{\sigma_1qKm\over \lambda_2 n} + {\sigma_1 Km\log(m/\epsilon) \over \lambda_2 n}+ \lambda_2 \|L^*\|_F^2.
	\]
	When $\log(m) = O(q)$, choosing $\epsilon = m^{-1}$ and optimizing the above rate over $\lambda_2$ yield
	\[
	{1\over n}\left\|\X \wh F - \X L^*\right\|_F^2	\lesssim
	\sqrt{\sigma_1 Kqm\over n}\|L^*\|_F.
	\]
	On the other hand, \cite{bunea2011} showed that the reduced-rank estimator $\wh L^{(RR)}$ in (\ref{est_rr}) has the following prediction error 
	\begin{align*}
	{1\over n}\left\|\X \wh L^{(RR)} - \X L^*\right\|_F^2  &\lesssim  {K\over n}\left\|\W B^*+\E\right\|_{op}^2\\ &\lesssim  {Km\over n}\left\|(\W B^*+\E)\Sigma_{\eps}^{-1/2}\right\|_{op}^2 \lesssim {K(n+m)m\over n}
	\end{align*}
	where the first inequality can be proved in the same way as \cite{koltchinskii2011,bunea2011}, the second inequality holds because the operator norm of $\Sigma_{\eps}= (B^*)^T\Sigma_W B^* + \tau^2\bI_m$ is of order $m$ under part (b) of Assumption \ref{ass_rates}, and the last inequality is due to the deviation bounds of the operator norm of random matrices whose rows are i.i.d. sub-Gaussian random vectors \citep{vershynin_2012}. Clearly, by $\|L^*\|_F^2 \le K\|L^*\|_{op}^2$ and recalling that $q = \rank(\X) \le \min\{n,p\}$, the rate of our estimator $\X \wh F$ is potentially faster if 
	\[
	{\sigma_1\|L^*\|_{op}^2 } \le {nm \over q}\left(1 + {m\over n}\right)^2
	\]
	which usually holds, especially when $m$ is large (see the discussion on $\|L^*\|_F$ in Section \ref{sec_dis_cond}).

	To the best of our knowledge, the reduced-rank estimator only achieves the minimax  rate when the error covariance matrix has a bounded operator norm \citep{koltchinskii2011,rt}. However, as shown above, the error in our model (\ref{model}) has covariance matrix $(B^*)^T\Sigma_W B^* + \tau^2\bI_m$ whose operator norm is order of $m$.  In this case, the reduced-rank estimator is no longer optimal. Instead, our ridge type estimator leverages the fact that $\|L^*\|_F$ is small and leads to a much faster prediction rate. This theoretical comparison is further corroborated by simulation studies in Appendix \ref{sec_sim_comp_rr}.
	
	Finally, from computational perspective, if we replace $\lambda_2 \|L\|_F^2$ in (\ref{est_F}) by the rank penalty $\lambda_2\cdot \rank(L)$, the resulting optimization becomes non-convex and is computationally challenging. For computational convenience, one may instead penalize the nuclear norm of $L$ rather than its rank such that the resulting optimization problem can be solved by the ADMM algorithm. However, to establish statistical guarantees for the corresponding estimator, on top of the RE condition for the group-lasso penalty, this approach may require additional conditions on the design matrix due to the nuclear norm regularization \citep{koltchinskii2011}. In contrast, the proposed estimator  (\ref{est_F}) can be computed efficiently and has much weaker restrictions on the design matrix for provable guarantees.
	
	\subsection{Empirical comparison with the reduced-rank estimator}\label{sec_sim_comp_rr}
	
	In this section, we compare the multivariate ridge regression with the reduced-rank estimator under model $\Y = \X L + \W B+\E$ when $\Psi=\Theta = 0$. Comparing to the commonly studied low-rank regression in the literature,  the noise level here is much larger since it follows a factor structure with diverging eigenvalues. 
	
	In the following we will demonstrate  the advantage of using the multivariate ridge regression in (\ref{est_F}) with $\lambda_1 = 0$ over the reduced-rank estimator. We first show that the ridge-type estimator has smaller PMSE than the reduced-rank estimator when $\|L\|_F$ is small or moderate. Second, we further show that the ridge-type estimator is more robust to the noise level than the reduced-rank estimator.
	
	We follow the data generating process in simulation studies and choose $n = 80$, $p = 120$, $m = 30$ and $\rho = 0.3$.  To change the strength of $\|L\|_F$, we vary $\eta \in \{0.05, 0.1, 0.15, \ldots, 0.5, 0.55\}$. For each $\eta$, we randomly generate $100$ datasets and the averaged PMSEs of $\wh L^{(RR)}$ (RR) and $\wh L^{(Ridge)}$ (Ridge) are shown in Figure \ref{fig_rr}. Note that we provide the true $K$ for $\wh L^{(RR)}$. As seen in the first panel, Ridge has much smaller PMSE than RR when $\eta$ is small. Moreover, it seems that RR does not provide consistent prediction when the noise has diverging eigenvalues. 
	
	To show the robustness to the noise level, we use the same setting and fix $\eta = 0.2$. Recall that $W_{ik} \sim N(0, \sigma_W^2 = 1)$. We then change the noise level by varying $\sigma_W^2 \in \{0.6, 0.8, 1, \ldots, 2.8, 3\}$. The averaged PMSEs of RR and Ridge are shown in the second panel of Figure \ref{fig_rr}. It is easy to see that Ridge is much more robust to the magnitude of the noise level and outperforms RR by a large margin. 
	
	\begin{figure}[H]
		\begin{tabular}{cc}
			\includegraphics[width=.45\textwidth]{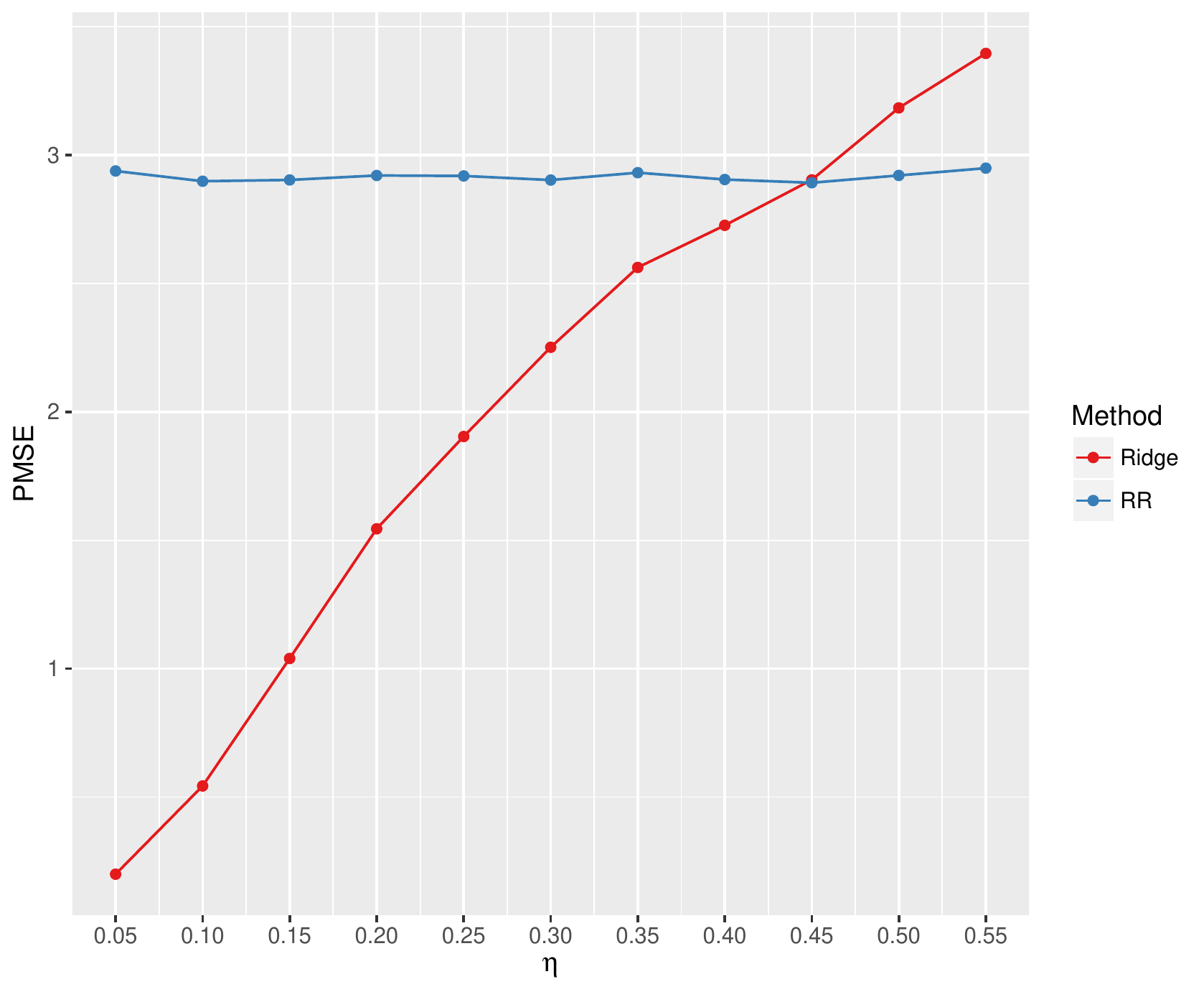}&
			\includegraphics[width=.45\textwidth]{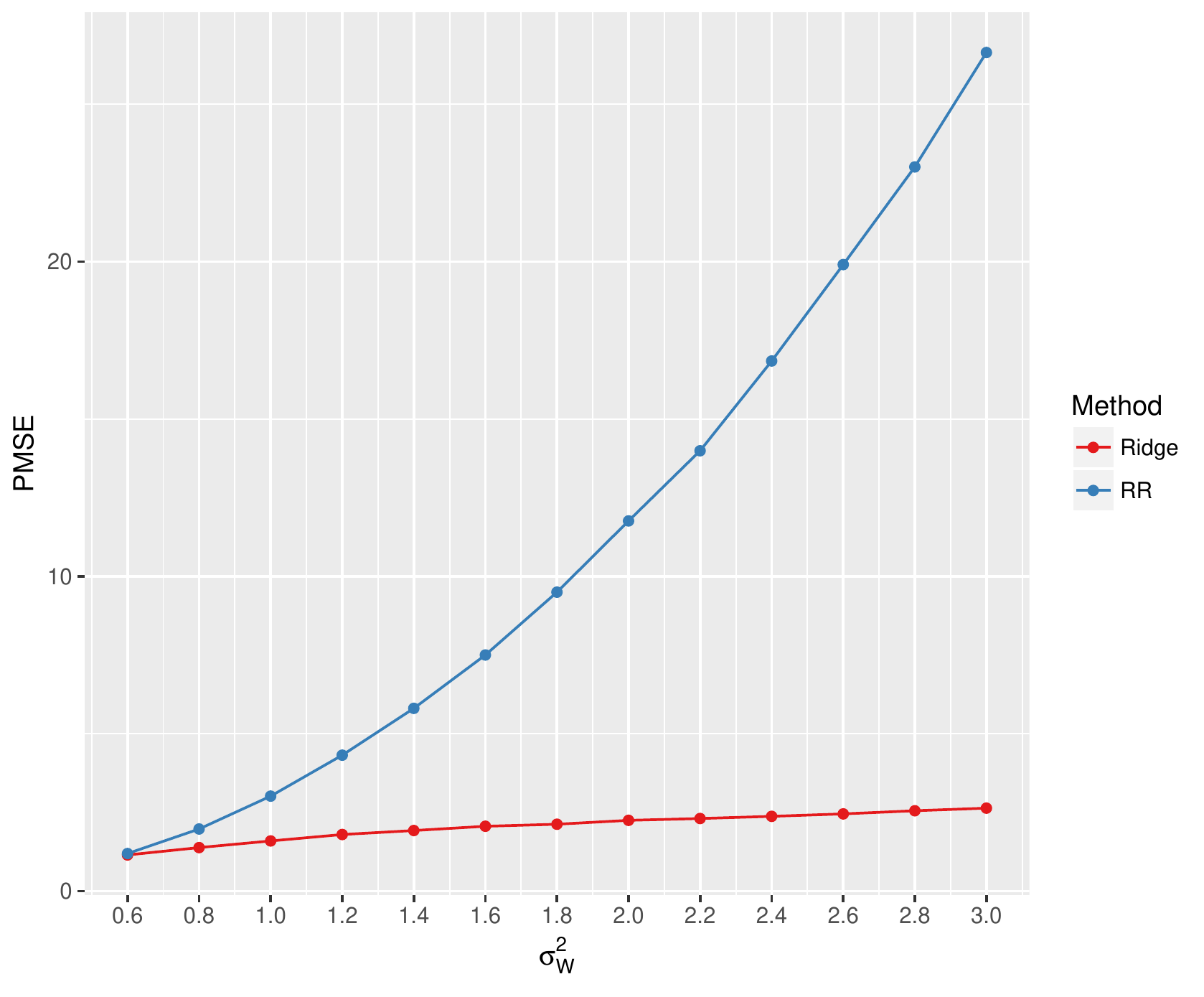}
		\end{tabular}
		\caption{The averaged PMSEs of RR and Ridge when we vary $\eta$ and $\sigma_W$ separately. }
		\label{fig_rr}
	\end{figure}

	\section{Supplementary simulation results}\label{sec_supp_sim}

	\subsection{Simulation results on comparison with other SVA-related algorithms}

	We conduct simulation studies to compare SVA \cite{Lee2017} with other related methods, such as CATE \cite{wang2017} and BCconf \cite{McKennan19}\footnote{We use the \textsf{cate} package and the code from \url{https://github.com/chrismckennan/BCconf.git} for CATE and BCconf, respectively, both implemented in \textsc{R}}.
	We use the same simulation setting described in Section \ref{sec_simulation}, and consider two scenarios: (1) small $p$ and small $m$ ($n=100$, $m = p = 20$); (2) small $p$ and large $m$ ($n=100$, $m = 150$, $p = 20$). Table \ref{tab_homo} contains the averaged RSSE for the two scenarios under homoscedastic errors. We provide the true $K$ for all three methods. SVA has the best performance while BCconf is only slightly worse than SVA. CATE tends to have much larger errors possibly due to the fact that the rows of $\Psi^*$ are not sufficiently sparse. For heteroscedastic errors, Table \ref{tab_hetero} shows the similar pattern, but the performance of BCconf deteriorates as the noise becomes more heteroscedastic (corresponding to a larger $\alpha$).

	\begin{table}[H]
		\centering
		\caption{Averaged RSSE of SVA, CATE and BCconf over 100 repetitions for homoscedastic cases. The numbers in parenthesis are standard errors.}
		\label{tab_homo}
		\begin{tabular}{c|ccc|ccc}
			\hline 
			&\multicolumn{3}{c|}{$n=100$, $p=m=20$} & \multicolumn{3}{c}{$n=100$, $p=20$, $m=150$}\\
			\hline
			$\eta$  & SVA & CATE & BCconf & SVA & CATE & BCconf  \\ 
			\hline
			$0.1$ & 2.92 (0.16) & 11.98 (1.12) & 3.25 (0.38) & 	7.89 (0.14) & 26.15 (0.63) & 8.04 (0.21)\\ 
			$0.3$ & 2.99 (0.19) & 12.06 (1.34) & 3.30 (0.39) & 8.07 (0.16) & 26.21 (0.64) & 8.20 (0.20)\\ 
			$0.5$ & 3.07 (0.21) & 12.10 (1.12) & 3.44 (0.45) & 8.28 (0.22) & 26.36 (0.61) & 8.43 (0.27)\\ 
			$0.7$ & 3.23 (0.24) & 12.12 (1.22) & 3.55 (0.39) & 8.68 (0.28) & 26.40 (0.71) & 8.83 (0.33)\\ 
			$0.9$ & 3.37 (0.26) & 12.14 (1.27) & 3.67 (0.43) & 9.15 (0.34) & 26.68 (0.63) & 9.31 (0.38)\\ 
			$1.1$ & 3.61 (0.31) & 12.10 (1.33) & 3.89 (0.44) & 9.78 (0.44) & 26.72 (0.88) & 9.91 (0.45)\\  
			$1.3$ & 3.81 (0.39) & 12.07 (1.56) & 4.07 (0.45) & 10.47 (0.63) & 27.12 (0.66) & 10.61 (0.64)\\ 
			\hline
		\end{tabular}
	\end{table}
	
	\begin{table}[H]
		\centering
		\caption{Averaged RSSE of SVA, CATE and BCconf over 100 repetitions for heteroscedastic cases. The numbers in parenthesis are standard errors.}
		\label{tab_hetero}
		\begin{tabular}{r|ccc|ccc}
			\hline 
			&\multicolumn{3}{c|}{$n=100$, $p=m=20$} & \multicolumn{3}{c}{$n=100$, $p=20$, $m=150$}\\
			\hline
			$\alpha$  & SVA & CATE & BCconf & SVA & CATE & BCconf  \\ 
			\hline
			$0$ & 3.11 (0.22) & 12.01 (1.37) & 3.44 (0.43) &  8.30 (0.19) & 26.33 (0.72) & 8.45 (0.22) \\ 
			$1$ & 3.12 (0.28) & 13.10 (2.13) & 6.26 (2.64) & 8.32 (0.19) & 26.21 (1.23) & 17.66 (5.80) \\ 
			$2$ & 3.13 (0.30) & 14.10 (2.83) & 8.87 (4.68) & 8.32 (0.23) & 25.91 (2.05) & 22.54 (8.42) \\ 
			$3$ & 3.26 (0.32) & 15.07 (3.17) & 10.82 (5.76) 
			& 8.33 (0.28) & 27.24 (2.97) & 19.57 (7.61) \\  
			$4$ & 3.29 (0.39) & 15.74 (4.05) & 10.85 (5.95) 
			& 8.32 (0.29) & 28.56 (3.96) & 20.99 (6.85) \\ 
			$5$ & 3.32 (0.49) & 17.14 (4.31) & 11.56 (5.32) 
			& 8.25 (0.29) & 28.14 (3.61) & 20.26 (7.64) \\
			\hline
		\end{tabular}
	\end{table}

	\subsection{Simulation results when the first $K$ eigenvalues of ${\B}^T\Sigma_W{\B}$ are moderate}
	
	Part $(b)$ of Assumption \ref{ass_rates} requires all the eigenvalues of ${\B}^T\Sigma_W \B$ to grow with order $m$. In practice, when $m$ is large, some eigenvalues of ${\B}^T\Sigma_W \B$ might be moderate. We thus evaluate the performance of the proposed method in this scenario and compare it with other SVA-related methods, such as SVA \cite{Lee2017}, BCconf \cite{McKennan19} and CATE \cite{wang2017}. We consider the scenario $n = 100$, $p=20$ and $m = 150$ and use the same setting as described in Section \ref{sec_simulation} except that entries $B_{jk}$ of $B$ for $1\le j\le m$ and $1\le k\le K$ are independently drawn from $N(0, [\sigma_B]_k^2)$ where 
	\[
	[\sigma_B]_k^2 = {1\over m} m^{K-k\over K-1}, \quad \textrm{for }k = 1,\ldots, K.
	\]
	From the concentration inequalities of the operator norm of $m^{-1}{\B}^T \B - \textrm{diag}([\sigma_B]_1^2, \ldots, [\sigma_B]_K^2)$ together with Weyl's inequality, it is easy to show that, with probability tending to one as $K=o(m)$, 
	\[
	\Lambda_k \asymp m\cdot [\sigma_B]_k^2 = m^{K-k\over K-1},\quad \textrm{for }k = 1,\ldots, K.
	\]
	We set $\eta = 0.5$ and choose $\alpha \in \{0,1,\ldots,5\}$ to vary the heteroscedasticity. The left plot in Figure \ref{fig_small_B} depicts the averaged RSSE of Oracle, SVA, CATE, BCconf and HIVE, all of which use the true $K$. It shows that HIVE has worse performance than SVA when the eigenvalues of ${\B}^T \Sigma_W \B$ are moderate or small. However, since $K$ is unknown in practice and using the true $K$ also seems suboptimal when some eigenvalues of ${\B}^T \Sigma_W \B$ are small, we conduct a more informative comparison of SVA, CATE, BCconf and HIVE by using data-dependent estimates of $K$. Specifically, we use the ``eigenvalue distance''(ED) approach to estimate $K$ for SVA, CATE and BCconf.\footnote{This method is implemented in the \textsf{cate} package, see \cite{wang2017} and \cite{McKennan19} for more details. Another option of selecting $K$ in \textsf{cate} is the bi-cross validation. We did not consider it here due to its slow computation for large $n$ and $m$.} On the other hand, we use the criterion (\ref{select_K}) (Ratio), the permutation test (PA) and ED to estimate $K$ for HIVE. The right plot of Figure \ref{fig_small_B} shows the RSSE of all methods and HIVE-Ratio has the best performance in all settings. It is worth mentioning that HIVE-Ratio, HIVE-PA and HIVE-ED all tend to under-estimate $K$ in this scenario. This suggests that under-estimating $K$ may improve the performance of HIVE when  some eigenvalues of ${\B}^T \Sigma_W \B$ are moderate or small, in line with our discussion after Lemma \ref{lem_mis_K} in Section \ref{sec_remark_K}.
	
	\begin{figure}[ht]
		\centering
		\begin{tabular}{cc}
			\includegraphics[width=0.48\textwidth]{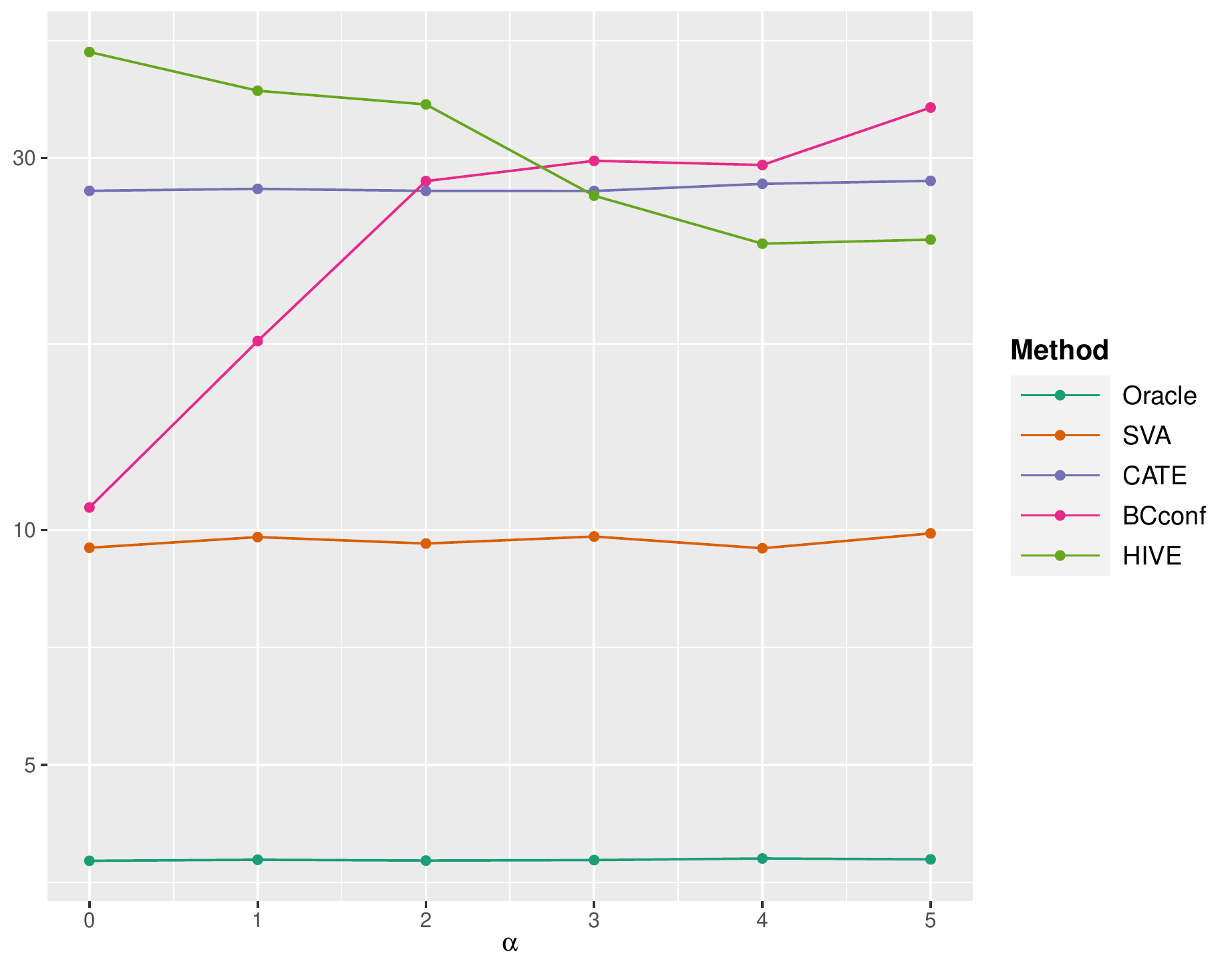}   & 
			\includegraphics[width=0.48\textwidth]{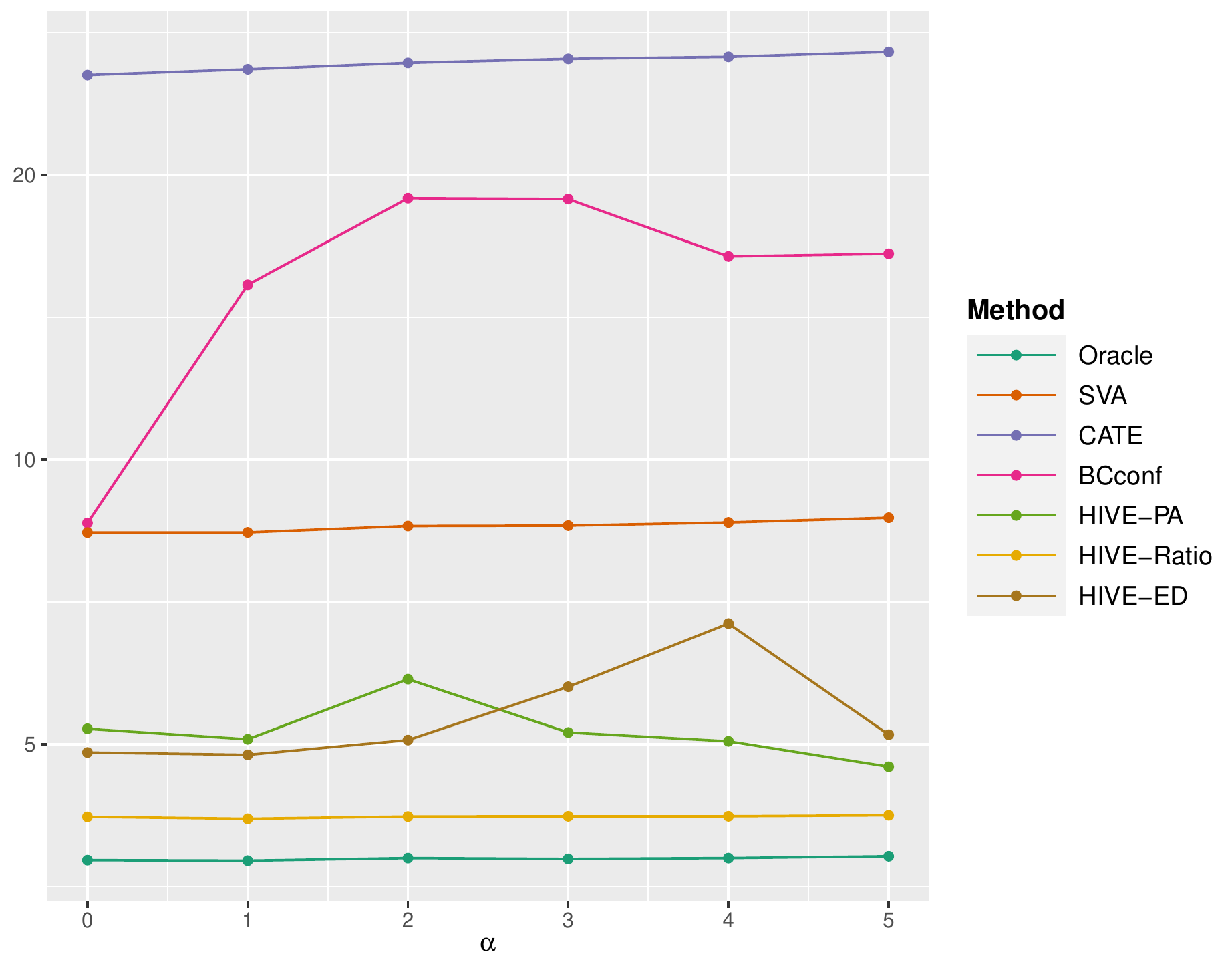}
		\end{tabular}
		\caption{The averaged RSSE of different methods. In the left plot, SVA, CATE, BCconf and HIVE are provided with the true $K$. In the right plot, all methods use data-dependent estimates of $K$.}
		\label{fig_small_B}
	\end{figure}

	\subsection{An alternative procedure of selecting $(\lambda_1,\lambda_2)$}\label{sec_supp_tuning}
	When selecting $(\lambda_1, \lambda_2)$ in the step of estimating $F^* = \Ps + L^*$ over a fine two-way grid is computationally burdensome, we suggest the following faster approach which, according to our simulation results, performs as well as the two-way grid search. Briefly speaking, it first searches $\lambda_2^{cv}$ which yields the smallest prediction error, over a grid of $\lambda_2$ with $\lambda_1$ choosing as a function of $\lambda_2$. Then by fixing $\lambda_2^{cv}$, it searches $\lambda_1^{cv}$ over a grid of $\lambda_1$ via a similar cross-validation criterion. This way of sequentially selecting $\lambda_2$ and $\lambda_1$ greatly reduces the computational cost comparing to the $k$-fold cross validation over a two-way grid of $(\lambda_1, \lambda_2)$.

	Specifically, we start with a grid $\mathcal{G}$ of $\lambda_2$ and for each $\lambda_2 \in \mathcal{G}$, we set 
	\[
	\lambda_1(\lambda_2) = c_0 \sqrt{\max_{1\le j\le p} M_{jj}(\lambda_2)}\left(\sqrt{m \over n} + \sqrt{2\log p \over n}\right)
	\]
	where 
	$M(\lambda_2) = n^{-1} {\bm X}^T Q^2_{\lambda_2} {\bm X}$ and 
	$c_0>0$ is some universal constant (our simulation reveals good performance with $c_0 = 4$). This choice of $\lambda_1(\lambda_2)$ is based on the theoretical rate in (\ref{rate_lbd1}) of Theorem \ref{thm_pred}. We then use $k$-fold cross validation to select $\lambda_2^{cv}$ which gives the smallest mean square error of the predicted values. Fixing $\lambda_2^{cv}$, the optimization problem in (\ref{crit_Theta}) becomes a group-lasso problem and we propose to select $\lambda_1$ via $k$-fold cross validation (for instance, the \textsf{cv.glmnet} package in \textsc{R}).

\end{document}